\theoremstyle{plain}
\newtheorem{thm}{Theorem}[section]
\newtheorem{lemma}[thm]{Lemma}
\newtheorem{coro}[thm]{Corollary}
\newtheorem{prop}[thm]{Proposition}
\newtheorem{fact}[thm]{Fact}
\newtheorem{conj}[thm]{Conjecture}
\newtheorem*{ques}{Question}
\newtheorem*{heur}{Heuristic}
\theoremstyle{definition}
\newtheorem{defn}[thm]{Definition}
\newtheorem{constr}[thm]{Construction}
\newtheorem{rmk}[thm]{Remark}
\newcommand{\ZZ}{\mathbb{Z}}
\newcommand{\RR}{\mathbb{R}}
\newcommand{\CC}{\mathbb{C}}
\newcommand{\ith}{^\text{th}}
\DeclareMathOperator{\Mod}{Mod}
\DeclareMathOperator{\spin}{{spin}}
\DeclareMathOperator{\even}{{even}}
\DeclareMathOperator{\odd}{{odd}}
\DeclareMathOperator{\hyp}{{hyp}}
\DeclareMathOperator{\GA}{\Gamma_\mathsf{A}}
\DeclareMathOperator{\SL}{SL}
\DeclareMathOperator{\Sp}{Sp}
\DeclareMathOperator{\SMod}{SMod}
\DeclareMathOperator{\Arf}{Arf}
\newcommand{\T}{\mathcal{T}}
\newcommand{\M}{\mathcal{M}}
\newcommand{\A}{\mathsf{A}}
\newcommand{\HT}{\mathcal{HT}}
\newcommand{\HM}{\mathcal{HM}}
\newcommand{\QT}{\mathcal{QT}}
\newcommand{\QM}{\mathcal{QM}}
\newcommand{\sing}{\underline{\kappa}}
\newcommand{\Ztwo}{\mathbb{Z}_2}
\newcommand{\Zr}{\mathbb{Z}_r}
\newcommand{\LL}{\mathcal{L}}
\begin{document}


\title{Connected components of strata of Abelian differentials over Teichm{\"u}ller space}

\author{Aaron Calderon}

\begin{abstract}
This paper describes connected components of the strata of
holomorphic abelian differentials on marked Riemann surfaces with prescribed degrees of zeros.
Unlike the case for unmarked Riemann surfaces,
we find there can be many connected components,
distinguished by roots of the cotangent bundle of the surface.
In the course of our investigation we also characterize the images of the fundamental groups of strata inside of the mapping class group.
The main techniques of proof are mod $r$ winding numbers and a mapping class group--theoretic analogue of the Euclidean algorithm.
\end{abstract}


\maketitle

\setcounter{tocdepth}{1}
\tableofcontents

\section{Introduction}

The Hodge bundle $\HM_g$ of holomorphic abelian differentials over the moduli space $\M_g$ of genus $g$ Riemann surfaces is a fundamental object of study in many diverse fields of mathematics. This bundle can be partitioned into a collection of disjoint {\em strata}, suborbifolds (in fact, subvarieties) which are distinguished by the number and degree of the zeros of the differentials in the stratum. For any integer partition $\sing = (k_1, \ldots, k_n)$ of $2g-2$, we write $\HM(\sing)$ to denote the stratum of abelian differentials on genus $g$ Riemann surfaces which have exactly $n$ zeros of degrees $k_1, \ldots, k_n$.

An abelian differential $dz$ defines a flat cone metric $|dz|^2$ on the surface, and so a stratum $\HM(\sing)$ may be identified with the moduli space of finite--area translation surfaces with cone points of angle\\ $2(k_1+1) \pi, \ldots, 2(k_n+1)\pi$.

By pioneering work of Masur \cite{Masur_IETsMF} and Veech \cite{Veech_IETs}, the Teichm{\"u}ller geodesic flow on $\HM_g$ acts ergodically on each connected component of a stratum with respect to a Lebesgue--class measure. More generally, strata are some of the simplest examples of orbit closures for the $\SL_2(\RR)$ action on $\HM_g$. For an overview of these and related topics, see, e.g., \cite{Wright_Survey1} or \cite{Zorich_Survey}.

In \cite{KZ_strata}, Kontsevich and Zorich classified the connected components of these strata. They proved that each stratum $\HM(\sing)$ has at most $3$ components, distinguished by hyperellipticity and the parity of the induced spin structure, an algebro-geometric condition relating to square roots of the canonical bundle (cotangent bundle) over a given Riemann surface (see Theorem \ref{thm:KZ_class}). 

This paper addresses a similar question, posed now over the {\em Teichm{\"u}ller} space.
Recall that the Teichm{\"u}ller space $\T_g$ is the space of {\em marked} genus $g$ Riemann surfaces (up to isotopy).
The change--of--marking action of the mapping class group $\Mod(S)$ on $\T_g$ demonstrates $\T_g$ as the (orbifold) universal cover of $\M_g$,
and there is similarly a Hodge bundle $\HT_g$ over Teichm{\"u}ller space
classifying the holomorphic abelian differentials on marked Riemann surfaces (equivalently, marked translation surfaces of finite area).

Just as over moduli space, the Hodge bundle over Teichm{\"u}ller space is stratified by number and degree of zeros. For any integer partition $\sing = (k_1, \ldots, k_n)$ of $2g-2$, we write $\HT(\sing)$ to denote the stratum of abelian differentials on {\em marked} genus $g$ Riemann surfaces which have exactly $n$ zeros of degrees $k_1, \ldots, k_n$.

Let $r:= \gcd(k_1, \ldots, k_n)$. When $r \in \{2g-2, g-1\}$ (and $g \ge 3$), there are infinitely many hyperelliptic components of $\HT(\sing)$, corresponding to the infinitely many different hyperelliptic involutions of the surface $S$ (see Corollary \ref{coro:comp_hyp}).

Our main theorem deals with the remaining cases by relating the connected components of $\HT(\sing)$ to the set of {\em $r$--spin structures}, $r\ith$ roots of the canonical bundle over a given marked Riemann surface which can equivalently be thought of as mod $r$ winding number functions (see \S\S\ref{sec:rspin}, \ref{sec:diffspin}).

Our results only apply to surfaces of high enough genus; in order to specify exactly which, we must use the following auxiliary function:
\[ g(r) = \left\{
\begin{array}{ll}
13 & r=4 \\
21 & r=8 \\
5 & \text{otherwise}
\end{array} \right. \]

\begin{thm}\label{thm:components}
Suppose that $\sing=(k_1, \ldots, k_n)$ is a partition of $2g-2$ such that $g \ge g(r)$, where
\[r=\gcd(k_1, \ldots, k_n) \notin \{g-1, 2g-2\}.\]
Then the stratum $\HT(\sing)$ has finitely many components. 
\begin{enumerate}
\item If $r$ is odd, then there are exactly $r^{2g}$ components, distinguished by their induced $r$--spin structure.
\item If $r$ is even, then there are at least $r^{2g}$ components, of which at least
\[(r/2)^{2g} \left( 2^{g-1}(2^g +1) \right)\]
have even parity and at least
\[(r/2)^{2g} \left( 2^{g-1}(2^g - 1) \right)\]
have odd.
\end{enumerate}
\end{thm}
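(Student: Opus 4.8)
The plan is to study $\HT(\sing)$ as a covering space of $\HM(\sing)$. The projection $\HT(\sing)\to\HM(\sing)$ is the restriction of the (orbifold) $\Mod(S)$--cover $\HT_g\to\HM_g$, so the set of components of $\HT(\sing)$ lying over a fixed component $C\subseteq\HM(\sing)$ is in $\Mod(S)$--equivariant bijection with $\Mod(S)/\Gamma_C$, where $\Gamma_C\le\Mod(S)$ is the image of $\pi_1(C)$ (the monodromy of $C$). Thus the number of components over $C$ is $[\Mod(S):\Gamma_C]$, and the theorem amounts to a computation of these indices. First I would apply the Kontsevich--Zorich classification (Theorem~\ref{thm:KZ_class}): because $g\ge g(r)\ge 5$ and $r\notin\{g-1,2g-2\}$, the stratum $\HM(\sing)$ has no hyperelliptic component, it is connected when $r$ is odd, and it is the disjoint union $C_\even\sqcup C_\odd$ of its even-- and odd--spin components when $r$ is even (recall that $r$ even is equivalent to all $k_i$ even).

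Second, I would exhibit the $r$--spin structure as a locally constant invariant. An abelian differential $\omega$ with zeros of degrees $k_1,\dots,k_n$ determines, through the winding numbers of simple closed curves with respect to its horizontal vector field, a mod $r$ winding number function, hence an $r$--spin structure $\theta$; this is precisely the dictionary of \S\ref{sec:rspin} and \S\ref{sec:diffspin}, and it uses $\gcd(k_i)=r$ so that the winding numbers around the zeros all vanish mod $r$. Since the winding number of a fixed isotopy class is an integer varying continuously with $(X,\omega)$, it is locally constant, so $\theta$ is constant on each component of $\HT(\sing)$; moreover $\theta$ is $\Mod(S)$--equivariant, and when $r$ is even the Arf invariant of its reduction to a $2$--spin structure is the Kontsevich--Zorich spin parity. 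Fixing a component $\widetilde C$ over $C$ with induced $r$--spin structure $\theta$, this immediately gives $\Gamma_C\subseteq\operatorname{Stab}_{\Mod(S)}(\theta)$, hence
\[
[\Mod(S):\Gamma_C]\;\ge\;[\Mod(S):\operatorname{Stab}(\theta)]\;=\;|\Mod(S)\cdot\theta|.
\]
Invoking the structure of the $\Mod(S)$--action on the (finite) set of $r$--spin structures --- it is transitive when $r$ is odd, while for $r$ even its orbits are exactly the two parity classes, of sizes $(r/2)^{2g}\,2^{g-1}(2^g+1)$ and $(r/2)^{2g}\,2^{g-1}(2^g-1)$ (the classical theta--characteristic count, times the cardinality $(r/2)^{2g}$ of a fibre of the reduction map to $2$--spin structures) --- now yields all the lower bounds asserted in the theorem, and in particular the entirety of case (2). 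Finiteness of $\pi_0(\HT(\sing))$ will follow once $\Gamma_C$ is shown to have finite index in $\Mod(S)$, which is a by-product of the realization argument below.

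The remaining, and central, point is the reverse containment $\operatorname{Stab}(\theta)\subseteq\Gamma_C$ when $r$ is odd, which I would establish in two steps. The algebraic step is to show that $\operatorname{Stab}(\theta)$ is generated by the Dehn twists about a fixed finite configuration of \emph{admissible} curves --- nonseparating simple closed curves $c$ with $\theta(c)=0$; this is exactly where the genus hypothesis $g\ge g(r)$ and its exceptional values $g(4)=13$, $g(8)=21$ enter. The proof is a mapping class group--theoretic Euclidean algorithm: one shows that admissible curves in a given topological type are joined to one another by sequences of admissible twists, by repeatedly reducing geometric intersection numbers in continued--fraction fashion, and then bootstraps from a standard generating configuration to all of $\operatorname{Stab}(\theta)$. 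The geometric step is to realize each twist in this configuration as the monodromy of a loop in $C$: given an admissible curve $c$ one first produces a translation surface in $\widetilde C$ in which $c$ is the core curve of an embedded flat cylinder (possible because $\theta(c)=0$), and then the full cylinder shear --- sliding the top of that cylinder once around its core --- is a loop in $\HM(\sing)$, necessarily contained in the component $C$, whose change of marking is the Dehn twist $T_c$; lifting this loop shows $T_c\widetilde C=\widetilde C$, i.e.\ $T_c\in\Gamma_C$. Combining the two steps gives $\Gamma_C=\operatorname{Stab}(\theta)$, so $\HT(\sing)$ has exactly $[\Mod(S):\operatorname{Stab}(\theta)]=r^{2g}$ components, all lying over the connected stratum $\HM(\sing)$ and distinguished by their $r$--spin structures.

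I expect the algebraic step to be the main obstacle: determining $\operatorname{Stab}(\theta)$ exactly as the group generated by admissible twists. Making the Euclidean--algorithm reduction go through uniformly is delicate, and the failure of the relevant arguments in low genus when $r\in\{4,8\}$ (traceable to torsion in the corresponding symplectic representations) is what forces the larger bounds $g(4)$, $g(8)$; for even $r$ no such equality is available to us, which is precisely why case (2) gives only a lower bound. A secondary technical hurdle is checking that the admissible curves of a generating configuration can \emph{all} be realized as cylinder cores by translation surfaces inside the prescribed component, rather than merely somewhere in the stratum.
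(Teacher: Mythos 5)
Your framework agrees with the paper's: both use the monodromy correspondence (components of $\HT(\sing)$ over a component $C$ of $\HM(\sing)$ are counted by $[\Mod(S):\Gamma_C]$), invariance of the $r$--spin structure for the containment $\Gamma_C\subseteq\Mod(S)[\phi]$, orbit--stabilizer for the lower bounds, and cylinder shears to realize Dehn twists inside $\Gamma_C$. The reverse containment via admissible twists is also in the spirit of the paper. However, there is a genuine gap in your geometric step.

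You assert that, given an admissible curve $c$ with $\theta(c)=0$, one can produce a translation surface in the \emph{same component} $\widetilde C$ on which $c$ is a cylinder core; you then realize $T_c$ by a full cylinder shear. This assertion is false in general, and the paper makes a point of saying so explicitly: in $\HM(1,2g-3)$ there are separating curves $c$ with $\theta(c)=0$ which can \emph{never} be the core of a flat cylinder, because the two complementary pieces of $X\setminus N_\varepsilon(c)$ would have total cone angle incompatible with their zero--curvature boundary by Gauss--Bonnet. So your plan of realizing an entire generating configuration geometrically, curve by curve, cannot work. The paper handles this by realizing \emph{only} a prescribed curve system $\mathsf{C}(\sing,\spin)$ of cylinder cores on a single prototype (Proposition \ref{prop:prototypes}, Lemma \ref{lem:twist_in_monodromy}), and then proving a purely group--theoretic statement $\Gamma(\sing,\spin)=\Gamma((r^{(2g-2)/r}),\spin)$ (Theorem \ref{thm:Euclidean_algorithm}), i.e.\ the Euclidean algorithm is run on the partition $(k_1,\ldots,k_n)$ itself---modeling addition and subtraction of indices of the $b_i$ curves via $\GA$--orbits and auxiliary curves $c_{(i,j)}$---so as to enlarge the set of available Dehn twists \emph{algebraically} rather than geometrically. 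Relatedly, your ``algebraic step'' of showing that admissible twists generate $\Mod(S)[\phi]$ by reducing geometric intersection numbers appears to be conflated with re-deriving Salter's generation criterion (Theorem \ref{thm:Salter_generation}); the paper treats that criterion as a black box and reserves the Euclidean--algorithm idea for passing from $\mathsf{C}(\sing,\spin)$ to $\mathsf{C}((r^{(2g-2)/r}),\spin)$, which is where the genus bounds $g(4)=13$, $g(8)=21$ (from Salter's theorem, not from your torsion heuristic) enter.
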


The connected components of strata over Teichm{\"u}ller space are intimately connected to the fundamental groups of strata over moduli space. Kontsevich has conjectured \cite{KZ_strings} that every connected component of a stratum is a classifying space for some sort of mapping class group, but little progress has been made either way in this regard.

Our second main theorem deals with certain representations of these fundamental groups inside of the mapping class group.
In particular, suppose that $\Omega$ is a connected component of some stratum $\HM(\sing)$. The forgetful map $p: \HM_g \rightarrow \M_g$ induces a map of orbifold fundamental groups
\[ p_* : \pi_1^{\text{orb}} (\Omega) \rightarrow \pi_1^{\text{orb}} \left(\M_g\right) = \Mod(S)\]
whose image is called the {\em geometric monodromy group} $\mathcal{G}(\Omega)$ of $\Omega$.
\footnote{Technically, this group is only well-defined after choice of basepoint $(X, \omega)$ (where $X$ is a Riemann surface and $\omega$ an abelian differential on $X$) and an identification of $X$ and $S$, i.e., a marking. We discuss this further in \S\ref{sec:wn_monodromy}, but for the purposes of the introduction one may simply think of the geometric monodromy as a subgroup up to conjugation.}

Since the number of connected components of $\HT(\sing)$ which lie over $\Omega$ is the same as the index of $\mathcal{G}(\Omega)$ inside $\Mod(S)$ (see \S\ref{sec:wn_monodromy}), Theorem \ref{thm:components} is essentially equivalent to the following:

\begin{thm}\label{thm:monodromy}
Suppose that $\sing=(k_1, \ldots, k_n)$ is a partition of $2g-2$ such that $g \ge g(r)$, where
\[r=\gcd(k_1, \ldots, k_n) \notin \{g-1, 2g-2\}.\]
If $\Omega$ is a connected component of $\HM(\sing)$, then the geometric monodromy group $\mathcal{G}(\Omega)$ is a finite--index subgroup of $\Mod(S)$.
\begin{enumerate}
\item If $r$ is odd, then $\mathcal{G}(\Omega)$ is the stabilizer inside the mapping class group of an $r$--spin structure.
\item If $r$ is even, then $\mathcal{G}(\Omega)$ is a finite--index subgroup of the stabilizer of an $r$--spin structure.
\end{enumerate}
\end{thm}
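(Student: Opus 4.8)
The plan is to prove the two inclusions separately. For the containment $\mathcal{G}(\Omega) \le \Mod(S)[\phi]$, where $\phi$ is the $r$-spin structure induced by the differential, I would use that $\phi$ — concretely the $r$-th root $\mathcal{O}(\sum m_i p_i)$ of $K_X$ with $k_i = r m_i$, equivalently the mod $r$ reduction of the winding number function of the horizontal foliation — is locally constant on $\HM(\sing)$. Constancy under deformations that fix the zeros is immediate; the reason the invariant is only a mod $r$ datum, and does not lift to a monodromy-invariant integral winding number function, is that the integral winding numbers naturally live on the surface punctured at the zeros, while dragging a zero of order $k_i$ around a loop shifts them by multiples of $k_i$, hence trivially after reduction mod $r$ since $r \mid k_i$; this is exactly where $r = \gcd(k_i)$ enters. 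Evaluating this locally constant invariant at the two ends of a lift of a loop in $\Omega$ — which differ by the monodromy — then forces every monodromy element to fix $\phi$. Since the $\Mod(S)$-orbit of $\phi$ sits inside the finite set of all $r$-spin structures, $\Mod(S)[\phi]$ has finite index in $\Mod(S)$, and hence so will $\mathcal{G}(\Omega)$ once the lower bound is established.

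For the lower bound I would realize mapping classes as monodromy. The fundamental move is the cylinder shear: every translation surface in $\Omega$ carries a horizontal cylinder, and cutting it along a core circle and regluing after a horizontal shift that sweeps once around the circumference is a loop in $\Omega$ whose monodromy is the Dehn twist $T_c$ about the core curve $c$; since $c$ is a geodesic it has integral winding number $0$, so it is admissible for $\phi$ (its mod $r$ winding number vanishes, hence $T_c$ preserves $\phi$), and $T_c \in \mathcal{G}(\Omega)$. Repeating this for the cylinder decompositions supplied by the change-of-coordinates principle for strata, and precomposing with loops that drag the zeros of $\omega$ around the surface — invisible in $\Mod(S)$, but moving a curve within its mod $r$ winding number class — one obtains $T_c \in \mathcal{G}(\Omega)$ for every $\phi$-admissible simple closed curve $c$ outside a short list of degenerate configurations. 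The hypothesis $r \notin \{g-1, 2g-2\}$ enters precisely here: it excludes exactly the two strata $\HM(2g-2)$ and $\HM(g-1,g-1)$ whose components can be hyperelliptic, so under our hypothesis every component $\Omega$ is non-hyperelliptic and those configurations cause no trouble. In particular $\mathcal{G}(\Omega)$ contains the subgroup $H$ generated by all $\phi$-admissible Dehn twists.

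The crux — and the step I expect to be the main obstacle — is the purely group-theoretic assertion that $H = \Mod(S)[\phi]$ when $r$ is odd, and that $H$ has finite index in $\Mod(S)[\phi]$ when $r$ is even. This is a generation statement for the ``$r$-spin mapping class group'', and I would approach it by the usual combination: an induction on genus that reduces a given element of $\Mod(S)[\phi]$ to the complement of a standard admissible handle, together with a careful argument that admissible twists can always bring such a handle into standard position. The latter is where a mapping class group analogue of the Euclidean algorithm enters — one successively corrects the mod $r$ winding numbers of the curves cutting out the handle, at each stage replacing a pair $(a,b)$ of winding numbers by $(\gcd(a,b) \bmod r, 0)$ just as in the integer Euclidean algorithm — and the base cases are checked directly. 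The induction needs enough disjoint handles to maneuver, which gives $g \ge 5$ in general; the values $r = 4$ and $r = 8$ are genuinely exceptional, the standardization failing at small genus for a $2$-adic reason in the winding-number bookkeeping and requiring the extra room of genus $13$, respectively $21$. For $r$ even the Euclidean reduction does not produce the whole stabilizer, which is the source of the weaker conclusion in part (2). Once $\mathcal{G}(\Omega)$ is located in this way, Theorem \ref{thm:components} follows by computing the index of an $r$-spin stabilizer.
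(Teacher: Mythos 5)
Your upper bound is exactly the paper's: the mod $r$ winding number function (equivalently the induced $r$-spin structure $\phi$) is a locally constant map on the stratum, hence constant on components, forcing $\mathcal{G}(\Omega) \le \Mod(S)[\phi]$, and finiteness of the index of $\Mod(S)[\phi]$ is Corollary \ref{coro:stab_index}.

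The lower bound, however, contains a genuine gap. You claim that via cylinder shears, change of coordinates, and dragging zeros, one realizes $T(c) \in \mathcal{G}(\Omega)$ for every $\phi$-admissible simple closed curve $c$ ``outside a short list of degenerate configurations,'' and then conclude $H \le \mathcal{G}(\Omega)$ for $H$ the full admissible twist subgroup. This is false, and the paper explicitly calls it out in \S\ref{sec:Euclidean}: not every admissible curve is realized as a cylinder on \emph{any} surface in the component. The obstruction is Gauss--Bonnet: in $\HM(1,2g-3)$ with $g \ge 4$, a separating curve $c$ whose two sides both have genus $\ge 2$ has $\phi(\overrightharp{c}) = 0$, but can never bound flat cylinders on a surface with one simple zero, since one of the two complementary pieces would have to be a flat surface with a single cone angle that is too small for its topology. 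So the set of curves for which you can directly produce the twist by a shear is constrained by the singularity data, not just by $\phi$. The paper must therefore work with a \emph{fixed, explicit} prototype $(X,f,\omega)$ (Construction \ref{constr:proto}) whose cylinder core curves are the specific curve system $\mathsf{C}(\sing, \spin)$, and then argue group-theoretically.

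You have also misidentified the role of the Euclidean algorithm. In the paper it is not a mechanism for proving the generation statement for $\Mod(S)[\phi]$; that statement is imported wholesale from Salter's work (Theorem \ref{thm:Salter_generation}), including the anomalous genus bounds $g(4)=13$ and $g(8)=21$. The Euclidean algorithm of Theorem \ref{thm:Euclidean_algorithm} operates entirely on the \emph{curve system}: it shows that the subgroup generated by Dehn twists in the prototype's possibly sparse curve system $\mathsf{C}(\sing,\spin)$ (which for a partition like $(2, 2g-4)$ consists of only a handful of $b_i$ curves) already contains all the twists in the much larger system $\mathsf{C}((r^{(2g-2)/r}), \spin)$, which in turn satisfies the hypotheses of Salter's theorem. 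This bridging step is necessary precisely because of the Gauss--Bonnet obstruction that undermines your ``all admissible curves'' claim: the prototype's cylinders alone need not fill $S$ richly enough to apply the generation theorem directly, and one cannot supplement them by shears on other surfaces in the component. Relatedly, the hypothesis $r \notin \{g-1, 2g-2\}$ is used mechanically in Lemma \ref{lem:ineq} to give $r < g-2$, which is what lets the $D_{2r+3}$ configuration and the curve-arithmetic moves of Proposition \ref{prop:curve_addsub} fit on the surface; its excluding the hyperelliptic strata is a consequence, not the operative use.
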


The high--genus and finite--index qualifications for even $r$ are not essential, but are rather relics of the mapping class group--theoretic methods which we use to investigate the geometric monodromy groups $\mathcal{G}(\Omega)$. Moreover, the strata $\HM(\sing)$ for $\sing = (2g-2)$ or $(g-1, g-1)$ have non-hyperelliptic components, the geometric monodromy groups of which remain unclassified (see the discussion in \S\ref{sec:onwards}).

\begin{conj}\label{conj:comp}
Let $g \ge 4$ and let $\sing=(k_1, \ldots, k_n)$ be a partition of $2g-2$ with $r = \gcd(k_1, \ldots, k_n)$. The non--hyperelliptic connected components of $\HT(\sing)$ are in one--to--one correspondence with the set of $r$--spin structures on $S$. In particular, there are always exactly $r^{2g}$ non-hyperelliptic components of $\HT(\sing)$.

Equivalently, if $\Omega$ is a non--hyperelliptic connected component of $\HM(\sing)$, then its geometric monodromy group is the stabilizer of an $r$--spin structure.
\end{conj}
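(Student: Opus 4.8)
\emph{Strategy.} The two formulations are equivalent exactly as in the discussion preceding Theorem~\ref{thm:components}: the number of components of $\HT(\sing)$ lying over a component $\Omega$ of $\HM(\sing)$ equals the index $[\Mod(S):\mathcal{G}(\Omega)]$, so it suffices to establish the monodromy statement. One containment needs no hypothesis on $g$: a differential $(X,\omega)\in\HM(\sing)$ determines a mod $r$ winding number function---i.e.\ an $r$--spin structure---on the underlying surface via the flat structure $|\omega|^2$, this varies locally constantly over the stratum, so it is constant on $\Omega$ and on every component of $\HT(\sing)$ lying above it, and hence $\mathcal{G}(\Omega)\subseteq\mathrm{Stab}_{\Mod(S)}(\phi)$, where $\phi$ is the resulting $r$--spin structure on $S$. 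The content of the conjecture is thus the reverse inclusion $\mathcal{G}(\Omega)\supseteq\mathrm{Stab}_{\Mod(S)}(\phi)$ for \emph{every} non--hyperelliptic $\Omega$ and \emph{every} $g\ge 4$; by Theorem~\ref{thm:monodromy} this is known when $r$ is odd, $g\ge g(r)$, and $r\notin\{g-1,2g-2\}$, so what remains is (a) the finitely many low--genus cases $4\le g<g(r)$, (b) the special strata $\sing=(2g-2)$ and $(g-1,g-1)$ and their non--hyperelliptic components, and (c) upgrading ``finite index'' to ``equality'' when $r$ is even.

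The mechanism for enlarging $\mathcal{G}(\Omega)$ is cylinder shearing: if some translation surface in $\Omega$ contains an embedded cylinder with core $c$, then fully shearing it traces a loop in $\Omega$ with monodromy $T_c$, so $T_c\in\mathcal{G}(\Omega)$. The first step is to pin down which isotopy classes arise this way; by cutting a flat surface open along $c$ and regluing one expects that for non--hyperelliptic $\Omega$ these are precisely the non--separating $c$ with $\phi(c)\equiv 0\pmod r$, with no further restriction. The second step is a generation statement for $r$--spin stabilizers---established for $r=1,2$ and expected in general---asserting that for $g\ge 4$ the group $\mathrm{Stab}_{\Mod(S)}(\phi)$ is generated by the Dehn twists $T_c$ with $c$ non--separating and $\phi(c)\equiv 0\pmod r$. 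Together with the classification of $\Mod(S)$--orbits of $r$--spin structures (one orbit when $r$ is odd; two, one per parity, when $r$ is even), combining the two steps yields $\mathcal{G}(\Omega)=\mathrm{Stab}_{\Mod(S)}(\phi)$ and hence the stated count. In the special strata $(2g-2)$ and $(g-1,g-1)$ more care is required, because the hyperelliptic configurations fill up a large part of the stratum and one must still realize every admissible $c$ as a cylinder core inside the non--hyperelliptic component---equivalently, show that the complement of the hyperelliptic locus remains connected enough for change--of--coordinates arguments.

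The main obstacle is the even $r$ case, i.e.\ closing the finite--index gap in Theorem~\ref{thm:monodromy}(2), which the paper already flags as a relic of its methods (see \S\ref{sec:onwards}). The mapping class group--theoretic Euclidean algorithm manipulates configurations of curves and cylinders by moves whose effect on the mod $r$ winding number is controlled, but for even $r$ the interaction of these moves with the mod $2$ (parity) structure is more delicate, so only a finite--index subgroup---still large enough to force the ``at least $r^{2g}$'' lower bound---is certified. Removing this should require a finer change--of--coordinates argument in the spirit of Kontsevich--Zorich: realize inside $\Omega$ a flat surface built from an explicit handle decomposition that is manifestly not hyperelliptic and whose cylinder cores already generate $\mathrm{Stab}_{\Mod(S)}(\phi)$, then propagate through the rest of $\Omega$ by connectedness; alternatively, one could try to verify directly that the twists above lie in $\mathcal{G}(\Omega)$ by exhibiting, among shearing loops, the finitely many defining relations of a presentation of $\mathrm{Stab}_{\Mod(S)}(\phi)$. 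The residual low--genus cases fall outside the inductive range of the Euclidean algorithm and would likely need an ad hoc family of surgeries or a finite computation. Together with the even $r$ gap these are the only essential difficulties; the rest reduces to bookkeeping with the mod $r$ winding numbers developed in \S\S\ref{sec:rspin}, \ref{sec:diffspin}.
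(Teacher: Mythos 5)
The statement is a \emph{conjecture}, not a theorem of the paper: the paper proves it only for $g\ge g(r)$, $r\notin\{g-1,2g-2\}$, and when $r$ is even only up to finite index (Theorems \ref{thm:components} and \ref{thm:monodromy}), and flags the remaining cases as open, citing a later paper \cite{CS_strata} for the eventual resolution. Your proposal is not a proof but a roadmap; you say as much yourself (``the second step is a generation statement\ldots expected in general,'' ``removing this should require a finer change--of--coordinates argument,'' ``would likely need an ad hoc family of surgeries''). So there is a genuine gap, and it is exactly the one you name.

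Two of your intermediate claims would themselves be new results and are not justified here. First, the assertion that cylinder cores in a non--hyperelliptic $\Omega$ are ``precisely the non--separating $c$ with $\phi(c)\equiv 0\pmod r$'' is not established in the paper; the paper carefully avoids making any such realizability claim, and in \S\ref{sec:Euclidean} it explicitly gives an example of an admissible curve that cannot be realized as a cylinder core in its stratum (a separating curve in $\HM(1,2g-3)$, killed by Gauss--Bonnet), precisely to motivate abandoning the ``all admissible twists come from shears'' heuristic in favor of the combinatorial Euclidean--algorithm argument. Whether all admissible \emph{non--separating} curves are realizable as cylinder cores inside a fixed non--hyperelliptic component is a delicate question that your proposal assumes rather than proves. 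Second, the generation statement you invoke---that for $g\ge 4$ the stabilizer $\Mod(S)[\phi]$ is generated by \emph{all} Dehn twists on admissible non--separating curves---is strictly stronger than Salter's Theorem \ref{thm:Salter_generation}, which requires a specific $D_{2r+3}$--type configuration and arboreal filling subnetwork and is only proved (with the stated conclusion) for $g\ge g(r)$; closing the finite--index gap for even $r$ and pushing down to $g=4$ is the substance of what one would have to prove, and no mechanism for doing so is supplied. In short, the proposal correctly maps the terrain but crosses none of the chasms it identifies.
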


{\em Update:} Nick Salter and the author have proven this conjecture for all $r$ and all $g \ge 5$ \cite{CS_strata}.

\subsection{Context: higher spin structures}

While square roots of the canonical bundle $K_X$ over a Riemann surface $X$ (also known as theta characteristics or (classical) spin structures) have been studied since the times of Riemann, its higher roots are a relatively recent addition to the literature. 

The fundamental work of Sipe \cite{Sipe_roots} relates $r\ith$ roots of $K_X$ to the cohomology of the unit tangent bundle $T_0X$ (see \S \ref{sec:def_rspin}), and in that paper and in a sequel \cite{Sipe_quotients} she also describes the action of the mapping class group on the set of $r$--spin structures.
Later, Trapp recovered the same action in his construction of novel representations of the mapping class group acting on the homology of the unit tangent bundle \cite{Trapp_wn}.

Higher spin structures were recently utilized by Salter in the course of his investigations into the geometric monodromy groups of both families of smooth plane curves of fixed degree \cite{Salter_planecurves} and of families of smooth curves in a complete linear system on a smooth toric surface \cite{Salter_monodromy}. In the latter work, he also analyzes the $\Mod(S)$ stabilizer of a fixed $r$--spin structure and gives an explicit criterion for collections of Dehn twists to generate the subgroup (Theorem \ref{thm:Salter_generation}). We make extended use of this result in \S\ref{sec:gen_monodromy}.

Though it seems higher spin structures had been largely forgotten in the Teichm{\"u}ller theory literature until quite recently, they
are routine objects of inquiry
for complex algebraic geometers and topological string theorists. One need only perform a cursory web search to find a wealth of papers relating to moduli of Riemann surfaces with $r$--spin structures and compactifications thereof, e.g. \cite{Jarvis_geometry}, \cite{AbrJar_twistspin}, and \cite{CCC_rspin}. We mention in particular work of Polischuk on moduli of {\em effective} $r$--spin structures, that is, $r$--spin structures which admit holomorphic sections. The $r\ith$ power of one such section is an abelian differential, and in particular the moduli spaces of effective $r$--spin structures are in one--to--one correspondence with strata of abelian differentials \cite[Theorem 1.2]{Poli_rspin}.

Much of the recent activity regarding higher spin curves has focused on their role in a higher spin formulation of Witten's conjecture \cite{Witten_rspin}, which relates intersection theory on the moduli space of stable $r$--spin curves with integrable hierarchies. This conjecture was refined and subsequently proved in certain special cases in \cite{JKV_KdV} and in all generality in \cite{FSZ_rspinWitten}.

Intersection theory over the moduli space of stable curves is known to relate to both the Weil--Petersson volume of moduli space \cite{Mirz_Witten} and the Masur--Veech volume of the principal stratum of quadratic differentials (that is, the stratum with all simple zeros) \cite{DGZZ}. As Masur--Veech volumes are notoriously difficult to compute, it would be interesting to know if intersection theory over the moduli space of stable $r$--spin curves can be related to the volumes of non-principal strata in a similar fashion.

\subsection{Context: connected components}\label{sec:comp_rev}
As stated above, Kontsevich and Zorich classified the connected components of strata over the moduli space of holomorphic abelian differentials \cite{KZ_strata} by hyperellipticity and parity of spin structure. In the infinite--area case, Boissy \cite{Boissy_components} proved that each stratum of meromorphic abelian differentials over moduli space also has at most $3$ components (except when $g=1$), distinguished by the same invariants. Lanneau completed the classification of the connected components of strata of quadratic differentials over moduli space in \cite{Lanneau_strata1} and \cite{Lanneau_strata2}, with a slight correction by Chen and M{\"o}ller when $g=4$ \cite{ChenMoller_QDinlowg}.

Except for the last--named result, all of the above papers rely on a classification of the connected components of the {\em minimal stratum} $\HM(2g-2)$ (or for quadratic differentials, the stratum with a single zero of degree $4g-4$). By ``colliding zeros,'' one may degenerate any stratum to the minimal one, and therefore the number of connected components of a general stratum over moduli space is at most the number of connected components of the minimal stratum. Over Teichm{\"u}ller space, this approach fails miserably, for there are infinitely many components of the minimal stratum $\HT(2g-2)$ (Corollary \ref{coro:comp_hyp}).

In her thesis \cite{Walker_thesis} and in \cite{Walker_components}, Walker used winding numbers and $r\ith$ roots of the square of the cotangent bundle to investigate the connected components of the Teichm{\"u}ller space of quadratic differentials, recovering in some special cases results which are analogous to ours. The characterization of connected components appearing in our main theorem is inspired by her work, and our argument in \S\ref{sec:invariance} is a generalization of her lower bound for the the number of connected components of strata. However, her use of $r\ith$ roots to construct upper bounds uses the connectivity of certain configuration spaces, a technique which requires many zeros of the same multiplicity and thus is insufficient for most of our cases.

\subsection{Context: monodromy of strata}\label{sec:monodromyreview}

While the fundamental groups of strata have remained mysterious outside of the hyperelliptic components and low genera \cite{LM_strata}, their monodromy representations (in both mapping class and symplectic groups) have been studied by multiple authors.

Let $\Omega$ be a connected component of a stratum of abelian or quadratic differentials over moduli space. By marking the zeros of any representative differential in $\Omega$, one may obtain a geometric monodromy representation of $\pi_1^\text{orb}(\Omega)$ not only into the mapping class group, but into the punctured mapping class group. We denote the resulting subgroup of $\Mod(S_{g,n})$ by $\mathcal{G}^\circ(\Omega)$. This representation gives one more information about $\pi_1^\text{orb}(\Omega)$ (since $\mathcal{G}(\Omega)$ is the image of $\mathcal{G}^\circ(\Omega)$ under the forgetful map), but is less related to the components of the stratum of differentials over Teichm{\"u}ller space which cover $\Omega$. 

In addition to her work on connected components of strata over Teichm{\"u}ller space, Walker also considered the groups $\mathcal{G}^\circ(\Omega)$ when $\Omega$ is a stratum of quadratic differentials over moduli space \cite{Walker_thesis}, \cite{Walker_groups}. In some very special cases (in particular, when one has many simple zeros), she proved that this group is the kernel of a certain map and gave an explicit generating set.

During the writing of this paper, Hamenst{\"a}dt released a preprint in which she computes $\mathcal{G}^\circ(\Omega)$ when $\Omega$ is a stratum of abelian differentials \cite{Hamen_strata}. We note that while her main result gives a set of generators for $\mathcal{G}^\circ(\Omega)$ (and hence for $\mathcal{G}(\Omega)$), it does not immediately characterize $\mathcal{G}(\Omega)$ as a subgroup of $\Mod(S)$. In a later draft, by applying the work of Salter, she is able to recover some cases of our main theorems \cite[Theorem 3]{Hamen_strata}.

The geometric monodromy of a component of a stratum can be realized more concretely as a monodromy group by building the corresponding surface bundle. To that end, if $\Omega$ is a component of $\HM(\sing)$ and $\widetilde{\Omega}$ is a component of $\HT(\sing)$ lying over $\Omega$, define $\widetilde{\mathcal{X}}$ to be the bundle over $\widetilde{\Omega}$ whose fiber at a marked abelian differential $(X, f, \omega)$ is simply the Riemann surface $X$. This bundle is trivial over $\widetilde{\Omega}$, but quotienting out by the diagonal action of the mapping class group yields a nontrivial surface bundle $\mathcal{X} \rightarrow \Omega$ whose monodromy group (of a generic fiber) is exactly $\mathcal{G}(\Omega)$.

By replacing each Riemann surface with its homology, one can similarly define an $H_1(X; \RR)$ bundle over $\widetilde{\Omega}$, which descends to a bundle unfortunately also sometimes referred to in the literature as the Hodge bundle over $\Omega$.
\footnote{Observe that with this nomenclature, the Hodge bundle is a bundle over a subvariety of the Hodge bundle! Moreover, it is common in the literature to use the term ``Hodge bundle'' to refer to the $\Mod(S)$ quotients of a number of different real or complex, relative or absolute, homology or cohomology bundles over $\widetilde{\Omega}$ \cite[Remark 4]{Matheus_whatis}.}
We will eschew this terminology, and will instead simply denote this bundle by $H_1\Omega$.

The natural $\SL_2(\RR)$ action on $H_1\Omega$ gives rise to the {\em Kontsevich--Zorich cocycle}, the Lyapunov exponents of which have been studied extensively (see, e.g., \cite{AV_spectra}, \cite{Forni_KZ}). Associated to this cocycle is its algebraic hull, the smallest algebraic group containing (a conjugate of) the cocycle, which has been exploited to great effect by Filip \cite{Filip_monodromy} and Eskin--Filip--Wright \cite{EFW_HullKZcocycle}. Since the Zariski closure of the monodromy of $H_1\Omega$ necessarily contains the algebraic hull, constraints on the monodromy place constraints on the hull.

Filip proved Zariski density in $\Sp(2g, \RR)$ of the monodromy of $H_1\Omega$ \cite[Corollary 1.7]{Filip_monodromy}
\footnote{In fact, he proved an analogous statement for the monodromy group of any affine invariant submanifold.}
and the full computation of the monodromy groups of $H_1\Omega$ was completed by Guti{\'e}rrez-Romo \cite[Corollary 1.2]{GR_RVgroups}.

By the construction of the bundles $\mathcal{X}$ and $H_1\Omega$ above and our discussion of their monodromies, one can see that the monodromy of $H_1\Omega$ is exactly 
\[\psi( \mathcal{G}(\Omega)) \le \Sp(2g, \ZZ),\]
where $\psi$ is the natural symplectic representation of $\Mod(S)$ via its action on homology. Using this fact, we can use Theorem \ref{thm:monodromy} to give a topological proof of the result of Guti{\'e}rrez-Romo.

\begin{coro}[c.f. Corollary 1.2 in \cite{GR_RVgroups}]\label{coro:symp_monodromy}
Suppose that $\sing=(k_1, \ldots, k_n)$ is a partition of $2g-2$ such that $g \ge g(r)$, where
\[r=\gcd(k_1, \ldots, k_n) \notin \{g-1, 2g-2\}.\]
Let $\Omega$ be a connected component of $\HM(\sing)$.
\begin{enumerate}
\item If $r$ is odd, then the monodromy group of $H_1\Omega$ is the entire symplectic group $\Sp(2g, \ZZ)$.
\item If $r$ is even, then the monodromy group of $H_1\Omega$ is the stabilizer in $\Sp(2g, \ZZ)$ of a quadratic form $q$ associated to the spin structure on the chosen basepoint (see \S\ref{sec:Arf}).
\end{enumerate}
\end{coro}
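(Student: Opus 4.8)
The plan is to deduce Corollary \ref{coro:symp_monodromy} directly from Theorem \ref{thm:monodromy} by pushing forward along the symplectic representation $\psi \colon \Mod(S) \to \Sp(2g,\ZZ)$. As noted in the discussion preceding the statement, the monodromy group of $H_1\Omega$ is precisely $\psi(\mathcal{G}(\Omega))$, so everything reduces to computing the image under $\psi$ of the stabilizer of an $r$--spin structure (or a finite--index subgroup thereof).

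First I would treat the odd case. By Theorem \ref{thm:monodromy}(1), $\mathcal{G}(\Omega)$ is the full stabilizer $\SMod(S;\phi)$ of an $r$--spin structure $\phi$, so I must show $\psi(\SMod(S;\phi)) = \Sp(2g,\ZZ)$. The key point is that when $r$ is odd there is no obstruction in homology: an $r$--spin structure is a mod $r$ winding number function, and its ``mod $r$ reduction of a symplectic basis'' data does not survive as a quadratic refinement over $\ZZ/2$. Concretely, I would argue that the stabilizer surjects onto $\Sp(2g,\ZZ)$ because it already contains enough transvections — e.g., by exhibiting, for a generating set of transvections of $\Sp(2g,\ZZ)$, Dehn twists about curves on which $\phi$ vanishes (or has the appropriate value), which lie in $\SMod(S;\phi)$ and map to those transvections under $\psi$. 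Alternatively, and more cleanly, one invokes that $\SMod(S;\phi)$ has finite index in $\Mod(S)$ and contains the Torelli group $\Ig$ (since Torelli acts trivially on $H_1$ and hence on the set of $r$--spin structures, as that action factors through $\Sp(2g,\ZZ)$), whence $\psi(\SMod(S;\phi))$ is a finite--index subgroup of $\Sp(2g,\ZZ)$ containing $\psi(\Mod(S)) \cap (\text{stabilizer of } \phi\text{'s }\Sp\text{-data})$; for odd $r$ that data is trivial, so we recover all of $\Sp(2g,\ZZ)$.

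For the even case, Theorem \ref{thm:monodromy}(2) only gives that $\mathcal{G}(\Omega)$ is a \emph{finite--index} subgroup of the $r$--spin stabilizer, so I cannot argue purely by containments of finite--index subgroups — I need a lower bound as well. The even $r$ structure carries a classical spin structure by reduction mod $2$, equivalently an Arf function, equivalently a $\ZZ/2$ quadratic form $q$ refining the intersection pairing; this is $\psi$--equivariant, so $\psi(\mathcal{G}(\Omega))$ lands inside $\Sp(2g,\ZZ)[q]$, the stabilizer of $q$. For the reverse inclusion I would again use the Torelli group: $\Ig \le \mathcal{G}(\Omega)$ (Torelli fixes every $r$--spin structure, and a finite--index subgroup of the stabilizer still contains all of $\Ig$ provided one checks $\mathcal{G}(\Omega) \supseteq \Ig$ — which should follow from the winding--number description of $\mathcal{G}(\Omega)$ in \S\ref{sec:wn_monodromy}, since Torelli elements preserve all mod $r$ winding number functions). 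Then $\psi(\mathcal{G}(\Omega))$ contains $\psi(\Ig) \cdot (\text{anything in } \mathcal{G}(\Omega))$; since $\psi$ restricted to the $r$--spin stabilizer already surjects onto $\Sp(2g,\ZZ)[q]$ (a statement about the $\Mod(S)$--action on spin structures having image exactly the $q$--stabilizer, which is classical — the $\Sp(2g,\ZZ)$ orbit of $q$ is all forms of the same Arf invariant), and the finite--index subgroup $\mathcal{G}(\Omega)$ differs from the full stabilizer by a subgroup containing $\Ig = \ker \psi|_{\text{something}}$... here I must be careful: $\ker\psi = \Ig$ globally, so $\psi(\mathcal{G}(\Omega)) = \psi(\SMod(S;\phi))$ \emph{as soon as} $\mathcal{G}(\Omega)$ contains $\Ig$, because $\mathcal{G}(\Omega) \supseteq \Ig = \ker\psi$ forces $\psi(\mathcal{G}(\Omega)) = \psi(\mathcal{G}(\Omega) \cdot \Ig) \supseteq \psi(\SMod(S;\phi))$ once $\mathcal{G}(\Omega)$ and $\SMod(S;\phi)$ have the same image — which they do, being both saturated by $\ker\psi$ and $\mathcal{G}(\Omega) \le \SMod(S;\phi)$ of finite index meaning $\psi(\mathcal{G}(\Omega))$ and $\psi(\SMod(S;\phi))$ agree. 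So the even case also collapses to the classical computation $\psi(\SMod(S;\phi)) = \Sp(2g,\ZZ)[q]$.

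The main obstacle is the step I flagged: verifying that $\mathcal{G}(\Omega) \supseteq \Ig$, i.e.\ that the geometric monodromy group of any stratum component contains the Torelli group. This is what makes the whole argument go through, since it lets me replace the possibly--proper finite--index subgroup $\mathcal{G}(\Omega)$ by the full $r$--spin stabilizer at the level of symplectic images. I expect this to follow from the explicit generators of $\mathcal{G}(\Omega)$ produced earlier (Dehn twists, bounding pair maps, or the "Euclidean algorithm" moves referenced in the abstract), together with the fact that a finite--index normal--enough subgroup of $\Mod(S)$ containing enough bounding pair maps must contain all of $\Ig$; once that is in hand, the rest is the classical identification of the $\Mod(S)$--orbit of a spin structure with a coset space of $\Sp(2g,\ZZ)$ by the Arf--invariant--$q$ stabilizer, for which I would cite the standard references on spin structures and the symplectic representation.
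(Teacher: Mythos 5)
Your even--$r$ argument has a genuine logical gap in the key step. You claim that because $\mathcal{G}(\Omega)$ contains $\Ig = \ker\psi$, its image under $\psi$ must agree with the image of the ambient group $\Mod(S)[\phi]$. This is false as stated: if $\ker\psi \le G \le H$, then $\psi(G) \le \psi(H)$ with equality if and only if $G = H$; containing the kernel only tells you $G$ is \emph{saturated} (i.e.\ $G = \psi^{-1}(\psi(G))$), not what $\psi(G)$ actually is. For instance, $\psi^{-1}(N)$ for any proper finite--index $N \le \Sp(q)$ would be a finite--index subgroup of $\Mod(S)[\phi]$ containing $\Ig$ whose image is $N$, not $\Sp(q)$. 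So even granting the (unproved and nontrivial) claim $\Ig \le \mathcal{G}(\Omega)$, you do not get a lower bound on $\psi(\mathcal{G}(\Omega))$.

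What is missing is a concrete lower bound, and the paper has it from a different direction. The paper sandwiches $\Gamma(\sing,\spin) \le \mathcal{G}(\Omega) \le \Mod(S)[\phi]$ and invokes Lemma \ref{lem:Salter_sympl}, which directly computes the symplectic image of the explicit twist subgroup $\Gamma(\sing,\spin)$ generated by the cylinder--shear Dehn twists: $\psi(\Gamma(\sing,\spin)) = \Sp(2g,\ZZ)$ when $r$ is odd and $\psi(\Gamma(\sing,\spin)) = \Sp(q)$ when $r$ is even (after applying Theorem \ref{thm:Euclidean_algorithm} to reduce to the favorable curve system). Since $\Mod(S)[\phi]$ preserves $q = q_{\phi^{\otimes r/2}}$ and therefore maps into $\Sp(q)$, both ends of the sandwich have symplectic image $\Sp(q)$, which forces $\psi(\mathcal{G}(\Omega)) = \Sp(q)$. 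Your odd--case sketch (exhibiting transvections) is heading in the right direction, but the paper outsources that too to Lemma \ref{lem:Salter_sympl}. In short: the load--bearing step is a quantitative result about the Dehn twist subgroup, not a Torelli containment argument, and you need to cite or reproduce something like Lemma \ref{lem:Salter_sympl} to close the proof.
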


We note that Guti{\'e}rrez-Romo's result (combined with work of Avila, Matheus, and Yoccoz for the hyperelliptic case \cite{AMY_hyperRV}) also recovers the cases when $r \in \{2g-2, g-1\}$ (and for low genera). 
In addition, the original impetus for both \cite{GR_RVgroups} and \cite{AMY_hyperRV} was the computation not of monodromy representations, but rather the {\em Rauzy--Veech groups} of strata, which relate to a discrete version of the Kontsevich--Zorich cocycle and the combinatorial dynamics of the Teichm{\"u}ller geodesic flow.

It would be interesting to know how much of the geometric monodromy group can be recovered from the Teichm{\"u}ller  geodesic flow, perhaps via modular Rauzy--Veech groups (see \cite[Definition 2.3]{GR_RVgroups}).

\subsection{Outline of the paper}

In Section \ref{sec:background}, we recall some necessary background about abelian differentials and strata. We also use this section to collect results about the topology of the hyperelliptic connected components of $\HM(\sing)$ (Theorem \ref{thm:hypstrata_pi1}) and hyperelliptic mapping class groups (Theorem \ref{thm:BH1}). 
The latter theorem plays an important role in the calculations appearing in Appendix \ref{app:curves}.
Combining Theorems \ref{thm:hypstrata_pi1} and \ref{thm:BH1}, we derive the classification of hyperelliptic connected components of $\HT(\sing)$ (Corollary \ref{coro:comp_hyp}). 

In order to parametrize the non--hyperelliptic connected components of $\HT(\sing)$ by their induced $r$--spin structure, in \S\ref{sec:rspin} we recall Sipe's characterization of $r$--spin structures as elements of the cohomology of the unit tangent bundle and the action of the mapping class group on the set of these structures. We also record Salter's criterion (Theorem \ref{thm:Salter_generation}) for generating an {\em $r$--spin mapping class group} $\Mod(S)[\phi]$, the stabilizer of an $r$--spin structure $\phi$ under the mapping class group action. In particular, viewing $r$--spin structures as topological, instead of algebro-geometric, objects will allow us to compare $r$--spin structures on different (marked) Riemann surfaces.

Section \ref{sec:diffspin} contains one final interpretation of $r$--spin structures as mod $r$ winding numbers (Proposition \ref{prop:wn_is_rspin}) and uses this characterization to relate them to the flat geometry of surfaces in $\HT(\sing)$. From this equivalence, it is easy to show that the $r$--spin structures on any two marked differentials in a component of $\HT(\sing)$ must be topologically equivalent (Proposition \ref{prop:wn_const_on_comp}). In particular, this demonstrates that there exist at least as many components of $\HT(\sing)$ as there are (topological equivalence classes of) $r$--spin structures (Theorem \ref{thm:lower_bd}).

The invariance of the $r$--spin structure therefore implies that the geometric monodromy group $\mathcal{G}(\Omega)$ of any connected component $\Omega$ of $\HM(\sing)$ must lie inside some $r$--spin mapping class group $\Mod(S)[\phi]$ (\S\ref{sec:wn_monodromy}). The remainder of the paper consists of using the action of the mapping class group on simple closed curves to show that $\mathcal{G}(\Omega)$ is all of $\Mod(S)[\phi]$ (or when $r$ is even, is of finite index).

In \S\ref{sec:prototypes}, we fix a system of curves $\mathsf{C}$ of combinatorial type compatible with $\Omega$ and use a standard construction to build an explicit (marked) abelian differential in $\Omega$ (Proposition \ref{prop:prototypes}). The core curves of the horizontal and vertical cylinders on this differential are exactly the curves of $\mathsf{C}$, and so by shearing these cylinders (see \S\ref{sec:gen_monodromy}) we are able to realize a subgroup $\Gamma(\mathsf{C}) \le \mathcal{G}(\Omega)$ generated by all of the Dehn twists in the curves of $\mathsf{C}$.

In some special cases, the collection of curves $\mathsf{C}$ is large enough that we are able to immediately apply Salter's theorem. In the case when $r$ is odd, the theorem says that $\Gamma(\mathsf{C}) =\Mod(S)[\phi]$, so we have that
\[\Gamma(\mathsf{C}) =\Mod(S)[\phi] \le \mathcal{G}(\Omega) \le \Mod(S)[\phi] \]
and in particular $\mathcal{G}(\Omega)=\Mod(S)[\phi]$. If $r$ is even, the theorem says that $\Gamma(\mathsf{C})$ is a finite index subgroup of $\Mod(S)[\phi]$, hence $\mathcal{G}(\Omega)$ must be as well, finishing the proof of Theorems \ref{thm:components} and \ref{thm:monodromy}.

However, for many strata the curves of $\mathsf{C}$ do not fulfill the hypotheses of Salter's theorem. To deal with the remaining possibilities, we show in Theorem \ref{thm:Euclidean_algorithm} that by we can ``complete'' the curve system $\mathsf{C}$ to the maximal one allowed by $r$, that is, to the curve system $\mathsf{C}'$ corresponding to the partition $(r, r, \ldots, r)$ of $2g-2$. 
More precisely, we show that $\Gamma(\mathsf{C}) = \Gamma(\mathsf{C}')$.

One of the most novel contributions of this work is the demonstration of the above equality. In order to prove it, we model the operations of standard arithmetic by Dehn twists on certain simple closed curves of $\mathsf{C}$ (see Appendix \ref{app:curves}) and then iteratively apply the Euclidean algorithm to reduce the partition $(k_1, \ldots, k_n)$ to the partition $(r, \ldots, r)$. From this procedure it follows that any Dehn twist in a curve of $\mathsf{C}'$ can be expressed as a product of Dehn twists in the curves of $\mathsf{C}$.

The completed curve system $\mathsf{C}'$ is then large enough to apply Salter's theorem, so we can conclude that
\[\Gamma(\mathsf{C}) = \Gamma(\mathsf{C}')=\Mod(S)[\phi]\]
when $r$ is odd, and is a finite index subgroup of $\Mod(S)[\phi]$ when $r$ is even,
finishing the proof of Theorems \ref{thm:components} and \ref{thm:monodromy}.

We conclude in \S\ref{sec:onwards} by outlining some natural questions that arise in the course of the proof, as well as possible directions for further research.

\subsection{Acknowledgements}

The author is grateful to his advisor, Yair Minsky, for encouraging him to pursue this question and for his continued support and guidance, as well as for comments on earlier drafts of this paper.

Parts of this work were completed at the summer school ``Teichm{\"u}ller dynamics, mapping class groups and applications'' at the Institut Fourier in June 2018 and at the ``Workshop on the dynamics and moduli of translation surfaces'' at the Fields Institute in October 2018. The author is indebted to these venues for their hospitality and the other participants for stimulating discussions. He would particularly like to thank Rodolfo Gutierr{\'e}z-Romo and Sam Grushevsky for helpful conversations about this work.

The author would also like to thank Ursula Hamenst{\"a}dt for helpful comments, as well as the anonymous referee for comments which improved the readability of this paper.

The author was supported in part by NSF grant [DGE-1122492].

\section{Preliminaries}\label{sec:background}

Before proceeding with the proof we will recall some foundational information, which also serves the purpose of allowing us to establish our notation. All of this material can be found in greater detail in the flat surfaces literature, see e.g. \cite{Wright_Survey1}, \cite{Zorich_Survey}. In \S\ref{sec:hyperelliptic}, we record the relationship between hyperelliptic abelian differentials and hyperelliptic mapping class groups, and use this to show that there are infinitely many hyperelliptic components of $\HT(2g-2)$ and $\HT(g-1, g-1)$ (Corollary \ref{coro:comp_hyp}).

Let $S=S_{g,n}$ denote a (smooth, orientable) surface of genus $g$ with $n$ marked points. The moduli space $\M_{g,n}$ of $S$ is the space of complex (equivalently, conformal or hyperbolic) structures on $S$. The moduli space is generally not a manifold but an orbifold, whose orbifold universal cover is the Teichm{\"u}ller space $\T_g$ of (equivalence classes of) marked Riemann surfaces. A point in $\T_g$ is an (equivalence class of) pairs $(X, f)$, where $X$ is a Riemann surface and $f: S \rightarrow X$ is a diffeomorphism (a marking), and where two marked Riemann surfaces $(X, f)$ and $(Y, h)$ are equivalent if the map 
$h \circ f^{-1}: X \rightarrow Y$
is isotopic to a biholomorphism.

The mapping class group $\Mod(S)$ may be defined as
\[\Mod(S)=\pi_0 \left( \text{Diff}^+(S)\right),\]
where $\text{Diff}^+(S)$ is the space of orientation--preserving diffeomorphisms of $S$. If $S$ has punctures and/or boundary components, we allow mapping classes to permute the punctures but insist that they fix the boundary pointwise.

The mapping class group acts on Teichm{\"u}ller space by precomposition (by inverses) with the marking, so that for any $g \in \Mod(S)$, 
\[g \cdot (X, f) = (X, f g^{-1}).\]
A specific family of mapping classes that we will use frequently are {\em Dehn twists}: given any simple closed curve $c$ on $S$, the (left--handed) Dehn twist $T(c)$ in $c$ is realized by cutting the surface along $c$ and regluing the resulting boundary components with a full leftward twist. It is a standard fact that $\Mod(S)$ is generated by a finite collection of Dehn twists.

For the rest of the paper, except when otherwise stated, all surfaces will be closed and without boundary.

A {\em holomorphic abelian differential} $\omega$ on a Riemann surface $X$ is a holomorphic $1$--form, equivalently, a holomorphic section of $K_X$, while a {\em quadratic differential} is a section 
\[q: X \rightarrow K_X^{\otimes 2}.\]
For the rest of the paper, we will assume that all abelian differentials are holomorphic and all quadratic differentials are meromorphic with at worst simple poles.

Around every point of $X$, an abelian (quadratic) differential defines canonical coordinates in which the differential takes the form $z^k dz$ for some $k \ge 0$ (respectively, $z^k dz^2$ for $k \ge -1$). By pulling back the flat metric on $\CC$ along these coordinates, both abelian and quadratic differentials induce flat cone metrics on $X$ with cone angles of $2(k+1)\pi$ at each point (respectively, $(k+2)\pi$). A {\em cylinder} on a flat surface $(X, \omega)$ or $(X, q)$ is an embedded flat cylinder which does not contain any singularities in its interior.

The space of all pairs $(X, \omega)$ where $X$ is a Riemann surface and $\omega$ is a holomorphic abelian differential is naturally a vector bundle over $\M_g$, called the {\em Hodge bundle} $\HM_g$. For a given partition $\sing = (k_1, \ldots, k_n)$ of $2g-2$ by positive integers, we denote the stratum of $\HM_g$ of differentials with exactly $n$ zeros of orders $k_1, \ldots, k_n$ by $\HM(\sing)$. Similarly, there is a Hodge bundle $\HT_g$ over the Teichm{\"u}ller space $\T_g$ and we denote its strata by $\HT(\sing)$. Points in $\HT_g$ correspond to triples $(X, f, \omega)$ where $X$ is a Riemann surface, $f: S \rightarrow X$ is a marking, and $\omega$ is a holomorphic abelian differential on $X$.

Let $(X, f, \omega) \in \HT(\sing)$ and fix a basis $\{\gamma_1, \ldots, \gamma_d\}$ for the homology of $X$ relative to the zeros of $\omega$. One can transport each $\gamma_i$ to nearby $(X',f', \omega')$ in $\HT(\sing)$, yielding {\em period coordinates} on the stratum, local coordinates given by
\[ \begin{array}{ccc}
\HT(\sing) & \rightarrow & \CC^{d} \\
(X, \omega) & \mapsto & \displaystyle \left( \int_{\gamma_1} \omega , \ldots, \int_{\gamma_d} \omega \right).
\end{array}\]
which demonstrate $\HT(\sing)$ as a complex manifold of dimension $d = 2g+n-1$. Quotienting out by the $\Mod(S)$ action, these coordinates descend to coordinates on $\HM(\sing)$, which is a (possibly disconnected) complex orbifold of the same dimension.

The orbifold nature of $\HM(\sing)$ can be observed at differentials $(X, \omega)$ which have extra symmetries (since they project to orbifold points in $\M_g$).
A prominent example occurs when a differential is {\em hyperelliptic}, that is, preserved under some involution of $X$ which acts by $-1$ on homology.
In this case, $\omega$ is obtained by pulling back a (necessarily meromorphic) quadratic differential $q$ on the Riemann sphere along a (branched) covering map whose branch locus is contained in the singularities of $q$.

A stratum $\HM(\sing)$ is not necessarily connected, but the work of Kontsevich and Zorich classifies its connected components. Before we state their theorem, we must record one more definition.

 Suppose that $(X, \omega) \in \HM(\sing)$; then $\omega$ defines a divisor 
\[(\omega)=\sum_{i=1}^n k_i p_i\]
on $X$, where $p_i \in X$ is the point at which $\omega$ has a zero of order $k_i$. When all $k_i$ are even, the divisor $(\omega)/2$ is equivalent to a section of some line bundle $\LL$ such that 
$\LL^{\otimes 2} = K_X$.

\begin{defn}\label{def:spinstr}
Suppose that $(X, \omega) \in \HM(\sing)$, where $\gcd(\sing)$ is even. The line bundle $\LL$ defined above is called the {\em spin structure} associated to $(X, \omega)$.

The {\em parity} of $\LL$ is $h^0(X, \LL) \pmod 2$, the dimension mod 2 of the space of holomorphic sections of $\LL \rightarrow X$.
\end{defn}

\begin{thm}[Theorem 1 of \cite{KZ_strata}]\label{thm:KZ_class}
If $g \ge 4$, then any stratum of abelian differentials over moduli space has at most three connected components:
\begin{itemize}
\item If $\sing = (2g-2)$ or $(g-1, g-1)$ then there is one component $\HM(\sing)^{\textnormal{hyp}}$ consisting entirely of hyperelliptic differentials.
\item If $\gcd(\sing)$ is even then there are two non-hyperelliptic components of $\HM(\sing)$, distinguished by the parity of their induced spin structure.
\item If $\gcd(\sing)$ is odd, there is one non-hyperelliptic component of $\HM(\sing)$.
\end{itemize}
\end{thm}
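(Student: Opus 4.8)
The statement is the Kontsevich--Zorich classification theorem, so the plan is really to recall the architecture of their proof. The first task is to isolate the deformation invariants. The parity $h^0(X,\LL)\bmod 2$ of the spin structure is locally constant on $\HM(\sing)$ whenever $\gcd(\sing)$ is even: this is the classical fact (Mumford, Atiyah) that the dimension mod $2$ of the space of sections of a theta characteristic cannot jump under deformation, combined with the observation that $\LL$ varies holomorphically with $(X,\omega)$ in period coordinates, so the parity is constant on each component. Separately, hyperellipticity is a closed condition, and for $\sing=(2g-2)$ or $(g-1,g-1)$ the hyperelliptic differentials form a single component: they are exactly the pullbacks of quadratic differentials on $\PP^1$ having a single high--order zero and $2g+1$ (resp. $2g+2$) simple poles, the moduli of such configurations is connected (a quotient of a configuration space of points on the sphere), and one checks that a hyperelliptic differential cannot be deformed within the stratum to a non-hyperelliptic one.

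The substance of the theorem is the converse: two differentials in $\HM(\sing)$ with the same hyperellipticity type and (when $\gcd(\sing)$ is even) the same spin parity lie in a common component. Here I would follow the Kontsevich--Zorich reduction to the minimal stratum. Any stratum $\HM(\sing)$ can be connected, by ``colliding'' adjacent conical singularities along short saddle connections, to $\HM(2g-2)$; conversely a zero of order $k_1+k_2$ can be split into zeros of orders $k_1$ and $k_2$ by a local surgery whose parameter space is connected. Tracking how these surgeries interact with the spin parity and with the hyperelliptic locus reduces the problem to the base case: $\HM(2g-2)$ has exactly the claimed components.

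For that base case I would deploy the surgery toolkit on translation surfaces --- in particular ``bubbling a handle'' and breaking up or moving a zero --- to bring an arbitrary differential in $\HM(2g-2)$ into a normal form assembled from an explicit family of polygons, and then verify that any two normal forms of the same parity and hyperellipticity type are joined by an explicit path in period coordinates. I expect this to be the genuinely hard and lengthy step: it is where one must carefully bookkeep how handles and zeros can be rearranged, and where both the hyperelliptic/non-hyperelliptic dichotomy and the two parities actually emerge as the only obstructions. The hypothesis $g\ge 4$ enters precisely here, to guarantee enough room for these moves; in low genus there are sporadic coincidences. For the purposes of the present paper, however, I would simply invoke \cite{KZ_strata} and use the statement as a black box.
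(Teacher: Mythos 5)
The paper does not prove this theorem: it is stated verbatim as a citation of Theorem~1 of \cite{KZ_strata}, and the only fragment of the argument that appears is the sketch of Theorem~\ref{thm:hypstrata_pi1}, which identifies the hyperelliptic loci $\HM(2g-2)^{\hyp}$ and $\HM(g-1,g-1)^{\hyp}$ with quotients of configuration spaces on the sphere --- precisely the content of your second paragraph. The paper also records in \S\ref{sec:comp_rev} the collision-of-zeros reduction to the minimal stratum that you describe. Your outline is a fair high-level summary of the Kontsevich--Zorich proof (deformation invariance of the Arf parity after Atiyah--Mumford, isolation of the hyperelliptic components, reduction to $\HM(2g-2)$ by collision, and surgeries such as ``bubbling a handle'' for the base case), and you correctly observe at the end that for the purposes of this paper the statement is simply invoked as a black box, which is exactly what the author does.
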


For uniformity of notation, we will always write $\HM(\sing)^{\spin}$ to denote a component of $\HM(\sing)$ with specified parity of spin structure, even when $\gcd(\sing)$ is odd. In that case, the $\spin$ term will be understood to be vestigial, as such abelian differentials do not determine ($2$--)spin structures. Similar naming conventions will be adopted throughout the paper.

\subsection{Hyperelliptic components and Birman--Hilden theory}\label{sec:hyperelliptic}

In the case when $\Omega$ is the hyperelliptic component of either $\HM(2g-2)$ or $\HM(g-1, g-1)$, its topology is much more tractable. In Theorem \ref{thm:hypstrata_pi1}, we record the topological types of these strata as quotients of configuration spaces.

We then discuss the theory of Birman and Hilden relating hyperelliptic mapping class groups to braid groups (Theorem \ref{thm:BH1}) and explain how to use this theory to classify the hyperelliptic connected components of $\HT(\sing)$ (Corollary \ref{coro:comp_hyp}). While Corollary \ref{coro:comp_hyp} is a consequence of existing statements in the literature and is certainly known to experts, we include a proof of it for completeness and to put our results into context.

Recall that a hyperelliptic differential $(X, \omega) \in \Omega$ is obtained by pulling back an integrable quadratic differential $q$ on $\widehat{\CC}$ via a branched cover $X \rightarrow \widehat{\CC}$. 

\begin{thm}[Folklore, see \cite{LM_strata}]\label{thm:hypstrata_pi1}
The strata $\HM(2g-2)^{\hyp}$ and $\HM(g-1, g-1)^{\hyp}$ are isomorphic to quotients of configuration spaces of points on the Riemann sphere by the action of the group of $(2g+1)^\text{st}$, respectively $(2g+2)^\text{nd}$, roots of unity.

In particular, this implies that $\HM(2g-2)^{\hyp}$ and $\HM(g-1, g-1)^{\hyp}$ are orbifold classifying spaces for finite extensions of the corresponding braid groups.
\end{thm}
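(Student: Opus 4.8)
The plan is to make both strata completely explicit as families of hyperelliptic curves, via the standard dictionary between $\iota$-anti-invariant abelian differentials and quadratic differentials on the quotient sphere, and then to recognize the resulting parameter space as a quotient of a configuration space. First I would record the basic mechanism: if $(X,\omega)$ is hyperelliptic with involution $\iota$, then $\iota^*\omega=-\omega$ (because $\iota$ acts by $-1$ on $H^1(X)$ and preserves the Hodge decomposition), so the zero divisor of $\omega$ is $\iota$-invariant and $q:=\omega^{\otimes 2}$ descends to a meromorphic quadratic differential on $Y:=X/\iota\cong\widehat{\CC}$ whose zeros and (simple) poles are supported on the branch locus together with the images of the zeros of $\omega$.

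Next I would identify precisely which $(X,\omega)$ arise. For $\HM(2g-2)^{\hyp}$ the single zero of $\omega$, being a divisor of multiplicity $2g-2$ that is $\iota$-invariant, must lie at a Weierstrass point; writing $X=\{y^2=P(x)\}$ with $P$ monic and squarefree of degree $2g+1$ and placing that Weierstrass point over $x=\infty$, one checks that $dx/y$ is, up to scale, the unique anti-invariant holomorphic form vanishing only there, so $\omega=\lambda\,dx/y$ with $\lambda\in\CC^\times$ (and $q$ then has one zero of order $2g-3$ and $2g+1$ simple poles). For $\HM(g-1,g-1)^{\hyp}$ one checks (e.g.\ by comparing dimensions, using Theorem~\ref{thm:KZ_class}) that the hyperelliptic component is the one on which $\iota$ interchanges the two zeros; their common image is then an unramified point of $Y$, and writing $X=\{y^2=P(x)\}$ with $\deg P=2g+2$ and that point over $x=\infty$ gives again $\omega=\lambda\,dx/y$, with $q$ having a single zero of order $2g-2$ and $2g+2$ simple poles. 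In both cases $(X,\omega)$ is reconstructed from the pair (monic squarefree $P$, scalar $\lambda$).

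Then I would divide out the remaining freedom. The only indeterminacy in the model is the choice of identification $Y\cong\widehat{\CC}$ sending the distinguished point to $\infty$, i.e.\ the affine group $\{x\mapsto ax+b\}$, together with the hyperelliptic involution (which merely sends $\lambda\mapsto-\lambda$). Using translations to center the roots of $P$ and then reading off from the transformation law of $dx/y$ under a homothety $x\mapsto ax$ how $\lambda$ rescales, the residual $\CC^\times$ of homotheties can be used to normalize $\lambda$; what survives is exactly the finite cyclic group of roots of unity in the statement (of order $2g+1$, resp.\ $2g+2$), acting on the configuration of roots of $P$ by homothety. This presents $\HM(2g-2)^{\hyp}$, resp.\ $\HM(g-1,g-1)^{\hyp}$, as the quotient orbifold of $\mathrm{Conf}_{2g+1}(\CC)/S_{2g+1}$, resp.\ $\mathrm{Conf}_{2g+2}(\CC)/S_{2g+2}$, by the action of $\mu_{2g+1}$, resp.\ $\mu_{2g+2}$, where $\CC=\widehat{\CC}\setminus\{\infty\}$; one also verifies that this matches the orbifold structure on the stratum, i.e.\ that the stabilizer of a configuration is the automorphism group of the corresponding differential.

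Finally, for the classifying-space statement: $\mathrm{Conf}_m(\CC)$ is aspherical by the Fadell--Neuwirth fibrations, $\pi_1(\mathrm{Conf}_m(\CC)/S_m)=B_m$ is the braid group on $m$ strands, and the quotient by the finite group $\mu_m$ is an aspherical orbifold whose orbifold fundamental group fits in a short exact sequence $1\to B_m\to\pi_1^{\mathrm{orb}}\to\mu_m\to1$; thus the stratum is an orbifold $K(\pi,1)$ for a finite extension of the braid group on $2g+1$, resp.\ $2g+2$, strands. I expect the genuine work to be the identification step — establishing the Weierstrass-point / zero-swapping dichotomy, checking that $dx/y$ is the unique anti-invariant form with the required vanishing, and, above all, tracking how $\lambda$ transforms under the residual homotheties so as to pin down the exact order of the cyclic group — whereas the asphericity and orbifold $K(\pi,1)$ conclusions are then routine.
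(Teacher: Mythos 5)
Your proposal follows essentially the same route as the paper's sketch: both pass to the $\iota$-invariant quadratic differential $q=\omega^{\otimes 2}$ on $\widehat{\CC}$, determine the zero/pole structure of $q$ ($2g-3$ plus $2g+1$ simple poles in the one case, $2g-2$ plus $2g+2$ simple poles in the other) via the Weierstrass-point/zero-swap dichotomy and Poincar\'e--Hopf (equivalently your Gauss--Bonnet count), and then take the configuration of simple poles as moduli with a residual finite cyclic scaling action. What you add beyond the paper is the explicit plane model $y^2=P(x)$, $\omega=\lambda\,dx/y$, the Fadell--Neuwirth asphericity argument, and, importantly, an honest acknowledgment that the ``genuine work'' is the normalization that pins down the order of the cyclic group. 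The paper's proof is also only a sketch and does not carry out that normalization either, so at the level of rigor of the paper you have matched it.

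One caveat worth flagging: you assert without computation that normalizing $\lambda$ leaves exactly $\mu_{2g+1}$ (resp.\ $\mu_{2g+2}$). If you actually track the rescaling of $dx/y$ under $x\mapsto ax$ in your model (with $P$ monic, roots centered), you find $\lambda\mapsto\lambda a^{(2g-1)/2}$ for $\deg P=2g+1$ and $\lambda\mapsto\lambda a^{-g}$ for $\deg P=2g+2$, which naively yields a residual $\mu_{2g-1}$ (resp.\ $\mu_{2g}$, after accounting for $\iota$), not $\mu_{2g+1}$ (resp.\ $\mu_{2g+2}$). So the step you identify as ``the genuine work'' is indeed where care is needed; if you carry it out you should either confirm the stated order or discover that the model-dependent bookkeeping (choice of normalization, whether $\iota$ is counted, whether the zero of $q$ is included in the configuration) affects the answer. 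This does not affect the ``finite extension of a braid group'' conclusion, which is all the paper actually uses downstream, but it is the one place your outline could go wrong if executed literally.
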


We outline the geometric intuition of this theorem below, and direct the curious reader to \cite[\S1.4]{LM_strata} as well as \cite[\S4.2]{AMY_hyperRV} for a dynamical perspective.

\begin{proof}[Sketch of Proof]
Let $(X, \omega)$ be a hyperelliptic abelian differential, coming from a quadratic differential $q$ on $\widehat{\CC}$. Because $(X, \omega)$ is completely determined by the zeros of $q$, one may take the configuration of the singularities of $q$ as moduli for the space of hyperelliptic differentials in $\Omega$. We note that this can only be done locally: there is an action of the multiplicative group $\CC^\times$ on this configuration space, and its respective action on the universal hyperelliptic curve over the configuration space has nontrivial kernel. In particular, we note that the action of $-1 \in \CC^\times$ is the hyperelliptic involution.

To see how many singularities $q$ has, we consider the action of the hyperelliptic involution $\iota$. Suppose first that $\Omega = \HM(2g-2)^{\hyp}$; then since the zero of $\omega$ must necessarily be fixed under $\iota$, $q$ must have a zero of order $2g-3$. Therefore, by the Poincar{\'e}--Hopf theorem, it also has $2g+1$ simple poles.

Similarly, if $\Omega = \HM(g-1, g-1)^{\hyp}$ then since $\iota$ necessarily interchanges the two zeros,
\footnote{This follows because the underlying surface is isomorphic to a plane curve of the form $w^2 = \prod_{i=1}^{2g-2} (z - z_i)$ and the hyperelliptic involution interchanges the two points at infinity.} 
the differentials in $\Omega$ are obtained from a quadratic differential $q$ on $\widehat{\CC}$ with a single zero of order $2g-2$ and $2g+2$ simple poles.
\end{proof}

In order to relate this result to the geometric monodromy groups of the strata, we will appeal to the work of Birman and Hilden on symmetric mapping class groups.

\begin{defn}
If $\iota$ is some hyperelliptic involution of a surface $S$ (for the moment, closed), then the {\em symmetric mapping class group} $\SMod(S)$ (with respect to $\iota$) is the centralizer of $\iota$ in $\Mod(S)$.
\end{defn}

The theory of Birman and Hilden (developed over a series of papers in the 1970s, see the survey \cite{MargWin_BHSurvey} or \cite[\S9.4]{FarbMarg}) relates $\SMod(S)$ to the mapping class group of the quotient $S / \iota$.

By the Riemann--Hurwitz formula, the quotient $\Sigma = S / \iota$ is a sphere with $2g+2$ branch points, and so its mapping class group $\Mod(\Sigma)$ is just the mapping class group of a $(2g+2)$--times punctured sphere, which is a $\Ztwo$ quotient of the spherical braid group on $2g+2$ strands \cite[pg. 245]{FarbMarg}. Suppose that $\alpha$ is an arc on $\Sigma$ connecting branch points $b_1$ and $b_2$; then the half--twist $H_\alpha$ on $\alpha$ interchanges $b_1$ and $b_2$ by a clockwise twist in a neighborhood of $\alpha$. If $c$ is a curve on $S$ whose quotient is $\alpha$, one may observe that $H_\alpha$ lifts to the Dehn twist on $c$. See Figure \ref{fig:twist_lift}.

\begin{figure}[ht]
\centering
\includegraphics[scale=.8]{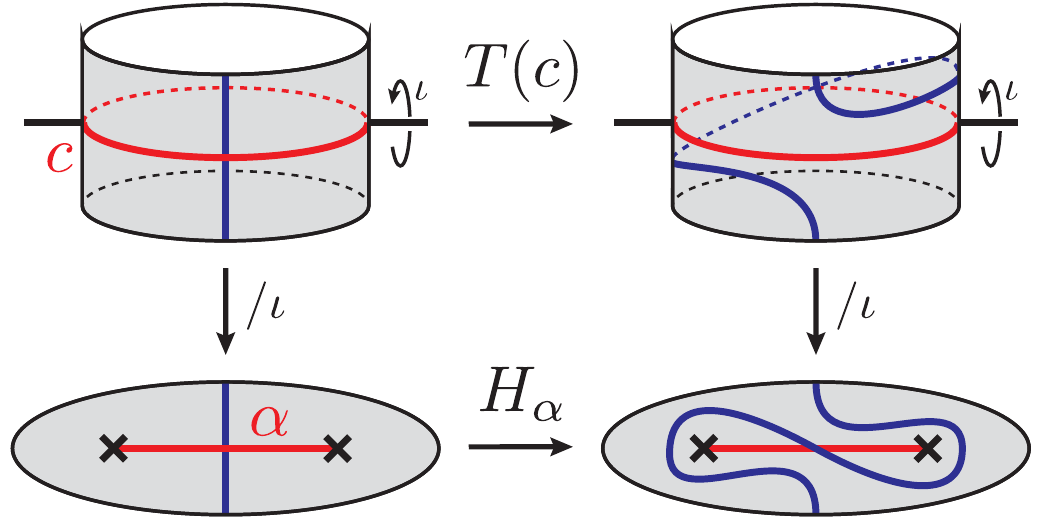}
\caption{Lifting a half--twist $H_\alpha$ to a Dehn twist $T(c)$.}
\label{fig:twist_lift}
\end{figure}

In this case, the Birman--Hilden theory states that

\begin{thm}[Birman--Hilden]\label{thm:BH1}
Let $\iota$ be a hyperelliptic involution of a closed surface $S$ and $\Sigma = S / \iota$. Then
\[\SMod(S) / \langle \iota \rangle \cong \Mod(\Sigma).\]
\end{thm}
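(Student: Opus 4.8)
The plan is to build the isomorphism by \emph{descending} symmetric diffeomorphisms to the quotient orbifold, isolating the single nontrivial ingredient as the classical Birman--Hilden fact that a symmetric diffeomorphism which is isotopic to the identity is isotopic to the identity \emph{through symmetric diffeomorphisms}. Throughout, write $\mathrm{SDiff}^+(S)$ for the group of orientation--preserving diffeomorphisms of $S$ commuting with $\iota$, let $B \subset \Sigma$ be the set of $2g+2$ branch points (the images of the Weierstrass points of $\iota$, counted via Riemann--Hurwitz), and set $\overline{\SMod}(S) := \pi_0(\mathrm{SDiff}^+(S))$ --- the group of symmetric mapping classes up to \emph{symmetric} isotopy. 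Since $\SMod(S)$ is by definition the centralizer of $\iota$, the element $\iota$ is central in it (and in $\overline{\SMod}(S)$) and has order two for $g \ge 2$, so $\langle\iota\rangle \cong \Ztwo$ is normal and all quotients below are legitimate. A diffeomorphism $\phi \in \mathrm{SDiff}^+(S)$ permutes $\iota$--orbits and so descends to a homeomorphism $\bar\phi$ of $\Sigma$ preserving $B$; this assignment is compatible with composition and with symmetric isotopies, giving a homomorphism $\Theta \colon \overline{\SMod}(S) \to \Mod(\Sigma)$ with $\iota \mapsto \mathrm{id}_\Sigma$, so $\langle\iota\rangle \subseteq \ker\Theta$.

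Next I would show that $\Theta$ induces an isomorphism $\overline{\SMod}(S)/\langle\iota\rangle \cong \Mod(\Sigma)$ by pure covering space theory. Over $\Sigma^\circ := \Sigma \setminus B$ the branched cover restricts to an honest double cover $S^\circ \to \Sigma^\circ$, classified by the homomorphism $\epsilon \colon \pi_1(\Sigma^\circ) \to \Ztwo$ sending every small loop around a point of $B$ to $1$ (consistent because $|B| = 2g+2$ is even). Any homeomorphism of $(\Sigma, B)$ permutes these puncture loops and hence preserves $\epsilon$, so it lifts to $S^\circ$, and the lift extends over the Weierstrass points --- uniquely up to post--composition with $\iota$ --- to an element of $\mathrm{SDiff}^+(S)$; this gives surjectivity of $\Theta$. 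For the kernel: if $\bar\phi$ is trivial in $\Mod(\Sigma)$ then it fixes $B$ pointwise and is isotopic to $\mathrm{id}_\Sigma$ rel $B$, and lifting such an isotopy through the cover (and extending over the branch points) exhibits $\phi$ as symmetrically isotopic to a deck transformation, i.e. to $\mathrm{id}_S$ or to $\iota$. Hence $\ker\Theta = \langle\iota\rangle$.

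The remaining --- and genuinely hard --- step is to identify $\overline{\SMod}(S)$ with $\SMod(S)$, i.e. to prove that the forgetful map $\overline{\SMod}(S) \to \SMod(S)$ is injective: a diffeomorphism commuting with $\iota$ that is isotopic to $\mathrm{id}_S$ must already be \emph{symmetrically} isotopic to $\mathrm{id}_S$. This is exactly the Birman--Hilden theorem for the hyperelliptic cover, and it is the main obstacle. I would attack it with an $\iota$--equivariant version of the Alexander method: fix a collection of simple closed curves on $\Sigma$, say the edges of a triangulation with vertex set $B$, whose preimages form an $\iota$--invariant filling system on $S$; given a symmetric $\phi \simeq \mathrm{id}_S$, the descended map $\bar\phi$ is isotopic to the identity and hence fixes each of these curves up to isotopy, forcing $\phi$ to fix each lifted curve up to isotopy, and then the Alexander method on $S$ --- carried out equivariantly --- pins $\phi$ down to a deck transformation up to symmetric isotopy. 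The delicate point is precisely the upgrade of a non--equivariant isotopy to an equivariant one; alternatively one may realize $\iota$ as an isometry of a hyperbolic metric on $S$ and run the corresponding equivariant isotopy argument there, or simply invoke the classical references \cite{MargWin_BHSurvey}, \cite[\S9.4]{FarbMarg}.

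Combining these steps gives
\[ \SMod(S)/\langle\iota\rangle \;\cong\; \overline{\SMod}(S)/\langle\iota\rangle \;\cong\; \Mod(\Sigma), \]
which is the assertion. I note that the hard step is also what makes the descent $\phi \mapsto \bar\phi$ well defined on $\SMod(S)$ itself (not merely on $\overline{\SMod}(S)$); the descent and covering--space steps are entirely formal once it is granted.
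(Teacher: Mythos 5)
The paper does not actually prove Theorem \ref{thm:BH1}; it states it as a classical result of Birman and Hilden and defers to \cite{MargWin_BHSurvey} and \cite[\S9.4]{FarbMarg}. Your sketch is a correct outline of the standard argument from those references: you rightly separate the formal covering-space content (descent of symmetric diffeomorphisms to $\Sigma$, lifting via the mod-$2$ classifying map $\epsilon$, and lifting of isotopies to identify $\overline{\SMod}(S)/\langle\iota\rangle$ with $\Mod(\Sigma)$) from the genuinely non-formal Birman--Hilden injectivity statement, that a symmetric diffeomorphism isotopic to the identity is \emph{symmetrically} isotopic to the identity, which is the one step that cannot be dispatched by covering-space generalities and which you correctly flag as requiring either the equivariant Alexander method or a hyperbolic-geometry realization of $\iota$ (or an outright citation). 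Two small points worth keeping in mind: your kernel computation uses that an isotopy of $\Sigma$ to the identity rel the \emph{set} $B$ automatically fixes $B$ pointwise throughout (the permutation of $B$ is locally constant along the isotopy), which makes the lifted isotopy land on a genuine deck transformation; and the surjectivity step needs the lift $\tilde\phi$ of $\bar\phi$ over $S^\circ$ to extend continuously over the Weierstrass points and then be smoothed, both of which are routine but should be said. Neither is a gap --- the plan is sound.
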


One may perform a similar construction when the surface $S$ has punctures. Suppose first that $S$ has a unique puncture fixed by $\iota$, so that $\Sigma$ has $2g+1$ branch points and a unique puncture. Then the appropriate mapping class group $\Mod(\Sigma)$ is the subgroup of $\Mod(S_{0, 2g+2})$ which preserves the puncture but is allowed to interchange the branch points. When $S$ has two punctures which are interchanged by $\iota$, then $\Sigma$ has $2g+2$ branch points and a unique puncture and $\Mod(\Sigma)$ is defined similarly. In both of these cases, one has the same conclusion as in Theorem \ref{thm:BH1}, namely, that
\begin{equation}\label{eqn:BH_punc}
\SMod(S) / \langle \iota \rangle \cong \Mod(\Sigma).
\end{equation}

Finally, as it will play a large role in Appendix \ref{app:curves}, we also consider the case when $S$ has no punctures but two boundary components which are interchanged by $\iota$. In this case, the quotient $\Sigma$ again has $2g+2$ branch points but has a single boundary component, so $\Mod(\Sigma)$ is exactly the braid group $B_{2g+2}$ on $2g+2$ strands. Half--twists still lift to Dehn twists, but now the hyperelliptic involution $\iota$ is {\em not} a mapping class of the surface $S$ since it interchanges the boundary components. Therefore, one has that
\begin{equation}\label{eqn:BH2}
\SMod(S) \cong \Mod(\Sigma) \cong B_{2g+2}.
\end{equation}
One may of course perform similar constructions for surfaces with more punctures or boundary components, but the restrictions on which points may be interchanged become more involved.

Combining Theorems \ref{thm:hypstrata_pi1} and \ref{thm:BH1}, we arrive at a classification of the hyperelliptic components of $\HT(2g-2)$ and $\HT(g-1, g-1)$.

\begin{coro}\label{coro:comp_hyp}
For any $g \ge 3$, the strata $\HT(2g-2)$ and $\HT(g-1, g-1)$ each have infinitely many hyperelliptic connected components.
\end{coro}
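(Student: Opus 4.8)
The plan is to leverage the fact that the hyperelliptic components of $\HT(\sing)$ lying over a single hyperelliptic component $\HM(\sing)^{\hyp}$ are in bijection with the cosets $\Mod(S) / \mathcal{G}(\Omega^{\hyp})$, so it suffices to show that the geometric monodromy group of $\Omega^{\hyp}$ (for $\sing = (2g-2)$ or $(g-1,g-1)$) has infinite index in $\Mod(S)$. By Theorem \ref{thm:hypstrata_pi1}, the orbifold fundamental group of $\Omega^{\hyp}$ is a finite extension of a braid group (the spherical braid group on $2g+1$ or $2g+2$ strands, modulo the cyclic group of roots of unity coming from the $\CC^\times$ scaling action). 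Under the geometric monodromy map, this braid group maps into $\Mod(S)$: marking the zero(s) of the differential and tracking the branched cover, a loop in configuration space that braids two poles of $q$ lifts to a mapping class of $S$ that preserves (a conjugate of) the hyperelliptic involution $\iota$. Concretely, the half-twist $H_\alpha$ swapping two branch points lifts to the Dehn twist $T(c)$ in the preimage curve $c$, as in Figure \ref{fig:twist_lift}. Hence $\mathcal{G}(\Omega^{\hyp})$ lands inside the symmetric mapping class group $\SMod(S)$ attached to a fixed hyperelliptic involution $\iota$.

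The key point is then that $\SMod(S)$ has infinite index in $\Mod(S)$ for $g \ge 3$ — equivalently, there are infinitely many distinct conjugacy classes of hyperelliptic involutions in $\Mod(S)$ that are not conjugated into a single coset. More precisely, I would argue as follows. First, by Theorem \ref{thm:BH1} (and its punctured analogue \eqref{eqn:BH_punc}), $\SMod(S)/\langle \iota \rangle \cong \Mod(\Sigma)$ where $\Sigma$ is a sphere with $2g+1$ or $2g+2$ branch points, so $\SMod(S)$ is (virtually) a surface braid group. A braid group on a sphere has polynomial-growth abelianization and, more to the point, is a group whose image in $\Sp(2g,\ZZ)$ is the (finite-covolume but infinite-index) stabilizer of the standard symplectic-basis-respecting structure fixed by $\iota$ acting as $-I$ — but $\iota$ acts as $-I$ on $H_1(S;\ZZ)$, so every element of $\SMod(S)$ commutes with $-I$, which is automatic, so this does not immediately give infinite index. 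Instead, the cleanest route is to count: the hyperelliptic involution $\iota$ is a specific mapping class, and its centralizer $\SMod(S)$ has index in $\Mod(S)$ equal to the size of the orbit of $\iota$ under conjugation. For $g \ge 2$ there are infinitely many distinct mapping classes that are hyperelliptic involutions (they correspond to the infinitely many isotopy classes of the "standard symmetric picture" after applying arbitrary mapping classes, and two are equal only if the ambient mapping class centralizes $\iota$); since a finite-index subgroup would force this orbit to be finite, $[\Mod(S):\SMod(S)] = \infty$. I would cite that for $g \ge 3$ the hyperelliptic mapping class group is its own normalizer and is not finite index — this is classical (see \cite[\S9.4]{FarbMarg}).

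Putting it together: the geometric monodromy group $\mathcal{G}(\Omega^{\hyp})$ is contained in $\SMod(S)$, which has infinite index in $\Mod(S)$; therefore $\mathcal{G}(\Omega^{\hyp})$ itself has infinite index, and so the number of connected components of $\HT(\sing)$ lying over $\Omega^{\hyp}$ — which equals $[\Mod(S) : \mathcal{G}(\Omega^{\hyp})]$ — is infinite. Since $\HM(2g-2)$ and $\HM(g-1,g-1)$ each have a hyperelliptic component for $g \ge 3$ (by Theorem \ref{thm:KZ_class} for $g \ge 4$, and by direct inspection for $g = 3$), this proves the corollary.

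I expect the main obstacle to be pinning down precisely \emph{why} $\SMod(S)$ (or the even smaller $\mathcal{G}(\Omega^{\hyp})$) has infinite index, in a way that is clean and self-contained rather than a citation: the conceptual content is that there are infinitely many pairwise non-conjugate-in-a-single-coset hyperelliptic involutions, equivalently that the $\Mod(S)$-orbit of a hyperelliptic involution is infinite, and one wants to see this explicitly — e.g., by exhibiting an infinite family of distinct isotopy classes of embedded configurations realizing $\iota$, or by using that $\SMod(S)$ maps to a proper (infinite-index) subgroup of $\Mod(S)$ detected by, say, the action on the set of isotopy classes of separating curves of a fixed topological type. I would fill in this step by invoking the standard fact from Birman--Hilden theory that for $g \ge 3$ the hyperelliptic mapping class group is a proper subgroup of infinite index, with a one-line justification via the orbit-counting argument above.
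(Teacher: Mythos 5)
Your proposal follows the same route as the paper's proof: use Theorem \ref{thm:hypstrata_pi1} to identify $\pi_1^{\text{orb}}(\Omega^{\hyp})$ with a finite extension of a braid group, invoke the Birman--Hilden lifting of half--twists to Dehn twists to conclude $\mathcal{G}(\Omega^{\hyp}) \le \SMod(S_g)$, then finish with the classical fact that $\SMod(S_g)$ has infinite index in $\Mod(S_g)$ for $g \ge 3$. The paper is slightly more precise by passing through the punctured monodromy group $\mathcal{G}^\circ(\Omega)$ and showing it is \emph{equal} to $\SMod(S_{g,n})$ before applying the forgetful map, whereas you only establish the inclusion $\mathcal{G}(\Omega^{\hyp}) \le \SMod(S_g)$, which is all the corollary needs. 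One substantive slip: you assert ``for $g \ge 2$ there are infinitely many distinct mapping classes that are hyperelliptic involutions,'' but this is false for $g = 2$, where the hyperelliptic involution is the unique central element of order two and $\SMod(S_2) = \Mod(S_2)$; the correct bound is $g \ge 3$, which is exactly the hypothesis of the corollary. Also note that your orbit-counting digression is circular as stated (asserting the orbit is infinite is equivalent to asserting the index is infinite), and the paper simply cites \cite[Proposition 7.15]{FarbMarg} for the infinite-index fact --- you correctly recognize that this is what is ultimately needed, so filling that citation in resolves the gap you flagged.
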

\begin{proof}
Suppose that $\Omega$ is the hyperelliptic component of either $\HM(2g-2)$ or $\HM(g-1, g-1)$; then by Theorem \ref{thm:hypstrata_pi1} its fundamental group is a finite extension of a spherical braid group. Therefore, its punctured geometric monodromy group $\mathcal{G}^\circ(\Omega)$ must be
\begin{equation}\label{eqn:hypmon}
\mathcal{G}^\circ(\Omega) \cong \Mod(\Sigma) \times \Ztwo \cong \SMod(S_{g, n})
\end{equation}
where $n$ is the number of zeros of a differential in $\Omega$, and the corresponding hyperelliptic involution $\iota$ either preserves the single zero (in the case $\sing = (2g-2)$) or interchanges the two zeros (when $\sing = (g-1, g-1)$). Note that the last isomorphism of \eqref{eqn:hypmon} is just \eqref{eqn:BH_punc}, the Birman--Hilden correspondence for the surface punctured at the zeros of the differential.

The hyperelliptic involution $\iota$ remains a hyperelliptic involution after forgetting the puncture(s), and so we see that the image of $\SMod(S_{g,n})$ under the forgetful map lies inside of a different (unpunctured) symmetric mapping class group $\SMod(S_g)$.
\footnote{This map is by no means an isomorphism. When $n=1$, the map $\SMod(S_{g,1}) \rightarrow \SMod(S_g)$ is injective but not surjective \cite[Theorem 3.1]{BrenMarg_hypTorBirman}. When $n=2$, the map $\SMod(S_{g,2}) \rightarrow \SMod(S_g)$ is surjective but not injective \cite[Theorem 3.2]{BrenMarg_hypTorBirman}.}
We may therefore conclude that
\[\mathcal{G}(\Omega) \le \SMod(S_g).\]
Now for $g \ge 3$ any symmetric mapping class group has infinite index \cite[Proposition 7.15]{FarbMarg} and hence by the correspondence between monodromy groups and connected components (see \S \ref{sec:wn_monodromy}), there must be infinitely many connected components of $\HT(\sing)$ covering $\Omega$.
\end{proof}

\begin{rmk}
One can also use the above correspondence to prove that in genus 2 (where every surface and every differential is hyperelliptic), the stratum $\HT(1,1)$ is connected while the stratum $\HT(2)$ has 6 components, corresponding to the 6 Weierstrass points on a genus 2 surface.
\end{rmk}

\section{Higher spin structures}\label{sec:rspin}

In this section, we collect the necessary results on higher spin structures.
As these objects do not appear frequently in the flat surfaces or Teichm{\"u}ller theory literature,
we take a more expository approach and
summarize many of their important properties.

In \S\ref{sec:def_rspin}, we give two equivalent definitions of $r$--spin structure, and in \S\ref{sec:Arf} recall an important invariant of $r$--spin structures, called the {\em Arf invariant} (Definition \ref{Arf}). In order to compare $r$--spin structures on different surfaces, in \S\ref{sec:marked_rspin} we explain how $r$--spin structures interact with a marking and how a geometric homology basis can be used to determine equality of two $r$--spin structures (Lemma \ref{lem:hom_rspin_eq}).
Finally, we explain how this theory can be used to classify the action of $\Mod(S)$ on the set of $r$--spin structures (Theorem \ref{thm:Mod_rspin_action}).

Depending on the reader's mathematical taste, it may be helpful to read \S\ref{sec:rspin_winding}, in which we give a differential--geometric characterization of $r$--spin structures, in tandem with (or even before) this section.

\subsection{Two equivalent definitions}\label{sec:def_rspin}

The most natural way to define an $r$--spin structure is in analogy with the (classical) spin structures constructed in \S\ref{sec:background}. Recall that a {\em spin structure} on a Riemann surface $X$ is a square root of the canonical bundle, that is, a (complex) line bundle $\LL \rightarrow X$ such that $\LL^{\otimes 2} \cong K_X$.

\begin{defn}
An $r$--spin structure on a Riemann surface $X$ is an $r\ith$ root of the canonical bundle, that is, a line bundle $\LL \rightarrow X$ such that $\LL^{\otimes r} \cong K_X$.
\end{defn}

Observe that we do not require the root $\LL$ to admit a holomorphic section. In fact, if $\LL \rightarrow X$ does admit a section then $X$ must admit an abelian differential with certain constraints on its divisor (see \S\ref{sec:diffspin}).

From this definition, it is easy to see that there are exactly $r^{2g}$ $r$--spin structures up to isomorphism.
Indeed, the $r$--spin structures can be put into (non-canonical) bijection with torsion elements of the Jacobian $J(X)$:
\footnote{Recall that the Jacobian $J(X)$ of a genus $g$ Riemann surface $X$ is a $g$--dimensional complex torus. By the Abel--Jacobi theorem, $J(X)$ parametrizes degree--0 divisor classes on $X$, equivalently, degree--0 line bundles on $X$. Given this identification, it naturally has the structure of an abelian group whose addition is given by taking sums of divisor classes. In the line bundle formulation, addition takes the form of the tensor product and the inverse of a line bundle $\LL$ is its dual bundle $\LL^*$. See, e.g., \cite[pp. 224--39, 333--63]{Griff_Harr}).}
if $\LL$ is an $r$--spin structure on $X$ and $j$ is an $r$--torsion element of $J(X)$, then we have that
\[(\LL \otimes j)^{\otimes r} = \LL^{\otimes r} \otimes j^{\otimes r} \cong K_X \otimes \mathcal{O} \cong K_X\]
where $\mathcal{O}$ is a trivial bundle over $X$. Therefore $\LL \otimes j$ is an $r$--spin structure.

Likewise, if $\LL_1$ and $\LL_2$ are $r$--spin structures, then $\LL_1 \otimes \LL_2^*$ is $r$--torsion, for
\[(\LL_1 \otimes \LL_2^*)^{\otimes r} =
\LL_1^{\otimes r} \otimes (\LL_2^*)^{\otimes r} \cong K_X \otimes K_X^* \cong \mathcal{O}.\]

We will now reformulate the definition of an $r$--spin structure on a surface without reference to the underlying holomorphic structure. For more details on this equivalence, see \cite{Sipe_roots} or \cite[\S\S 2,3]{Salter_planecurves}.

Choose some $r$--spin structure $\LL$ on $X$. Puncturing (that is, removing the zero sections from $K_X$ and $\LL$) induces an (unramified) cover of the corresponding punctured bundles. The punctured canonical bundle is clearly homotopy equivalent to the unit cotangent bundle $T^*_0X$, and likewise we see that the punctured $\LL$ bundle 
is homotopy equivalent to some circle bundle $Q$. Moreover, since the process of tensoring $\LL \rightarrow \LL^{\otimes r}$ locally has the form
$z \mapsto z^r$, we see that the cover $Q \rightarrow T^*_0X$ induces the standard (connected) $r$--fold cover of $S^1 \rightarrow S^1$ on fibers \cite[Proposition 2.3]{Sipe_roots}.

Let $\alpha$ denote an $S^1$ fiber of $T_0^*X$. Now $\langle \alpha \rangle$ is central inside of $\pi_1(T^*_0 X)$,
hence the cover $Q$ of the preceding paragraph corresponds to a map
\[\phi: H_1( T_0^* X, \ZZ) \rightarrow G,\]
where $G$ is some group of size $|G| = r$. Since the induced map on the fibers is given by $z \mapsto z^r$, we see that $G \cong \Zr$ and $\phi(\alpha) = 1$.

A choice of Riemannian metric on $X$ induces an isomorphism between $T_0^* X$ and $T_0 X$, giving the following (co)homological characterization of $r$--spin structures:

\begin{thm}[Theorem 1 of \cite{Sipe_roots}, see also \S \S 2,3 in \cite{Salter_planecurves}]\label{thm:rspin_equiv_defn}
The $r\ith$ roots of the canonical bundle are in $\Mod(S)$--equivariant bijection with elements of 
\begin{equation}\label{eqn:space_of_spin_defn}
\Phi_r := \{\phi \in H^1(T_0 X, \Zr) : \phi(\alpha) = 1\}.
\end{equation}
\end{thm}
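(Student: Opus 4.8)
The plan is to make precise the bijection between $r$-th roots of $K_X$ and the set $\Phi_r$ of $\Zr$-valued cohomology classes on $T_0X$ that send the fiber class $\alpha$ to $1$, following the chain of homotopy-theoretic observations already sketched in the text. The key point is that line bundles, up to isomorphism, are classified by their associated principal (or circle) bundles, and a circle bundle over a space $Y$ is determined up to isomorphism by its Euler class in $H^2(Y;\ZZ)$; but once we pass to the \emph{punctured} total spaces, the relevant classifying data becomes a covering space, hence a subgroup of (equivalently, quotient map out of) the fundamental group, and $H_1$ suffices because the target group $\Zr$ is abelian.

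First I would set up the correspondence in one direction. Given an $r$-spin structure $\LL$ with a chosen isomorphism $\LL^{\otimes r}\cong K_X$, remove the zero sections to get an unramified $r$-fold cover $Q\to T^*_0X$ which restricts to the connected $r$-fold cover $S^1\to S^1$ on each fiber (this is \cite[Proposition 2.3]{Sipe_roots}, quoted in the excerpt). Since the fiber class $\alpha$ is central in $\pi_1(T^*_0X)$ — the unit cotangent bundle of a surface is an aspherical Seifert-fibered $3$-manifold with central fiber — the $r$-fold cover $Q$ is classified by a surjection $\pi_1(T^*_0X)\to G$ with $|G|=r$, and the behavior on fibers forces $G\cong\Zr$ and the image of $\alpha$ to be a generator, which we normalize to $1$. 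As $\Zr$ is abelian this factors through $H_1(T^*_0X;\ZZ)$, giving $\phi\in H^1(T^*_0X;\Zr)$ with $\phi(\alpha)=1$; finally a choice of Riemannian metric identifies $T^*_0X\cong T_0X$, landing us in $\Phi_r$. In the reverse direction, a class $\phi\in\Phi_r$ gives a connected $r$-fold cover of $T_0X$ (equivalently $T^*_0X$) which is fiberwise the standard cover $S^1\to S^1$; this cover is itself a circle bundle over $X$, say the unit bundle of a line bundle $\LL$, and the fiberwise description of the covering $z\mapsto z^r$ shows $\LL^{\otimes r}\cong K_X$. One then checks the two constructions are mutually inverse on isomorphism classes — the only subtlety being that isomorphic line bundles give isomorphic (hence equal-in-$H^1$) covers and conversely, which follows because a bundle isomorphism $\LL_1\to\LL_2$ covering $\mathrm{id}_X$ induces a deck-equivariant homeomorphism of the associated covers of $T_0X$.

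For $\Mod(S)$-equivariance I would argue that every construction above is natural: a diffeomorphism $f\colon S\to S$ induces compatible maps on $T^*_0X$, on $K_X$, on punctured total spaces and on covers, and hence the assignment $\LL\mapsto\phi$ intertwines the pullback actions. (Since the metric-induced identification $T^*_0X\cong T_0X$ is not canonical, strictly speaking one works with the cotangent bundle throughout and transports the action; alternatively one notes that any two such identifications are homotopic through bundle maps, so the induced iso on $H^1$ is canonical.)

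The step I expect to be the main obstacle is verifying cleanly that the circle bundle $Q\to X$ recovered from a class $\phi\in\Phi_r$ is genuinely an $r$-th tensor root of $K_X$ \emph{with its holomorphic structure forgotten} — i.e. that the local model $z\mapsto z^r$ globalizes to an isomorphism $\LL^{\otimes r}\cong K_X$ of complex line bundles, and that no information is lost in passing back and forth through the smooth category. This amounts to comparing Euler classes: $e(\LL^{\otimes r})=r\cdot e(\LL)$, and one must see that the covering-space data pins down $e(\LL)$ among the classes satisfying $r\cdot e(\LL)=e(K_X)$, with the remaining ambiguity being exactly the $r$-torsion of $H^2(X;\ZZ)\cong\ZZ$ together with $H^1(X;\Zr)$ — matching the count $r^{2g}$ already established via the Jacobian. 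Carrying this out carefully is where I would either cite \cite[Theorem 1]{Sipe_roots} directly or spell out the Gysin-sequence computation relating $H^*(T_0X)$ to $H^*(X)$; given that the excerpt explicitly defers to Sipe for the details, the cleanest route is to organize the argument as above and invoke \cite{Sipe_roots} for this comparison.
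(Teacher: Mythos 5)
Your proposal follows the same outline that the paper itself sketches in the prose just before the theorem statement: puncture $\LL$ and $K_X$ to get an $r$-fold cover $Q\to T^*_0X$ that is the standard $r$-fold cover on fibers, use centrality of the fiber class $\alpha$ to recognize this as the kernel of a map to $\Zr$ factoring through $H_1$, and pass to $T_0X$ via a metric. The paper does not actually prove the theorem — it is cited as Theorem 1 of Sipe's paper, with the surrounding paragraph serving only as motivation — and you likewise defer the technical core (that the cover determines and is determined by the $r$-th root structure, not merely the underlying smooth bundle) to Sipe, so you are matching the paper's treatment. Your added reverse-direction argument (reconstructing $\LL$ as the disk-bundle completion of the cover of $T_0X$ corresponding to $\ker\phi$ and recovering $\LL^{\otimes r}\cong K_X$ from the fiberwise $z\mapsto z^r$) and your equivariance remark are correct and consistent with the cited sources, so no genuine gap; this is essentially the paper's approach with the sketch filled in a bit further.
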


We will often use $\Phi_r$ in the sequel as shorthand for ``the set of all $r$--spin structures on $X$,'' freely passing between $r\ith$ roots of the canonical bundle on a Riemann surface and their induced cohomology classes.

A reader familiar with the literature will note that Sipe's original statement of the theorem requires that $\phi(\alpha)=-1$ instead of $1$. This sign arises because she uses a Hermitian metric on $X$
and by conjugate--linearity, the isomorphism between a Hermitian vector space and its dual reverses orientation.
\footnote{In all truth, Sipe actually induces the isomorphism via the Bergman Hermitian metric on the universal curve over Teichm{\"u}ller space \cite[\S5]{Sipe_roots}. This metric restricts to a Hermitian metric on each fiber, as does the induced isomorphism.}
If one instead uses a Riemannian metric, as appears here and in \cite{Salter_monodromy} and \cite{Salter_planecurves}, then the isomorphism preserves orientation and thus does not flip the fiber.

\subsection{The induced Arf invariant}\label{sec:Arf}

An $r$--spin structure $\phi$ on $X$ comes with more data than just an $r\ith$ root. Indeed, observe that any $r$--spin structure induces an entire family of intermediate roots of the canonical bundle simply by taking intermediate powers. More formally, if $s | r$, then for any $\phi \in \Phi_r$ we have that $\phi^{\otimes (r/s)} \in \Phi_s$. In particular, when $r$ is even, any $\phi \in \Phi_r$ induces a $2$--spin structure $\phi^{\otimes(r/2)}$.

For any $2$--spin structure $\psi$, Atiyah showed in \cite{Atiyah_spin} that $h^0(X, \psi) \mod 2$, the dimension of the space of holomorphic sections $X \rightarrow \psi$ mod $2$, is deformation invariant. Johnson later proved that this value is the same as the Arf invariant of a certain quadratic form on $H_1(X, \Ztwo)$ \cite{Johnson_spin}. We briefly sketch Johnson's construction below (see also \cite[\S3.1]{Salter_monodromy}).

To begin, we recall that a $\Ztwo$ {\em quadratic form} on a (nondegenerate) symplectic vector space $(V, \langle \cdot, \cdot \rangle )$ over $\Ztwo$ is a function $q: V \rightarrow \Ztwo$ such that for any $v, w \in V$,
\[q(v + w) = q(v) + q(w) + \langle v, w \rangle.\]

\begin{defn}\label{Arf}
If $\{v_1, \ldots, v_g, w_1, \ldots, w_g\}$ is a symplectic basis for $V$ (i.e., a basis such that $\langle v_i, w_j \rangle = \delta_{ij}$) then the {\em Arf invariant} of $q$ is the value
\begin{equation}\label{eqn:Arf}
\Arf(q) := \sum_{i=1}^g q(v_i) q(w_i) \mod 2.
\end{equation}
\end{defn}
Arf proved in \cite{Arf} that this value depends only on the quadratic form and not on the choice of basis. Moreover, the symplectic group $\Sp(V)$ acts on the set of quadratic forms with two orbits, distinguished by the Arf invariant. There is also a count of how many quadratic forms have even and odd parity, respectively.

\begin{lemma}\label{lem:Arfcount}
Let $V$ be a symplectic $\Ztwo$ vector space of dimension $2g$. Then exactly $\left( 2^{g-1}(2^g +1) \right)$ of the (nonsingular) $\Ztwo$--valued quadratic forms on $V$ have even parity and $\left( 2^{g-1}(2^g - 1) \right)$ have odd.
\end{lemma}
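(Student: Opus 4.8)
The plan is to fix a symplectic basis $\{v_1,\dots,v_g,w_1,\dots,w_g\}$ of $V$ and exploit the fact that a $\Ztwo$ quadratic form $q$ refining $\langle\cdot,\cdot\rangle$ is completely and freely determined by the $2g$ values $q(v_1),\dots,q(v_g),q(w_1),\dots,q(w_g)$. Indeed, the defining relation $q(x+y)=q(x)+q(y)+\langle x,y\rangle$ lets one compute $q$ on all of $V$ from its values on any basis, and conversely any assignment of these $2g$ values extends consistently to a quadratic form (the potential cocycle obstruction vanishes over $\Ztwo$). Hence the set of such forms is in bijection with $\Ztwo^{2g}$; in particular there are $2^{2g}$ of them, and since the bilinear form associated to each is the nondegenerate form $\langle\cdot,\cdot\rangle$, all of them are nonsingular.

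Write $N^{+}$ and $N^{-}$ for the number of these forms with $\Arf(q)=0$ and $\Arf(q)=1$ respectively, so that $N^{+}+N^{-}=2^{2g}$. To pin down $N^{+}-N^{-}$ I would use Arf's formula \eqref{eqn:Arf}, which by \cite{Arf} is legitimate for our fixed basis: $\Arf(q)=\sum_{i=1}^{g}q(v_i)q(w_i)$. Under the bijection above this gives
\[
N^{+}-N^{-}=\sum_{q}(-1)^{\Arf(q)}=\sum_{(a_i,b_i)_{i=1}^{g}\in\Ztwo^{2g}}(-1)^{\sum_i a_i b_i}=\prod_{i=1}^{g}\Bigl(\sum_{a,b\in\Ztwo}(-1)^{ab}\Bigr)=2^{g},
\]
because $\sum_{a,b\in\Ztwo}(-1)^{ab}=1+1+1-1=2$. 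Solving the two linear equations $N^{+}+N^{-}=2^{2g}$, $N^{+}-N^{-}=2^{g}$ yields $N^{+}=2^{g-1}(2^{g}+1)$ and $N^{-}=2^{g-1}(2^{g}-1)$, as claimed.

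As an alternative that sidesteps the character sum, one can induct on $g$ via the $q$-orthogonal splitting $V=\langle v_1,w_1\rangle\perp\langle v_2,\dots,w_g\rangle$ together with the additivity of the Arf invariant under such splittings. The base case $g=1$ is the direct check that among the four quadratic forms on a symplectic plane exactly one—the one with $q(v_1)=q(w_1)=1$—has nonzero Arf invariant. The inductive step then reduces to the recursion $N_g^{\pm}=3N_{g-1}^{\pm}+N_{g-1}^{\mp}$, whence $N_g^{+}-N_g^{-}=2\bigl(N_{g-1}^{+}-N_{g-1}^{-}\bigr)$ and $N_g^{+}+N_g^{-}=4^{g}$, giving the same conclusion.

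There is no substantial obstacle here: the only point requiring care is that one is entitled to evaluate $\Arf$ by the closed expression $\sum_i q(v_i)q(w_i)$ attached to a single chosen symplectic basis rather than treating it as an a priori basis-dependent quantity, and this is exactly the content of Arf's theorem \cite{Arf}, already invoked in the paragraph preceding the statement. Everything else is bookkeeping.
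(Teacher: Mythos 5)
Your proof is correct. The paper itself states Lemma~\ref{lem:Arfcount} without proof, treating it as a classical fact (it goes back essentially to Dickson, and the counting appears in standard references on quadratic forms over $\mathbb{F}_2$), so there is no ``paper's argument'' to compare against. Both of your arguments are sound: the bijection between quadratic forms refining $\langle\cdot,\cdot\rangle$ and $\Ztwo^{2g}$ via values on a symplectic basis is justified by the explicit formula $q\bigl(\sum c_i e_i\bigr)=\sum c_i q(e_i)+\sum_{i<j}c_ic_j\langle e_i,e_j\rangle$, whose validity over $\Ztwo$ uses that squaring is the identity; the character sum correctly gives $N^+-N^-=2^g$ since $\sum_{a,b\in\Ztwo}(-1)^{ab}=2$; and the inductive alternative via additivity of Arf under $q$-orthogonal symplectic splittings, with base case $(N_1^+,N_1^-)=(3,1)$ and recursion $N_g^\pm=3N_{g-1}^\pm+N_{g-1}^\mp$, is an equally valid route. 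The only point worth emphasizing for rigor in the first argument is the one you already flag: the identity $\Arf(q)=\sum_i q(v_i)q(w_i)$ is being used as a \emph{definition} on a single fixed symplectic basis (which is what the counting requires), so one does not even need Arf's basis-independence theorem for the count itself—only for the interpretation of the two classes as the two orbits.
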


A $2$--spin structure $\phi$ in the sense of \eqref{eqn:space_of_spin_defn} does not itself define a quadratic form on homology with $\Ztwo$ coefficients, but can be made into one by considering the {\em Johnson lift} of a homology basis.
\footnote{We note that the map presented here is the same as Johnson's original lifting \cite{Johnson_spin}, and hence does not match the convention appearing in Salter's work \cite{Salter_monodromy}.}
To that end, fix a symplectic basis for $H_1(X, \ZZ)$ consisting of smooth simple closed curves. Mimicking \cite{Salter_monodromy}, we call such a basis {\em geometric}. For each curve $a$ in the basis, the framed curve $\overrightharp{a}$ defines an element in $H_1(T_0X, \ZZ)$, and reducing coefficients mod $2$ removes dependence on the initial orientation.

The framing is not a homology invariant since the framing of a small nulhomotopic loop returns $\alpha$, the class of the $S^1$ fiber. However, the map $a \mapsto \tilde{a} :=\overrightharp{a} + \alpha$ is.

\begin{defn}
Let $a = \sum_{i=1}^N n_i a_i$ be an integral multicurve (so that $a_i$ are all pairwise disjoint simple closed curves) on a surface $X$. The {\em Johnson lift} of $a$ is
\[ \tilde{a} := \sum_{i=1}^N n_i \left( \overrightharp{a}_i +\alpha \right) \in H_1(T_0X, \Ztwo) .\]
\end{defn}

Johnson proved that this lift only depends on the homology class, and has a certain {\em twist--linearity} condition:

\begin{lemma}[Theorems 1A and 1B in \cite{Johnson_spin}]\label{lem:lift_twistlin}
The map $a \mapsto \tilde{a}$ is well--defined on homology classes in $H_1(X, \Ztwo)$, and obeys the following:
\[\widetilde{(a+b)} \equiv \tilde{a} + \tilde{b} + \langle a,b \rangle \alpha\]
where all coefficients are taken mod $2$.
\end{lemma}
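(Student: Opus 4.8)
The plan is to establish both assertions of Lemma \ref{lem:lift_twistlin} directly from the definition of the Johnson lift, with the twist-linearity identity as the main content and well-definedness following from it. First I would fix a geometric symplectic basis and verify the twist-linearity relation in the model case where $a$ and $b$ are simple closed curves meeting transversely in a single point (so $\langle a, b\rangle = 1$). Here the key geometric picture is the one underlying Figure \ref{fig:twist_lift}: a regular neighborhood of $a \cup b$ is a once-punctured torus, and the boundary curve $c = \partial N(a\cup b)$ is homologous to $a + b$. One computes the framing of $c$ by walking along $a$, then $b$, then back, tracking the winding of the tangent vector; the surgery at the intersection point contributes exactly one extra full turn of the frame relative to $\overrightharp{a} + \overrightharp{b}$, which after adding the correction term $\alpha$ to each of $\tilde a$, $\tilde b$, $\widetilde{(a+b)}$ yields $\widetilde{(a+b)} = \tilde a + \tilde b + \alpha$ in $H_1(T_0X, \Ztwo)$. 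When $a, b$ are disjoint the neighborhood is a pair of pants, no surgery turn is introduced, and $\widetilde{(a+b)} = \tilde a + \tilde b$, matching $\langle a,b\rangle = 0$.

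From the single-intersection and disjoint cases I would bootstrap to arbitrary $a, b$ in two steps. First, since every class in $H_1(X;\Ztwo)$ is represented by a simple closed multicurve and any two such can be put in minimal position, one reduces a general pair of curves to an iterated sum of the two model situations by resolving intersections one at a time; bilinearity of the algebraic intersection pairing mod $2$ ensures the accumulated $\alpha$-terms add up to $\langle a,b\rangle\, \alpha$. Second, for the multicurve formulation in the statement of the Lemma, one expands $a = \sum n_i a_i$ and $b = \sum m_j b_j$ and applies the curve-level identity term by term, using that $\alpha$ is $2$-torsion to discard the square terms $n_i^2$, $m_j^2 \pmod 2$ inside the self-sums and collecting cross terms into $\sum_{i,j} n_i m_j \langle a_i, b_j\rangle\, \alpha = \langle a, b\rangle\, \alpha$.

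Well-definedness on homology then comes for free: if $a$ and $a'$ are disjoint simple closed curves with $[a] = [a']$ in $H_1(X;\Ztwo)$, then $a + a'$ bounds, so $\langle a+a', \cdot\rangle = 0$, and applying the twist-linearity identity to the (disjoint, hence honestly additive) sum shows $\tilde a = \widetilde{a'} + \widetilde{(a+a')}$; it therefore suffices to check that the lift of a separating simple closed curve, or more generally of any nullhomologous multicurve, is zero in $H_1(T_0X;\Ztwo)$, which is Johnson's computation that the framing of such a curve contributes an even multiple of $\alpha$ (equivalently, the relative Euler class argument: a nullhomologous curve bounds a subsurface over which the tangent framing extends up to an even defect mod $2$). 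The main obstacle is the careful bookkeeping of the framing contribution at a transverse intersection point — getting the sign and the mod-$2$ count of extra turns exactly right — but this is a local computation on the once-punctured torus and is precisely what Johnson carries out in \cite{Johnson_spin}; I would present it via the explicit picture rather than an abstract argument. Everything downstream (the multicurve identity, well-definedness) is then formal manipulation using that $\alpha$ is $2$-torsion and central.
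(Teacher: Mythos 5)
The paper does not prove this lemma itself; it cites Johnson's Theorems 1A and 1B directly, so there is no internal argument to compare against, and any proof has to reconstruct Johnson's computation. Your overall outline (reduce to a local crossing analysis, bootstrap to arbitrary classes, obtain well-definedness from the twist-linearity relation together with the fact that a nullhomologous multicurve lifts to zero) is consistent with how Johnson argues. However, your model case contains a concrete error: the curve $c = \partial N(a\cup b)$ is \emph{not} homologous to $a+b$. When $a$ and $b$ meet transversely in one point, $N(a\cup b)$ is a one-holed torus, so its boundary separates and hence is nullhomologous in $H_1(X;\Ztwo)$; by homological coherence (Lemma \ref{lem:hom_coh}) its framing is $\pm\alpha$, which is a different computation from the one you need. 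The curve representing $a+b$ that you should be working with is the \emph{oriented smoothing} of the single crossing (replace the figure-eight $a\cup b$ near the intersection point by two arcs respecting orientations), which is a single simple closed curve whose class is $[a]+[b]$.

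With the correct curve the turning count also comes out differently than you claim. A local computation at the smoothed crossing (the two replacement arcs turn by $+\pi/2$ and $-\pi/2$) shows that the resolution contributes \emph{zero} net turns, i.e.\ $\overrightharp{(a+b)} = \overrightharp{a}+\overrightharp{b}$ in $H_1(T_0X;\Ztwo)$, and then $\widetilde{(a+b)} = \overrightharp{a}+\overrightharp{b}+\alpha = \tilde a + \tilde b + 3\alpha = \tilde a + \tilde b + \alpha$ as required. With your asserted ``one extra full turn'' the two sides of the twist-linearity identity differ by $\alpha$, so the arithmetic in your sketch does not close. The remaining steps — resolving intersections one at a time (being careful that intermediate objects remain multicurves), expanding the multicurve sums, and reducing well-definedness to the vanishing of the lift of a bounding multicurve — are sound in outline, though the last point is not ``for free'': the Euler-characteristic computation for the framing of a boundary curve is genuinely required there, as you acknowledge.
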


Therefore for any $\psi \in \Phi_2$, the function $q_\psi(a) = \psi (\tilde{a})$ is a quadratic form on $H_1(X, \Ztwo)$, for
\begin{align*}
q_\psi(a + b) &= \psi\left(\widetilde{(a+b)}\right) \\
&= \psi \left(\tilde{a} + \tilde{b} + \langle a,b \rangle \alpha \right) \\
&= \psi(\tilde{a}) + \psi(\tilde{b}) + \langle a,b \rangle\\
&=  q_\psi(a) + q_\psi(b) + \langle a,b \rangle
\end{align*}
where the third equality follows because $\psi(\alpha) = 1$.

\begin{defn}
If $r$ is even, then an $r$--spin structure $\phi$ is called {\em even} (respectively {\em odd}) if the Arf invariant of the induced quadratic form $q_{\phi^{\otimes (r/2)}}$ is $0$ (respectively $1$).
\end{defn}

We note that since the map from $\Phi_r$ to $\Phi_{s}$ is just reduction mod $s$, \eqref{eqn:Arf} can be written as
\begin{equation}\label{eqn:rspin_Arf}
\text{Arf}\left(q_{\phi^{\otimes (r/2)}} \right) =  
\sum_{i=1}^g \big(\phi( \overrightharp{a}_i )  +1 \big) \big(\phi( \overrightharp{b}_i )  +1 \big)\mod 2
\end{equation}
for any $r$--spin structure $\phi$, whenever $\{a_1, \ldots, a_g, b_1, \ldots, b_g\}$ is a geometric basis for $H_1(X, \ZZ)$.

\subsection{Marked $r$--spin structures}\label{sec:marked_rspin}

In order to compare $r$--spin structures on different Riemann surfaces, we need to identify $X$ with a reference topological surface $S$. This will give us an easy way to tell if two $r$--spin structures are equivalent (Lemma \ref{lem:hom_rspin_eq}) and another way of counting them (Lemma \ref{lem:count_equiv}).

To that end, we define a {\em marked} $r$--spin structure to be a marked Riemann surface $(X,f)$ together with an $r$--spin structure $\phi$ on $X$. If the reference surface $S$ is endowed with a smooth structure and the marking map is smooth, then $f: S \rightarrow X$ induces a homeomorphism $Df: TS \rightarrow TX$ of tangent bundles (and of their unit sub-bundles, which we will also denote by $Df$). We can therefore use $Df$ to pull back an $r$--spin structure $\phi$ on $X$ to one on the reference surface $S$. 

\begin{defn}
We say that two marked $r$--spin structures $(X,f, \phi)$ and $(Y, g, \psi)$ are {\em topologically equivalent} if
\[(Df)^* \phi = (Dg)^* \psi\]
as elements in $H^1(T_0 S, \ZZ_r)$.
\end{defn}

In particular, this gives us an easy way to tell if two $r$--spin structures are topologically equivalent.

\begin{lemma}[c.f. Theorem 2.5 in \cite{HJ_windingnumber}]\label{lem:hom_rspin_eq}
Two marked $r$--spin structures are topologically equivalent if and only if they take the same values
\footnote{One can evaluate an $r$--spin structure $\phi$ on an oriented simple closed curve $c$ by lifting $c$ to a framed curve $\overrightharp{c}$ as in \ref{sec:Arf} and then computing $\phi(\overrightharp{c})$. Such a lift is {\em not} well--defined on homology classes in $S$, since a nulhomotopic loop evaluates to either $\pm 1$, depending on its orientation. See, e.g., \cite[\S3.1]{Salter_monodromy} or \cite[Proposition 1]{Sipe_quotients}.}
on a geometric basis for $H_1(S, \ZZ)$
\end{lemma}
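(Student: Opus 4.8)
The plan is to reduce the assertion to a statement about the integral homology of the unit tangent bundle $T_0 S$. After fixing smooth markings $f : S \to X$ and $g : S \to Y$, the marked $r$--spin structures $(X, f, \phi)$ and $(Y, g, \psi)$ are topologically equivalent precisely when $(Df)^*\phi = (Dg)^*\psi$ in $\Phi_r \subseteq H^1(T_0 S, \Zr)$, so I would first note that, since $H_0(T_0 S, \ZZ)$ is free, universal coefficients identify $H^1(T_0 S, \Zr)$ with $\operatorname{Hom}(H_1(T_0 S, \ZZ), \Zr)$; an $r$--spin structure on $S$ is then a homomorphism $H_1(T_0 S, \ZZ) \to \Zr$ whose value on the fiber class $\alpha$ is normalized to $1$. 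The ``only if'' direction is then immediate --- equal cohomology classes take equal values on the framed lifts $\overrightharp{a}_i, \overrightharp{b}_i$ of any geometric basis --- so the content lies in the converse.

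The key input will be the Gysin (equivalently, Serre) exact sequence of the circle bundle $S^1 \to T_0 S \xrightarrow{\ \pi\ } S$, which produces a short exact sequence
\[ 0 \longrightarrow \langle \alpha \rangle \longrightarrow H_1(T_0 S, \ZZ) \xrightarrow{\ \pi_*\ } H_1(S, \ZZ) \longrightarrow 0, \]
with $\langle \alpha \rangle$ the cyclic group generated by the fiber class (of order $2g-2$ by the Euler number of $T_0 S$, though this will not be needed). I would fix orientations on the simple closed curves $a_1, \dots, a_g, b_1, \dots, b_g$ of a geometric basis, so that their framed lifts satisfy $\pi_*(\overrightharp{a}_i) = [a_i]$ and $\pi_*(\overrightharp{b}_i) = [b_i]$; then for any $x \in H_1(T_0 S, \ZZ)$, expanding $\pi_*(x)$ in the basis $\{[a_i],[b_i]\}$ of $H_1(S,\ZZ)$ and subtracting off the corresponding integral combination of framed lifts leaves an element of $\ker \pi_* = \langle \alpha \rangle$. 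Hence $\{\alpha\} \cup \{\overrightharp{a}_i, \overrightharp{b}_i\}$ generates $H_1(T_0 S, \ZZ)$.

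From here the lemma is essentially immediate: a homomorphism $H_1(T_0 S, \ZZ) \to \Zr$ sending $\alpha \mapsto 1$ is determined by its values on the $\overrightharp{a}_i$ and $\overrightharp{b}_i$. Equivalently --- and this is perhaps the cleanest packaging --- the difference $(Df)^*\phi - (Dg)^*\psi$ annihilates $\langle \alpha \rangle$, hence factors through $\pi_*$ as $\pi^*\beta$ for a unique class $\beta \in H^1(S, \Zr) = \operatorname{Hom}(H_1(S,\ZZ),\Zr)$, so it vanishes if and only if $\beta$ vanishes on $\{[a_i],[b_i]\}$, i.e. if and only if $\phi$ and $\psi$ take the same values on the geometric basis; this realizes the statement as a transcription of \cite[Theorem 2.5]{HJ_windingnumber} into the cohomological framework of Theorem \ref{thm:rspin_equiv_defn}. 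I do not anticipate a genuine obstacle; the one point requiring care is that the framed lift $\overrightharp{c}$ depends on the chosen orientation of $c$, which is why orientations on the geometric basis should be fixed at the outset --- reversing one alters $\phi(\overrightharp{c})$ only by a term controlled by $\phi(\alpha) = 1$ and so does not change the conclusion.
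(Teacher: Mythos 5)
Your argument is correct and is essentially the same one the paper relies on: the paper cites Humphries--Johnson for this lemma without reproducing a proof, but the same observation (that $\alpha$ together with the framed lifts $\overrightharp{a}_i, \overrightharp{b}_i$ of a geometric basis generates $H_1(T_0S,\ZZ)$, so an element of $\Phi_r$ is pinned down once $\phi(\alpha)=1$ is imposed) appears explicitly in the proof of Lemma~\ref{lem:count_equiv}. Your packaging via the short exact sequence $0 \to \langle\alpha\rangle \to H_1(T_0S,\ZZ) \to H_1(S,\ZZ) \to 0$ and the factorization of the difference through $\pi_*$ is a clean way to say the same thing, and your closing caveat about orientation dependence correctly mirrors the footnote to the lemma.
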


The cohomological formulation of $r$--spin structures also provides another way to count $r$--spin structures without appealing to torsion in the Jacobian of a reference holomorphic structure.

\begin{lemma}\label{lem:count_equiv}
There are exactly $r^{2g}$ topological equivalence classes of marked $r$--spin structures on a surface of genus $g$. If $r$ is even, then exactly 
\[(r/2)^{2g} \left( 2^{g-1}(2^g +1) \right)\] have even parity and 
\[(r/2)^{2g} \left( 2^{g-1}(2^g - 1) \right)\]
have odd.
\end{lemma}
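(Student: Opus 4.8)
The plan is to count directly the elements of the set $\Phi_r = \{\phi \in H^1(T_0 S, \Zr) : \phi(\alpha) = 1\}$ from Theorem \ref{thm:rspin_equiv_defn}, and then to sort those elements by parity using the Arf formula \eqref{eqn:rspin_Arf}. For the first count, I would use the Gysin/fiber sequence of the unit tangent bundle $S^1 \hookrightarrow T_0 S \to S$. Over $\Zr$ this gives a short exact sequence $0 \to H^1(S,\Zr) \to H^1(T_0 S, \Zr) \to \langle \alpha\rangle^\vee \to 0$ (the relevant piece of the Gysin sequence; the Euler class of $T S$ is $2-2g$, which we do not even need to track mod $r$ since we only want the affine set $\phi(\alpha)=1$). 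The set $\Phi_r$ is the preimage of the element "$1$" under the restriction-to-fiber map $H^1(T_0 S,\Zr)\to\Zr$, so it is a coset of $H^1(S,\Zr)\cong (\Zr)^{2g}$ and hence has exactly $r^{2g}$ elements. This is really just the cohomological incarnation of the Jacobian-torsion count already given after Theorem \ref{thm:rspin_equiv_defn}, now phrased without reference to a complex structure; by Lemma \ref{lem:hom_rspin_eq} these $r^{2g}$ cohomology classes are exactly the topological equivalence classes of marked $r$--spin structures.

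For the parity count (when $r$ is even), I would stratify $\Phi_r$ by the induced $2$--spin structure via the reduction map $\Phi_r \to \Phi_2$, $\phi \mapsto \phi^{\otimes(r/2)}$, which in cohomological terms is reduction of coefficients $\Zr \to \Ztwo$. This map is surjective and its fibers are cosets of the kernel of $H^1(S,\Zr)\to H^1(S,\Ztwo)$, which has size $(r/2)^{2g}$; so each of the $2^{2g}$ classes in $\Phi_2$ is hit by exactly $(r/2)^{2g}$ classes in $\Phi_r$. Since the parity of $\phi$ is by definition the Arf invariant of $q_{\phi^{\otimes(r/2)}}$, the number of even (resp. odd) $r$--spin structures is $(r/2)^{2g}$ times the number of even (resp. odd) $2$--spin structures. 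Finally I would invoke Lemma \ref{lem:Arfcount} (equivalently, the classical Arf count applied to the $2g$--dimensional symplectic $\Ztwo$ space $H_1(S,\Ztwo)$, using that $\psi\mapsto q_\psi$ is a bijection $\Phi_2 \to \{\text{quadratic forms}\}$ as established just before this lemma): there are $2^{g-1}(2^g+1)$ even forms and $2^{g-1}(2^g-1)$ odd ones. Multiplying gives the stated totals $(r/2)^{2g}\,2^{g-1}(2^g+1)$ and $(r/2)^{2g}\,2^{g-1}(2^g-1)$, and one checks these sum to $r^{2g}$ as a sanity check.

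The only genuine subtlety — and the step I would be most careful about — is verifying that $\psi \mapsto q_\psi$ really is a \emph{bijection} from $\Phi_2$ onto the set of nonsingular $\Ztwo$ quadratic forms refining the intersection pairing, not merely a map into it. That it lands among quadratic forms is the computation displayed right before Definition \ref{Arf} (using $\psi(\alpha)=1$ and Lemma \ref{lem:lift_twistlin}); injectivity follows from Lemma \ref{lem:hom_rspin_eq} (two $2$--spin structures agreeing on a geometric basis agree as cohomology classes, and $q_\psi$ records exactly the values $\psi(\overrightharp{a}_i)$); and both sets have the same cardinality $2^{2g}$, forcing surjectivity. Everything else is bookkeeping with the two short exact sequences above. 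I would present the proof in three short moves: (i) the $r^{2g}$ count via the Gysin sequence, citing Lemma \ref{lem:hom_rspin_eq} to identify these with topological equivalence classes; (ii) the $(r/2)^{2g}$-to-one reduction $\Phi_r \to \Phi_2$; and (iii) the Arf count Lemma \ref{lem:Arfcount} on $\Phi_2$, multiplying through.
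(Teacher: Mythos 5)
Your proposal is correct and takes essentially the same route as the paper: identify $\Phi_r$ as a torsor over $H^1(S,\Zr)\cong(\Zr)^{2g}$ (the paper does this by choosing a basis of $H_1(T_0S,\ZZ)$ from framed geometric basis curves together with $\alpha$; your Gysin-sequence phrasing is the same content made explicit), then stratify by the induced $2$-spin structure and invoke Lemma \ref{lem:Arfcount}, with the $(r/2)^{2g}$-to-one count and the bijectivity of $\psi\mapsto q_\psi$ filling in steps the paper treats as immediate. One small correction: your parenthetical that the Euler class $2-2g$ ``need not be tracked mod $r$'' is off --- the short exact sequence $0\to H^1(S,\Zr)\to H^1(T_0S,\Zr)\to\Zr\to 0$ holds precisely because $r\mid 2g-2$ makes the cup-product-with-$e$ map $H^0(S,\Zr)\to H^2(S,\Zr)$ vanish, and without that divisibility the restriction-to-fiber map would not hit $1\in\Zr$ and $\Phi_r$ would be empty, so the Euler class mod $r$ is exactly what the count relies on.
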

\begin{proof}
Elements of $H^1(T_0 S, \ZZ_r)$ are determined by their values on a basis of $H_1(T_0S, \ZZ)$, and one can choose a basis consisting of the framings of a geometric basis for $H_1(S, \ZZ)$ together with the class $\alpha$ of a fiber. An $r$--spin structure must evaluate to $1$ on $\alpha$, but can take any value in $\Zr$ on each framed basis curve. Since $H_1(S, \ZZ)$ has rank $2g$, there are therefore $r^{2g}$ possible topological equivalence classes of $r$--spin structures.

The second statement follows from equation \ref{eqn:rspin_Arf} together with the count of quadratic forms with given Arf invariant (Lemma \ref{lem:Arfcount}).
\end{proof}

Since we already have perfectly good notation for the set $\Phi_r$ of $r$--spin structures on a given Riemann surface $X$, we will
assume the generosity of the reader and subsequently conflate $\Phi_r$ with the set of topological equivalence classes of marked $r$--spin structures on the underlying (topological) surface $S$.

\subsection{Action of the mapping class group}\label{sec:Mod_rspin}

The mapping class group $\Mod(S)$ acts naturally on the space of marked $r$--spin structures by change of marking. In order to understand this action (and in particular to understand the stabilizer of a given $r$--spin structure, see Definition \ref{def:spinstab}), we will relate the action of $\Mod(S)$ on $\Phi_r$ to its action on $H_1(S, \ZZ)$.

Choose a geometric basis for $H_1(S, \ZZ)$. By taking the framings of these curves as in \S\S \ref{sec:Arf} and \ref{sec:marked_rspin}, these together with the circular fiber $\alpha$ determine a homology basis for $H_1(T_0S, \ZZ)$. Lemma \ref{lem:hom_rspin_eq} tells us that the values of $\phi \in \Phi_r$ on this basis completely determine $\phi$, so to understand the action of $\Mod(S)$ on $\Phi_r$ it suffices to consider the action on homology.

With this description, one can carry out explicit matrix multiplication to understand the action of the mapping class group on $\Phi_r$. The following theorem appears in multiple places in the literature, for example in \S4 of \cite{Salter_monodromy} and as Theorem 3.2 in \cite{Jarvis_geometry}. It can also be deduced from Sipe's work in \cite{Sipe_quotients}. Morally similar computations also appear in the proof of \cite[Theorem 4]{Walker_components}.

\begin{thm}\label{thm:Mod_rspin_action}
Let $S$ be a surface of genus at least $2$. If $r$ is odd, then $\Mod(S)$ acts transitively on the set $\Phi_r$ of $r$--spin structures. If $r$ is even, then $\Mod(S)$ acts with two orbits, distinguished by the parity of the induced $2$--spin structure.
\end{thm}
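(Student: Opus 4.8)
The plan is to convert the statement into linear algebra over $\Zr$ and then read off the orbit structure. Fix a geometric basis $\{a_1,\dots,a_g,b_1,\dots,b_g\}$ of $H_1(S;\ZZ)$. By Lemma~\ref{lem:hom_rspin_eq} an $r$--spin structure $\phi$ is recorded precisely by the tuple of values $\big(\phi(\overrightharp{a}_1),\dots,\phi(\overrightharp{a}_g),\phi(\overrightharp{b}_1),\dots,\phi(\overrightharp{b}_g)\big)\in(\Zr)^{2g}$, and by Lemma~\ref{lem:count_equiv} every tuple arises. So it is enough to understand the permutation action of $\Mod(S)$ on $(\Zr)^{2g}$ through this identification, and for that it suffices to track a generating set of Dehn twists.

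First I would record the effect of a single Dehn twist $T_c$. Since $T_c$ acts on $H_1(S;\ZZ)$ by the transvection $x\mapsto x+\langle x,c\rangle c$, and the Johnson lift fails to be additive only by the twist--linearity correction of Lemma~\ref{lem:lift_twistlin}, a direct computation gives a formula of the shape
\[
(T_c\cdot\phi)(\overrightharp{x}) \;=\; \phi(\overrightharp{x}) + \langle x,c\rangle\,\phi(\overrightharp{c}) + (\text{a correction depending only on }\langle x,c\rangle)\pmod r .
\]
Applying this to twists in the curves of a chain carrying the chosen geometric basis, together with one ``handle--slide'' curve $c_{i,i+1}$ pairing nontrivially only with $b_i$ and $b_{i+1}$, produces an explicit list of elementary moves on the value tuple: inside a single handle the twists $T_{a_i},T_{b_i}$ generate an $\SL_2(\Zr)$--type action on the pair $(\phi(\overrightharp{a}_i),\phi(\overrightharp{b}_i))$ (up to the affine corrections), while the handle--slide moves shift value between consecutive handles.

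For even $r$ the lower bound on the number of orbits is then immediate: writing $g\cdot\phi$ for the change--of--marking action, naturality of the Johnson lift gives $q_{g\cdot\phi}=q_\phi\circ(g_*)^{-1}$, and since $g_*$ is symplectic on $H_1(S;\Ztwo)$, Arf's theorem yields $\Arf(q_{g\cdot\phi})=\Arf(q_\phi)$. Hence the parity of the induced $2$--spin structure is a $\Mod(S)$--invariant, so there are at least two orbits.

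The substance of the proof is the matching upper bound: transitivity of $\Mod(S)$ on each parity class when $r$ is even, and on all of $\Phi_r$ when $r$ is odd. Here I would argue by reduction to a normal form using the elementary moves above --- first use the intra--handle $\SL_2(\Zr)$ moves and handle--slides to clear the $b$--coordinates against the $a$--coordinates, then use handle--slides to collect all remaining value into the first handle, and finally use the $\SL_2(\Zr)$ action there to reach one of the standard structures: the all--zero one when $r$ is odd, and one of ``$\phi\equiv 0$'' or ``$\phi(\overrightharp{a}_1)=1$, else $0$'' when $r$ is even (these two having opposite Arf invariant by \eqref{eqn:rspin_Arf}). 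This is exactly where $g\ge 2$ is used: in genus one the $\gcd$ (with $r$) of a handle's value pair is an extra $\SL_2(\Zr)$--invariant, and it takes a second handle and a handle--slide to dissolve it. I expect this combinatorial reduction --- verifying that the elementary moves suffice to annihilate every invariant other than (for even $r$) the Arf invariant --- to be the main obstacle; it amounts to the explicit matrix bookkeeping carried out in \cite[\S4]{Salter_monodromy} and \cite[Theorem 3.2]{Jarvis_geometry}, and one could alternatively invoke those computations directly.
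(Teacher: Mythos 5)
The paper does not supply its own proof of this theorem; it states it and cites the computations in \cite[\S4]{Salter_monodromy}, \cite[Thm.~3.2]{Jarvis_geometry}, \cite{Sipe_quotients}, and \cite{Walker_components}. Your sketch is a condensed version of exactly the argument those references carry out: coordinatize $\Phi_r$ as $(\Zr)^{2g}$ via Lemma~\ref{lem:hom_rspin_eq} and Lemma~\ref{lem:count_equiv}, record the (affine-linear) action of Dehn twists coming from twist--linearity, observe that the Arf invariant is a $\Mod(S)$--invariant when $r$ is even, and then show by a normal-form reduction that it is the only invariant. Your fallback --- ``one could alternatively invoke those computations directly'' --- is precisely what the paper does. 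The one substantive point you leave open (and correctly flag as the crux) is that the intra-handle $\SL_2(\Zr)$ moves plus handle-slides really do reduce an arbitrary value tuple to one of the two standard structures; that is the matrix bookkeeping Salter and Jarvis perform, and your sketch would need either to reproduce it or to cite it. Two small sanity checks that your sketch gets right: the action on $\Phi_r$ is genuinely affine over $H^1(S;\Zr)$ (Sipe's crossed-homomorphism phenomenon), which is what your ``affine corrections'' remark is tracking; and the two proposed normal forms do have opposite Arf invariant, since by \eqref{eqn:rspin_Arf} the all-zero structure gives $g \bmod 2$ while setting $\phi(\overrightharp{a}_1)=1$ and all else zero gives $g-1 \bmod 2$.
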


\begin{defn}\label{def:spinstab}
Let $\phi$ be an $r$--spin structure. The stabilizer of $\phi$ under the $\Mod(S)$ action is called an {\em $r$--spin mapping class group}, and is denoted by $\Mod(S)[\phi]$.
\end{defn}

By the orbit--stabilizer theorem and Lemma \ref{lem:count_equiv}, the following statements are immediate.

\begin{coro}\label{coro:stab_index}
Let $\phi \in \Phi_r$. Then the stabilizer $\Mod(S)[\phi]$ has the following index in $\Mod(S)$:
\begin{itemize}
\item $r^{2g}$ if $r$ is odd.
\item $(r/2)^{2g} \left( 2^{g-1}(2^g +1) \right)$ if $r$ is even and $\phi$ has even parity.
\item $(r/2)^{2g} \left( 2^{g-1}(2^g - 1) \right)$ if $r$ is even and $\phi$ has odd parity.
\end{itemize}
Moreover, if $\psi \in \Phi_r$ is any other $r$--spin structure (with the same parity if $r$ is even), then $\Mod(S)[\phi]$ and $\Mod(S)[\psi]$ are conjugate subgroups of $\Mod(S)$.
\end{coro}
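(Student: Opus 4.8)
The plan is to read everything off from the orbit--stabilizer theorem applied to the $\Mod(S)$--action on $\Phi_r$ furnished by Theorem \ref{thm:Mod_rspin_action}. First I would observe that, by Definition \ref{def:spinstab}, $\Mod(S)[\phi]$ is precisely the point stabilizer of $\phi$ under this action, so orbit--stabilizer identifies the index $[\Mod(S) : \Mod(S)[\phi]]$ with the cardinality of the orbit $\Mod(S)\cdot\phi \subseteq \Phi_r$. Here $\Phi_r$ is to be read as the set of topological equivalence classes of marked $r$--spin structures on $S$, as agreed at the end of \S\ref{sec:marked_rspin}; this is exactly the set on which Theorem \ref{thm:Mod_rspin_action} asserts the action.

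Next I would split on the parity of $r$. If $r$ is odd, Theorem \ref{thm:Mod_rspin_action} gives transitivity, so $\Mod(S)\cdot\phi = \Phi_r$, which by Lemma \ref{lem:count_equiv} has $r^{2g}$ elements; this is the first bullet. If $r$ is even, Theorem \ref{thm:Mod_rspin_action} gives exactly two orbits, distinguished by the parity of the induced $2$--spin structure, so $\Mod(S)\cdot\phi$ is exactly the set of $r$--spin structures whose parity agrees with that of $\phi$. Lemma \ref{lem:count_equiv} counts these as $(r/2)^{2g}\left(2^{g-1}(2^g+1)\right)$ when $\phi$ is even and $(r/2)^{2g}\left(2^{g-1}(2^g-1)\right)$ when $\phi$ is odd, giving the remaining two bullets.

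For the conjugacy assertion, suppose $\psi \in \Phi_r$ has the same parity as $\phi$ when $r$ is even (and impose no condition when $r$ is odd). By the orbit description just used, $\phi$ and $\psi$ lie in a common $\Mod(S)$--orbit, so there is $h \in \Mod(S)$ with $h \cdot \phi = \psi$. Then for $k \in \Mod(S)$ one has $k \cdot \psi = \psi$ if and only if $kh \cdot \phi = h \cdot \phi$, i.e.\ $(h^{-1} k h)\cdot\phi = \phi$; hence $\Mod(S)[\psi] = h\,\Mod(S)[\phi]\,h^{-1}$, as claimed.

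I do not anticipate a genuine obstacle: the substance is entirely contained in Theorem \ref{thm:Mod_rspin_action} and Lemma \ref{lem:count_equiv}, both already established, and the rest is the orbit--stabilizer theorem together with the standard fact that point stabilizers along an orbit are conjugate. The only point needing a moment's care is keeping straight that the relevant action is on topological equivalence classes of \emph{marked} $r$--spin structures rather than on the $r$--spin structures of one fixed Riemann surface --- but this is precisely the viewpoint adopted just before the statement.
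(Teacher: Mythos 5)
Your proof is correct and follows exactly the paper's approach: the paper simply notes that the result is immediate from the orbit--stabilizer theorem together with Theorem \ref{thm:Mod_rspin_action} and Lemma \ref{lem:count_equiv}, which is precisely the argument you have spelled out (including the standard observation that stabilizers along an orbit are conjugate).
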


Since $\Mod(S)$ is finitely generated and $\Mod(S)[\phi]$ is of finite index, it is also finitely generated. In \cite{Salter_monodromy}, Salter gave a criterion for a finite collection of Dehn twists to generate $\Mod(S)[\phi]$. We record his theorem below.

First, define a {\em network} of curves on a surface (possibly with nonempty boundary) to be a set of simple closed curves such that any two curves in the network intersect at most once. A network is {\em connected} if the union of all curves in the network is connected (as a topological space), and {\em arboreal} if the graph whose vertices are curves and whose edges represent intersections is a tree. A network is {\em filling} if the union of the curves cuts the surface into disks and boundary--parallel annuli.

Salter then defines the $D_{2r+3}$ configuration to be the the arrangement of simple closed curves
\[\{a_1,  a_1', c_1, \ldots, c_{2r+1}\}\]
appearing in Figure \ref{fig:Dtwiddle}. Observe that the boundary of a regular neighborhood of $a_1 \cup a_1' \cup c_1 \cup \ldots \cup c_{2r}$ is isotopic to the multicuve $\Delta_0 \cup a_{r+1} \cup a_{r+1}'$.

\begin{figure}[ht]
\centering
\includegraphics[scale=.95]{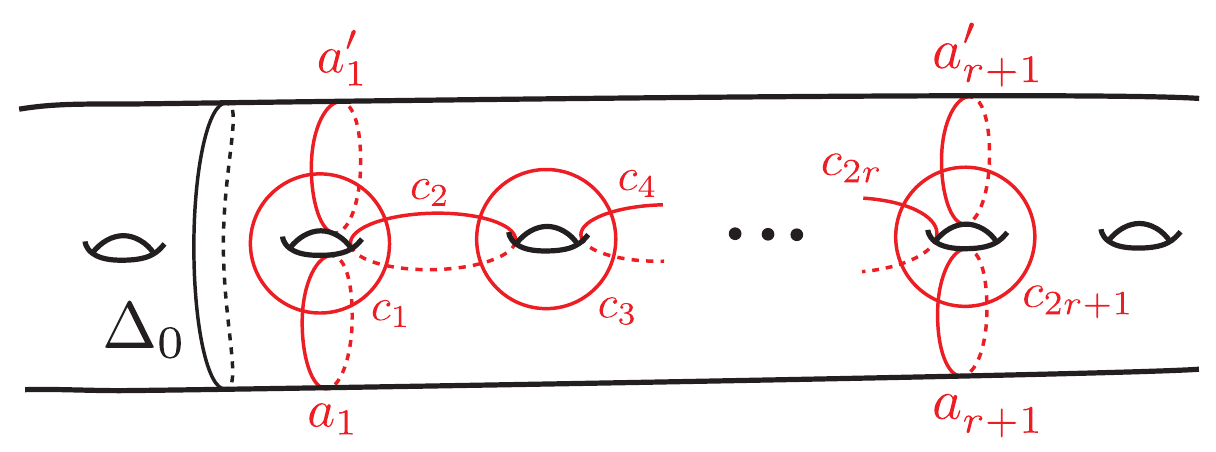}
\caption{The $D_{2r+3}$ configuration on a surface.}
\label{fig:Dtwiddle}
\end{figure}

\begin{thm}[Theorem 9.5 in \cite{Salter_monodromy}]\label{thm:Salter_generation}
Suppose that $\phi$ is an $r$--spin structure on a closed surface $S_g$ and $\mathsf{C}=\{c_i\}$ is a connected filling network on $S_g$ satisfying the following:
\begin{enumerate}
\item $\phi(\overrightharp{c}_i ) = 0$ for all $i$, where $\overrightharp{c}_i$ is the framing of the (oriented) curve $c_i$.
\item There is some subset $\{c_1, \ldots, c_{2r+4} \}$ of $\mathsf{C}$ such that $\{c_1, \ldots, c_{2r+3} \}$ is arranged in the $D_{2r+3}$ configuration and $c_{2r+4}$ corresponds to $a_{r+1}$, as shown in Figure \ref{fig:Dtwiddle}.
\item If $d$ is the curve corresponding to $\Delta_0$ in the $D_{2r+3}$ configuration, then there is some $c \in \mathsf{C}$ such that $i(c,d)=1$.
\item If $\mathsf{C}'$ is the subnetwork of $\mathsf{C}$ containing the curves which do not intersect $d$, then $\mathsf{C}'$ has a further subnetwork $\mathsf{C}''$ which is a connected arboreal filling network for $S \setminus d$.
\end{enumerate}
Then
\begin{itemize}
\item if $r$ is odd and $g \ge 5$, $\langle T({c_i}) : c_i \in \mathsf{C} \rangle = \Mod(S)[\phi]$.
\item if $r$ is even and $g \ge g(r)$ where
\[ g(r) = \left\{
\begin{array}{ll}
13 & r=4 \\
21 & r=8 \\
5 & \text{otherwise}
\end{array} \right. \]
then $\langle T({c_i}) : c_i \in \mathsf{C} \rangle$ is of finite index in $\Mod(S)[\phi]$.
\end{itemize}
\end{thm}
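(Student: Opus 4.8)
The plan is to prove both containments. First, each $T(c_i)$ lies in $\Mod(S)[\phi]$: the transformation rule for an $r$--spin structure (equivalently, a mod $r$ winding number) under a Dehn twist changes $\phi$ by a class proportional to $\phi(\overrightharp{c}_i)$, which vanishes by hypothesis (1); hence $\Gamma := \langle T(c_i) : c_i \in \mathsf{C}\rangle \le \Mod(S)[\phi]$. For the reverse, I would first reduce to a known generating set: one establishes separately — by a change--of--coordinates argument among the \emph{admissible} curves (those $c$ with $\phi(\overrightharp{c}) = 0$) — that $\Mod(S)[\phi]$ is generated by Dehn twists about admissible curves, indeed by a fixed finite collection arranged in a standard configuration, so that it suffices to express each such standard twist as a word in the $T(c_i)$. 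The workhorse is the conjugation identity $T(\gamma)T(c)T(\gamma)^{-1} = T(\gamma(c))$, which shows $\Gamma$ contains $T(c')$ for every admissible curve $c'$ in the $\Gamma$--orbit of a network curve; the whole game is then to show this orbit is large enough to contain a generating configuration.

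Here the hypotheses on $\mathsf{C}$ do the work, via an induction on a filling network. Condition (1) together with the connected--filling hypothesis let me peel leaves off an arboreal filling subnetwork: cutting along a leaf curve $c$ (admissible by (1)) produces a surface of smaller complexity carrying an induced $r$--spin structure and a smaller arboreal filling network, to which the inductive hypothesis applies, yielding all admissible twists supported there; the tree structure is exactly what guarantees that the peeling is consistent and that the resulting subsurface twists actually lie in $\Gamma$. Condition (4) — that the curves of $\mathsf{C}$ disjoint from $d$ contain an arboreal filling subnetwork $\mathsf{C}''$ for $S \setminus d$ — is precisely what makes this induction get off the ground on the complement of $d$.

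The delicate point, and the reason the $D_{2r+3}$ configuration appears, is reincorporating the curve $d$ corresponding to $\Delta_0$, i.e. promoting the arboreal network to a genuinely filling one spanning all of homology. The chain $c_1, \dots, c_{2r+1}$ is calibrated so that the classical chain relation, rewritten inside $\Mod(S)[\phi]$, places $T(d)$ (or an element whose presence in $\Gamma$ is enough) into $\Gamma$: the regular neighborhood of $a_1 \cup a_1' \cup c_1 \cup \dots \cup c_{2r}$ has boundary $\Delta_0 \cup a_{r+1} \cup a_{r+1}'$, and admissibility ties the winding number of $\Delta_0$ to $r$, so that $2r+1$ is the shortest chain whose boundary curves are again admissible and hence accessible by the conjugation move. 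Condition (3) then supplies a curve meeting $d$ exactly once, and one final change--of--coordinates pass — combining $T(d)$, this transverse curve, and the subsurface twists from the previous step — sweeps out a full generating set of admissible twists. When $r$ is odd this yields $\Gamma = \Mod(S)[\phi]$.

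When $r$ is even the induced $2$--spin (Arf) structure obstructs this last step: admissible curves no longer form a single $\Mod(S)[\phi]$--orbit, the relation produced by the chain only captures a proper finite--index piece of the relevant quotient of $\Sp(2g,\ZZ)$, and one can conclude only that $\Gamma$ has finite index in $\Mod(S)[\phi]$ — and even that requires invoking structure theory of finite--index (congruence-- and theta--type) subgroups of $\Mod(S)[\phi]$; the genus bounds $g(r)$, with the anomalous values $13$ and $21$ at $r = 4, 8$, are artifacts of this arithmetic step. I expect the main obstacle to be exactly this interface: showing that the $D_{2r+3}$ chain relation, together with conditions (3) and (4), is genuinely \emph{sufficient} — that no further admissible configuration is needed — which seems to demand both a sharp change--of--coordinates lemma for admissible curves and, in the even case, honest input from arithmetic group theory rather than pure surface topology.
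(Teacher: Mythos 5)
This statement is quoted from Salter's work (Theorem 9.5 of \cite{Salter_monodromy}); the paper does not reprove it. The only original observation here is the remark immediately following the theorem, that condition (4) may be weakened so that $\mathsf{C}'$ need only \emph{contain} an arboreal filling subnetwork. There is thus no internal proof in this paper to compare your sketch against.

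Judged as a reconstruction of Salter's argument, your sketch is a plausible roadmap, but it stops short of a proof precisely where the hard work lies --- and you say as much yourself. A few concrete cautions. Your claim that the $D_{2r+3}$ chain relation ``places $T(d)$ into $\Gamma$'' glosses the real mechanism: the boundary of a regular neighborhood of $a_1 \cup a_1' \cup c_1 \cup \ldots \cup c_{2r}$ is the multicurve $\Delta_0 \cup a_{r+1} \cup a_{r+1}'$, so the relevant $D$-type relation produces a \emph{product} of boundary twists, not $T(\Delta_0)$ alone; isolating $T(d) = T(\Delta_0)$ is exactly why condition (2) insists on the extra curve $c_{2r+4}$ (corresponding to $a_{r+1}$) beyond the $D_{2r+3}$ configuration itself, a detail your sketch does not address. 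The leaf-peeling induction is the right mental picture, but as written it is a heuristic rather than an argument --- the assertion that cutting along a leaf curve hands you a smaller instance of the same problem must be squared with how the $r$-spin structure restricts to the cut surface, and this is not automatic. Your account of the even-$r$ case (the Arf invariant obstructing a single orbit, the finite-index conclusion requiring arithmetic input, $g(r)$ as thresholds) is faithful to the spirit of the result and consistent with Lemma \ref{lem:Salter_sympl}, but ``honest input from arithmetic group theory'' stands in for the technical bulk of that step. In short, this is a reasonable outline of the shape of Salter's argument, not a proof of the cited theorem.
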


We remark that while Salter's theorem as stated in \cite{Salter_monodromy} requires $\mathsf{C}'$ to be an arboreal filling network for the cut surface, an analysis of his methods reveals that it is enough to require that $\mathsf{C}'$ {\em contains} such a subnetwork (c.f. \cite[Lemma 9.4]{Salter_monodromy}).

By a more careful analysis of the subgroup $\langle T({c_i}) : c_i \in \mathsf{C} \rangle$, Salter is also able to say something about its image under the symplectic representation $\psi: \Mod(S) \rightarrow \Sp(2g, \ZZ)$.

\begin{lemma}[c.f. Lemmas 5.4 and 6.4 in \cite{Salter_monodromy}]\label{lem:Salter_sympl}
Suppose that $\phi$ is an $r$--spin structure on a closed surface $S_g$ and $\mathsf{C}=\{c_i\}$ is as in Theorem \ref{thm:Salter_generation}.
\begin{itemize}
\item If $r$ is odd and $g \ge 5$,
then 
$\psi \left( \langle T({c_i}) : c_i \in \mathsf{C} \rangle \right) = \Sp(2g, \ZZ)$
\item If $r$ is even and $g \ge g(r)$,
then $\psi \left( \langle T({c_i}) : c_i \in \mathsf{C} \rangle \right)$ is the stabilizer in $\Sp(2g, \ZZ)$ of the $\ZZ/2$--quadratic form $q_{\phi^{\otimes r/2}}$.
\end{itemize}
\end{lemma}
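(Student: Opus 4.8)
The plan is to prove the stated equality by two inclusions: the inclusion ``$\subseteq$'' is a short formal argument, while the reverse inclusion is essentially an application of Salter's work, and is where all the content lies.

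For ``$\subseteq$'' I would use that Theorem \ref{thm:Salter_generation} already places $\langle T(c_i) : c_i \in \mathsf{C}\rangle$ inside $\Mod(S)[\phi]$ (equal to it when $r$ is odd, of finite index when $r$ is even). When $r$ is odd this gives the inclusion for free, since the claimed image $\Sp(2g,\ZZ)$ is the whole target. When $r$ is even, every element of $\Mod(S)[\phi]$ fixes the induced $2$-spin structure $\phi^{\otimes r/2}$, and since the assignment of a $\ZZ/2$--quadratic form to a $2$-spin structure is $\Mod(S)$--equivariant (cf. Lemma \ref{lem:lift_twistlin}), such an element also fixes $q_{\phi^{\otimes r/2}}$; hence its image under $\psi$ lands in the stabilizer $\Sp(2g,\ZZ)[q_{\phi^{\otimes r/2}}]$.

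For the reverse inclusion I would invoke Lemmas 5.4 and 6.4 in \cite{Salter_monodromy}. The point is that $\psi(T(c_i))$ is the symplectic transvection in the class $[c_i]$, so it suffices to show that the transvections attached to the curves of $\mathsf{C}$ generate $\Sp(2g,\ZZ)$ (odd case), respectively $\Sp(2g,\ZZ)[q_{\phi^{\otimes r/2}}]$ (even case). Salter's analysis shows this already for the transvections along a suitable chain-type sub-network extracted from $\mathsf{C}$ using the filling property together with the $D_{2r+3}$ sub-configuration and the curve $c_{2r+4}$ guaranteed by hypotheses (1)--(4). One minor bookkeeping point: Salter works with a quadratic form that differs from $q_{\phi^{\otimes r/2}}$ by a convention (cf. the footnote following Lemma \ref{lem:lift_twistlin}), but the two forms have the same Arf invariant and hence conjugate stabilizers -- and in fact equal stabilizers, once the ``$\subseteq$'' inclusion above is in hand.

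The main obstacle -- and the reason this lemma is \emph{not} a formal consequence of Theorem \ref{thm:Salter_generation} -- is the even case: that theorem only yields that $\langle T(c_i)\rangle$ has finite index in $\Mod(S)[\phi]$, and a finite-index subgroup of $\Mod(S)[\phi]$ can a priori have strictly smaller symplectic image than $\Sp(2g,\ZZ)[q_{\phi^{\otimes r/2}}]$. What saves us is that Salter's generation argument is already sharp at the level of the explicit configuration -- the transvections in the chosen chain surject onto the full quadratic-form stabilizer before any finite-index ambiguity enters -- and the care needed here is simply to isolate which step of Salter's proof delivers this precise conclusion rather than mere generation up to finite index.
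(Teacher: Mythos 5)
Your approach matches the paper's: the paper supplies no independent proof of this lemma, instead citing Salter's Lemmas 5.4 and 6.4 exactly as you do for the reverse inclusion, and your ``$\subseteq$'' scaffolding (hypothesis (1) of Theorem \ref{thm:Salter_generation} puts $\langle T(c_i)\rangle$ inside $\Mod(S)[\phi]$, which preserves $\phi^{\otimes r/2}$ and hence $q_{\phi^{\otimes r/2}}$) is a correct unpacking of what the citation implicitly relies on. Your observation that the even case is \emph{not} a formal corollary of Theorem \ref{thm:Salter_generation}'s finite-index conclusion — since a finite-index subgroup of $\Mod(S)[\phi]$ could a priori have a properly smaller image under $\psi$ — is exactly the right subtlety to flag, and your index/containment argument for dispatching the convention discrepancy between Salter's quadratic form and $q_{\phi^{\otimes r/2}}$ is sound.
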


\section{Abelian differentials and winding numbers}\label{sec:diffspin}

We have already seen in Definition \ref{def:spinstr} how any pair $(X, \omega) \in \HM(\sing)$ defines a square root of the canonical bundle $K_X$ whenever $r= \gcd(\sing)$ is even. In a similar way, it also defines an $r$--spin structure on $X$.

Below, we give an algebro-geometric interpretation of this correspondence before giving an equivalent formulation in terms of winding numbers (Proposition \ref{prop:wn_is_rspin}). Using this equivalence, in Proposition \ref{prop:wn_const_on_comp} we prove that the induced $r$--spin structure is an invariant of connected components of $\HT(\sing)$, and in \S\ref{sec:wn_monodromy} investigate the implications of this fact for the {\em geometric monodromy group} (Definition \ref{def:geomon}).

\begin{lemma}
If $X$ is a Riemann surface, then there exists an effective $r$--spin structure $\LL \rightarrow X$ if and only if $X$ admits an abelian differential $\omega$ such that $r | \gcd( \sing )$.
\end{lemma}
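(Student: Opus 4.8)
The plan is to directly translate the statement about effective $r$-spin structures into the language of divisors, using the Abel--Jacobi correspondence between line bundles and divisor classes recalled in the footnote of \S\ref{sec:def_rspin}.

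First, suppose $\omega$ is an abelian differential on $X$ with $r \mid \gcd(\sing)$; write its divisor as $(\omega) = \sum_{i=1}^n k_i p_i$, where each $k_i$ is a multiple of $r$. Then $D := (\omega)/r = \sum_i (k_i/r) p_i$ is an honest effective divisor on $X$, and the associated line bundle $\LL := \mathcal{O}(D)$ satisfies $\LL^{\otimes r} \cong \mathcal{O}(rD) = \mathcal{O}((\omega)) \cong K_X$, the last isomorphism being the standard identification of the canonical bundle with the line bundle of the divisor of any nonzero section of $K_X$. Moreover $\LL$ is effective by construction: the divisor $D \ge 0$ furnishes a holomorphic section. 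This gives one direction.

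Conversely, suppose $\LL \rightarrow X$ is an $r$-spin structure admitting a nonzero holomorphic section $s$, so that its divisor of zeros $D := (s)$ is effective and $\LL \cong \mathcal{O}(D)$. Then $K_X \cong \LL^{\otimes r} \cong \mathcal{O}(rD)$, so $rD$ is linearly equivalent to a canonical divisor; concretely, the $r$-th power $s^{\otimes r}$ is a holomorphic section of $K_X$, i.e.\ an abelian differential $\omega := s^{\otimes r}$ whose divisor is $(\omega) = rD = \sum_i r m_i q_i$ if $D = \sum_i m_i q_i$. Thus every zero of $\omega$ has order divisible by $r$, so $r \mid \gcd(\sing)$ where $\sing$ records the zero orders of $\omega$ (after possibly merging points, but the $\gcd$ is unaffected). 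This establishes the other direction.

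This argument is essentially formal given the dictionary between effective line bundles, their holomorphic sections, and effective divisors, together with the fact that $K_X = \mathcal{O}((\omega))$ for any abelian differential $\omega$; there is no serious obstacle. The only point requiring a word of care is that $\omega = s^{\otimes r}$ is genuinely nonzero and holomorphic (which is immediate since $s$ is) and that one is free to assume $\omega \ne 0$ in the forward direction — on a compact Riemann surface of genus $g \ge 1$ every holomorphic abelian differential relevant to a stratum is nonzero by hypothesis. I would present the two implications as above and remark that this is exactly the content (for a single surface) of Polischuk's correspondence between moduli of effective $r$-spin structures and strata of abelian differentials cited in the introduction.
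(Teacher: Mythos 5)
Your argument is the same as the paper's: both directions pass through the standard dictionary between effective divisors, line bundles, and holomorphic sections, taking $\LL = \mathcal{O}((\omega)/r)$ in one direction and $\omega = s^{\otimes r}$ in the other. The details match the paper's proof essentially verbatim, and the closing remark correctly identifies this as the single-surface content of Polischuk's correspondence.
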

\begin{proof}
As $(X,\omega) \in \HM(\sing)$, the associated divisor $(\omega) = \sum_{i=1}^n k_i p_i$ is divisible by $r$, via 
\[(\omega) /r = \sum_{i=1}^n (k_i/r) p_i.\]
By the standard correspondence between divisor classes and line bundles (see, e.g., \cite[pp. 133--4]{Griff_Harr}), this divisor gives rise to a holomorphic line bundle $\LL=\LL_{(\omega)/r} \rightarrow X$ whose $r\ith$ tensor power is (isomorphic to) $K_X$. This $\LL$ is therefore an $r$--spin structure on $X$.

Moreover, $(\omega)/r$ is effective because its coefficients are all positive. Therefore the standard correspondence also yields a holomorphic section $\sigma: X \rightarrow \LL$ such that $\sigma^{\otimes r} : X \rightarrow \LL^{\otimes r} \cong K_X$ is a section of $K_X$ with
\[(\sigma^{\otimes r}) = r (\sigma) = r (\omega)/r = (\omega).\]

On the other hand, suppose that $X$ is a Riemann surface equipped with an $r$--spin structure $\LL$ and a holomorphic section $\sigma: X \rightarrow \LL$. Then by same manipulations as above, we see that $\sigma^{\otimes r}: X \rightarrow K_X$ is an abelian differential with divisor $r (\sigma)$.
\end{proof}

\subsection{Marked $r$--spin structures as winding numbers}\label{sec:rspin_winding}

While the above construction is natural (indeed, almost tautological) from an algebro-geometric perspective, it does not shed any light on the relation between $r$--spin structures and the flat geometry of $(X, \omega)$. In order to investigate this connection, we give one final interpretation of $r$--spin structures which will allow us to make the link with flat structures explicit. 
\footnote{To the best of the author's knowledge, this relationship first appears explicitly in print in work of Trapp \cite{Trapp_wn}, though a preliminary sketch appears in the proof of Proposition 3.2 of \cite{Sipe_roots}. More recently, it has resurfaced in \cite{Salter_planecurves} and \cite{Salter_monodromy} and in a partial form in \cite{Walker_components}.}

Observe that every abelian differential $\omega$ with divisor $\sum_{i=1}^n k_i p_i$ on a Riemann surface $X$ naturally defines a (nonvanishing) {\em horizontal unit vector field} $H_\omega$ on $X \setminus \{p_1, \ldots, p_n\}$. For every $x \in X \setminus \{p_1, \ldots, p_n\}$, the vector  $H_\omega(x)$ is the unique unit tangent vector such that $\omega( H_\omega(x)) \in \RR_{>0}$. Note that the horizontal foliation of $\omega$ exactly consists of the integral curves for this vector field, and at each point $p_i$ we have that
\begin{equation}\label{eqn:index_k_i}
\text{index}_{p_i}(H_\omega) = - k_i 
\end{equation}
where we recall that the index of a vector field at a singular point is the degree of the Gauss map on a small loop about that point.

\begin{figure}[ht]
\centering
\includegraphics[scale=.6]{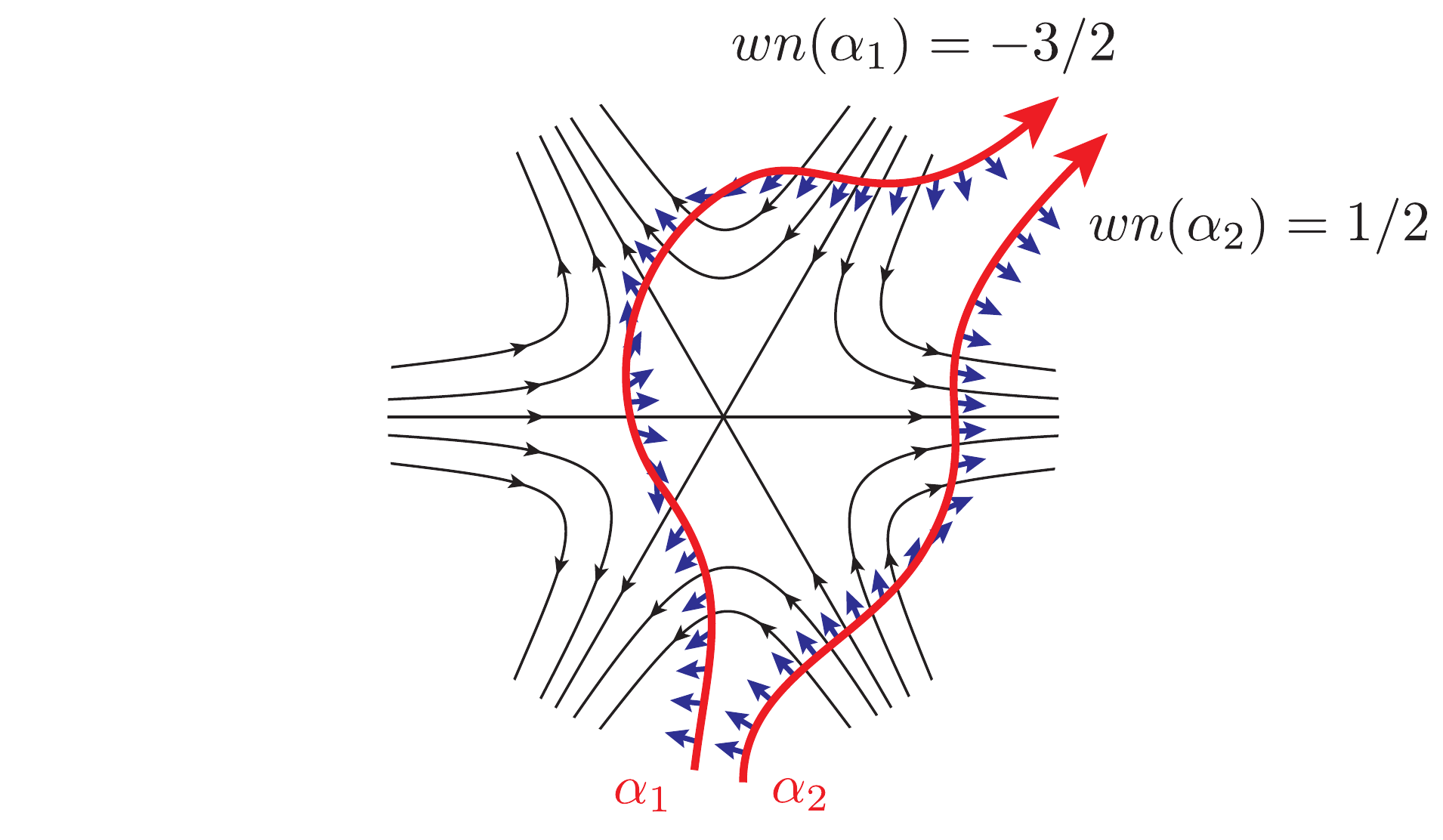}
\caption{The horizontal foliation around the zero of an abelian differential. The winding numbers of the (oriented) arcs $\alpha_1$ and $\alpha_2$, which are homotopic across the zero, differ by the degree of the zero.}
\label{fig:index_puncture}
\end{figure}

Define the {\em winding number} $wn_{(X,\omega)} (c)$ {\em with respect to} $H_\omega$ of any (smooth) oriented simple closed curve $c$ on $X \setminus \{p_1, \ldots, p_n\}$ by counting the number of times the tangent vector of $\overrightharp{c}$ turns about $H_\omega$. Observe that this assignment is not homotopy invariant, for a homotopically trivial counterclockwise loop has winding number $1$ with respect to the horizontal vector field.
\footnote{The correct notion is invariance under {\em regular homotopy}, which in particular includes isotopy. See \cite{Chill_wn1} and \cite{Chill_wn2} for a careful discussion of this construction.}
One may also compute that any small counterclockwise loop about $p_i$ has winding number exactly $k_i + 1$.

In order to make the above notion of winding number coherent for (smooth, oriented) simple closed curves on our original surface $X$, we need to understand what happens to winding numbers when we fill in a puncture. As a curve passes from one side of a zero to the other, its winding number must change by plus or minus the index of the vector field at that singularity, so by \eqref{eqn:index_k_i}, the winding number changes by the multiplicity of the zero (where the sign depends on which side of the curve the zero lies, see Figure \ref{fig:index_puncture}). Therefore, taking all winding numbers mod $r = \gcd(\sing)$ yields a well--defined function on isotopy classes of (smooth, oriented) simple closed curves on $X$. Note that a small nulhomotopic loop always has winding number $1$ mod $r$.

Moreover, this winding number function satisfies a {\em twist linearity condition}:
\footnote{Chillingworth actually only considers winding number functions corresponding to nonvanishing vector fields on punctured surfaces, and the mod $2g-2$ winding numbers obtained by taking a nonsingular vector field on $S_{g,1}$ and filling in the single puncture. For us, these correspond to the winding numbers functions obtained from a differential in the minimal stratum $\HT(2g-2)$. However, his work immediately generalizes to vector fields obtained by filling in multiple punctures.}

\begin{lemma}[Lemma 4.2 of \cite{Chill_wn2}]
If $r = \gcd(\sing)$ and $(X, \omega) \in \HM(\sing)$, one has
\begin{equation}\label{eqn:wn_twist_linear}
wn_{(X,\omega)}^r( T(b) \cdot c) = wn_{(X,\omega)}^r(c) + (b.c) wn_{(X,\omega)}^r (b) \mod r
\end{equation}
where $b$ and $c$ are oriented simple closed curves on $X$, $T(b)$ is the Dehn twist about $b$, and $(b.c)$ is the {\em algebraic} intersection number. 
\end{lemma}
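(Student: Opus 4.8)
The plan is to prove the twist-linearity formula \eqref{eqn:wn_twist_linear} by working on the punctured surface $X^\circ := X \setminus \{p_1, \ldots, p_n\}$, where the horizontal vector field $H_\omega$ is genuinely nonvanishing, deriving the formula there for the honest (integer-valued) winding number attached to a nonvanishing vector field, and then reducing mod $r$ to descend to isotopy classes of curves on the closed surface $X$. The key point is that, upstream on $X^\circ$, the winding number $wn_{(X,\omega)}$ is precisely a Chillingworth-type winding number function associated to the nonvanishing field $H_\omega$, so Lemma 4.2 of \cite{Chill_wn2} applies directly; the only work is to check that passing to the mod $r$ reduction on $X$ is compatible with the twist operation.

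First I would recall how $wn_{(X,\omega)}^r$ on $X$ is defined from $wn_{(X,\omega)}$ on $X^\circ$: choose a representative of the isotopy class of $c$ that is disjoint from the zeros and compute its $H_\omega$-winding number; by the discussion preceding the lemma, changing the representative across a zero $p_i$ changes the answer by $\pm k_i$, hence by $0 \bmod r$ since $r \mid k_i$, so $wn_{(X,\omega)}^r(c) \in \ZZ/r\ZZ$ is well-defined on isotopy classes on $X$. Next, given oriented simple closed curves $b, c$ on $X$, I would isotope them to meet the zeros trivially and to intersect each other minimally, realize the Dehn twist $T(b)$ by a homeomorphism supported in an annular neighborhood of $b$ disjoint from the $p_i$, and observe that $T(b) \cdot c$ then has a representative on $X^\circ$. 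Applying Chillingworth's formula on $X^\circ$ gives
\[
wn_{(X,\omega)}(T(b)\cdot c) = wn_{(X,\omega)}(c) + (b.c)\, wn_{(X,\omega)}(b),
\]
where $(b.c)$ is the algebraic intersection number (which is unchanged whether computed on $X$ or $X^\circ$, since the zeros are avoided). Reducing this identity mod $r$ yields exactly \eqref{eqn:wn_twist_linear}.

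The one subtlety worth spelling out — and the step I expect to be the main obstacle — is the footnoted remark that Chillingworth's original statement is phrased either for nonvanishing fields on a once-punctured surface $S_{g,1}$ or for the mod $2g-2$ reduction obtained from such a field, whereas here $H_\omega$ is a nonvanishing field on the multiply-punctured $X^\circ$. I would address this by noting that Chillingworth's proof of twist-linearity is entirely local to an annular neighborhood of the twisting curve $b$: the formula compares the winding of $c$ and of $T(b)\cdot c$ strand by strand through $A_b$, and each of the $|b.c|$ signed crossings contributes $\pm wn_{(X,\omega)}(b)$ independently of the global topology of the surface or the number of punctures. Thus the argument carries over verbatim to $X^\circ$. (Alternatively, one can invoke the more general treatment in \cite{Chill_wn1, Chill_wn2} of winding-number functions for arbitrary nonsingular vector fields, of which $H_\omega|_{X^\circ}$ is an instance.) Once this local nature is recorded, the rest is the bookkeeping above, and the mod $r$ reduction is immediate.
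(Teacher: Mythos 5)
Your approach matches the paper's: the paper does not prove this lemma but simply cites Chillingworth's Lemma 4.2, with a footnote asserting that while Chillingworth stated it for a nonsingular vector field on a once-punctured surface (and the mod $2g-2$ reduction on the closed surface), his argument "immediately generalizes" to multiple punctures. Your proposal is precisely the content of that footnote made explicit — applying Chillingworth's integer-valued twist-linearity on $X^\circ$, observing that the argument is local to an annular neighborhood of $b$ and hence insensitive to how many zeros have been removed, and then reducing mod $r$ — so it is correct and faithful to the paper's (implicit) reasoning.
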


In \cite{HJ_windingnumber}, Humphries and Johnson classified such twist--linear winding number functions, and in our case, their work implies that the winding number function factors through $H_1(T_0 X, \ZZ)$.

\begin{lemma}[c.f. Theorem 2.5 of \cite{HJ_windingnumber}]\label{lemma:HJ}
There is some $\phi \in H^1(T_0 X, \Zr)$ so that $wn_{(X,\omega)}^r = \phi \circ h$, where
\[h : \{ \text{oriented simple closed curves}\} \rightarrow H_1(T_0 X, \ZZ)\]
is the map which sends an oriented simple closed curve to the homology class of its framing.
\end{lemma}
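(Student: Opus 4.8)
The plan is to read $\phi$ off from the twist-linearity identity \eqref{eqn:wn_twist_linear} by appealing to the Humphries--Johnson classification of twist-linear functions. Concretely, \eqref{eqn:wn_twist_linear} exhibits $wn^r_{(X,\omega)}$ as a $\Zr$--valued twist-linear winding number function on oriented simple closed curves, and the result of \cite{HJ_windingnumber} cited as ``c.f.\ Theorem 2.5'' in Lemma \ref{lem:hom_rspin_eq} asserts that every such function factors through the framing map $h$ via a cohomology class of $T_0X$; this produces $\phi \in H^1(T_0X;\Zr)$ with $wn^r_{(X,\omega)} = \phi\circ h$. Two small bookkeeping points then finish matters. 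First, Humphries and Johnson phrase their classification for winding numbers reduced modulo $2g-2 = |\chi(X)|$, so to land in $\Zr$ one observes that $r = \gcd(\sing)$ divides $2g-2$ (immediate from $\sum k_i = 2g-2$) and reduces coefficients. Second, although the statement of the lemma does not demand it, evaluating $wn^r_{(X,\omega)} = \phi\circ h$ on a small counterclockwise nulhomotopic loop --- which has winding number $1$ and whose framing is the fiber class $\alpha$ --- shows $\phi(\alpha) = 1$, so $\phi$ in fact lies in the set $\Phi_r$ of Theorem \ref{thm:rspin_equiv_defn}; by the discussion following that theorem, getting $+1$ rather than $-1$ is precisely the effect of the Riemannian (as opposed to Hermitian) normalization.

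For a self-contained proof one can instead build $\phi$ by hand. The plan would be: fix a geometric symplectic basis $a_1, \dots, a_g, b_1, \dots, b_g$ of $H_1(X;\ZZ)$; recall from the Gysin sequence of the circle bundle $T_0X \to X$, whose Euler number is $\pm(2g-2)$, that $H_1(T_0X;\ZZ) \cong \ZZ^{2g} \oplus \ZZ/(2g-2)$, with free part spanned by the framings $\overrightharp{a}_i, \overrightharp{b}_i$, torsion generated by $\alpha$, and sole relation $(2g-2)\alpha = 0$; and, using $r \mid 2g-2$, define $\phi \in H^1(T_0X;\Zr)$ by $\overrightharp{a}_i \mapsto wn^r_{(X,\omega)}(a_i)$, $\overrightharp{b}_i \mapsto wn^r_{(X,\omega)}(b_i)$, $\alpha \mapsto 1$, which respects that relation. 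By construction $wn^r_{(X,\omega)}$ and $\phi\circ h$ agree on the basis curves and on nulhomotopic loops, and one would then try to propagate the equality to an arbitrary oriented simple closed curve $c$ by writing $c = g\cdot c_0$ for a fixed model curve $c_0$ of the same topological type and $g$ a product of Dehn twists, transporting the identity one twist at a time via \eqref{eqn:wn_twist_linear}.

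The main obstacle is exactly this propagation. Transporting $wn^r_{(X,\omega)}(c) = \phi(h(c))$ along a Dehn twist $T_b$ requires already knowing the identity for the twisting curve $b$, together with a precise statement of how $h(T_b\cdot c)$ decomposes relative to $h(c)$, $h(b)$, and the fiber class; moreover the Dehn twists about a geometric basis alone do not act transitively on simple closed curves (the subgroup they generate is proper in $\Mod(X)$), so a naive induction stalls at the ``extra'' generator(s) of $\Mod(X)$. Sorting all of this out --- controlling the framing of an \emph{arbitrary} simple closed curve in $H_1(T_0X;\ZZ)$, equivalently checking that $wn^r_{(X,\omega)}$ is consistent with the defining relations (lantern, chain, etc.) among Dehn twists --- is precisely the substance of \cite{HJ_windingnumber}, which is why I would invoke their theorem rather than reprove it. The only genuinely new ingredient is carrying coefficients mod $r$ instead of mod $2g-2$, which is harmless exactly because $r \mid 2g-2$.
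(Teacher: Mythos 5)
Your primary argument---invoking the Humphries--Johnson classification of twist-linear winding number functions, then reducing coefficients from $\ZZ_{2g-2}$ to $\Zr$ using $r \mid 2g-2$---is exactly the paper's route: the lemma is stated with a ``c.f.\ Theorem 2.5'' attribution and no independent proof, so the intended argument is precisely to cite Humphries--Johnson. Your side remarks are also accurate: the self-contained Gysin-sequence sketch would indeed stall at propagating the identity from a geometric basis to arbitrary simple closed curves (that propagation \emph{is} the content of the cited theorem), and the observation $\phi(\alpha)=1$ is what upgrades $\phi$ to an $r$--spin structure in the immediately following Proposition~\ref{prop:wn_is_rspin} rather than being part of Lemma~\ref{lemma:HJ} itself.
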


Since the framing of a small nulhomotopic loop is homotopic to a fiber $\alpha$, we have that $\phi(\alpha)=1$, hence

\begin{prop}\label{prop:wn_is_rspin}
Let $(X, \omega) \in \HM(\sing)$ where $\gcd(\sing)=r$. Let $\phi \in H^1(T_0 X, \Zr)$ be the cohomology class resulting from Lemma \ref{lemma:HJ}; then $\phi$ is an $r$--spin structure.
\end{prop}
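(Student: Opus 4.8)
The plan is to unwind the definitions already in place so that the statement becomes almost immediate. By Lemma \ref{lemma:HJ} we already have a class $\phi \in H^1(T_0 X, \Zr)$ with $wn_{(X,\omega)}^r = \phi \circ h$, so by the cohomological characterization of Theorem \ref{thm:rspin_equiv_defn} the only thing that needs to be checked is that $\phi(\alpha) = 1$, where $\alpha$ is the homology class of an $S^1$ fiber of $T_0 X$. This is exactly the normalization condition \eqref{eqn:space_of_spin_defn} defining the set $\Phi_r$ of $r$--spin structures.

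First I would recall the geometric meaning of $\alpha$: the framing $h(c)$ of a small counterclockwise nulhomotopic loop $c$ on $X$ is homotopic in $T_0 X$ to a positively oriented fiber $\alpha$, since such a loop bounds a disk on which the horizontal vector field $H_\omega$ can be trivialized, and the framing of the boundary then records precisely one full turn of the tangent vector relative to that trivialization. (This is the content of the footnoted ``regular homotopy'' subtlety; a nulhomotopic counterclockwise loop is \emph{not} regularly nulhomotopic, but rather regularly homotopic to a fiber.) Second, by the discussion preceding the proposition — specifically the computation that any small nulhomotopic counterclockwise loop has winding number $1 \bmod r$ with respect to $H_\omega$ — we get
\[\phi(\alpha) = \phi(h(c)) = wn_{(X,\omega)}^r(c) = 1 \bmod r.\]
Hence $\phi \in \Phi_r$, and by Theorem \ref{thm:rspin_equiv_defn} it corresponds (after a choice of Riemannian metric identifying $T_0 X$ with $T_0^* X$) to an $r\ith$ root of $K_X$, i.e.\ to an $r$--spin structure.

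There is essentially no obstacle here: the real work has been front-loaded into Lemma \ref{lemma:HJ} (the Humphries--Johnson classification, which forces the winding number function to factor through $H_1(T_0 X, \ZZ)$) and into the computation of the index of $H_\omega$ at the zeros, which is what makes the mod $r$ reduction well-defined on isotopy classes of simple closed curves on the closed surface $X$. The only point requiring a modicum of care is the identification of $\alpha$ with the framing of a nulhomotopic loop and the resulting sign/orientation bookkeeping — one must be consistent about counterclockwise loops and the positively oriented fiber so that the value comes out to $+1$ and not $-1$ (this is precisely the convention issue flagged after Theorem \ref{thm:rspin_equiv_defn} distinguishing the Riemannian from the Hermitian normalization). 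Once that is pinned down, the proposition follows by direct substitution.
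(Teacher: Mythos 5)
Your proof is correct and follows the same route as the paper: reduce to checking the normalization $\phi(\alpha)=1$ via Theorem \ref{thm:rspin_equiv_defn}, then observe that the framing of a small counterclockwise nulhomotopic loop is homotopic to the fiber $\alpha$ and has winding number $1 \bmod r$. The paper states this in one sentence; you have simply spelled out the regular-homotopy and orientation bookkeeping more explicitly.
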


Tracing through the definitions, the reader should convince herself that this cohomology class is the same as the one corresponding to the $r$--fold cover of $T_0 X$ induced by the $r\ith$ root $\LL_{(\omega)/r}$ of $K_X$ discussed in the introduction to this section.

Moreover, given any $r$--spin structure $\phi$ on any Riemann surface $X$, a (meromorphic) section $\mu: X \rightarrow \LL$ defines a horizontal vector field $H_\mu$ on $X$ away from the zeros and poles of $\mu$ and hence a corresponding mod $r$ winding number function. Therefore we see that there is a natural one--to--one correspondence between $r$--spin structures and mod $r$ winding number functions.

By the work of Humphries and Johnson, we also have the following {\em homological coherence} property:

\begin{lemma}[Lemma 2.4 in \cite{HJ_windingnumber}, see also Proposition 3.8 of \cite{Salter_monodromy}]\label{lem:hom_coh}
Suppose that $\phi$ is any $r$--spin structure on $(X, \omega)$ and $Y$ is a subsurface of $X$ with boundary components $c_1, \ldots, c_m$. Then if the $c_i$ are oriented such that $Y$ always lies on the left--hand side of $\overrightharp{c}_i$,
\[ \sum_{i=1}^m \phi(\overrightharp{c}_i) \equiv \chi(Y) \mod r.\]
\end{lemma}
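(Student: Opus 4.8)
The plan is to realize $\phi$ by an honest winding number function and then read off the statement from the Poincar\'e--Hopf theorem applied to $Y$.

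\emph{Reduction and setup.} First I would reduce to the case $\phi = \phi_\omega$, the $r$--spin structure attached to $(X,\omega)$ itself via Proposition~\ref{prop:wn_is_rspin}. Any two $r$--spin structures on $X$ differ by (the image in $H^1(T_0X,\Zr)$ of) an $r$--torsion class $j$ pulled back from $H^1(X,\Zr)$, and since $\sum_i[\overrightharp{c}_i]$ projects to $\sum_i[c_i]=[\partial Y]=0$ in $H_1(X,\ZZ)$, such a class $j$ contributes nothing to $\sum_i\phi(\overrightharp{c}_i)$; so it is enough to treat $\phi_\omega$. Fixing honest smooth representatives for the $c_i$ (the actual boundary curves of the embedded subsurface $Y$, which avoid the zeros of $\omega$), we have $\phi_\omega(\overrightharp{c}_i)\equiv wn^r_{(X,\omega)}(\overrightharp{c}_i)\pmod r$, where the (integer-valued) winding number is computed with respect to the horizontal vector field $H_\omega$.

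\emph{Poincar\'e--Hopf with boundary.} The heart of the argument is the identity
\[ \sum_{i=1}^{m} wn_{(X,\omega)}(\overrightharp{c}_i) \;=\; \chi(Y) \;-\; \sum_{p\in\operatorname{Int}(Y)} \operatorname{index}_p(H_\omega),\]
valid when the $c_i$ are oriented so that $Y$ lies to the left of each $\overrightharp{c}_i$, where the right-hand sum runs over the singularities of $\omega$ contained in $Y$. I would prove this by capping off each boundary curve $c_i$ with a disk $D_i$, extending $H_\omega$ smoothly over the caps, applying the usual closed Poincar\'e--Hopf theorem to the resulting closed surface, and accounting for the indices introduced inside each $D_i$ by means of the elementary fact that a vector field on a disk with counterclockwise boundary $c$ has total interior index $1-wn(\overrightharp{c})$; alternatively one can cut $Y$ into pairs of pants and argue additively, using that the two framings of a separating curve cancel ($\phi(\overrightharp{c})+\phi(-\overrightharp{c})=0$) while Euler characteristics add. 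This is the classical statement underlying the cited Lemma~2.4 of \cite{HJ_windingnumber} and Proposition~3.8 of \cite{Salter_monodromy}. I expect the only genuine obstacle here to be bookkeeping the orientation and sign conventions consistently; there is no serious content beyond Poincar\'e--Hopf.

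\emph{Passing to coefficients mod $r$.} By \eqref{eqn:index_k_i} one has $\operatorname{index}_p(H_\omega)=-k(p)$, where $k(p)$ is the order of the zero of $\omega$ at $p$, and $r=\gcd(\sing)$ divides every $k(p)$; hence $\sum_{p\in\operatorname{Int}(Y)}\operatorname{index}_p(H_\omega)\equiv 0\pmod r$. Combining this with the previous two steps gives
\[ \sum_{i=1}^{m}\phi(\overrightharp{c}_i) \;\equiv\; \sum_{i=1}^{m} wn_{(X,\omega)}(\overrightharp{c}_i) \;=\; \chi(Y)-\!\!\sum_{p\in\operatorname{Int}(Y)}\!\!\operatorname{index}_p(H_\omega)\;\equiv\;\chi(Y)\pmod r,\]
which is the claim. (Equivalently, the computation shows $\sum_i[\overrightharp{c}_i]=\chi(Y)\,[\alpha]$ in $H_1(T_0X,\ZZ)$, after which one simply applies $\phi$ and uses $\phi(\alpha)=1$.) Note the boundaryless case $Y=X$ is not excluded: there the left-hand side is an empty sum and the assertion becomes $2-2g\equiv 0\pmod r$, which holds because $\sing$ is a partition of $2g-2$ and $r\mid 2g-2$.
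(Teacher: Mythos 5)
Your proof is correct, and the paper does not actually reproduce a proof of this lemma—it only cites Lemma~2.4 of Humphries--Johnson and Proposition~3.8 of Salter—so there is no in-paper argument to compare against; your Poincar\'e--Hopf-with-boundary computation is precisely the standard argument those references give (and, as you note, the cleanest formulation is the integral identity $\sum_i[\overrightharp{c}_i]=\chi(Y)\,[\alpha]$ in $H_1(T_0X,\ZZ)$, after which every step is purely formal). Two small remarks: (i) the reduction to $\phi_\omega$ tacitly uses that $\omega$ actually induces an $r$--spin structure, i.e.\ that $r\mid\gcd(\sing)$—this holds in the paper's setting but is worth flagging, and the integral-homology route sidesteps even needing a differential; (ii) your sign bookkeeping in the capping step is consistent (when a cap $D_i$ is glued to $c_i$, its boundary oriented with $D_i$ on the left is $-\overrightharp{c}_i$, contributing interior index $1+wn(\overrightharp{c}_i)$, which cancels the $m$ added to $\chi$ and yields exactly the identity you state).
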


The above equivalence between mod $r$ winding number functions and $r$--spin structures then allows us to state the following geometrically obvious generalization of \cite[Lemma 1]{KZ_strata}.

\begin{lemma}\label{lem:cyl_wn0}
Let $(X,f,  \omega) \in \HT(\sing)$ be a marked abelian differential with $\gcd(\sing)=r$, and set $\phi$ to be the $r$--spin structure induced by $\omega$. Then if $c$ is a curve everywhere transverse to the horizontal foliation,  $\phi(\overrightharp{c})=0$. Similarly, if $c$ is the core curve of a horizontal cylinder on $X$, we have $\phi(\overrightharp{c})=0$.
\end{lemma}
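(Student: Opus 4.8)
The plan is to unwind the definitions so that the statement becomes the elementary fact that a continuously varying direction which never becomes horizontal can accumulate no net turning relative to the horizontal direction. By Proposition~\ref{prop:wn_is_rspin} and the discussion in \S\ref{sec:rspin_winding}, the value $\phi(\overrightharp c)$ is by construction the mod~$r$ reduction of the integer winding number $wn_{(X,\omega)}(c)$ of the tangent direction of $\overrightharp c$ relative to the horizontal unit vector field $H_\omega$; this integer is defined precisely because in both cases the curve $c$ lies in $X \setminus \{p_1,\dots,p_n\}$ (a cylinder contains no singularity in its interior, and ``everywhere transverse to the horizontal foliation'' presupposes that $c$ misses the singular set of that foliation, which is exactly $\{p_1,\dots,p_n\}$). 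So it suffices to show $wn_{(X,\omega)}(c)=0$ in $\ZZ$, not merely mod~$r$.

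For the transverse case I would parametrize $c$ as $\gamma\colon \RR/\ZZ \to X\setminus\{p_i\}$ compatibly with the orientation $\overrightharp c$ and track the angle $\theta(t) \in \RR/2\pi\ZZ$ from $H_\omega(\gamma(t))$ to $\gamma'(t)$, measured in the flat metric $|\omega|$. Transversality says exactly that $\gamma'(t)$ is never parallel nor antiparallel to $H_\omega(\gamma(t))$, i.e.\ $\theta(t)\notin\{0,\pi\}$ for all $t$, so a continuous real lift $\widetilde\theta$ of $\theta$ avoids the discrete set $\pi\ZZ$ and hence stays inside a single open interval $(k\pi,(k+1)\pi)$ of length $\pi$. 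Since $c$ closes up, $\widetilde\theta$ changes by an integer multiple of $2\pi$ over the loop, and the only multiple of $2\pi$ in $(-\pi,\pi)$ is $0$; therefore $wn_{(X,\omega)}(c)=\tfrac{1}{2\pi}\big(\widetilde\theta(1)-\widetilde\theta(0)\big)=0$ and $\phi(\overrightharp c)=0$.

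For the cylinder case the point is even simpler: a horizontal cylinder is flat and isometric to $[0,h]\times(\RR/L\ZZ)$ in which the horizontal leaves are the circles $\{s\}\times(\RR/L\ZZ)$ and $H_\omega$ is the unit vector field tangent to them, so its core curve is itself a leaf. Thus for an appropriate orientation $\gamma'(t)=H_\omega(\gamma(t))$ identically (and $\gamma'(t)=-H_\omega(\gamma(t))$ identically for the reverse orientation), so $\theta$ is constant and again $wn_{(X,\omega)}(c)=0$, giving $\phi(\overrightharp c)=0$; this is essentially the argument of \cite[Lemma~1]{KZ_strata}, which the present lemma generalizes. I do not expect a genuine obstacle: the statement is, as remarked, geometrically obvious, and the only place needing a moment's care is the bookkeeping that identifies $\phi(\overrightharp c)$ with the honest turning number of $\gamma'$ about $H_\omega$ and verifies that $c$ can be taken disjoint from the zeros so that this turning number is defined --- but both of these are already built into the setup of \S\ref{sec:rspin_winding}.
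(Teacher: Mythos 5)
The paper does not actually give a proof of Lemma~\ref{lem:cyl_wn0}; it merely asserts, after establishing the equivalence between mod~$r$ winding number functions and $r$--spin structures, that the statement is a ``geometrically obvious generalization of \cite[Lemma~1]{KZ_strata}.'' Your argument is the natural and correct one filling that gap: you correctly use Lemma~\ref{lemma:HJ} and Proposition~\ref{prop:wn_is_rspin} to identify $\phi(\overrightharp{c})$ with the mod~$r$ winding number $wn^r_{(X,\omega)}(c)$, and then you show the integer winding number is already zero. The lifted-angle argument for the transverse case (continuity plus avoidance of $\pi\ZZ$ traps the lift in one interval of length $\pi$, while the closing-up condition forces the net change to lie in $2\pi\ZZ$, so it is $0$) is airtight, and the cylinder case is immediate since the core curve is a leaf of the horizontal foliation, so the angle is identically $0$ or $\pi$. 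So your proposal is correct and supplies exactly the geometric computation the author left implicit.
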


\subsection{Invariance of winding number under deformation}\label{sec:invariance}

Now that we have interpreted $r$--spin structures in flat geometric language, we can use this to construct an invariant of components of $\HT(\sing)$. The arguments in this section are modeled on ideas contained in \cite[Proposition 1]{Walker_components}.

\begin{prop}\label{prop:wn_const_on_comp}
The mod $r$ winding number of any (smooth, oriented) simple closed curve is constant on each component of $\HT(\sing)$.
\end{prop}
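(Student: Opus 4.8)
The plan is to show that the mod $r$ winding number function is locally constant on $\HT(\sing)$, which suffices since being locally constant on a manifold implies being constant on each connected component. Fix a smooth, oriented simple closed curve $c$ on the reference surface $S$ and a marked differential $(X_0, f_0, \omega_0) \in \HT(\sing)$. We may realize $c$ on $X_0$ (via the marking) as a smooth simple closed curve that meets the zeros of $\omega_0$ only transversally, or even misses them; the winding number $wn^r_{(X_0,\omega_0)}(c)$ is computed by counting turns of the tangent vector of $\overrightharp{c}$ relative to the horizontal vector field $H_{\omega_0}$, reduced mod $r$.

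The key point is that period coordinates give $\HT(\sing)$ the structure of a complex manifold, and in a period-coordinate neighborhood $U$ of $(X_0, f_0, \omega_0)$ the underlying marked surfaces vary smoothly: there is a smooth family of diffeomorphisms $g_t : X_0 \to X_t$ compatible with the markings, depending continuously on $t \in U$, carrying the zeros of $\omega_0$ to the zeros of $\omega_t$. Transporting $c$ along this family gives a continuous family of smooth simple closed curves $c_t = g_t(c)$ on $X_t$, each isotopic (rel nothing — just as a smooth curve, allowing it to pass through zeros would change things, but it does not) to the image of $c$ under the marking. Simultaneously, the horizontal vector field $H_{\omega_t}$ on $X_t \setminus \{\text{zeros}\}$ varies continuously with $t$ (away from the zeros, $\omega_t$ is a nonvanishing holomorphic $1$-form depending continuously on $t$, so its horizontal direction field does too). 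Hence the winding number $wn_{(X_t, \omega_t)}(c_t)$, computed on the punctured surface before reduction, varies continuously in $t$; being integer-valued, it is locally constant in $t$. A fortiori its reduction mod $r$ is locally constant, and by Lemma \ref{lemma:HJ} this reduction is exactly $\phi_t \circ h$ evaluated on $c$, i.e. the value of the induced $r$--spin structure on the framed curve $\overrightharp{c}$.

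One subtlety I would address carefully: a priori the curve $c_t$ on $X_t$ might be forced to cross a zero of $\omega_t$ as $t$ varies, and crossing a zero of order $k_i$ changes the (unreduced) winding number by $k_i$. But since $r \mid k_i$ for every $i$ by definition of $r = \gcd(\sing)$, any such crossing leaves the \emph{mod $r$} winding number unchanged — this is precisely the content of the earlier discussion (Figure \ref{fig:index_puncture} and the paragraph following it) showing that $wn^r$ descends to a well-defined function on isotopy classes of simple closed curves on the unpunctured surface $X_t$. So even if the smooth family $c_t$ meets zeros, the mod $r$ winding number is unaffected, and in fact we may just as well compute $wn^r_{(X_t,\omega_t)}$ on the isotopy class of $c$ on the unpunctured $X_t$.

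The main obstacle — really the only thing needing care — is making precise the claim that the horizontal vector field and the transported curve vary continuously with the period coordinates in a way that keeps the turning number of $\overrightharp{c}_t$ relative to $H_{\omega_t}$ a continuous (hence locally constant integer-valued) function of $t$. This is intuitively clear from the flat-geometric picture: in period coordinates the flat structure deforms continuously, so the relative angle function along $c_t$ deforms continuously, and the winding number is its total variation divided by $2\pi$. Since the statement is asserted to be ``geometrically obvious'' and modeled on \cite[Proposition 1]{Walker_components}, I expect the author's proof to invoke exactly this continuity argument — possibly phrased in terms of a trivialization of the unit tangent bundle varying continuously over the period-coordinate chart — rather than chasing the homological formula through Lemma \ref{lemma:HJ}.
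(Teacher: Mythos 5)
Your proof is correct and follows essentially the same route as the paper's: both reduce to the observation that the mod $r$ winding number is a continuous function on $\HT(\sing)$ with values in the discrete set $\Zr$, hence locally constant, hence constant on components. The only cosmetic difference is the direction of transport — you push the curve $c$ forward to $X_t$ via a family of diffeomorphisms, whereas the paper pulls the horizontal vector field $H_\omega$ back to the fixed reference surface $S$ via $(Df^{-1})_*$ and evaluates the winding number of $c$ there; these are dual formulations of the same continuity argument, and the zero-crossing subtlety you carefully flag (invisible in the paper's write-up but present since the pulled-back singularities move on $S$) is handled identically by the mod-$r$ reduction in both.
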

\begin{proof}
Suppose that $(X, f, \omega)$ and $(Y, g, \eta)$ lie in the same component of $\HT(\sing)$ and $c$ is a (smooth, oriented) simple closed curve on our reference surface $S$. Then we need to show that
\[ wn_{(X,H_\omega)}^r(f(c)) = wn_{(Y,H_\eta)}^r(g(c)).\]
We prove below that the mod $r$ winding number of $c$ is continuous on $\HT(\sing)$. Therefore since it is a continuous map into the discrete space $\Zr$, it must be constant on the connected components of its domain.

To demonstrate continuity, we pull everything back to our reference surface $S$ and compare winding numbers there. To that end, observe that if $c$ is a simple closed curve on $S$ and $(X, f, \omega)$ is a marked abelian differential, then we can push forward the vector field $H_\omega$ on $X$ to a vector field $\left(Df^{-1}\right)_* H_\omega$ on $S$. One can analogously define a mod $r$ winding number of any (smooth, oriented) simple closed curve on $S$ with respect to $\left(Df^{-1}\right)_* H_\omega$, and it is immediate that
\begin{equation}\label{eqn:pullback_wn}
wn_{(S,\left(Df^{-1}\right)_* H_\omega )}^r(c) = wn_{(X,H_\omega)}^r(f(c)).
\end{equation}
Since the horizontal vector field $H_\omega$ depends continuously on $\omega$,
the left hand side of \eqref{eqn:pullback_wn} is continuous in $(X, f, \omega)$. Therefore the right hand side must be, and so the mod $r$ winding number of $c$ is constant on components of $\HT(\sing)$.
\end{proof}

Choosing a geometric basis of $H_1(S, \ZZ)$ and taking the corresponding framed curves, this implies that

\begin{coro}\label{coro:rspin_const_on_comp}
Any two marked abelian differentials in the same connected component of $\HT(\sing)$ define the same (topological equivalence class of) $r$--spin structure.
\end{coro}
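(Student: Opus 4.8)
The plan is to deduce this as an immediate consequence of Proposition \ref{prop:wn_const_on_comp} via the topological criterion for equivalence of marked $r$--spin structures recorded in Lemma \ref{lem:hom_rspin_eq}. First I would fix a geometric basis $\{a_1, \ldots, a_g, b_1, \ldots, b_g\}$ of $H_1(S, \ZZ)$, represented by smooth simple closed curves on the reference surface $S$. Given two marked abelian differentials $(X, f, \omega)$ and $(Y, g, \eta)$ in the same connected component of $\HT(\sing)$, I would let $\phi_\omega$ and $\phi_\eta$ be the $r$--spin structures they induce via Proposition \ref{prop:wn_is_rspin}, pull these back to $S$ along the markings $f$ and $g$, and then appeal to Lemma \ref{lem:hom_rspin_eq}: the two pulled--back marked $r$--spin structures are topologically equivalent if and only if they agree on the framing of each curve of the chosen basis.

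Next I would identify these values with winding numbers. By the construction underlying Lemma \ref{lemma:HJ} and Proposition \ref{prop:wn_is_rspin}, $\phi_\omega$ evaluated on the framing $\overrightharp{f(a_i)}$ equals the mod $r$ winding number $wn_{(X, H_\omega)}^r(f(a_i))$, and likewise for the $b_i$ and for $\phi_\eta$. So topological equivalence of the two $r$--spin structures reduces to the equalities
\[ wn_{(X, H_\omega)}^r(f(c)) = wn_{(Y, H_\eta)}^r(g(c)) \]
as $c$ ranges over the $2g$ curves of the geometric basis --- which is exactly what Proposition \ref{prop:wn_const_on_comp} provides for each of these finitely many curves. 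This completes the argument.

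There is essentially no serious obstacle here; the corollary is a direct transcription of Proposition \ref{prop:wn_const_on_comp} through the dictionary between $r$--spin structures and mod $r$ winding number functions. The one point to be careful about is that a mod $r$ winding number is a priori an invariant of a smooth oriented simple closed curve rather than of a homology class, so one genuinely needs a \emph{geometric} basis --- a symplectic basis realized by disjoint simple closed curves --- in order to speak of ``the values on a basis,'' and one must lean on the Humphries--Johnson classification (already invoked in Lemmas \ref{lemma:HJ} and \ref{lem:hom_rspin_eq}) to know that agreement on such a basis suffices for topological equivalence.
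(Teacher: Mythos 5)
Your argument is correct and is exactly the paper's own proof: invoke Proposition \ref{prop:wn_const_on_comp} to get agreement of the mod $r$ winding numbers on a fixed geometric basis, then conclude via Lemma \ref{lem:hom_rspin_eq}. You spell out the dictionary between $r$--spin structures and winding number functions (Proposition \ref{prop:wn_is_rspin} and Lemma \ref{lemma:HJ}) more explicitly than the paper does, but the underlying argument is identical.
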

\begin{proof}
Let $\Omega$ be a connected component of $\HT(\sing)$ and pick a geometric basis $\mathcal{B}$ of  $H_1(S, \ZZ)$. Suppose that $(X, f, \omega)$ and $(X', f', \omega')$ are both in $\Omega$ and define $r$--spin structures $\phi$ and $\phi'$.
By Proposition \ref{prop:wn_const_on_comp},
\[ \phi(b) = \phi'(b) \text{ for all } b \in \mathcal{B}\]
and therefore by Lemma \ref{lem:hom_rspin_eq}, it must be that $\phi = \phi'$.
\end{proof}

In particular, this allows us to put a lower bound on the number of connected components of strata over Teichm{\"u}ller space.

\begin{thm}\label{thm:lower_bd}
If $g \ge 3$ and $\sing$ is a partition of $2g-2$ with $\gcd(\sing)= r$, then there exist at least $r^{2g}$ non-hyperelliptic connected components of $\HT(\sing)$.
\end{thm}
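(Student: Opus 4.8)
The plan is to extract the advertised bound from the $r$-spin invariant furnished by Corollary~\ref{coro:rspin_const_on_comp}. By that corollary, every component $\Omega$ of $\HT(\sing)$ determines a single topological equivalence class $w(\Omega)\in\Phi_r$ of marked $r$-spin structures, namely the common $r$-spin structure of all marked differentials in $\Omega$; this defines a map $w\colon \pi_0(\HT(\sing))\to\Phi_r$. Since $w$ is a function, distinct classes in its image have disjoint preimages, so if $\mathcal N\subseteq\pi_0(\HT(\sing))$ denotes the set of \emph{non-hyperelliptic} components then $|\mathcal N|\ge |w(\mathcal N)|$. Combined with Lemma~\ref{lem:count_equiv}, which records $|\Phi_r|=r^{2g}$, it therefore suffices to prove that $w(\mathcal N)=\Phi_r$, i.e.\ that every $r$-spin structure is realized by some non-hyperelliptic component.

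To prove this surjectivity I would use the change-of-marking action. Whether a marked differential $(X,f,\omega)$ is hyperelliptic depends only on $\omega$, and $g\cdot(X,f,\omega)=(X,fg^{-1},\omega)$, so $\Mod(S)$ permutes the non-hyperelliptic components of $\HT(\sing)$ among themselves; moreover $w$ is $\Mod(S)$-equivariant for this action on the source and the action of Theorem~\ref{thm:Mod_rspin_action} on $\Phi_r$, so $w(\mathcal N)$ is a union of $\Mod(S)$-orbits. Now let $p\colon \HT(\sing)\to\HM(\sing)$ be the quotient by $\Mod(S)$ and pick a \emph{non-hyperelliptic} component $\Omega$ of $\HM(\sing)$; this exists by the Kontsevich--Zorich classification (Theorem~\ref{thm:KZ_class}) whenever $g\ge 4$, and for $g=3$ the only partitions with $r>1$ are $(4)$ and $(2,2)$, which one handles directly from the low-genus classification of strata. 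Choose a component $\widetilde\Omega$ of $\HT(\sing)$ lying over $\Omega$. Since $p^{-1}(\Omega)$ is open, closed and $\Mod(S)$-invariant, each translate $g\cdot\widetilde\Omega$ again lies over $\Omega$, hence is non-hyperelliptic, and by equivariance $w$ carries $\{\,g\cdot\widetilde\Omega : g\in\Mod(S)\,\}$ onto the full $\Mod(S)$-orbit of $w(\widetilde\Omega)$ in $\Phi_r$. Thus $w(\mathcal N)$ contains that whole orbit.

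Finally I would invoke Theorem~\ref{thm:Mod_rspin_action} to see these orbits exhaust $\Phi_r$. If $r$ is odd the action on $\Phi_r$ is transitive, so one non-hyperelliptic component of $\HM(\sing)$ already forces $w(\mathcal N)=\Phi_r$. If $r$ is even there are exactly two orbits, separated by the parity of the induced $2$-spin structure, and the parity of $w(\widetilde\Omega)$ is precisely the spin parity of $\Omega$ (since $\phi^{\otimes r/2}$ recovers the classical spin structure of any differential in $\Omega$); because $\gcd(\sing)$ is then even, Theorem~\ref{thm:KZ_class} supplies non-hyperelliptic components of $\HM(\sing)$ of \emph{both} parities, and running the previous paragraph for each yields both orbits, so again $w(\mathcal N)=\Phi_r$. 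The step I expect to require the most care is this last one --- guaranteeing a non-hyperelliptic representative of each spin parity when $r$ is even, together with the genus-$3$ exceptions --- since it is exactly here that the Kontsevich--Zorich classification enters in an essential way; the rest is formal bookkeeping with the equivariant map $w$.
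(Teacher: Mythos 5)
Your argument is essentially the same as the paper's: use the $r$-spin invariant furnished by Corollary~\ref{coro:rspin_const_on_comp}, observe that the map $w$ it defines is $\Mod(S)$-equivariant, pick one marked non-hyperelliptic differential (of each parity, when $r$ is even) over a Kontsevich--Zorich component, and let transitivity of the action on $\Phi_r$ (Theorem~\ref{thm:Mod_rspin_action}) produce the $r^{2g}$ distinct translates. The paper phrases it as constructing the translates $g_1\Omega,\dots,g_{r^{2g}}\Omega$ directly, while you phrase it as surjectivity of $w$ restricted to non-hyperelliptic components; these are the same argument and your bookkeeping via $w$ is fine.

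You correctly flag that the paper's invocation of Theorem~\ref{thm:KZ_class} is only licensed for $g\ge 4$, whereas Theorem~\ref{thm:lower_bd} is asserted for $g\ge 3$; the paper passes over this tacitly, so catching it is genuinely useful. However, your proposed patch --- ``handle directly from the low-genus classification of strata'' --- does not actually close the gap, and is worth pinning down because it is a real issue, not a routine verification. For $g=3$ the only partitions of $2g-2=4$ with $r>1$ are $(4)$ and $(2,2)$, i.e.\ precisely $r=2g-2$ and $r=g-1$, and the genus-$3$ Kontsevich--Zorich classification shows $\HM(4)$ and $\HM(2,2)$ each have a \emph{single} non-hyperelliptic component, of odd spin parity. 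There is therefore no even-parity base component to translate, the even-parity $\Mod(S)$-orbit in $\Phi_r$ is never hit, and the argument yields only $(r/2)^{2g}2^{g-1}(2^g-1)$ distinct non-hyperelliptic components ($1792$ for $(4)$ and $28$ for $(2,2)$), which falls short of $r^{2g}$ ($4096$ and $64$ respectively). The $r$-spin invariant alone cannot bridge this, so either one restricts the statement to $g\ge 4$ (or excludes $r\in\{2g-2,g-1\}$, as the main theorems do), or one must supply a genuinely different argument for those two genus-$3$ strata. The paper's proof has the same defect; you noticed it but should not present the low-genus case as if it follows by routine inspection.
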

\begin{proof}
First, assume that $r$ is odd; then by Theorem \ref{thm:KZ_class}, the stratum $\HM(\sing)$ is nonempty and connected (unless $r=g-1$, in which case 
\[\HM(g-1, g-1) \setminus \HM(g-1,g-1)^{\hyp}\]
is nonempty and connected). Choose some $(X, \omega) \in \HM(\sing)$, fix a marking $f: S \rightarrow X$, and let $\Omega$ denote the component of $\HT(\sing)$ containing $(X, f, \omega)$.

Now by the discussion above, $(X, f, \omega)$ defines a marked $r$--spin structure $\phi$ which by Corollary \ref{coro:rspin_const_on_comp} must be topologically equivalent to the marked $r$--spin structure coming from any marked abelian differential in $\Omega$. Since $\Mod(S)$ acts transitively on $\Phi_r$, there are elements $\{ e=g_1, \ldots, g_{r^{2g}}\} \subset \Mod(S)$ such that 
\[g_{i}^* \phi \neq g_j^* \phi \text{ for all } i \neq j.\]
Therefore by Corollary \ref{coro:rspin_const_on_comp}, $g_1 \Omega, \ldots, g_{r^{2g}} \Omega$ are all distinct, and the statement is proved.

The proof for even $r$ is analogous, but now there are non-hyperelliptic components of $\HM(\sing)$ corresponding to both parities of spin structures.
For this situation, one must choose a differential and a marking for each component, and note that $\Mod(S)$ acts transitively on the set of $r$--spin structures with fixed parity (Theorem \ref{thm:Mod_rspin_action}).
\end{proof}

\subsection{Winding numbers and monodromy}\label{sec:wn_monodromy}

In order to put an upper bound on the number of connected components of $\HT(\sing)$, we will use some elementary covering space theory to rephrase the problem in terms of subgroups of the mapping class group.

For $\spin \in \{\even, \odd\}$, the forgetful map $p: \HM_g \rightarrow \M_g$ induces a map of orbifold fundamental groups
\[p_* : \pi_1^\text{orb} \left( \HM(\sing)^{\spin}, (X, \omega) \right)  \rightarrow \pi_1^\text{orb} \left( \M_g, X \right) \cong \Mod(X)\]
(recall that when $r$ is odd the $\spin$ superscript is assumed to be empty). 

\begin{defn}\label{def:geomon}
The {\em geometric monodromy group} $\mathcal{G}(\sing, \spin)$ of the stratum $\HM(\sing)^{\spin}$ is the image of $p_*$ inside of $\Mod(X)$.
\end{defn}

\begin{rmk}
Note that our definition of $\mathcal{G}(\sing, \spin)$ depends on our choice of basepoint $(X, \omega)$, and while change of basepoint will result in isomorphic groups, it does not necessarily result in the same subgroup of $\Mod(X)$.
Because of this, we consider $\mathcal{G}(\sing, \spin)$ only ever up to conjugation within $\Mod(X)$.
\end{rmk}

A choice of marking $f:S \rightarrow X$ identifies $\Mod(S)$ and $\Mod(X)$, and moreover identifies $\mathcal{G}(\sing, \spin)$ with the stabilizer of the component $\widetilde{\Omega}$ of $\HT(\sing)^{\spin}$ containing $(X, f, \omega)$.

\begin{coro}\label{coro:monodromy_containment}
Let $g \ge 3$ and $\sing$ a partition of $2g-2$ with $\gcd(\sing) = r$. If $r$ is even, also choose $\spin \in \{\even, \odd\}$. Choose some marked abelian differential $(X, f, \omega)$ living inside a component $\widetilde{\Omega}$ of $\HT(\sing)^{\spin}$ and let $\phi \in \Phi_r$ be the (marked) $r$--spin structure induced by $\omega$. Then
\[ \mathcal{G}(\sing, \spin) = p_*\left( \pi_1 \left( \HM(\sing)^{\spin}, (X, \omega) \right) \right) 
\cong \textnormal{Stab}_{\Mod(S)} \left( \widetilde{\Omega} \right) \le \Mod(S)[\phi].\]
\end{coro}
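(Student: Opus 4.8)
The plan is to unpack the definitions and then glue together two facts that have already been established. The statement has two assertions: an isomorphism $\mathcal{G}(\sing,\spin) \cong \textnormal{Stab}_{\Mod(S)}(\widetilde{\Omega})$, and a containment $\textnormal{Stab}_{\Mod(S)}(\widetilde{\Omega}) \le \Mod(S)[\phi]$.

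First I would establish the isomorphism via elementary covering-space theory. Since $\T_g$ is the orbifold universal cover of $\M_g$ with deck group $\Mod(S)$, the same holds after restricting to strata: $\HT(\sing)^{\spin} \to \HM(\sing)^{\spin}$ is an orbifold covering (this uses that period coordinates on $\HT(\sing)$ descend to $\HM(\sing)$ by quotienting by $\Mod(S)$, as recalled in \S\ref{sec:background}). A marking $f : S \to X$ identifies $\Mod(S)$ with $\Mod(X)$ and picks out the component $\widetilde\Omega$ of $\HT(\sing)^{\spin}$ containing $(X,f,\omega)$; the stabilizer of $\widetilde\Omega$ under the deck action is precisely the subgroup of $\Mod(S)$ through which loops in $\HM(\sing)^{\spin}$ based at $(X,\omega)$ act, which is exactly the image $p_*(\pi_1^{\text{orb}}(\HM(\sing)^{\spin},(X,\omega)))$. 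Concretely, a loop $\gamma$ based at $(X,\omega)$ lifts to a path in $\widetilde\Omega$ from $(X,f,\omega)$ to $g\cdot(X,f,\omega)$ for a unique $g \in \Mod(S)$, and this $g$ fixes $\widetilde\Omega$ setwise; conversely every $g$ fixing $\widetilde\Omega$ arises this way since $\widetilde\Omega$ is connected. This is the standard correspondence between the image of $\pi_1$ of the base in the deck group and the stabilizer of a component of the cover, so the isomorphism is immediate once the covering-space setup is in place.

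Next I would prove the containment, which is where the real content lies — and it is already done. By Corollary \ref{coro:rspin_const_on_comp}, every marked abelian differential in the component $\widetilde\Omega$ induces the same topological equivalence class of $r$-spin structure, namely $\phi$. If $g \in \textnormal{Stab}_{\Mod(S)}(\widetilde\Omega)$, then $g \cdot (X,f,\omega) = (X, fg^{-1}, \omega) \in \widetilde\Omega$, so the $r$-spin structure induced by this differential is again $\phi$. On the other hand, changing the marking by $g$ changes the induced marked $r$-spin structure by $g^* \phi$ (this is the definition of the $\Mod(S)$ action on $\Phi_r$ from \S\ref{sec:Mod_rspin}, together with the naturality of the winding-number/$r$-spin correspondence of \S\ref{sec:diffspin} under the marking). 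Hence $g^*\phi = \phi$, i.e. $g \in \Mod(S)[\phi]$. This gives $\textnormal{Stab}_{\Mod(S)}(\widetilde\Omega) \le \Mod(S)[\phi]$ and completes the proof.

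The only genuine subtlety — and the step I expect to be the main obstacle to write cleanly — is the orbifold covering-space bookkeeping: being careful that $\HT(\sing)^{\spin} \to \HM(\sing)^{\spin}$ really is an (orbifold) covering with deck group $\Mod(S)$ rather than merely a quotient, and tracking basepoints and the choice of marking so that the isomorphism is canonical up to conjugation (as flagged in the preceding remark). The $r$-spin part of the argument is essentially a corollary of \ref{coro:rspin_const_on_comp} plus the definition of the action, so it requires no new ideas; I would present it in a sentence or two and spend the bulk of the proof making the covering-space correspondence precise.
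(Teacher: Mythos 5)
Your proposal is correct and takes essentially the same approach as the paper: the isomorphism is proved by the standard covering-space lifting argument (lift a loop in $\HM(\sing)^{\spin}$ to a path in $\widetilde\Omega$ ending at $g\cdot(X,f,\omega)$, and conversely), while the containment in $\Mod(S)[\phi]$ is a direct consequence of Corollary~\ref{coro:rspin_const_on_comp} together with the definition of the $\Mod(S)$-action on $\Phi_r$. The paper's written proof only spells out the lifting argument for the isomorphism and leaves the containment implicit, so your version is slightly more explicit but not different in substance.
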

\begin{proof}
Suppose that $g \in \mathcal{G}(\sing, \spin)$; then it can be represented as a loop $\gamma$ inside of $\HM(\sing)^{\spin}$ based at $(X, \omega)$. Lifting $\gamma$ to a path $\tilde{\gamma}$ in $\HT(\sing)^{\spin}$, we see that $\tilde{\gamma}$ connects $(X, f, \omega)$ and $g \cdot (X, f, \omega)$ and so $g$ must preserve the connected component $\widetilde{\Omega}$.

Similarly, if $g \in \Mod(S)$ stabilizes $\widetilde{\Omega}$, then since $\widetilde{\Omega}$ is also path--connected we may connect $(X, f, \omega)$ to $g \cdot (X, f, \omega)$ via some path whose projection to $\HM(\sing)^{\spin}$ under the covering map will be a loop based at $(X, \omega)$.
\end{proof}

The rest of the proof of Theorem \ref{thm:components} consists of showing that this containment is in fact an equality when $r$ is odd, and that it is of finite index when $r$ is even (and $r \neq 2g-2, g-1$).

\section{Construction of prototypes}\label{sec:prototypes}

In this section, we show how to construct a special (marked) abelian differential with given singularity and $2$--spin data. First, we construct a flat metric on a Riemann surface with the correct cone angles (Construction \ref{constr:proto}) and then in Lemma \ref{lem:trivhol} prove that the metric actually comes from an abelian differential. Finally, we show that the differential so constructed induces a spin structure of the correct parity (Lemma \ref{lem:rightArf}).

Throughout, we suppress the marking $f: S \rightarrow X$. However, since it is important exactly which curves are realized as the core curves of cylinders in $X$, the marking will be implicit in much of our discussion.

Before all else, we must fix a set of simple curves whose complement has combinatorial type compatible with a stratum. In order to define these, we adopt different naming conventions for simple closed curves on $S$ as pictured in Figure \ref{fig:curvelabels}, depending on the parities of $\gcd(\sing)$ and $\spin$, together with the residue class of $g$ mod $4$. We will subsequently conflate these curves with their images on $X$ under the marking $f: S \rightarrow X$.

\begin{figure}[ht]
\centering
\begin{subfigure}{\textwidth}
\centering
\includegraphics[scale=.8]{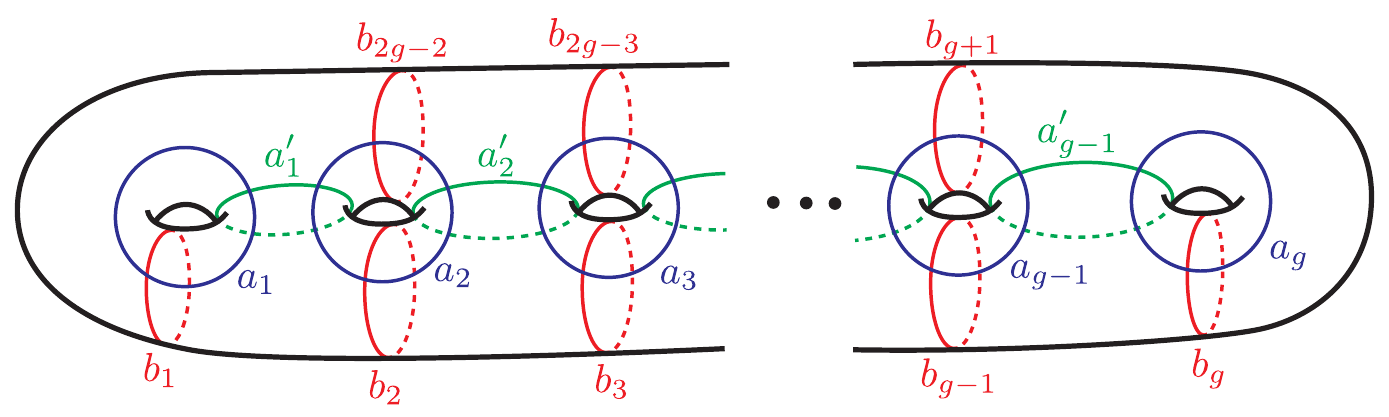}
\caption{Labels in cases (1) and (2) of Definition \ref{def:curvesystem}.}
\label{fig:curvelabels12}
\end{subfigure}\\
\begin{subfigure}{\textwidth}
\centering
\includegraphics[scale=.8]{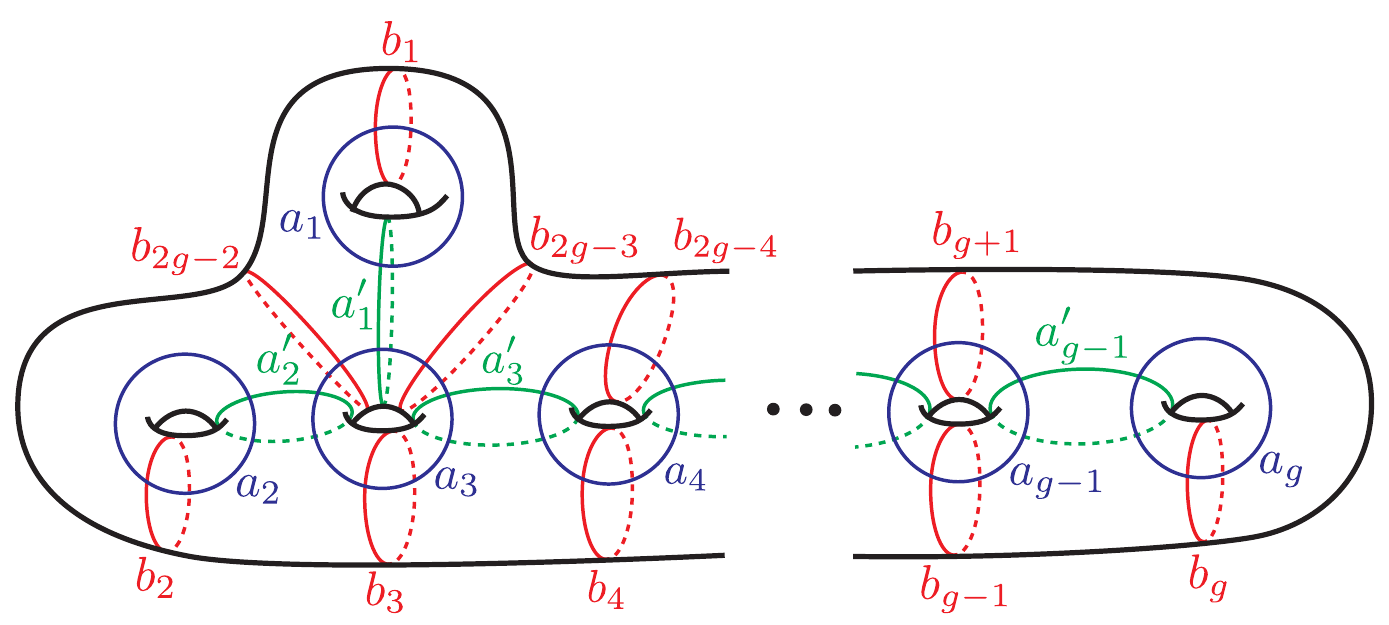}
\caption{Labels in case (3) of Definition \ref{def:curvesystem}.}
\label{fig:curvelabels3}
\end{subfigure}
\caption{Naming conventions for simple closed curves, depending on $\gcd(\sing)$, $\spin$, and $g$.}
\label{fig:curvelabels}
\end{figure}

\begin{defn}\label{def:curvesystem}
Let $g \ge 4$ and $\sing=(k_1, \ldots, k_n)$ a partition of $2g-2$. If $\gcd(\sing)$ is even, let $\spin \in \{\even, \odd\}$. Label the simple closed curves of $S$ in the following way:
\begin{enumerate}
\item If $\gcd(\sing)$ is odd, then label the curves of $S$ as in Figure \ref{fig:curvelabels12}.

\item If $\gcd(\sing)$ is even and either
\begin{enumerate}
\item $g \equiv 1 \text{ or } 2 \mod 4$ and $\spin = \odd$
\item $g \equiv 3 \text{ or } 0 \mod 4$ and $\spin = \even$
\end{enumerate}
then label the curves of $S$ as in Figure \ref{fig:curvelabels12}.

\item If $\gcd(\sing)$ is even and either
\begin{enumerate}
\item $g \equiv 1 \text{ or } 2 \mod 4$ and $\spin = \even$
\item $g \equiv 3 \text{ or } 0 \mod 4$ and $\spin = \odd$
\end{enumerate}
then label the curves of $S$ as in Figure \ref{fig:curvelabels3}.
\end{enumerate}
For either of the labeling schemes, set 
\[\mathsf{A} = \{ a_i\} \cup\{ a_i'\}\]
and define the {\em curve system of type $(\sing, \spin)$} to be
\[\mathsf{C}(\sing ,\spin ) = \mathsf{A} \cup \left\{b_i: i = 3 + \sum_{j=1}^\ell k_j \text{ for }j=1, \ldots, n\right\}\]
where indices are understood mod $2g-2$.
\footnote{The reason for starting at $b_3$ instead of $b_1$ or $b_2$ is to facilitate our proofs in Section \ref{sec:gen_monodromy} and to keep notation consistent between cases. The construction outlined below works just as well if one instead starts at any $b_i$, but then some extra work must be done to always recover a system of curves satisfying the conditions of Theorem \ref{thm:Salter_generation}.}
\end{defn}

Observe that the components of $S \setminus \mathsf{C}(\sing, \spin)$ are all disks. Moreover, if $\sing = (k_1, \ldots, k_n)$, then there are exactly $n$ disks $D_1, \ldots, D_i$ and the closure of each $D_i$ is an (immersed) $4(k_i+1)$-gon whose edges lie on $\mathsf{C}(\sing, \spin)$. 

\begin{constr}[Prototypes]\label{constr:proto}
To upgrade our curve system into an actual flat structure, we will employ a standard construction often attributed to Thurston and Veech. Consider $\mathsf{C}(\sing, \spin)$ as an embedded $1$--complex in $S$, with edges the simple arcs of $\mathsf{C}(\sing, \spin)$ and vertices their points of incidence. Since $\mathsf{C}(\sing, \spin)$ fills $S$, the dual complex $\mathcal{D}$ defines a square-ulation of $S$. Simply by declaring each square of $S \setminus \mathcal{D}$ to be a flat unit square, we get a flat cone metric $\sigma$ on $S$ with cone angles
\[\frac{\pi}{2} \cdot 4(k_i+1) = 2(k_i+1 ) \pi,\]
one contained in each $D_i$.
In addition, one can check by inspection that the curves of $\mathsf{C}_h$ and $\mathsf{C}_v$ are the core curves of cylinders on the surface.

Let $(X, f)$ denote the underlying (marked) Riemann surface so defined, and call $(X, f, \sigma)$ a {\em prototype} for the pair $(\sing, \spin)$.
\end{constr}

In general, the metric constructed above only comes a quadratic differential on $X$. To show that $\sigma$ comes from an abelian differential, we must analyze its holonomy.

\begin{lemma}\label{lem:trivhol}
The flat metric $\sigma$ on the prototype $(X, f)$ defined in Construction \ref{constr:proto} comes from an abelian differential; that is, there is some $\omega$ so that $\sigma$ is (isometric to) the metric induced by $\omega$.
\end{lemma}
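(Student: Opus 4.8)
The plan is to show that the holonomy representation of the flat metric $\sigma$ lands in the translation subgroup of the group of Euclidean isometries; equivalently, that $\sigma$ has trivial linear holonomy, so that parallel transport around any loop acts as the identity on tangent vectors. Since a flat cone metric with trivial linear holonomy (and integer cone angles, which we have from Construction \ref{constr:proto}) is exactly the metric induced by an abelian differential, this suffices. Because the square-ulation $\mathcal{D}$ comes with a distinguished pair of transverse directions --- the edges of each flat unit square are split into ``horizontal'' and ``vertical'' families --- we already have a candidate horizontal direction field on $S$ minus the cone points, and the content of the lemma is that this direction field is consistent (does not pick up a sign) when we pass around any closed loop.

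First I would set up the holonomy computation combinatorially. Away from the cone points, $\sigma$ is built by gluing flat unit squares along edges, and each gluing is by a Euclidean translation (the squares are all isometric to the standard unit square in $\RR^2$ and glued edge-to-edge). A priori, however, an edge of one square could be glued to an edge of another by a translation composed with a rotation by a multiple of $\pi/2$, so the linear holonomy lies in $\ZZ/4$ (the group of rotations preserving the square lattice). The point is to check that in our construction every gluing is by an honest translation, i.e.\ the rotational part is trivial. This can be read off directly from the combinatorics of $\mathsf{C}(\sing,\spin)$ and its dual: the curves of $\mathsf{C}(\sing,\spin)$ come in two families (those I would call horizontal and those vertical, corresponding to $\mathsf{C}_h$ and $\mathsf{C}_v$ referenced in Construction \ref{constr:proto}), and because each complementary disk $D_i$ is an immersed $4(k_i+1)$-gon whose sides alternate between the two families, the dual squares can be coherently assigned a horizontal-vs-vertical labeling of their edges. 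One then verifies that adjacent squares share this labeling across the gluing, so the transition maps are translations.

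Alternatively --- and this is probably the cleaner route to write --- I would argue via the winding number / index data. A flat cone metric comes from an abelian differential if and only if there is a globally defined unit direction field on the complement of the cone points whose index at the $i$-th cone point is $-k_i$; equivalently, the $\ZZ/4$ linear holonomy cocycle is a coboundary. Since the curves of $\mathsf{C}_h$ are core curves of (horizontal) cylinders and the curves of $\mathsf{C}_v$ of vertical cylinders, and these fill $S$, a loop representing any class in $\pi_1$ of the punctured surface can be homotoped to travel along cylinder core curves and transverse seams; along each such segment the horizontal direction is parallel, and at each cone point the total angle is the even multiple $2(k_i+1)\pi$ of $\pi$, so no rotational holonomy is introduced when circling a cone point either. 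Hence the linear holonomy is trivial on all of $\pi_1$, and $\sigma$ is induced by an abelian differential $\omega$, whose divisor has the prescribed orders $k_i$ by the cone angle computation in Construction \ref{constr:proto}.

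The main obstacle, and the step I expect to require the most care, is the bookkeeping needed to verify that the horizontal/vertical labeling of the squares in $\mathcal{D}$ is globally consistent rather than merely locally so --- i.e.\ ruling out the $\pi/2$-rotational holonomy. This is where the precise labeling conventions of Figure \ref{fig:curvelabels} and the parity cases of Definition \ref{def:curvesystem} enter: one must check, case by case in the three labeling schemes, that as one crosses from one complementary polygon $D_i$ to an adjacent one, the two families of curves are not interchanged. Everything else --- that integer cone angles plus trivial linear holonomy gives an abelian differential, and that the zero orders come out right --- is standard and follows from the cone-angle computation already recorded in Construction \ref{constr:proto}.
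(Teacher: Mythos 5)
Your overall strategy --- establish trivial linear holonomy by coherently orienting the dual square-ulation, then read off a horizontal direction field --- matches the paper's, and you correctly identify the main obstacle: checking that local consistency of the orientation extends to global consistency. Your Route 1 is structurally aligned with the paper's proof. However, you do not identify the structural fact that makes the consistency check succeed: the curve system $\mathsf{C}(\sing,\spin)$ is an \emph{arboreal} network, i.e.\ its intersection graph is a tree. The paper's argument fixes an arbitrary orientation of $a_1$ and propagates it outward through the combinatorial neighborhoods $N_r$ of $a_1$ in the intersection graph; arboreality guarantees that each curve in $N_r\setminus N_{r-1}$ meets exactly one already-oriented curve, giving a unique positive-intersection choice of orientation, and since a tree has no cycles the propagation can never encounter an inconsistency. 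This is a single uniform argument, in contrast to your proposed case-by-case verification over the three labeling schemes, which would be more laborious and does not reveal \emph{why} the check succeeds.

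Your Route 2 has a more serious problem: it is circular. Designating the curves of $\mathsf{C}_h$ and $\mathsf{C}_v$ as core curves of ``horizontal'' and ``vertical'' cylinders, and then asserting that ``along each such segment the horizontal direction is parallel,'' already presupposes a coherent global horizontal direction --- which is exactly what trivial linear holonomy would supply, and is the content of the lemma. (The cylinders exist a priori, but their labeling as horizontal vs.\ vertical in a way compatible with the flat metric does not.) Likewise, the observation that loops encircling cone points pick up no rotational holonomy, while correct, is insufficient on its own, since such loops do not generate $\pi_1$ of the punctured surface.
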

\begin{proof}
To show that the flat metric comes from an abelian differential, we construct a horizontal (unit) vector field $V$ with singularities only at the cone points. This then implies that $\sigma$ has trivial holonomy and hence $(X, \sigma)$ is isometric to the flat metric on $(X, \omega)$ for some abelian differential $\omega$ (see, e.g., \cite[\S1.2]{Zorich_Survey}).

In order to build $V$, we will show that the squares tiling $(X, \sigma)$ can be coherently oriented so that the right hand side of any square is glued to the left side of another, and similarly the top of a square is glued to the bottom of another.
\footnote{Observe that this construction also directly exhibits $(X, \sigma)$ as a translation surface, glued together from squares.}
Each square can then be equipped with the rightwards--pointing horizontal vector field, and the coherence condition then guarantees that the resulting vector field extends over the edges of the squares.

Partition the curves of $\mathsf{C}(\sing, \spin)$ into two maximal multicurves $\mathsf{C}_h$ and $\mathsf{C}_v$ (for concreteness, say $\mathsf{C}_h$ consists of those curves labeled by some $a_i$ and $\mathsf{C}_v$ consists of those labeled by either $b_i$ or $a_i'$). 
To orient the squares, we note that it suffices to orient the curves of $\mathsf{C}(\sing, \spin)$ so that each curve of $\mathsf{C}_h$ intersects $\mathsf{C}_v$ positively at each point of intersection; then the horizontal direction is given by the orientation of $\mathsf{C}_h$ and the vertical by that of $\mathsf{C}_v$.  See Figure \ref{fig:curvesys_vect}.

For $a, b, \in \mathsf{C}(\sing, \spin)$, define
\[\{a.b\} := \left\{ \begin{array}{ll}
(a.b) & \text{ if } a \in \mathsf{C}_h \text{ and } b \in \mathsf{C}_v \\
(b.a) & \text{ if } b \in \mathsf{C}_h \text{ and } a \in \mathsf{C}_v \\
0 & \text{ else}
\end{array}\right.\]
where $(a.b)$ is the algebraic intersection number of $a$ and $b$. This function returns the algebraic intersection number of $a$ and $b$, ordered to take the intersection of $\mathsf{C}_h$ with $\mathsf{C}_v$. Our goal is thus to orient the curves of $\mathsf{C}(\sing, \spin)$ so that if $i(a,b)=1$ then $\{a.b\}=1$.

In order to construct the desired orientation, choose an arbitrary orientation for $a_1$. We claim that we can inductively extend this choice to a globally coherent orientation on $\mathsf{C}(\sing, \spin)$. Indeed, observe that the curves of $\mathsf{C}(\sing, \spin)$ form a connected, arboreal network. Let $N_r$ denote the $r$--neighborhood of $a_1$ in the intersection graph $\Lambda$ (recall that $\Lambda$ has one vertex for each curve of $\mathsf{C}(\sing, \spin)$ and an edge whenever two curves intersect). 

Suppose that we have induced a coherent orientation on all of the curves of $N_r$. Since $\mathsf{C}(\sing, \spin)$ is arboreal, each curve $a$ in $N_r \setminus N_{r-1}$ intersects exactly one curve $b$ of $N_{r-1}$, and hence there is a unique choice of orientation on $c$ which makes $\{a.b\}$ positive. See Figure \ref{fig:curvesys_orientation}.

Therefore, by induction (and the fact that $\Lambda$ is connected) we see that we can induce an orientation on the curves of $\Lambda$ so that whenever $i(a,b) = 1$ we have $\{a.b\} =1$.

\begin{figure}[ht]
\centering
\begin{subfigure}{.45\textwidth}
\centering
\includegraphics[scale=.5]{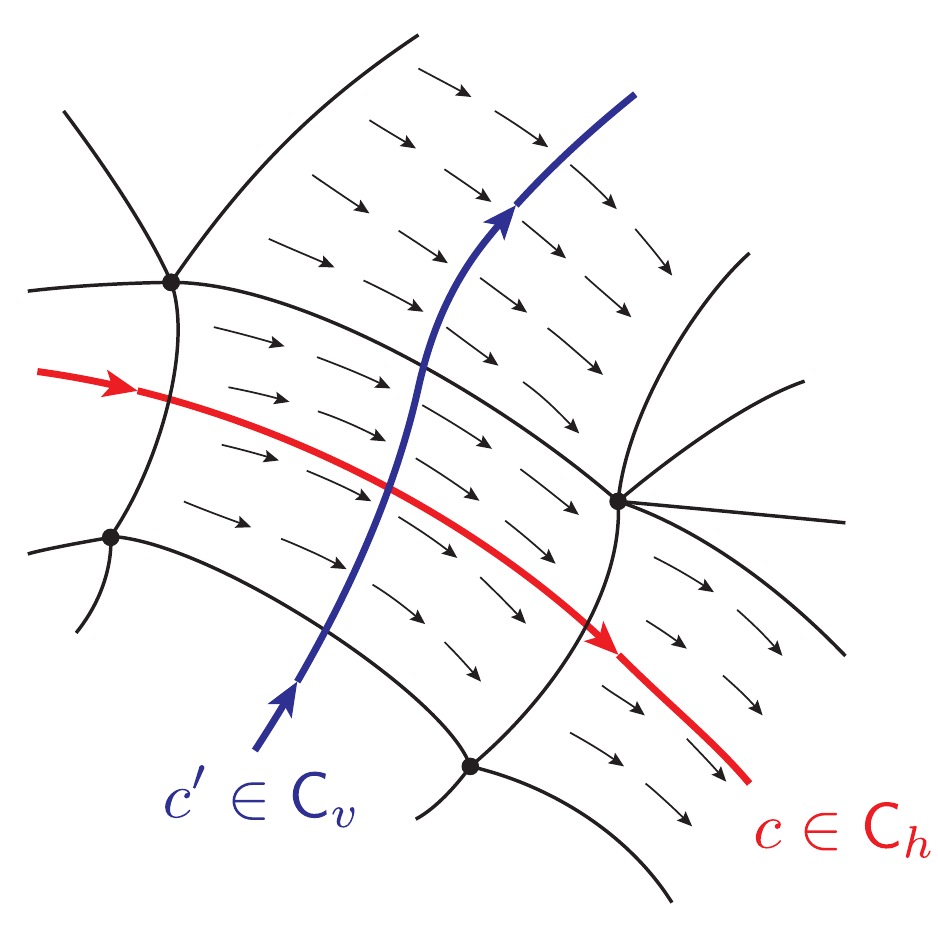}
\caption{Extending the orientation of $\mathsf{C}(\sing, \spin)$ to a\\horizontal vector field.}
\label{fig:curvesys_vect}
\end{subfigure}
\hspace{.25cm}
\begin{subfigure}{.45\textwidth}
\centering
\includegraphics[scale=.5]{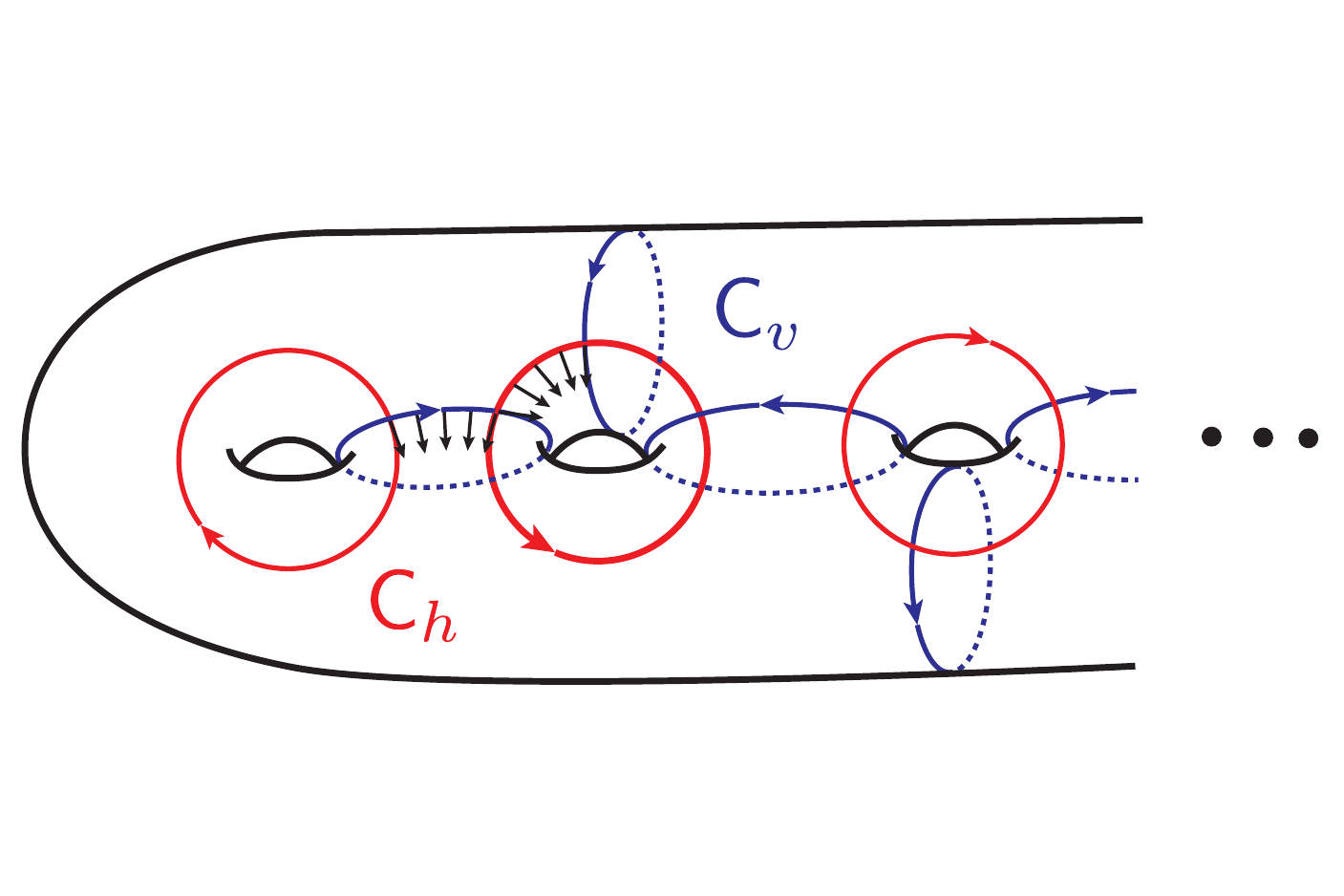}
\caption{Extending a local choice of orientation to a global \\orientation of $\mathsf{C}(\sing, \spin)$.}
\label{fig:curvesys_orientation}
\end{subfigure}
\caption{Proving that the flat square-ulation dual to $\mathsf{C}(\sing, \spin)$ has trivial holonomy.}
\label{fig:curvesys_STS}
\end{figure}

The horizontal vector fields on each square therefore glue together coherently, and so $X$ admits a horizontal unit vector field with singularities only at the cone points. It follows that the metric is induced by some abelian differential $\omega$.
\end{proof}

Finally, we need to show that the choice of $\spin$ used in the construction of the prototype actually matches the parity of the prototype abelian differential $(X, f, \omega)$.

\begin{lemma}\label{lem:rightArf}
When $r = \gcd(\sing)$ is even, the prototype $(X, f, \omega)$ for the pair $(\sing, \spin)$ has parity equal to $\spin$.
\end{lemma}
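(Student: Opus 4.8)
The plan is to read off the parity of the prototype $(X,f,\omega)$ directly from its flat geometry by means of the winding-number formula for the Arf invariant, equation \eqref{eqn:rspin_Arf}. Write $\phi$ for the $r$--spin structure induced by $\omega$. First I would fix a geometric symplectic basis $\{a_1,\dots,a_g,b_1,\dots,b_g\}$ for $H_1(X,\ZZ)$ adapted to the prototype, arranged so that every $a_i$ is one of the curves of $\mathsf{A}\subseteq\mathsf{C}(\sing,\spin)$. By Construction \ref{constr:proto}, every curve of $\mathsf{C}(\sing,\spin)$ is the core curve of a horizontal or a vertical cylinder on $(X,\sigma)$, and by Lemma \ref{lem:cyl_wn0} core curves of horizontal cylinders and curves transverse to the horizontal foliation (in particular core curves of vertical cylinders) all have $\phi$--value $0$. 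Hence $\phi(\overrightharp{a}_i)=0$ for every $i$, and likewise $\phi(\overrightharp{b}_i)=0$ for those $b_i$ that happen to lie in $\mathsf{C}(\sing,\spin)$.

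With this choice of basis, \eqref{eqn:rspin_Arf} collapses to
\[\Arf\!\left(q_{\phi^{\otimes(r/2)}}\right)\;\equiv\;\sum_{i=1}^{g}\bigl(\phi(\overrightharp{b}_i)+1\bigr)\;\equiv\; g+\sum_{i=1}^{g}\phi(\overrightharp{b}_i)\pmod 2,\]
so the problem reduces to computing the winding numbers of the dual curves. The $b_i$ lying in $\mathsf{C}(\sing,\spin)$ contribute $0$, and for each remaining $b_i$ I would compute $\phi(\overrightharp{b}_i)$ directly: such a curve cobounds, together with curves of $\mathsf{C}(\sing,\spin)$, an explicit subsurface tiled by finitely many of the unit squares of Construction \ref{constr:proto}, so its winding number can be read off either by inspecting how its tangent turns relative to $H_\omega$ or, equivalently, via the homological coherence of Lemma \ref{lem:hom_coh} applied to that subsurface. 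Carrying this out reduces the right-hand side above to $g$ plus an explicit correction term determined by the combinatorial picture of Figure \ref{fig:curvelabels12} or Figure \ref{fig:curvelabels3}.

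The final and most delicate step is the bookkeeping: in each of the cases of Definition \ref{def:curvesystem} one must verify that this correction term is exactly what makes $\Arf(q_{\phi^{\otimes(r/2)}})$ equal to $\spin$. This is precisely the reason the labeling scheme is permitted to depend on $\spin$ and on the residue of $g$ modulo $4$ — passing between the configurations of Figures \ref{fig:curvelabels12} and \ref{fig:curvelabels3} toggles the parity of the correction term, and the congruence conditions on $g$ are arranged to absorb both the leading $g$ in the displayed formula and the $g$--dependence of the correction. I expect this case-by-case verification of the winding numbers of the non-cylinder dual curves, and the resulting parity count, to be the main obstacle; once Lemma \ref{lem:cyl_wn0} is in hand the rest of the argument is essentially formal.
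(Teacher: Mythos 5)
Your strategy is the same one the paper uses: kill the $\phi(\overrightharp{a}_i)$ terms via Lemma~\ref{lem:cyl_wn0}, compute the remaining $\phi(\overrightharp{b}_i)$ via homological coherence (Lemma~\ref{lem:hom_coh}) applied to subsurfaces bounded by curves of $\mathsf{C}(\sing,\spin)$, and feed the results into the Arf formula~\eqref{eqn:rspin_Arf}. Your simplification to $\Arf \equiv g + \sum_i \phi(\overrightharp{b}_i) \pmod 2$ is correct, as is your observation that the $\mod 4$ case split in Definition~\ref{def:curvesystem} exists precisely to make the parity come out right.

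However, the proposal stops exactly where the lemma actually lives. You defer the computation of $\phi(\overrightharp{b}_i)$ for the dual curves and the resulting parity count to ``bookkeeping,'' and you never verify that this correction term matches $\spin$ in each case. This is not a routine check: one must identify, for each labeling scheme, which concrete subsurface each $b_i$ cobounds with cylinder curves, apply Lemma~\ref{lem:hom_coh} to get $\phi(\overrightharp{b}_i)$ from that subsurface's Euler characteristic, and then count. In the paper's argument this yields, in the Figure~\ref{fig:curvelabels12} case, that $\phi(\overrightharp{b}_i)$ is even iff $i$ is odd, so $\Arf \equiv \#\{1\le i\le g : i \text{ odd}\} \pmod 2$; in the Figure~\ref{fig:curvelabels3} case one of the dual curves is $b_{2g-2}$, which cobounds a once-punctured torus rather than a sphere, and its odd winding number flips the count. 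Those are the facts that actually establish the lemma, and they are absent from the proposal. As written this is a correct plan but not a proof; you need to carry out the subsurface-by-subsurface computation and check it produces $0$ when $g\equiv 0,3\pmod 4$ (resp. $1$ when $g\equiv 1,2\pmod 4$) under the Figure~\ref{fig:curvelabels12} labeling, and the opposite under Figure~\ref{fig:curvelabels3}.
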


\begin{proof}
We use the homological coherence property of winding number functions (Lemma \ref{lem:hom_coh}). Let $\phi$ be the $r$--spin structure determined by the marked abelian differential $(X, f, \omega)$.

First, suppose that $\gcd(\sing)$ is even and either
\begin{itemize}
\item $g \equiv 1 \text{ or } 2 \mod 4$ and $\spin = \odd$ or
\item $g \equiv 3 \text{ or } 0 \mod 4$ and $\spin = \even$.
\end{itemize}
Then the curves are labeled as in Figure \ref{fig:curvelabels12}, and so for any $i \ge 4$ the set
\[\{b_3, a_3', \ldots, a_{i-1}', b_i\}\]
bounds an $(i-1)$ times--punctured sphere. Similarly, $\{b_2, a_2', b_3\}$ and $\{b_1, a_1', a_2', b_3\}$ bound a thrice--punctured sphere and four times--punctured sphere, respectively. Therefore by Lemmas \ref{lem:hom_coh} and \ref{lem:cyl_wn0}, we have that $\phi(\overrightharp{b}_i)$ is even if and only if $i$ is odd. Applying \eqref{eqn:Arf}, we get
\[\text{Arf}\left(q_{\phi^{\otimes (r/2)}} \right) \equiv 
\sum_{i=1}^g \big(\phi( \overrightharp{a}_i )  +1 \big) \big(\phi( \overrightharp{b}_i )  +1 \big)
 \equiv \# \{ 1 \le i \le g : i \text{ is odd}\} \mod 2\]
which is $0$ when $g \equiv 0 \text{ or } 3 \mod 4$ and $1$ if $g \equiv 1 \text{ or } 2 \mod 4$.

Now suppose $\gcd(\sing)$ is even and either
\begin{itemize}
\item $g \equiv 1 \text{ or } 2 \mod 4$ and $\spin = \even$ or
\item $g \equiv 3 \text{ or } 0 \mod 4$ and $\spin = \odd$.
\end{itemize}
Then likewise, we have that $\{b_2, a_2', b_3\}$ bounds a thrice--punctured sphere and for each $4 \le i \le g$,
\[\{b_3, a_3', \ldots, a_{i-1}', b_i\}\]
bounds an $(i-1)$--times punctured sphere. However, now $b_{2g-2}$ is symplectically dual to the basis element $a_1$ while $b_1$ is not. Therefore since $\{b_{2g-2}, a_g', b_3\}$ bounds a thrice--punctured torus, we have that $\phi( \overrightharp{b}_{2g-2})$ is odd by Lemmas \ref{lem:cyl_wn0} and \ref{lem:hom_coh}. It follows that \eqref{eqn:Arf} tells us that
\[\text{Arf}\left(q_{\phi^{\otimes (r/2)}} \right) \equiv 
\big(\phi( \overrightharp{a}_1 )  +1 \big) \big(\phi( \overrightharp{b}_{2g-2})+1 \big)+
\sum_{i=2}^g \big(\phi( \overrightharp{a}_i )  +1 \big) \big(\phi( \overrightharp{b}_i )  +1 \big) 
 \equiv \# \{ 2 \le i \le g : i \text{ is odd}\} \mod 2\]
which is $0$ when $g \equiv 1 \text{ or } 2 \mod 4$ and $1$ when $g \equiv 0 \text{ or } 3 \mod 4$.

Therefore in both cases the parity of the $2$--spin structure induced by the abelian differential matches the label used to construct the curve system.
\end{proof}

For ease of reference, we package the results of Construction \ref{constr:proto} and Lemmas \ref{lem:trivhol} and \ref{lem:rightArf} together into the following:

\begin{prop}\label{prop:prototypes}
For any $(\sing, \spin)$, there is an abelian differential
\[(X, f, \omega) \in \HT(\sing)^{\spin}\]
such that $\mathsf{C}(\sing, \spin)$ is set of all horizontal and vertical cylinders on $Y$.
\end{prop}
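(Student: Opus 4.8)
The plan is to assemble the three ingredients developed above: Construction \ref{constr:proto}, Lemma \ref{lem:trivhol}, and Lemma \ref{lem:rightArf}. First I would invoke Construction \ref{constr:proto}: starting from the filling network $\mathsf{C}(\sing,\spin)$, pass to its dual square-ulation $\mathcal{D}$ of $S$ and declare each dual square a unit Euclidean square. This equips the underlying marked Riemann surface $(X,f)$ with a flat cone metric $\sigma$ whose cone points are the centers of the complementary disks $D_1,\dots,D_n$; since $\overline{D_i}$ is an (immersed) $4(k_i+1)$-gon, the cone angle there is $\tfrac{\pi}{2}\cdot 4(k_i+1)=2(k_i+1)\pi$. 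Hence the singularity data of $\sigma$ is exactly $\sing$, and the curves of $\mathsf{C}_h$ and $\mathsf{C}_v$ are core curves of cylinders by the observation recorded in that construction.

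Next I would apply Lemma \ref{lem:trivhol}: because $\mathsf{C}(\sing,\spin)$ is a connected arboreal network, its curves admit a coherent orientation under which $\mathsf{C}_h=\{a_i\}$ plays the role of the horizontal direction and $\mathsf{C}_v=\{b_i\}\cup\{a_i'\}$ the vertical, so that each unit square is glued right-edge to left-edge and top-edge to bottom-edge of its neighbors. The rightward-pointing vector fields on the squares then patch to a horizontal unit vector field on $X$ singular only at the cone points, so $\sigma$ has trivial linear holonomy and therefore equals $|\omega|^2$ for some holomorphic abelian differential $\omega$ on $X$. Thus $(X,f,\omega)\in\HT(\sing)$, and when $r=\gcd(\sing)$ is even, Lemma \ref{lem:rightArf} identifies the parity of the $2$-spin structure induced by $\omega$ with $\spin$, so in fact $(X,f,\omega)\in\HT(\sing)^{\spin}$.

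It then remains only to upgrade ``$\mathsf{C}_h$ and $\mathsf{C}_v$ consist of cylinder cores'' to ``$\mathsf{C}(\sing,\spin)$ is \emph{all} of the horizontal and vertical cylinder cores.'' Here I would observe that the square decomposition exhibits $(X,\omega)$ as a finite union of horizontal strips of unit squares glued edge-to-edge, so every non-singular horizontal leaf is closed and $X$ decomposes, away from the singular leaves, into horizontal cylinders; by the combinatorics of the dual square-ulation each maximal strip has core isotopic to exactly one curve of $\mathsf{C}_h$, and distinct curves of $\mathsf{C}_h$ are non-isotopic since $\mathsf{C}_h$ is a multicurve inside a filling network. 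The identical argument in the vertical direction handles $\mathsf{C}_v$. I expect this last bookkeeping — verifying that the cylinder decompositions are \emph{precisely} $\mathsf{C}_h$ and $\mathsf{C}_v$, with no coarsening or extra cylinders — to be the only point that requires genuine care; the rest is a direct citation of Construction \ref{constr:proto} and Lemmas \ref{lem:trivhol} and \ref{lem:rightArf}.
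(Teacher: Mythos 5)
Your proof is correct and follows exactly the paper's route: the statement is packaged there from Construction \ref{constr:proto}, Lemma \ref{lem:trivhol}, and Lemma \ref{lem:rightArf}, precisely as you assemble them. The one place you go beyond the text is the final paragraph, where you spell out why the horizontal and vertical cylinders are \emph{exactly} the curves of $\mathsf{C}_h$ and $\mathsf{C}_v$ (the paper merely says ``one can check by inspection''); your observation that every vertex of the dual square-ulation is a cone point of angle $2(k_i+1)\pi \ge 4\pi$, so no strip can merge across a singular leaf, is the right justification and fills that small gap cleanly.
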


\section{Generating the geometric monodromy}\label{sec:alltogether}

In this section, we prove our main theorems. Throughout, we will let $(X, f, \omega)$ denote the prototype for the pair $(\sing, \spin)$ where $r=\gcd(\sing) \notin \{2g-2, g-2\}$ and $\phi$ the marked $r$--spin structure induced by $\omega$. As in Section \ref{sec:wn_monodromy}, the marking induces an identification 
\[\mathcal{G}(\sing, \spin)=\text{Stab}_{\Mod(S)}(\widetilde{\Omega}),\]
where $\widetilde{\Omega}$ is the component of $\HT(\sing)$ containing $(X, f, \omega)$.

The main result of this section is Theorem \ref{thm:monodromy}, which virtually identifies the groups $\mathcal{G}(\sing, \spin)$, $\Mod(S)[\phi]$, and the following group generated by Dehn twists:
\begin{defn}
Let $\mathsf{C}(\sing, \spin)$ be defined as in Definition \ref{def:curvesystem}. Then set 
\[\Gamma(\sing, \spin) = \langle T(c) : c \in \mathsf{C}(\sing, \spin) \rangle.\]
\end{defn}
In the process of proving Theorem \ref{thm:monodromy}, we also arrive at an understanding of the action of $\Mod(S)$ on the set of connected components of $\HT(\sing)$ (Theorem \ref{thm:components}).

Our strategy is to realize the elements of $\Gamma(\sing, \spin)$ as flat deformations (Lemma \ref{lem:twist_in_monodromy}) and then to show that these twists are enough to generate the entire geometric monodromy group (or if $r$ is even, a finite--index subgroup thereof). While in some special cases the latter statement follows easily from Theorem \ref{thm:Salter_generation}, in general we must implement some sort of iterative procedure to reduce down to a special case. As we describe in \S\ref{sec:Euclidean}, this procedure in turn is the consequence of a loose analogy between our curve systems and modular arithmetic, which allows us to use the Euclidean algorithm to complete $\mathsf{C}(\sing, \spin)$ to $\mathsf{C}\left((r^{(2g-2)/r}), \spin\right)$ (Theorem \ref{thm:Euclidean_algorithm}).

\subsection{Cylinder shears and Dehn twists}\label{sec:gen_monodromy}

The first thing we must do is show that Dehn twists in the curves of $\mathsf{C}(\sing, \spin)$ can be realized as flat deformations of our prototype surface.

Since each curve $c \in \mathsf{C}(\sing, \spin)$ is realized as the corve curve of a cylinder on $(X,f,\omega)$, we may twist along the cylinder without exiting the stratum $\HT(\sing)$. We briefly recall the construction of \cite{Wright_CylDeform} below, and direct the interested reader there for a much richer picture of these deformations.

Let $(X, f, \omega)$ be any marked abelian differential and let $\xi$ be a maximal flat cylinder of $(X, f, \omega)$ with core curve $f(c)$. Set $m$ to be the inverse modulus of $\xi$ (the ratio of its width to its height). Without loss of generality, we may assume that the cylinder is horizontal and apply the horocyclic flow
\[u_t = \left( \begin{array}{cc}
1 & t \\ 0 & 1
\end{array} \right)\]
to the cylinder $\xi$ while fixing the rest of the surface. This operation yields a family of {\em cylinder shears} $u_t(\xi) \cdot (X, \omega)$ of our original surface, as shown in Figure \ref{fig:cyl_shears}. 
Moreover, a full shear by the inverse modulus $m$ preserves the flat structure and acts by Dehn twisting in $f(c)$, that is,
\begin{equation}\label{eqn:twist}
u_m(\xi) \cdot (X, f, \omega) = (X, T({f(c)})^{-1} \circ f, \omega) = (X,  f \circ T(c)^{-1}, \omega).
\end{equation}

\begin{figure}[ht]
\centering
\includegraphics[scale=.6]{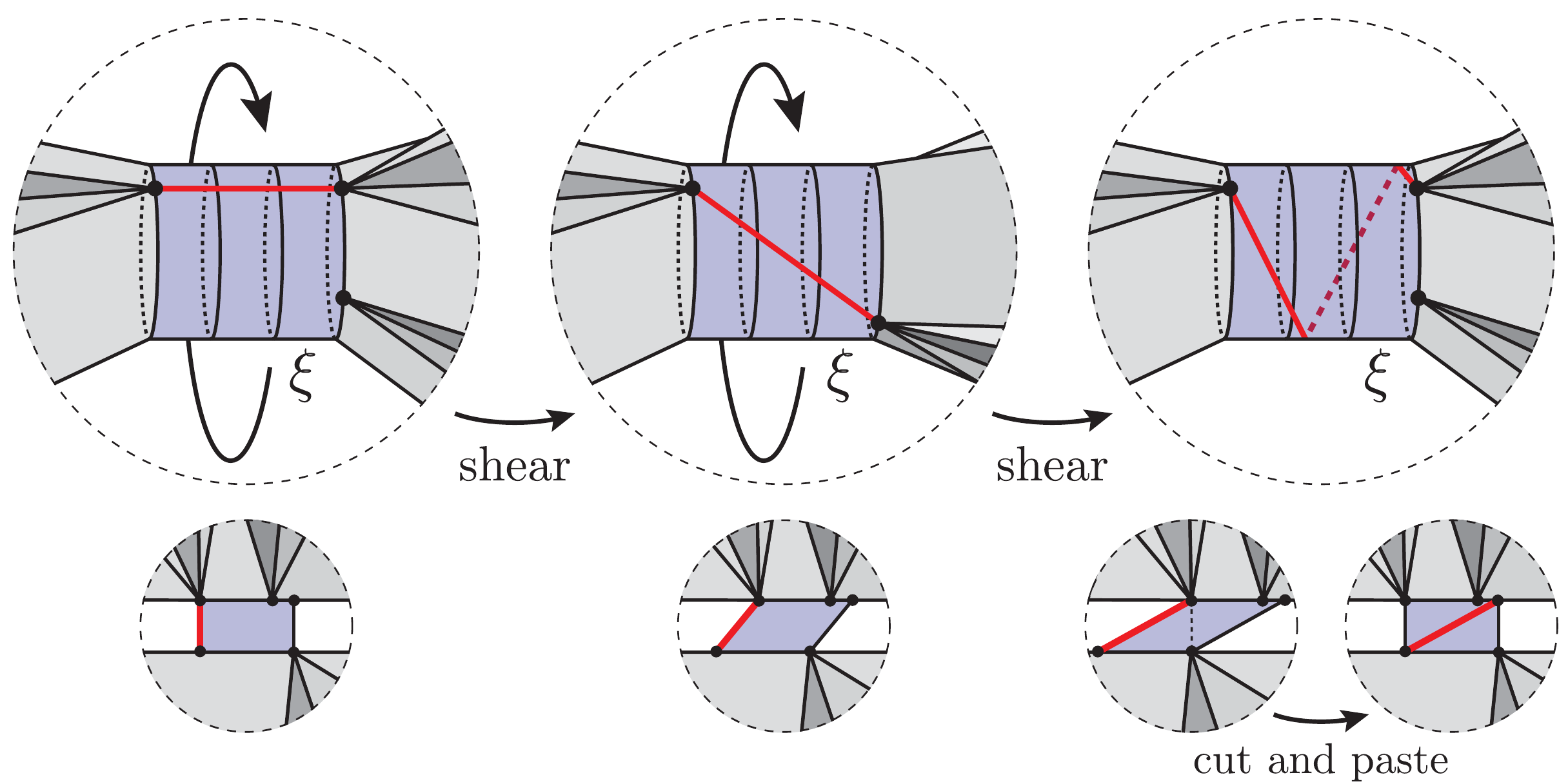}
\caption{A full shear in the cylinder $\xi$, both on the surface and on a polygonal presentation.}
\label{fig:cyl_shears}
\end{figure}

Using these deformations, we can realize twists on $\mathsf{C}(\sing, \spin)$ inside the geometric monodromy group.

\begin{lemma}\label{lem:twist_in_monodromy}
There is an inclusion $\Gamma(\sing, \spin) \le \mathcal{G}(\sing, \spin).$
\end{lemma}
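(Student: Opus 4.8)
The plan is to show that each generator $T(c)$ of $\Gamma(\sing, \spin)$, for $c \in \mathsf{C}(\sing, \spin)$, lies in $\mathcal{G}(\sing, \spin)$; since $\mathcal{G}(\sing, \spin)$ is a group, it will then contain the group they generate. By Proposition \ref{prop:prototypes}, every curve $c \in \mathsf{C}(\sing, \spin)$ is the core curve of a (horizontal or vertical) cylinder $\xi_c$ on the prototype differential $(X, f, \omega)$. First I would recall the identification from \S\ref{sec:wn_monodromy}: the marking $f$ identifies $\mathcal{G}(\sing, \spin)$ with $\mathrm{Stab}_{\Mod(S)}(\widetilde{\Omega})$, the stabilizer of the component $\widetilde{\Omega}$ of $\HT(\sing)^{\spin}$ containing $(X,f,\omega)$. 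So it suffices to exhibit, for each $c$, a loop in $\HM(\sing)^{\spin}$ based at $(X,\omega)$ whose image under $p_*$ is $T(c)$ — equivalently, a path in $\HT(\sing)^{\spin}$ from $(X,f,\omega)$ to $T(c)\cdot(X,f,\omega)$ that stays in $\widetilde\Omega$.

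The core step is the cylinder-shear construction recalled just above the statement. Apply (after rotating so that $\xi_c$ is horizontal) the horocycle flow $u_t(\xi_c)$, shearing $\xi_c$ while fixing the complement; this is a path inside the stratum $\HT(\sing)$ (the cone angles and the marking are unchanged, so one stays in $\HT(\sing)^{\spin}$), hence inside $\widetilde\Omega$ since $\widetilde\Omega$ is an entire connected component. By equation \eqref{eqn:twist}, shearing by the full inverse modulus $m$ of $\xi_c$ brings $(X,f,\omega)$ to $(X, f\circ T(c)^{-1}, \omega) = T(c)\cdot(X,f,\omega)$. Projecting this path to $\HM(\sing)^{\spin}$ via the forgetful map $p$ gives a loop based at $(X,\omega)$, and by the definition of the geometric monodromy group and the correspondence in Corollary \ref{coro:monodromy_containment} its class is exactly $T(c) \in \mathcal{G}(\sing,\spin)$. (One should note a small bookkeeping point: shearing along distinct curves of $\mathsf{C}(\sing,\spin)$ — which come in two transverse families $\mathsf{C}_h, \mathsf{C}_v$ — requires either rotating the differential by $\SL_2(\RR)$ first or, for vertical cylinders, using the opposite horocycle flow; either way the resulting mapping class is the Dehn twist in the relevant curve, possibly up to replacing $T(c)$ by $T(c)^{-1}$, and since $\Gamma(\sing,\spin)$ is generated by the $T(c)$ it is equally generated by the $T(c)^{\pm1}$.)

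Since each generator $T(c)$ of $\Gamma(\sing, \spin)$ lies in the group $\mathcal{G}(\sing, \spin)$, we conclude $\Gamma(\sing, \spin) \le \mathcal{G}(\sing, \spin)$, as desired.

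\textbf{Main obstacle.} There is no serious obstacle here — the lemma is essentially a repackaging of Wright's cylinder-deformation construction together with the covering-space dictionary of \S\ref{sec:wn_monodromy}. The only point requiring a little care is making the shear a genuine \emph{loop} upon projection (so that it represents an element of $\pi_1^{\mathrm{orb}}$, not merely a path), which is handled by the fact that a full shear by the inverse modulus returns to the same flat structure; and confirming that the shearing path never leaves $\HT(\sing)^{\spin}$, which is immediate because shearing a single cylinder changes neither the orders of the zeros nor (being a continuous deformation within the stratum) the component, hence preserves the induced $r$--spin structure and its parity by Corollary \ref{coro:rspin_const_on_comp}.
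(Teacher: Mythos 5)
Your proof is correct and follows essentially the same route as the paper: shear the cylinder $\xi_c$ by its inverse modulus to produce a path in $\HT(\sing)$ from $(X,f,\omega)$ to $T(c)\cdot(X,f,\omega)$, observe that the path stays in the stratum because the complement of $\xi_c$ (and hence the zero data) is untouched, and project to a loop in $\HM(\sing)^{\spin}$ whose monodromy is $T(c)$. The bookkeeping remark about horizontal versus vertical cylinders and possible inverse twists is a reasonable (and harmless) elaboration of the paper's "without loss of generality, we may assume that the cylinder is horizontal."
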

\begin{proof}
Let $c$ be a curve of $\mathsf{C}(\sing, \spin)$; by Proposition \ref{prop:prototypes}, it is realized as the core curve of a cylinder $\xi$ on the prototype $(X, f, \omega)$.

By twisting on $\xi$, we see that $u_t(\xi) \cdot (X, f, \omega)$ for $t \in [0, m]$ gives a path $\gamma$ from $(X, f, \omega)$ to $(X,  f \circ T(c)^{-1}, \omega)$ \eqref{eqn:twist}. Moreover, since no zero of $\omega$ is contained in the interior of $\xi$ and the bordered surface $X \setminus \text{int}(\xi)$ is fixed throughout the shearing process, we see that the surface 
\[u_t(\xi) \cdot (X, f, \omega) \in \HT(\sing) \text{ for all } t.\]
Thus the projection of $\gamma$ to $\HM(\sing)$ is a loop from $(X, \omega)$ to itself. Since the mapping class group acts by precomposition (by inverses) with the marking, this demonstrates that $T(c) \in \mathcal{G}(\sing, \spin).$

Repeating this for each curve of $\mathsf{C}(\sing, \spin)$ gives the desired inclusion.
\end{proof}

Our final goal is to understand the relation of both groups with $\Mod(S)[\phi]$. We begin by considering a special case, which is an easy consequence of our definitions together with Theorem \ref{thm:Salter_generation}.

\begin{prop}\label{prop:gen_easycase}
If $r$ is odd, then 
\[\mathcal{G}( (r^{(2g-2)/r}),\spin )=\Mod(S)[\phi].\]
If $r$ is even, then $\mathcal{G}(\sing, \spin)$ is a finite index--subgroup of $\Mod(S)[\phi]$.
\end{prop}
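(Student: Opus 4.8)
The plan is to pinch $\mathcal{G}(\sing,\spin)$ between two groups it is already known to lie between, and to close the remaining gap with Salter's generation criterion. Lemma \ref{lem:twist_in_monodromy} gives $\Gamma(\sing,\spin)\le\mathcal{G}(\sing,\spin)$, and Corollary \ref{coro:monodromy_containment} gives $\mathcal{G}(\sing,\spin)\le\Mod(S)[\phi]$; so it is enough to show that the twist subgroup already exhausts (resp.\ has finite index in) the $r$--spin mapping class group. In other words, I would reduce the proposition to the purely topological assertions that $\Gamma\big((r^{(2g-2)/r}),\spin\big)=\Mod(S)[\phi]$ when $r$ is odd, and that $\Gamma(\sing,\spin)$ has finite index in $\Mod(S)[\phi]$ when $r$ is even.

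To establish these I would apply Theorem \ref{thm:Salter_generation} to the relevant curve system. Hypothesis (1) comes for free: the complement of $\mathsf{C}(\sing,\spin)$ is a union of disks by the remark following Definition \ref{def:curvesystem}, so the system is a connected filling network, and by Proposition \ref{prop:prototypes} each of its curves is the core of a horizontal or vertical cylinder on the prototype, whence $\phi(\overrightharp{c})=0$ for each such $c$ by Lemma \ref{lem:cyl_wn0}. The substance is then the configuration data of hypotheses (2)--(4): a $D_{2r+3}$ arrangement inside the system together with the extra curve $a_{r+1}$, a curve meeting the associated curve $\Delta_0$ exactly once, and an arboreal connected filling subnetwork of the surface cut along $d$. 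For the maximal partition $(r^{(2g-2)/r})$ the curves $b_i$ occur at positions $3,\,3+r,\,3+2r,\dots$, and this is precisely the spacing that lets one read a $D_{2r+3}$ configuration off Figures \ref{fig:curvelabels12}--\ref{fig:curvelabels3}, with the remaining $a_i$ and $a_i'$ supplying the auxiliary curve meeting $\Delta_0$ and the arboreal filling subnetwork of the cut surface; the genus hypothesis $g\ge g(r)$ is exactly the one demanded by Theorem \ref{thm:Salter_generation}. For even $r$ one only needs finite index, which leaves room to spare: it suffices to exhibit a (small, fixed-size) sub-network realizing the induced $2$--spin structure $\phi^{\otimes r/2}$ in the sense of the $r=2$ case of Theorem \ref{thm:Salter_generation}, whose twist subgroup is finite index in $\Mod(S)[\phi^{\otimes r/2}]$; being sandwiched between $\Gamma(\sing,\spin)$ and the finite--index subgroup $\Mod(S)[\phi]$ of $\Mod(S)[\phi^{\otimes r/2}]$, it is then finite index in $\Mod(S)[\phi]$ as well.

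Granting the configuration check, Theorem \ref{thm:Salter_generation} yields $\Gamma\big((r^{(2g-2)/r}),\spin\big)=\Mod(S)[\phi]$ when $r$ is odd and the finite--index statement when $r$ is even; combining with the sandwich $\Gamma\le\mathcal{G}\le\Mod(S)[\phi]$ then finishes the proof. The only genuine work is the combinatorial verification of Salter's hypotheses (2)--(4) for the explicit curve systems of Definition \ref{def:curvesystem}, carried out separately through its three labeling schemes, and I expect this -- locating the $D_{2r+3}$ configuration and checking arboreality of the filling subnetwork of the surface cut along $d$ -- to be the main obstacle, with everything else formal.
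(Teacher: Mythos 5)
Your high-level architecture matches the paper: sandwich $\Gamma(\sing,\spin)\le\mathcal{G}(\sing,\spin)\le\Mod(S)[\phi]$ via Lemma \ref{lem:twist_in_monodromy} and Corollary \ref{coro:monodromy_containment}, then close the gap by checking the hypotheses of Theorem \ref{thm:Salter_generation} on $\mathsf{C}\bigl((r^{(2g-2)/r}),\spin\bigr)$. Two points of substance, however, do not go through as written.

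First, hypothesis (4) is not a routine combinatorial check in all cases, and this is where the bulk of the paper's proof actually lives. With the labeling of Figure \ref{fig:curvelabels12} (case (1+2) of Definition \ref{def:curvesystem}), the subnetwork of $\mathsf{C}\bigl((r^{(2g-2)/r}),\spin\bigr)$ consisting of curves disjoint from $d=b_2$ is \emph{not} a connected filling network for $S\setminus b_2$; in fact it is disconnected. The paper repairs this by producing an auxiliary curve $c$, via the Birman--Hilden correspondence on the subsurface $S_{\mathsf{A}}$, that lies in the $\Gamma$--orbit of $b_3$ (so $T(c)$ already belongs to $\Gamma$) and completes $\mathsf{C}\setminus\{a_2\}$ to an arboreal filling network of $S\setminus b_2$. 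This requires Lemma \ref{lem:ineq} ($r<g-2$) to ensure $c$ can be braided clear of $b_2$, and relies on a fact proved only later (Proposition \ref{prop:heuristic}) that $c_{(3,3+r)}\in\Gamma\cdot b_3$. Calling this step ``expected to be the main obstacle'' is not wrong, but a correct proof would have to supply it; an appeal to Theorem \ref{thm:Salter_generation} with the bare curve system $\mathsf{C}$ fails in this labeling regime.

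Second, your alternative route for even $r$ --- apply the $r=2$ case of Salter's theorem to a subnetwork realizing $\phi^{\otimes r/2}$ and then transfer finite index from $\Mod(S)[\phi^{\otimes r/2}]$ down to $\Mod(S)[\phi]$ --- does not work from the data available. A $D_7$ configuration requires two $b$--curves at consecutive \emph{odd} spacing, but in $\mathsf{C}\bigl((r^{(2g-2)/r}),\spin\bigr)$ the $b_i$ appear only at indices $\equiv 3\pmod r$, so no $D_7$ configuration exists when $r>2$; and you cannot simply enlarge the network, since any curve you add must have its Dehn twist inside $\Gamma(\sing,\spin)$, which is not free. The paper instead applies Theorem \ref{thm:Salter_generation} with the actual $r$ and simply reads off the finite-index conclusion from the even-$r$ branch of that theorem. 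This is also why the stronger genus bound $g\ge g(r)$ (not merely $g\ge 5$) is used for even $r$; your argument, if it worked, would improbably weaken that bound.
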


Before we can prove the Proposition, we record a quick inequality which will be used to ensure that there is enough space on the surface to perform the required manipulations.

\begin{lemma}\label{lem:ineq}
Suppose that $g$ and $r$ are positive integers so that $g \ge 5$, $r < g-1$, and $r$ divides $2g-2$. Then
\[r < g-2.\]
\end{lemma}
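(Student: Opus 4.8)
The statement is a purely number-theoretic observation, so the plan is to argue directly from the divisibility constraint. First I would record the basic implications of the hypotheses: since $r \mid 2g-2$, we may write $2g-2 = rk$ for some positive integer $k$, and since $r < g-1$ we have $k = (2g-2)/r > 2$, hence $k \ge 3$. Thus $2g-2 = rk \ge 3r$, which gives $r \le (2g-2)/3$.

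Next I would convert this into the desired strict inequality $r < g-2$. From $r \le (2g-2)/3$ it suffices to check that $(2g-2)/3 < g-2$, i.e. $2g-2 < 3g-6$, i.e. $g > 4$; this is exactly the hypothesis $g \ge 5$. Hence $r \le (2g-2)/3 < g-2$, as claimed. (The one value to watch is $k=3$, $g=5$, where $r=(2g-2)/3$ is not an integer anyway, so in fact the bound is never tight; but the clean chain above already suffices and avoids case analysis.)

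The main obstacle — such as it is — is simply organizing the divisibility bookkeeping cleanly: the key point is that $r < g-1$ forces the cofactor $(2g-2)/r$ to be \emph{at least} $3$ rather than merely at least $2$, and everything else follows from $g \ge 5$ by a one-line estimate. No deep input is needed; one only has to make sure the strict inequalities are handled correctly (in particular that $k>2$ with $k$ an integer yields $k\ge 3$).
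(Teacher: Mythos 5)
Your proof is correct, but it takes a genuinely different route from the paper. The paper observes that $r<g-1$ already gives $r\le g-2$, so it only needs to rule out the single boundary case $r=g-2$; it does this by contradiction, noting that if $r=g-2$ then both $g-2$ and $g-1$ divide $2g-2$, and since $\gcd(g-1,g-2)=1$ their product $(g-1)(g-2)$ must divide (hence be at most) $2g-2$, which forces $g\le 4$. Your argument instead bounds the cofactor $k=(2g-2)/r$ directly: the hypothesis $r<g-1$ forces $k>2$, hence $k\ge 3$ by integrality, giving $r\le (2g-2)/3$, and the estimate $(2g-2)/3<g-2$ is equivalent to $g>4$. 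Your version avoids the coprimality/lcm observation and any case analysis, and it delivers the slightly stronger bound $r\le (2g-2)/3$ for free; the paper's version is a one-case contradiction that leans on the standing fact $g-1\mid 2g-2$. Both are short and correct, and both isolate $g\ge 5$ as exactly the needed hypothesis.
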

\begin{proof}
Suppose towards contradiction that $r=g-2$; but now both $r$ and $g-1$ divide $2g-2$, and since $g-1$ and $g-2$ are coprime, it must be that
\[(g-1)(g-2) = \text{lcm}(g-1, g-2) \le 2g-2\]
which is equivalent to the inequality
\[g^2 - 5g+4 \le 0.\]
But this happens only for $g$ between $1$ and $4$, and we have assumed that $g \ge 5$, a contradiction.
\end{proof}

\begin{proof}[Proof of Proposition \ref{prop:gen_easycase}]
Observe that by Lemma \ref{lem:twist_in_monodromy} and Corollary \ref{coro:monodromy_containment}, we have that
\[\Gamma(\sing, \spin) \le \mathcal{G}(\sing, \spin) \le \Mod(S)[\phi].\]
Therefore in order to prove the statement, we need only prove that $\Gamma(\sing, \spin)=\Mod(S)[\phi]$ (or when $r$ is even, is of finite index). This reduces to checking the hypotheses of Theorem \ref{thm:Salter_generation}.

\begin{enumerate}
\item[(0)] Observe that by construction, $\mathsf{C}((r^{(2g-2)/r}),\spin)$ is a connected, filling network. Moreover, in this special case the definition of the curve system reduces to
\[\mathsf{C}((r^{(2g-2)/r}),\spin) = \mathsf{A} \cup \{b_i : i \equiv 3 \mod r\}.\]
\item Since each curve is realized as a cylinder on the prototype $(X, f, \omega)$, we see by Lemma \ref{lem:cyl_wn0} that $\phi(\overrightharp{c} ) = 0$ for each $c \in \mathsf{C}((r^{(2g-2)/r}),\spin ).$
\item The reader can verify that in both of the labeling schemes of Definition \ref{def:curvesystem}, the collection
\[ \{ b_3, a_2', a_3, a_3', \ldots, a_{r+2}, a_{r+2}', a_{r+3}\} \subset \mathsf{C}((r^{(2g-2)/r}),\spin) \]
is arranged in the $D_{2r+3}$ configuration and the labeled curve $b_{r+3}$ corresponds to the $a_{r+1}$ curve of the $D_{2r+3}$ configuration. Observe that by Lemma \ref{lem:ineq}, we have $r+3 \le g$ and so this configuration fits on the surface. See Figure \ref{fig:curvesys_Dtwiddle}.
\item The curve $b_2$ corresponds to $\Delta_0$ in the $D_{2r+3}$ configuration, and $i(b_2, a_2)=1$.
\item If the curves are labeled as in Figure \ref{fig:curvelabels3}, is clear by inspection that the subnetwork $\mathsf{C}(\sing, \spin) \setminus \{a_2\}$ is a connected arboreal network which fills $S \setminus b_2$.
\end{enumerate}

\begin{figure}[ht]
\centering
\includegraphics[scale=.5]{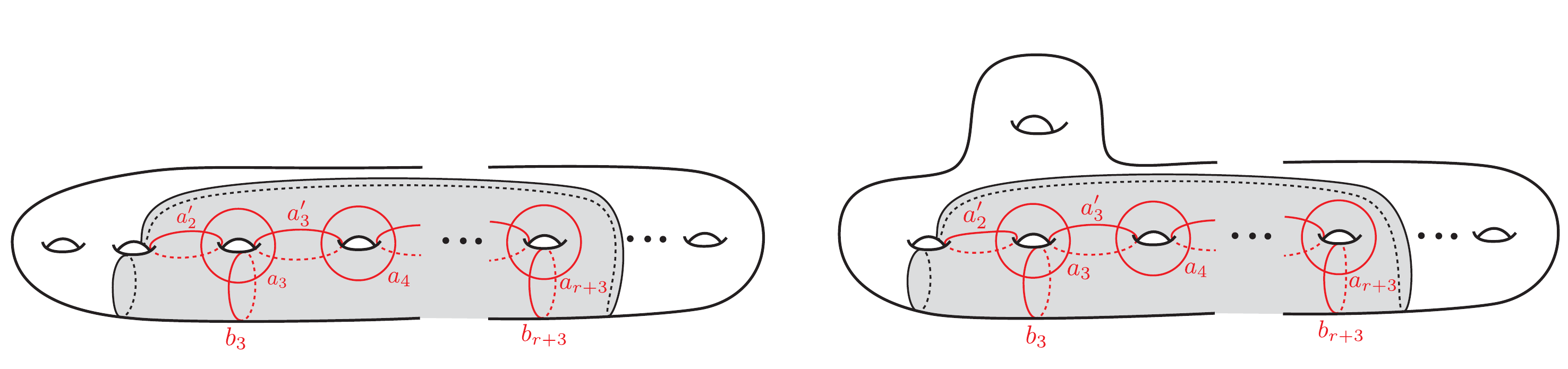}
\caption{The sets of curves in the $D_{2r+3}$ configuration and $a_{r+1}$, together with the subsurfaces which they fill.}
\label{fig:curvesys_Dtwiddle}
\end{figure}

When the curves are labeled as in Figure \ref{fig:curvelabels12}, the resulting subnetwork $\mathsf{C}(\sing, \spin) \setminus \{a_2\}$ is not a filling network for $S \setminus b_2$ (and indeed, is not even connected). To rectify this issue, we enhance our generating set by constructing a curve $c$ such that $\phi(\overrightharp{c})=0$, $T(c) \in \Gamma(\sing, \spin)$, and so that
\[\mathsf{C}':= \mathsf{C}((r^{(2g-2)/r}), \spin) \cup \{c \}\]
 is a network satisfying all of the hypotheses of Theorem \ref{thm:Salter_generation}. Once we have constructed such a $c$, then we will have that
\[G(\phi)
= \left \langle T(c) : c \in \mathsf{C}' \right \rangle
= \Gamma((r^{(2g-2)/r}),\spin) 
\le \Mod(S)[\phi]\]
where $G(\phi) = \Mod(S)[\phi]$ when $r$ is odd and is of finite index when $r$ is even. In either case, this will allow us to conclude our proof.

To find this curve, we will use a new, auxiliary curve $c_{(3,3+r)}$ which is the ``top'' boundary component of the chain
\[(a_3, a_3', a_4, \ldots, a_{r-1}', a_r).\]
See Figure \ref{fig:cij}. We claim (and prove below, see Proposition \ref{prop:heuristic}) that $c_{(3, 3+r)}$ is in the $\Gamma(\sing, \spin)$ orbit of $b_3$. 

Allowing this, let $S_{\mathsf{A}}$ denote the subsurface filled by $\A$. By the Birman--Hilden theory (\S\ref{sec:hyperelliptic}, see also \S\ref{sec:braids}), the image of $c_{(3,3+r)}$ encircles the $5\ith$ through $(6+2r)\ith$ branch points of $S_{\A}$ mod its obvious hyperelliptic involution. This curve can then be braided so that it encircles the $1^{\text{st}}$, $2^{\text{nd}}$, and through $(2g-2r+1)^{\text{st}}$ through $2g\ith$ branch points.

Lifting the braid action up to the action of the hyperelliptic mapping class group yields a curve $c$ which is in the $\GA$--orbit of $c_{(3,3+r)}$, and hence the $\Gamma(\sing, \spin)$ orbit of $b_3$. In particular, by Lemma \ref{lem:ineq} we have that
\[2g-2r+1 > 5\]
and so $c$ does not intersect $b_2$. See Figure \ref{fig:c_curve}.

Now since $c$ is in the $\Gamma(\sing, \spin)$ orbit of $b_3$, we have $T(c) \in \Gamma(\sing, \spin)$. Note that since 
\[\Gamma(\sing , \spin) \le \Mod (S)[\phi ]\]
and $\phi ( \overrightharp{b}_3)=0$, it must be that $\phi(\overrightharp{c})=0$.

\begin{figure}[ht]
\centering
\includegraphics[scale=.8]{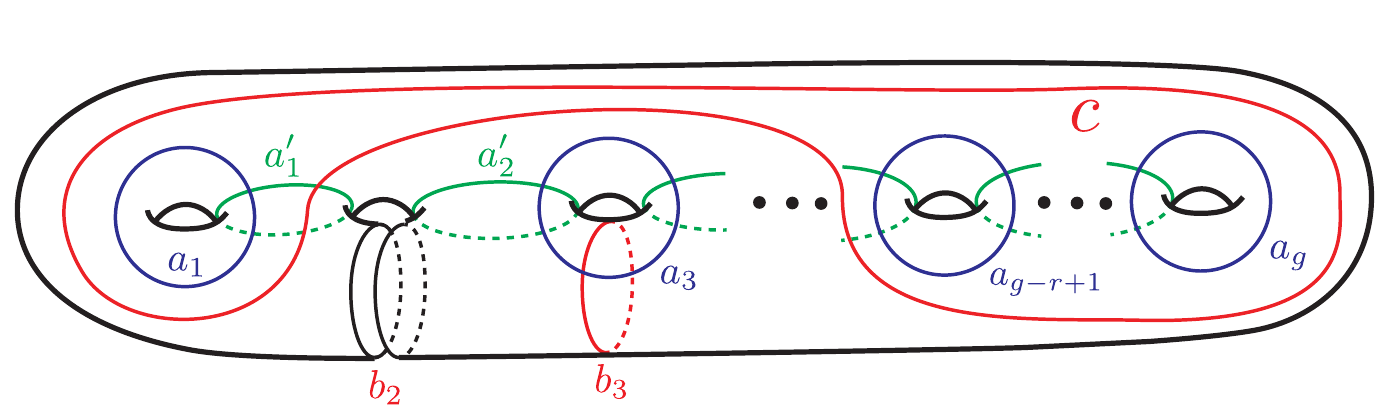}
\caption{Completing $\mathsf{C}(\sing, \spin) \setminus \{a_2\}$ to an arboreal, filling network on $S \setminus \{b_2\}$.}
\label{fig:c_curve}
\end{figure}

The new collection of curves $\mathsf{C}' $ is still a connected, filling network which contains the appropriate $D_{2r+3}$ configuration, and the subnetwork 
$\mathsf{A} \cup \{ b_3, c\}$
is a connected arboreal subnetwork which fills $S \setminus \{b_2\}$.

Therefore in either case, we can apply Theorem \ref{thm:Salter_generation} to deduce that $\Gamma(\sing, \spin)$ is either $\Mod(S)[\phi]$ (if $r$ is odd) or a finite--index subgroup thereof (if $r$ is even).
\end{proof}

\subsection{The Euclidean algorithm on simple closed curves}\label{sec:Euclidean}

In order to complete the proof of our main theorem, we need to extend Proposition \ref{prop:gen_easycase} to general partitions $\sing$ of $2g-2$. In particular, we need to show that we can recover the Dehn twists in the curves of $\mathsf{C}((r^{(2g-2)/r}), \spin)$ by twisting in $\mathsf{C}(\sing, \spin)$.

Let $(X, f, \omega)$ be the prototype constructed above for the curve system $\mathsf{C}(\sing, \spin)$. While the (framed lifts of the) curves of $\mathsf{C}((r^{(2g-2)/r}), \spin)$ all evaluate to $0$ under the $r$--spin structure $\phi$ induced by $(X, f, \omega)$ (by homological coherence, Lemma \ref{lem:hom_coh}), there is {\em a priori} no reason that we should expect to be able to twist in them.

It is tempting to speculate that every curve $c$ such that $\phi(\overrightharp{c})=0$ is realized as a cylinder on some $(X', f', \omega')$ living in the same component of $\HT(\sing)^{\spin}$ as our prototype $(X, f, \omega)$, but this is not the case.

For example, consider the stratum $\HM(1, 2g-3)$ for $g \ge 4$. Theorem \ref{thm:monodromy} implies that its monodromy group is the entire mapping class group, and in particular contains a Dehn twist about a separating curve $c$ whose complementary subsurfaces $S \setminus c$ both have genus at least $2$. However, if $c$ were realized as a cylinder on some abelian differential $(X, \omega) \in \HM(1, 2g-3)$ then the induced flat cone metrics on the pieces of $X \setminus N_\varepsilon(c)$ (where $N_\varepsilon(c)$ denotes a flat $\varepsilon$--neighborhood of $c$) would have cone angles $4\pi$ and $(4g-4)\pi$ with flat geodesic boundary of zero curvature. But this contradicts the Gauss--Bonnet theorem, and so $c$ can never be realized as a cylinder on a surface in $\HM(1, 2g-3)$.

We will therefore put aside our geometric interpretation of the monodromy group for the moment and instead appeal to perhaps the most established method of reducing to a greatest common divisor. That is to say, we are going to apply the Euclidean algorithm to the curve system $\mathsf{C}(\sing, \spin)$.

In order to use the Euclidean algorithm, one must first be able to ``add'' and ``subtract'' the quantities in question. In Proposition \ref{prop:curve_addsub} below, we demonstrate how to model the operations of arithmetic with simple closed curves by employing manipulations which are reminiscent of those arising in the derivation of the Lickorish generators from the Humphries generators \cite{Humphries_gen}.

Recall that if the curves of $S$ are labeled as in Figure \ref{fig:curvelabels12}, then we denote by $\mathsf{A}$ the set of all curves labeled by some $a_i$ or $a_i'$. Then define
\[\GA = \langle T(a) : a \in \mathsf{A} \rangle.\]
Observe that no matter the pair $(\sing, \spin)$, we have that $\mathsf{A} \subset \mathsf{C}(\sing, \spin)$ and hence $\GA < \Gamma(\sing, \spin)$.

\begin{prop}[addition and subtraction]\label{prop:curve_addsub}
Let the curves of $S_g$ be labeled as in Figure \ref{fig:curvelabels} and suppose that $x \le g-2$. Then
\[ T({b_{i+2x}})  \in \langle T({b_i}), T({b_{i+x}}), \GA \rangle\]
where indices are taken mod $2g-2$. Analogously,
\[ T({b_{i}}) \in \langle T({b_{i+x}}), T({b_{i+2x}}), \GA \rangle.\]
\end{prop}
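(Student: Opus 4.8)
The plan is to prove the ``addition'' assertion in detail; the ``subtraction'' assertion is entirely analogous, obtained from the evident left--right symmetry of the configuration $(b_i, b_{i+x}, b_{i+2x})$, in which $b_{i+x}$ is the ``middle'' curve and one recovers either outer curve from the middle one and the other outer one. To prove the addition statement I would invoke the change-of-coordinates principle: it suffices to produce a mapping class
\[ h \in \langle T(b_i),\, T(b_{i+x}),\, \GA \rangle \quad\text{with}\quad h(b_{i+x}) = b_{i+2x}, \]
for then $T(b_{i+2x}) = h\, T(b_{i+x})\, h^{-1}$ automatically lies in the subgroup. The hypothesis $x \le g-2$ is exactly what is needed to guarantee that the subchain of $a$-- and $a'$--curves running from position $i$ through position $i+2x$ embeds on $S_g$ without wrapping back onto itself, so that all the curves involved below are genuinely distinct and arranged as in Figure \ref{fig:curvelabels}.

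The heart of the argument is the construction of $h$. Each of $b_i$, $b_{i+x}$, $b_{i+2x}$ meets the subchain of $\mathsf{A}$ joining them exactly once, at the appropriate vertex of the intersection graph, and I would build $h$ as an explicit product of $T(b_i)$, $T(b_{i+x})$ and Dehn twists in the intermediate curves of $\mathsf{A}$, via a sequence of chain-slide and half-twist moves of the kind used to pass from the Humphries generators to the Lickorish generators \cite{Humphries_gen}. Concretely, a ``shift by $x$'' along the chain should be obtained by using $T(b_i)$ together with the $\mathsf{A}$--twists between positions $i$ and $i+x$ to slide $b_i$ onto $b_{i+x}$; performing the analogous move one step further along the chain (now using $T(b_{i+x})$ and the $\mathsf{A}$--twists between $i+x$ and $i+2x$) then carries $b_{i+x}$ onto $b_{i+2x}$. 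The cleanest place to carry out this bookkeeping is the Birman--Hilden model of the subsurface $S_{\mathsf{A}}$ filled by $\mathsf{A}$ (see \S\ref{sec:hyperelliptic} and Appendix \ref{app:curves}): there the $b$--curves and the curves of $\mathsf{A}$ are lifts of half-twists between consecutive branch points, and the required identity reduces to a short word computation in a braid group, which can be verified directly.

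The main obstacle I expect is purely combinatorial: confirming that the element $h$ written down above really does send $b_{i+x}$ to $b_{i+2x}$ --- that is, tracking the intersection patterns of the curves through each move and checking that no unintended curve is produced --- while also checking that every twist appearing in $h$ is indeed one of $T(b_i)$, $T(b_{i+x})$, or a twist in a curve of $\mathsf{A}$. This is the step where the bound $x \le g-2$ and the two distinct labeling schemes of Definition \ref{def:curvesystem} must be treated with care, and where passing to the braid-group reformulation pays off. Once such an $h$ is in hand, the addition statement follows immediately by the conjugation above, and the subtraction statement follows by the mirror-image construction, built from $\GA$, $T(b_{i+x})$ and $T(b_{i+2x})$, which sends $b_{i+x}$ to $b_i$.
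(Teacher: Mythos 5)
Your overall reduction — via the conjugation formula $T(b_{i+2x}) = h\,T(b_{i+x})\,h^{-1}$, so that it suffices to find $h$ in the subgroup carrying $b_{i+x}$ to $b_{i+2x}$ — matches the paper's proof exactly (this is the role of Fact~\ref{lem:change_of_coords}). The appeal to the Birman--Hilden model of $S_{\A}$ and the analogy with Humphries/Lickorish chain-slides are also the right tools, and indeed the paper invokes them in precisely this way.

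Where your sketch has a genuine gap is in the construction of $h$. You propose building $h$ so that a ``shift by $x$'' carries $b_{i+x}$ onto $b_{i+2x}$ using only $T(b_{i+x})$ and twists in $\A$. No such element of $\langle T(b_{i+x}), \GA\rangle$ exists in general: if it did, then $T(b_{i+2x}) \in \langle T(b_{i+x}), \GA\rangle$ for every $i$ and every $x \le g-2$, and by iteration $\langle T(b_3), \GA\rangle$ alone would contain every $T(b_j)$ — but $\langle T(b_3), \GA\rangle = \Gamma((2g-2),\spin)$ is a proper subgroup of $\Mod(S)$ lying inside a $(2g-2)$-spin stabilizer, so this is false. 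The paper's proof is genuinely a \emph{two-step} process through the auxiliary curve $c_{(i,j)}$ of Figure~\ref{fig:cij} and the heuristic of Proposition~\ref{prop:heuristic}: first one uses \emph{both} $T(b_i)$ and $T(b_{i+x})$ together with $\GA$ to extract the twist $T(c_{(k,\ell)})$ with $\ell - k = x$; then Lemma~\ref{lem:GA_action_cij} moves $c_{(k,\ell)}$ to the appropriate $c$-curve by an element of $\GA$; and only then can one use that twist together with $T(b_{i+x})$ and $\GA$ to produce $T(b_{i+2x})$. Your sketch never introduces the $c_{(i,j)}$ curves, and so has no mechanism for the shift.

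A secondary issue: your claim that ``$x \le g-2$ is exactly what is needed to guarantee that the subchain \ldots embeds on $S_g$ without wrapping'' misreads the role of the bound. The triple $(b_i, b_{i+x}, b_{i+2x})$ \emph{can} wrap past the ends of the surface — $2x$ can be as large as $2g-4$ — and handling these cases (the two-sided and three-sided regimes of Lemmas~\ref{lem:twosided}, \ref{lem:twosided_generalized}, \ref{lem:LHS_case3}) is most of the technical content of the appendix. What $x \le g-2$ actually guarantees is that \emph{some} pair $(k,\ell)$ with $2 \le k < \ell \le g$ and $\ell - k = x$ lies in the one-sided regime, so that the auxiliary curve $c_{(k,\ell)}$ exists as an embedded curve inside $S_{\A}$, and Proposition~\ref{prop:heuristic} is stated precisely to transfer between that representative and the actual (possibly wrapping) indices $i, i+x, i+2x$.
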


In order to prove the first claim of Proposition, we will find some $f \in \langle T({b_i}), T({b_i+x}), \GA \rangle$ which takes one of $\{b_i, b_{i+x}\}$ to $b_{i+2x}$ and then apply the following standard fact:

\begin{fact}\label{lem:change_of_coords}
If $c$ is any simple closed curve on $S$ and $f \in \Mod(S)$, then $f T(c) f^{-1} = T({f(c)}).$
\end{fact}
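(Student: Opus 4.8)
This is the standard ``change of coordinates'' principle for Dehn twists; I will prove it directly from the definition. Recall that $T(c)$ is defined by fixing an orientation--preserving embedding $\iota : A \hookrightarrow S$ of the closed annulus $A = S^1 \times [0,1]$ whose core $S^1 \times \{1/2\}$ maps to (a curve isotopic to) $c$, and declaring $T(c)$ to be the isotopy class of the homeomorphism $T_{\iota}$ which is the identity on $S \setminus \iota(A)$ and on $\iota(A)$ is given by $\iota \circ \tau \circ \iota^{-1}$, where $\tau(\theta, t) = (\theta + 2\pi t, \, t)$ is the standard left--handed twist of $A$. The isotopy class of $T(c)$ is independent of the choices of $\iota$ and of the representative of $c$; the only subtlety is the handedness, which is pinned down by requiring $\iota$ to be orientation preserving (with respect to a fixed orientation of $A$ and of $S$).

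First I would pick an orientation--preserving homeomorphism $\varphi : S \to S$ representing $f$, and consider the homeomorphism $\psi := \varphi \circ T_{\iota} \circ \varphi^{-1}$. Since $T_{\iota}$ restricts to the identity on $S \setminus \iota(A)$, the conjugate $\psi$ restricts to the identity on $\varphi\bigl(S \setminus \iota(A)\bigr) = S \setminus (\varphi \circ \iota)(A)$. Thus $\psi$ is supported in the annulus $(\varphi\circ\iota)(A)$, whose core is $(\varphi\circ\iota)(S^1 \times \{1/2\}) = \varphi(c)$, a curve isotopic to $f(c)$. On this annulus, $\psi = (\varphi\circ\iota)\circ \tau \circ (\varphi\circ\iota)^{-1}$, which is exactly the twist homeomorphism $T_{\varphi\circ\iota}$ built from the embedding $\varphi\circ\iota : A \hookrightarrow S$.

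The key point is then that $\varphi \circ \iota$ is itself an orientation--preserving embedding of $A$ into $S$: indeed $\iota$ is orientation preserving by hypothesis and $\varphi$ is orientation preserving because $f \in \Mod(S)$. Hence $T_{\varphi\circ\iota}$ is a \emph{left--handed} twist about its core curve $\varphi(c)$, so by definition $\psi = T_{\varphi\circ\iota}$ represents $T(f(c))$. Therefore $f T(c) f^{-1} = [\psi] = T(f(c))$ in $\Mod(S)$, as claimed.

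The only step that requires care is the handedness bookkeeping in the last paragraph: one must check that conjugating the model twist $\tau$ of $A$ by the orientation--preserving gluing map $\varphi\circ\iota$ yields again a left--handed (rather than right--handed) twist, which is precisely where orientation--preservation of $f$ is used. Everything else is formal: conjugation of a homeomorphism supported in a subsurface $U$ is supported in the image of $U$, and the image of an annular neighborhood of $c$ under $\varphi$ is an annular neighborhood of $\varphi(c) \simeq f(c)$. (An alternative to the direct argument, if a reference is preferred, is to cite \cite[Fact 3.7]{FarbMarg}, where exactly this identity is proved.)
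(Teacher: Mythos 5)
Your argument is correct and complete: conjugating the annulus-supported twist homeomorphism by a representative of $f$ gives a twist supported in the image annulus with core $f(c)$, and your handedness bookkeeping via orientation-preservation is exactly the point that needs care. The paper states this as a standard fact without proof, and your direct argument is precisely the standard one (as in \cite[Fact 3.7]{FarbMarg}) that the paper implicitly invokes, so there is nothing to add.
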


The construction of the required element uses a detailed analysis of the group $\GA$ and its action on certain auxiliary curves.
In the interest of the reader, we will only give a schematic of its construction in a specific (but representative) case and defer the full proof to Appendix \ref{app:curves} (see in particular Proposition \ref{prop:heuristic} and the proof of Proposition \ref{prop:curve_addsub} at the very end of the Appendix).

\begin{proof}[Sketch of Proposition \ref{prop:curve_addsub}]
Suppose that $(\sing, \spin)$ and $g$ determine the labeling scheme pictured in Figure \ref{fig:curvelabels12}, and that 
\[1 \le i< i+x < i+2x \le g.\]
In this case, we define an auxiliary type of curve, $c_{(i,j)}$, which is one of the boundary curves of an $\varepsilon$--neighborhood of $a_i \cup a_i' \cup a_{i+1} \cup \ldots \cup a_{j-1}' \cup a_j$. See Figure \ref{fig:cij}.

\begin{figure}[ht]
\centering
\includegraphics[scale=.6]{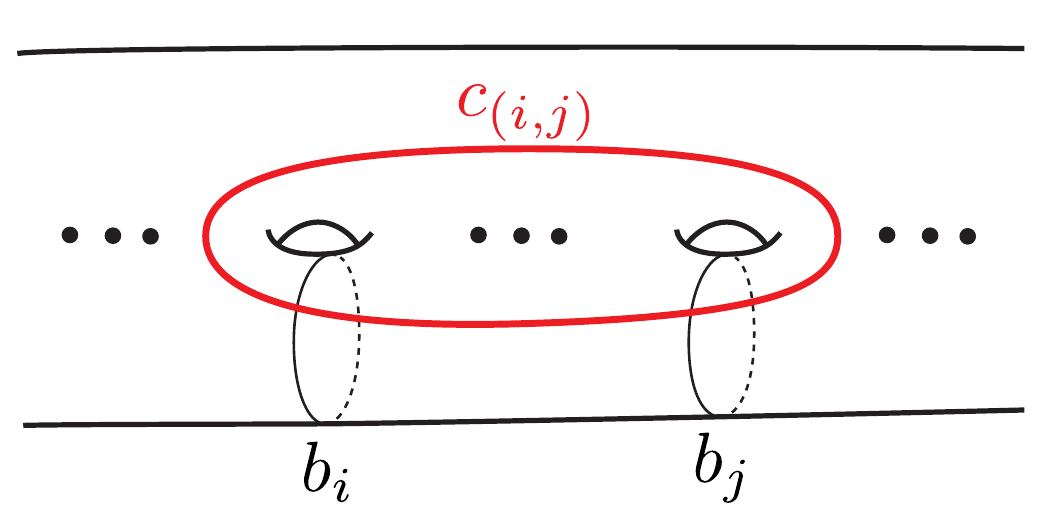}
\caption{The curve $c_{(i,j)}$.}
\label{fig:cij}
\end{figure}

The main idea of the proof is to understand the structure of the $\GA$ and $\langle \GA, b_i \rangle$--orbits of both the $c_{(i,j)}$ and the $b_i$ curves. These orbits are investigated in detail in Appendix \ref{app:curves}, but in our case we can distill the relevant results into the following

\begin{heur}
Any group containing both $\GA$ and  two of $\{T(b_i), T(b_{j}), T(c_{(i,j)}) \}$ contains the third.
\end{heur}

That is, if $\{u, v, w\} = \{b_i, b_{j}, c_{(i,j)} \}$, we have 
\[T(u) \in \left\langle \GA, T(v), T(w) \right\rangle \]
With this rule, we can now sketch the construction of an $f$ taking $b_i$ to $b_{i+2x}$.

Applying the heuristic, we observe that we have
\begin{equation}\label{eqn:sketch1}
T(c_{(i, i+x)}) \in \left\langle \GA, T(b_i), T(b_{i+x}) \right\rangle.
\end{equation}
Now $\GA$ acts transitively on the set of $c_{(i,j)}$ with fixed difference $j-i$ (Lemma \ref{lem:GA_action_cij}), so there is an element of $\GA$ which takes $c_{(i, i+x)}$ to $c_{(i+x, i+2x)}$ and hence
\begin{equation}\label{eqn:sketch2}
T(c_{(i+x, i+2x)}) \in \left\langle \GA, T(c_{(i, i+x)}) \right\rangle.
\end{equation}
By applying the heuristic again, we see that
\begin{equation}\label{eqn:sketch3}
T(b_{i+2x}) \in \left\langle \GA, T(b_{i+x}), T(c_{(i+x, i+2x)}) \right\rangle.
\end{equation}
Combining \eqref{eqn:sketch1}, \eqref{eqn:sketch2}, and \eqref{eqn:sketch3} then yields the desired containment. See Figure \ref{fig:addition_sketch} for an overview of this construction.
\end{proof}

\begin{figure}[ht]
\centering
\includegraphics[scale=.65]{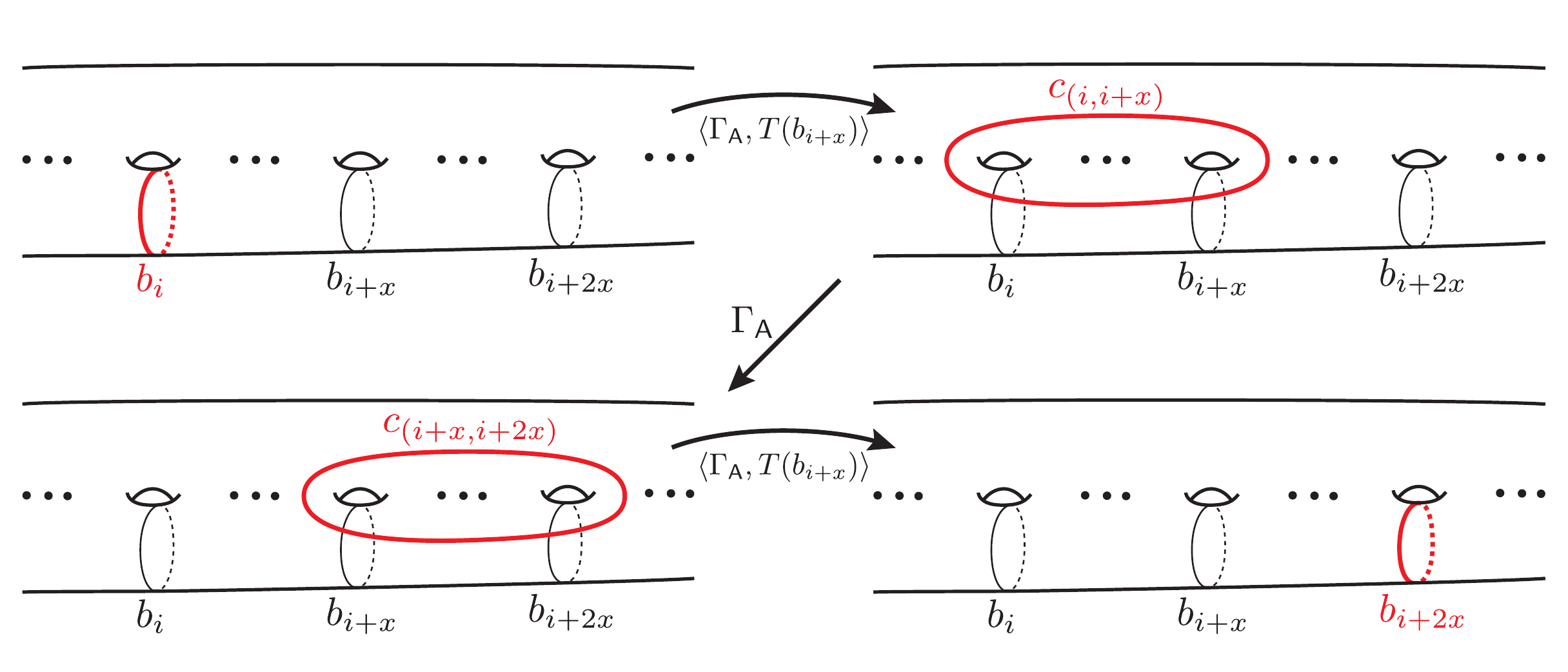}
\caption{Obtaining the twist on $b_{i+2x}$ from the twists on $b_{i}$ and $b_{i+x}$.}
\label{fig:addition_sketch}
\end{figure}

Of course, in the general case, one must take into account the different curve labeling schemes appearing in Figure \ref{fig:curvelabels}. Moreover, there is no guarantee that all of the curves $\{ b_i, b_{i+x}, b_{i+2x}\}$ will lie on the lower half of the surface (i.e., that $i+2x \le g$). In order to deal with the latter issue, we will need to understand how to ``go around the ends of the surface,'' the nuances of which account for a significant portion of the technical difficulty of the proof.

Assuming these simple closed curve analogues of addition and subtraction, we can iteratively apply the Euclidean algorithm to the curve system $\mathsf{C}(\sing, \spin)$
and reduce it to the case
considered in Proposition \ref{prop:gen_easycase}.

\begin{thm}\label{thm:Euclidean_algorithm}
Let $g \ge 4$ and $\sing$ a partition of $2g-2$. If $\gcd(\sing)=r$ is even, choose $\spin \in \{ \even, \odd \}.$ Then
\[\Gamma(\sing, \spin) = \Gamma((r^{(2g-2)/r}), \spin).\]
\end{thm}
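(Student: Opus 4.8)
The plan is to realize the greatest common divisor $r = \gcd(k_1, \ldots, k_n)$ by repeatedly applying Proposition \ref{prop:curve_addsub} to the ``$b$-curves'' indexed by the partial sums of the partition, mimicking the classical Euclidean algorithm. Recall from Definition \ref{def:curvesystem} that $\mathsf{C}(\sing, \spin) = \mathsf{A} \cup \{b_{i} : i \equiv 3 + \sum_{j=1}^{\ell} k_j \bmod (2g-2)\}$, while $\mathsf{C}((r^{(2g-2)/r}), \spin) = \mathsf{A} \cup \{b_i : i \equiv 3 \bmod r\}$. Since $\mathsf{A} \subset \mathsf{C}(\sing, \spin)$ always, and $\Gamma(\sing, \spin) = \langle T(c) : c \in \mathsf{C}(\sing, \spin) \rangle \supseteq \GA$, it suffices to show that for every index $i \equiv 3 \bmod r$ the twist $T(b_i)$ lies in $\Gamma(\sing, \spin)$, and conversely that every generator $T(b_m)$ with $m$ a partial-sum index lies in $\Gamma((r^{(2g-2)/r}), \spin)$. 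The latter (the inclusion $\Gamma(\sing, \spin) \le \Gamma((r^{(2g-2)/r}), \spin)$) is the easy direction: the set of indices $\{i \equiv 3 \bmod r\}$ contains every partial-sum index $3 + \sum_{j \le \ell} k_j$ because each $k_j$ is divisible by $r$, so $\mathsf{C}(\sing, \spin) \subseteq \mathsf{C}((r^{(2g-2)/r}), \spin)$ literally, giving the containment of groups directly.

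For the reverse inclusion, I would argue by induction mirroring the subtractive Euclidean algorithm. Let $I \subseteq \ZZ/(2g-2)$ be the set of indices $i$ such that $T(b_i) \in \Gamma(\sing, \spin)$; initially $I$ contains $3$ and all partial sums $3 + \sum_{j=1}^{\ell} k_j$. The key observation is that $I$ is closed under the operation supplied by Proposition \ref{prop:curve_addsub}: if $i, i+x \in I$ with $x \le g-2$, then $i + 2x \in I$, and symmetrically if $i+x, i+2x \in I$ then $i \in I$. Writing this additively in terms of the \emph{differences} between consecutive elements of $I$, the set $D$ of achievable differences is closed under $d_1, d_2 \mapsto 2d_2 - d_1$ and $d_1, d_2 \mapsto |2d_1 - d_2|$ type moves (subject to the bound $x \le g-2$), which generate the same subgroup of $\ZZ/(2g-2)$ as ordinary addition and subtraction; since the initial differences are exactly $k_1, \ldots, k_n$, their gcd with $2g-2$ is $r$ (note $r \mid 2g-2$), one concludes that every residue class $\equiv 3 \bmod r$ eventually lies in $I$. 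The bookkeeping here is to run the Euclidean algorithm on the multiset $(k_1, \ldots, k_n)$ — replacing a pair $(k, k')$ by $(k, k'-k)$ and so on down to $(r, r, \ldots, r)$ — and at each replacement step invoke Proposition \ref{prop:curve_addsub} to transfer the corresponding $b$-twist into $\Gamma(\sing, \spin)$; one must check that at every intermediate step the ``step size'' $x$ being used stays $\le g-2$, which holds because all partial sums under consideration are bounded by $2g-2$ and the configuration can always be routed to lie on at most one ``wrap'' around the surface.

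The main obstacle I anticipate is precisely this last point: controlling the indices modulo $2g-2$ so that Proposition \ref{prop:curve_addsub}'s hypothesis $x \le g-2$ is never violated, and correctly handling the ``wrap-around'' when the relevant $b$-curves do not all lie on the bottom half of the surface (i.e.\ when some partial sum exceeds $g$). The excerpt flags this explicitly as accounting for ``a significant portion of the technical difficulty,'' and the different curve-labeling schemes of Figure \ref{fig:curvelabels} (cases (1)--(3) of Definition \ref{def:curvesystem}) must each be checked, since the symplectic-dual structure of the $b$-curves near the ends of the surface differs between Figures \ref{fig:curvelabels12} and \ref{fig:curvelabels3}. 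A clean way to organize this is to prove a lemma asserting that the $\GA$-orbit of any $b_i$ contains $b_j$ whenever $i \equiv j \bmod (2g-2)$ lie in a common ``fundamental block'' of size $\le g-1$, reducing the general Euclidean step to the model case sketched in the proof of Proposition \ref{prop:curve_addsub}; then the induction proceeds purely combinatorially on the partition, and the geometry is quarantined in that orbit lemma and in Appendix \ref{app:curves}.
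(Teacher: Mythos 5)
Your proposal takes the same approach as the paper — iterated applications of Proposition~\ref{prop:curve_addsub} to realize the Euclidean algorithm on the indices of the $b$--curves — and your ``easy direction'' observation (that $\mathsf{C}(\sing,\spin)\subseteq\mathsf{C}((r^{(2g-2)/r}),\spin)$ as sets, since each $k_j$ is divisible by $r$) is a clean way to get one inclusion that the paper leaves implicit. However, there is a gap in how you justify the crucial hypothesis $x\le g-2$ of Proposition~\ref{prop:curve_addsub} at every step. Your reason — ``all partial sums under consideration are bounded by $2g-2$ and the configuration can always be routed to lie on at most one wrap around the surface'' — is not correct: the difference between two partial sums of a partition of $2g-2$ can easily exceed $g-2$ (e.g.\ $k_n$ itself can be close to $2g-2$), and staying on one ``wrap'' does not by itself control the step size. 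The mechanism the paper uses is different and essential: (i) if $r\in\{2g-2,g-1\}$ the theorem is vacuous because $\sing=(r^{(2g-2)/r})$ already; (ii) otherwise, ordering the $k_j$ so that $k_1\le\cdots\le k_n$ forces $k_1\le g-2$, and the proof proceeds through the intermediate partitions $\sing_j=(r_j^{d_j},k_{j+1},\ldots,k_n)$ with $r_j=\gcd(k_1,\ldots,k_j)$, running the Euclidean algorithm on $r_j$ and $k_{j+1}$; since Euclidean remainders strictly decrease, every step size used is at most $r_j\le r_1=k_1\le g-2$. Your ``replace $(k,k')$ by $(k,k'-k)$'' model does not preserve the total $2g-2$ and does not track which step sizes are actually invoked, so the bound is not established. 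Supplying the paper's explicit index bookkeeping (the $y_\ell,y'_\ell$ in \eqref{eqn:yi}) and the ordering/trivial-case reduction is what closes this gap; with that, the argument matches the paper's.
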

\begin{proof}
In order to complete $\mathsf{C}(\sing, \spin)$ to $\mathsf{C}((r^{(2g-2)/r}), \spin)$, we pass through a filtration by intermediate partitions of $2g-2$, each related to the subsequent by an application of the Euclidean algorithm.

To that end, set $r_j = \gcd(k_1, \ldots, k_j)$,
\[d_j = \left( \sum_{i=1}^j k_i \right) / r_j,\]
and define
\[\sing_j = \left( r_j^{d_j}, k_{j+1}, \ldots, k_n \right)\]
for each $j=1, \ldots, n$. Note that $r_1 = k_1$ and $\sing_1 = \sing$, while
\[r_n = r = \gcd(\sing) \text{ and }\sing_n = \left( r^{(2g-2)/r} \right).\]
Observe also that $d_j \ge 1$ for all $j$.

Therefore, to prove the Theorem it suffices to show for each $j = 1, \ldots, n-1$ that
\[\Gamma(\sing_j, \spin) = \Gamma(\sing_{j+1}, \spin).\]

Observe that since the orders of zeros $k_i$ were assumed to be given in increasing order, we know that $r_j \le k_{j+1}$. To begin, we first run the Euclidean algorithm on $r_j$ and $k_{j+1}$; that is, we find a sequence of non-negative integers $Q_\ell$ and $R_\ell$ such that $R_\ell < Q_\ell$ for all $\ell$ and
\begin{equation}\label{eqn:Euclidean}
\begin{array}{rcl}
k_{j+1} &= &Q_1 r_j + R_1\vspace{2pt} \\
r_j &= &Q_2 R_1 + R_2\vspace{2pt}  \\
R_1 &= &Q_3 R_2 + R_3 \\
&\vdots \\
R_{N-1} &=& Q_{N+1} R_{N} + 0
\end{array}
\end{equation}
Then the Euclidean algorithm certifies that $R_{N} = \gcd(r_j, k_{j+1}) = r_{j+1}$.

To ease our notational burden, we define the following indices:
\begin{equation}\label{eqn:yi}
\begin{array}{rcl}
y_0 &= & 3 + \sum_{i=1}^{j} k_i  \vspace{2pt} \\
y_1 &= & y_0 + Q_1 r_j \vspace{2pt} \\
y_2 &= & y_1 - Q_2 R_1 \vspace{2pt} \\
y_3 &= & y_2 + Q_3 R_2 \\
&\vdots \\
y_{N+1} & = & y_N +(-1)^{N} Q_{N+1} R_{N}
\end{array}
\hspace{10pt}
\text{ and }
\hspace{10pt}
\begin{array}{rcl}
y_0' &= & 3 + \sum_{i=1}^{j+1} k_i  \vspace{2pt} \\
y_1' &= & y_0 + (Q_1-1) r_j \vspace{2pt} \\
y_2' &= & y_1 - (Q_2-1) R_1 \vspace{2pt} \\
y_3' &= & y_2 + (Q_3-1) R_2 \\
&\vdots \\
y_{N+1}' & = & y_N +(-1)^{N} (Q_{N+1}-1) R_{N}
\end{array}
\end{equation}
Now by construction of this recursive labeling scheme, we have
\[y_\ell ' = y_\ell + (-1)^{\ell} R_{\ell-1}\]
so for all $\ell \ge 1$,
\begin{equation}\label{eqn:Ri}
| y_\ell - y_{\ell-1}' |=
\left| ( y_{\ell-1} + (-1)^{\ell-1} Q_{\ell} R_{\ell-1}) - (y_{\ell-1} + (-1)^{\ell-1} R_{\ell-2} )\right|
= R_{\ell-2} - Q_{\ell} R_{\ell-1} = R_{\ell}.
\end{equation}

We can now realize the series of equations appearing in \eqref{eqn:Euclidean} as a sequence of curve diagrams by repeated application of Proposition \ref{prop:curve_addsub}, an example of which appears in Figure \ref{fig:Euclidean_example}. In order to keep our notation readable, we will denote the Dehn twist in $b_i$ by $T(i)$ for the rest of the proof.

Since the $k_j$ are assumed to be ordered from least to greatest and $r \notin \{2g-2, g-1\}$, we have that 
\[r_1 \le k_1 \le g-2.\]
As the Euclidean algorithm mandates that successive remainders always decrease (i.e., $R_\ell < R_{\ell - 1}$), we see that the $x$ value added to and subtracted from indices never exceeds $g-2$, thereby justifying our use of Proposition \ref{prop:curve_addsub}.

First, we note that by construction, both $T(y_0)$ and $T(y_0')$ are elements of $\Gamma(\sing_j, \spin).$ Moreover, since $d_j \ge 1$ (as it is the quotient of the partial sum $\sum_{i=1}^j k_i$ by $r_j$), we have that 
\[T(y_0 - r_j) = T(3 + r_j(d_j-1))\]
is also an element of $\Gamma(\sing_j, \spin).$
Therefore after applying the first half of Proposition \ref{prop:curve_addsub} (addition) with $x = r_j$ for $Q_1-1$ and $Q_1$ times, respectively, we see that
\[T(y_1'), \, T(y_1) \in \Gamma(\sing_j, \spin).\]
But now since $T(y_0')$ and $T(y_1)$ are both in the group, and we have from \eqref{eqn:Ri} that $y_0' - y_1 = R_1$, we may apply the second half of Proposition \ref{prop:curve_addsub} (subtraction) with $x = R_1$ to deduce that both
\[T(y_2'), \, T(y_2) \in \Gamma(\sing_j, \spin).\]
Likewise, the difference between $y_2$ and $y_1'$ is $R_2$, so again applying Propostion \ref{prop:curve_addsub} (addition) with $x=R_2$ for $Q_3-1$ and $Q_3$ steps yields
\[T(y_3'), \, T(y_3) \in \Gamma(\sing_j, \spin).\]
Continuing in this way, alternating between addition and subtraction of indices, we can work our way through the series of equations in \eqref{eqn:Euclidean} until terminating at $T(y_{N+1})$.
\footnote{Since $R_{N+1}=0$, we must have that $y_{N+1} = y_{N}'$.}
See Figure \ref{fig:Euclidean_example}.

\begin{figure}[ht]
\centering
\includegraphics[scale=.6]{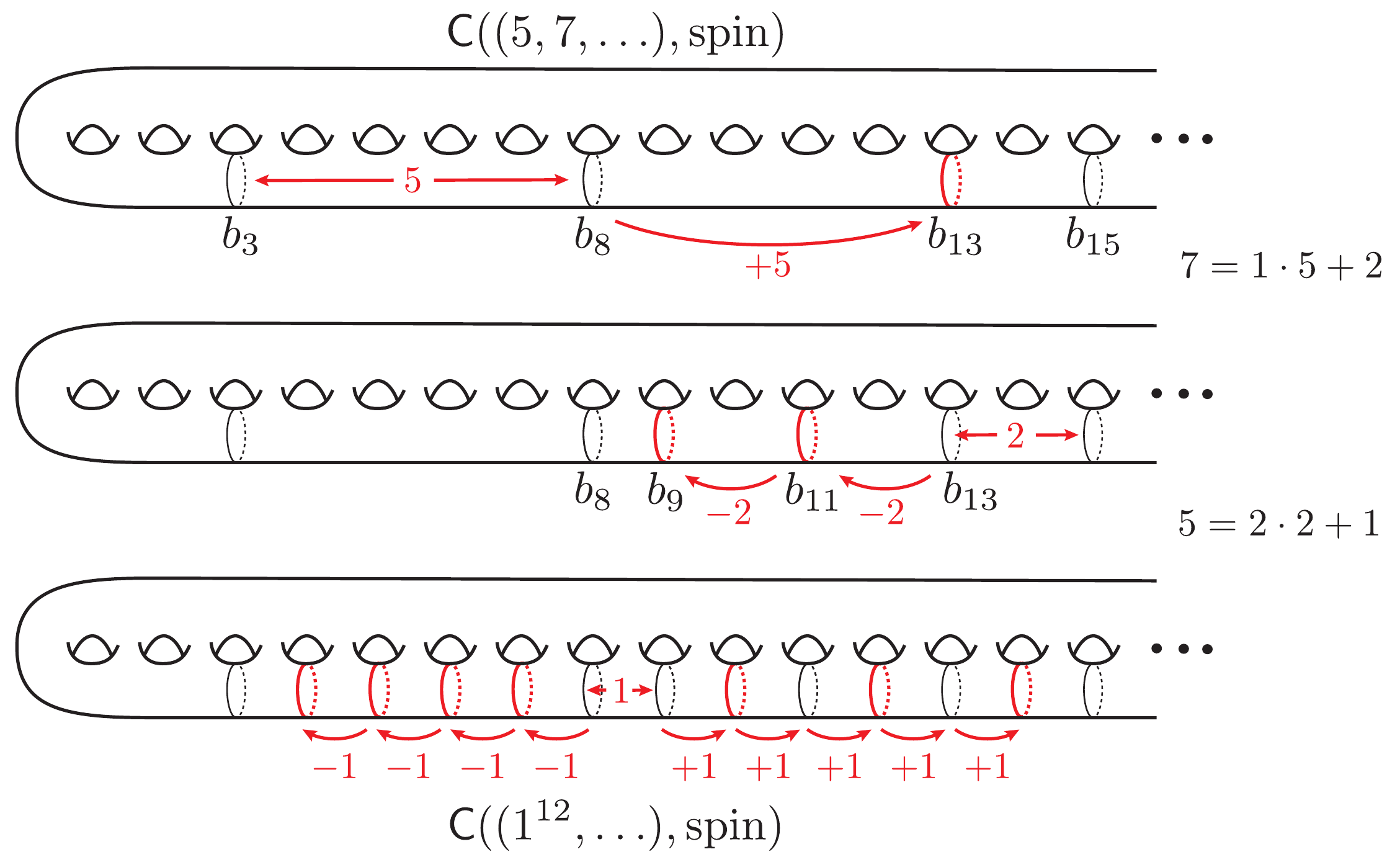}
\caption{Using the Euclidean algorithm to show $\Gamma((5, 7, \ldots), \spin) = \Gamma((1^{12}, \ldots), \spin).$}
\label{fig:Euclidean_example}
\end{figure}

In particular, both $T(y_N)$ and $T(y_{N-1}')$ are in $\Gamma(\sing_j, \spin)$, but by \eqref{eqn:Euclidean} and \eqref{eqn:Ri} we have that
\[| y_N - y_{N-1}'| = R_N = \gcd(r_j, k_{j+1}) = r_{j+1}.\]
Therefore, by applying Proposition \ref{prop:curve_addsub} to $y_N$ and $y_{N-1}=y_N - r_j$ with $x = r_j$ we see that
\[T(y_N - 2r_j) \in \Gamma(\sing_j, \spin).\]
Applying Proposition \ref{prop:curve_addsub} to $y_{N}-r_j$ and $y_{N}-2r_j$ we likewise have 
\[T(y_N - 3r_j) \in \Gamma(\sing_j, \spin).\]
Repeatedly applying Proposition \ref{prop:curve_addsub} in the same way, we see that
\[T(3+pr_{j+1}) \in \Gamma(\sing_j, \spin)\]
for any $0 \le p \le d_{j+1}$, and hence
\[\Gamma(\sing_j, \spin) = \Gamma(\sing_{j+1}, \spin).\]

By iterating the above procedure on $j$, it follows that
\[\Gamma(\sing, \spin) = \Gamma(\sing_2, \spin) = \ldots = \Gamma( \sing_n, \spin) = \Gamma (( r^{(2g-2)/r} ), \spin )\]
and so the Theorem is proved.
\end{proof}

Combining the above statements, we can now give a short proof of our main theorems.

\begin{proof}[Proof of Theorems \ref{thm:components} and \ref{thm:monodromy}]
Suppose $\sing = (k_1, \ldots, k_n)$ is given, and if $r = \gcd(\sing)$ is even, that $\spin \in \{ \even, \odd\}$. Suppose also that $r \notin \{2g-2, g-1\}.$ Let $(X, f, \omega)$ be the (marked) prototype for the pair $(\sing, \spin)$, and let $\phi \in \Phi_r$ be its induced $r$--spin structure.

By Lemma \ref{lem:twist_in_monodromy} and Corollary \ref{coro:monodromy_containment}, we have that
\[\Gamma(\sing, \spin) \le \mathcal{G}(\sing, \spin) \le \Mod(S)[\phi].\]
Combining Theorem \ref{thm:Euclidean_algorithm} and Proposition \ref{prop:gen_easycase}, it follows that if $r$ is odd, then 
\[\Gamma(\sing, \spin) = \Gamma ( (r^{(2g-2)/2}),\spin) = \Mod(S)[\phi].\]
Similarly, if $r$ is even, then $\Gamma(\sing, \spin)=  \Gamma ( (r^{(2g-2)/2}), \spin)$ is a finite index subgroup of $\Mod(S)[\phi]$. Therefore the same conclusions must hold for $\mathcal{G}(\sing, \spin)$. This concludes the proof of Theorem \ref{thm:monodromy}.

Consider now the action of $\Mod(S)$ on the set of connected components of $\HT(\sing)^{\spin}$. By Theorem \ref{thm:KZ_class}, every connected component must contain some $(X, g, \omega)$, where $g: S \rightarrow X$ is a marking, and hence by Theorem \ref{thm:Mod_rspin_action}, the action of $\Mod(S)$ on the set of components of $\HT(\sing)^{\spin}$ is seen to be transitive. Therefore by the orbit--stabilizer theorem the number of connected components is the same as the index of $\mathcal{G}(\sing, \spin)$ inside of $\Mod(S)$. Applying Corollary \ref{coro:stab_index} (which counts the number of $r$--spin structures of given parity) finishes the proof of Theorem \ref{thm:components}.
\end{proof}

We can also deduce the image of $\mathcal{G}(\sing, \spin)$ under the symplectic representation.

\begin{proof}[Proof of Corollary \ref{coro:symp_monodromy}]
Let $\psi: \Mod(S) \rightarrow \Sp(2g, \ZZ)$ denote the standard symplectic action of a mapping class on homology, and suppose $\sing$ is such that $r = \gcd(\sing) \notin\{2g-2, g-1\}$. If $r$ is even, also choose $\spin \in \{\even, \odd\}$.

By Theorem \ref{thm:monodromy}, the geometric monodromy group $\mathcal{G}(\sing, \spin)$ is either the stabilizer of an $r$--spin structure $\phi$ (for $r$ odd) or is a finite--index subgroup thereof (for $r$ even).

If $r$ is odd, then by Lemma \ref{lem:Salter_sympl}, $\Mod(S)[\phi]$ surjects onto the entire symplectic group. When $r$ is even, the lemma together with Theorem \ref{thm:Euclidean_algorithm} states that $\psi ( \Gamma(\sing, \spin))$ is the stabilizer $\Sp(q)$ of the quadratic form $q=q_{\phi^{\otimes r/2}}$. Moreover, since $\Mod(S)[\phi]$ preserves $\phi$, it preserves $\phi^{\otimes r/2}$ and therefore $q$, so its image under $\phi$ is also $\Sp(q)$. But now
\[\Gamma(\sing, \spin) \le \mathcal{G}(\sing, \spin) \le \Mod(S)[\phi]\]
and hence it must be that $\psi ( \mathcal{G}(\sing, \spin)) = \Sp(q)$.
\end{proof}

\section{Remarks and further directions}\label{sec:onwards}

It would be interesting to understand the robustness of the relationship between cylinder shears and monodromy groups. By our choice of prototype surface in \S\ref{sec:prototypes}, we could deduce that Dehn twists in the prototype's cylinders generated the entire monodromy group (or a finite--index subgroup for $r$ even). Our combinatorial arguments hinge on the specific structure of the curve system $\mathsf{C}(\sing, \spin)$, but the result may be more general.

\begin{ques}
Let $\sing$ be any partition of $2g-2$ and if $\gcd(\sing)$ is even, choose $\spin \in \{\even, \odd\}$. If $(X, \omega)$ is any square--tiled surface in $\HM(\sing)^{\spin}$, do the Dehn twists in the cylinders of $(X, \omega)$ generate $\mathcal{G}(\sing, \spin)$? What if $(X, \omega)$ is an arbitrary differential in $\HM(\sing)^{\spin}$?
\end{ques}

Parallel to our main theorems, one could also investigate the components of strata of quadratic differentials. 
Walker began an investigation into these questions in \cite{Walker_thesis}, \cite{Walker_components}, and \cite{Walker_groups}, but her results are incomplete and techniques generally insufficient (see \S\S\ref{sec:comp_rev} and \ref{sec:monodromyreview}).

Recall that if $\sing = (k_1, \ldots, k_n)$ is a partition of $4g-4$ and $0 \neq k_i \ge -1$ for each $i$, then the stratum $\QM(\sing)$ is space of all quadratic differentials with zeros (or simple poles) of degrees $k_1, \ldots, k_n$ which are not squares of abelian differentials, and $\QT(\sing)$ is the corresponding space of marked quadratic differentials.

\begin{ques}
How many connected components does $\QT(\sing)$ have? What is the geometric monodromy group of a component of $\QM(\sing)$?
\end{ques}

It is noteworthy that quadratic differentials generally do not define $r$--spin structures since their horizontal foliations generally are not orientable; one must instead define an $\RR P^1$--valued Gauss map and consider the winding number of a curve with respect to the horizontal line field.

Importantly, the action of $\Mod(S)$ on the set of these winding number functions (equivalently, roots of $K_X^{\otimes 2}$ which are not roots of $K_X$) is not fully understood, though Chen and M{\"o}ller have proven in low genus that it is not transitive \cite[Theorems 1.1, 1.2]{ChenMoller_QDinlowg}.

\bibliographystyle{utphyssorted}

\bibliography{library}

\pagebreak

\appendix

\section{Modular arithmetic and simple closed curves}\label{app:curves}

In this section, we prove Proposition \ref{prop:curve_addsub} and demonstrate more generally how one can model the operations of arithmetic with simple closed curves. The high--level idea is the same as presented in the proof sketch in \S\ref{sec:Euclidean}, and the
bulk of our proof consists of justifying and refining the heuristic used therein. For the convenience of the reader, we restate this principle below.

Recall that $c_{(i,j)}$ denotes one of the boundary curves of an $\varepsilon$--neighborhood of $a_i \cup a_i' \cup \ldots \cup a_j$, as shown in Figure \ref{fig:cij}.

\begin{heur}
Any group containing both $\GA$ and  two of $\{T(b_i), T(b_{j}), T(c_{(i,j)}) \}$ contains the third.
\end{heur}

The final form of this heuristic is Proposition \ref{prop:heuristic}, which allows us to replace $c_{(i,j)}$ with another auxiliary curve $c_{(k,\ell)}$ where 
\[\ell - k = j - i \pmod {2g-2}.\]

In Section \ref{sec:braids}, we relate the group $\GA$ to hyperelliptic mapping class groups of certain subsurfaces of $S$; this connection allows us to investigate the $\GA$ orbits of simple closed curves with relative ease. Once we have developed this machinery, we will put it to use in Section \ref{sec:heur}, where we carry out explicit computations on curves (Lemmas \ref{lem:bi_and_cij} through \ref{lem:LHS_case3}), culminating in the proofs of Propositions \ref{prop:heuristic} and \ref{prop:curve_addsub}.

Since the curve labeling schemes given in Figure \ref{fig:curvelabels} are the same away from the left--hand side of $S$, we will generally assume that we are in the case when the curves are labeled as in Figure \ref{fig:curvelabels12} and note where changes must be made on the indices if curves are labeled as in Figure \ref{fig:curvelabels3}. We will denote these scenarios by (1+2) and (3), respectively (corresponding to the cases given in Definition \ref{def:curvesystem}).

\subsection{Braiding and hyperelliptic subsurfaces}\label{sec:braids}

In order to investigate the $\GA$ action on the set of $c_{(i,j)}$ and $b_i$ curves, we must first understand the group itself. Once we have developed this geometric insight, we will use it to show that $\GA$ acts transitively on the set of $c_{(i,j)}$ curves (Lemma \ref{lem:GA_action_cij}).

Suppose for the moment that we are in case (1+2); then the set $\A$ is a chain of simple closed curves which fills a subsurface $S_\A$ of $S$ (that is, $\A$ may be ordered so that each curve $a_i$ intersects only $a_{i-1}$ and $a_{i+1}$). This subsurface has genus $g-1$ and two boundary components, and has a natural hyperelliptic involution $\iota$ which interchanges the boundary components and reverses the orientation of each curve of $\A$. Let
\[q: S_\A \rightarrow \Sigma = S_\A / \iota\]
denote the corresponding branched covering map. We will depict these coverings as in Figure \ref{fig:hypinv_subsurf12}, where the half--twists in the arcs in the figure lift to the Dehn twists on the $a_i$ curves.
\footnote{These arcs also serve as branch cuts for the covering $q: S_\A \rightarrow \Sigma$, where the sheets are the ``top'' and ``bottom'' halves of $S_\A$.}

Now by the theory of Birman and Hilden (see \S\ref{sec:hyperelliptic}), we have that the centralizer $\SMod(S_\A)$ of $\iota$ is isomorphic to the ($2g$--stranded) braid group $B$ of the quotient $\Sigma$, see \eqref{eqn:BH2}. One may verify by inspection that the Dehn twists in the group $\GA$ are lifts of the standard half--twist generators for $B$, and therefore
\begin{equation}\label{eqn:GA_BH12}
\GA = \SMod(S_\A) \cong B.
\end{equation}

\begin{figure}[h!]
\centering
\begin{subfigure}{.45\textwidth}
\includegraphics[scale=.5]{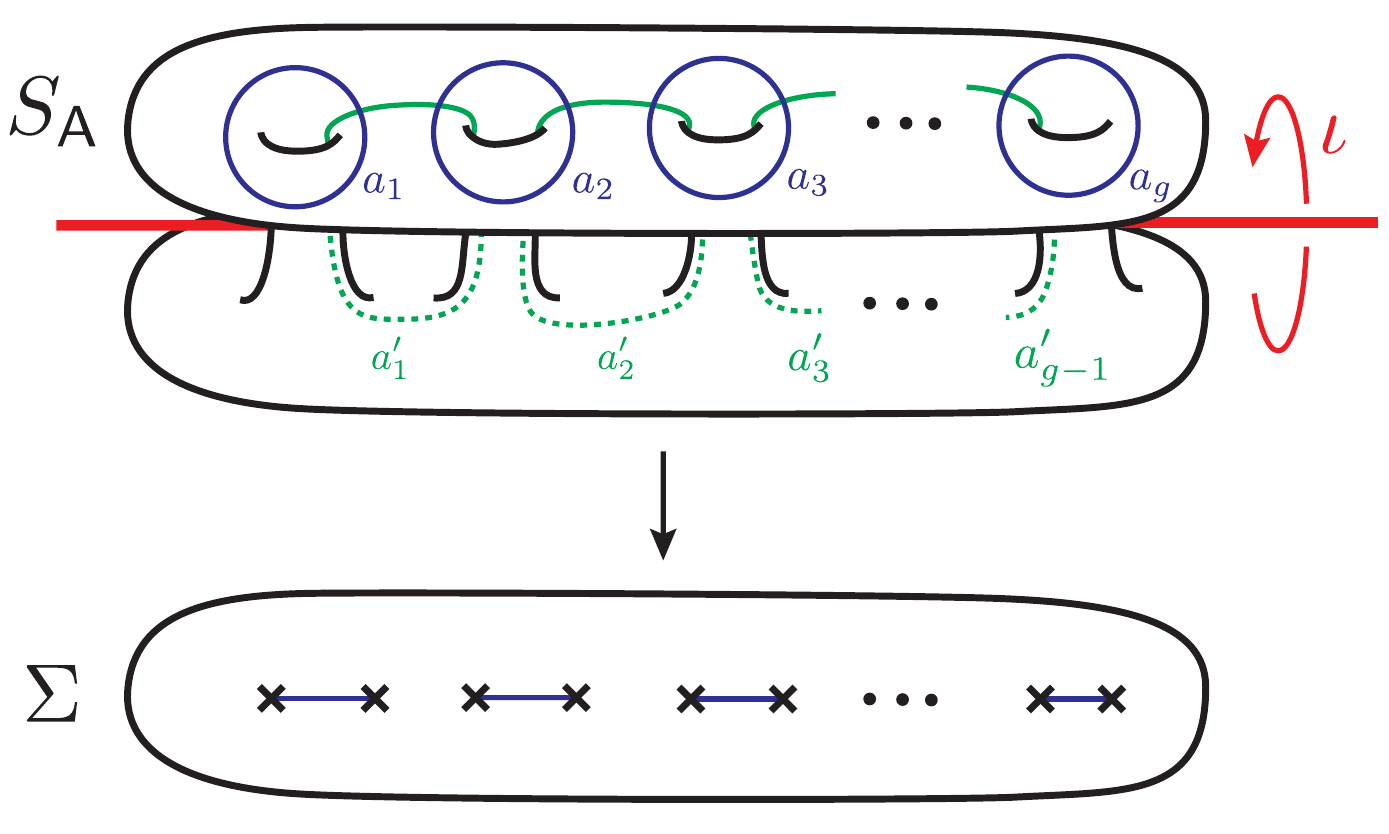}
\caption{The involution demonstrating $\GA = \SMod(S_\A)$.}
\label{fig:hypinv_subsurf12}
\end{subfigure}
\hspace{.5cm}
\begin{subfigure}{.45\textwidth}
\includegraphics[scale=.5]{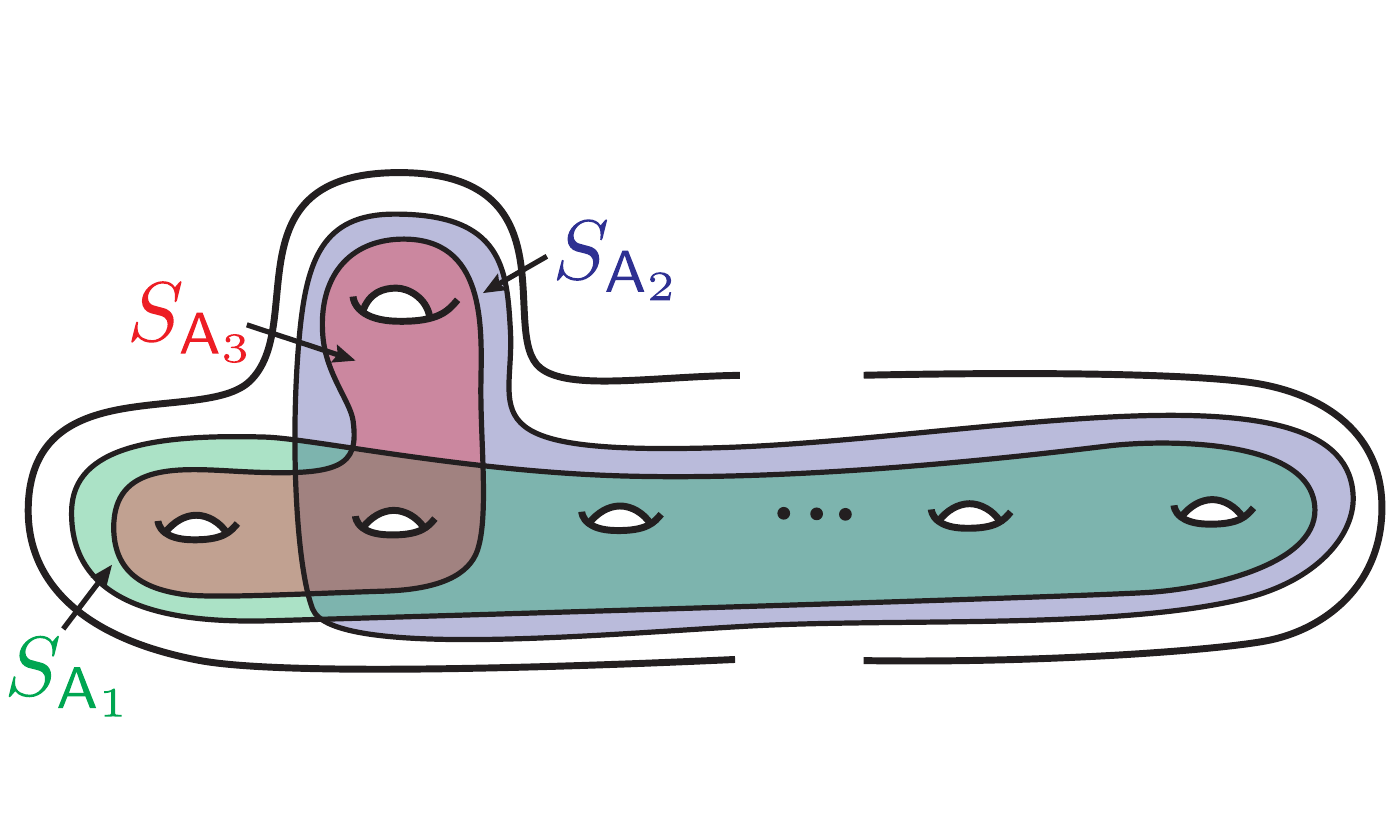}
\caption{The subsurfaces spanned by $\A_1$, $\A_2$, and $\A_3$.}
\label{fig:hypinv_subsurf3}
\end{subfigure}
\caption{The subsurfaces associated with $\GA$ and their hyperelliptic involutions.}
\label{fig:hypinv_subsurf}
\end{figure}

If instead we are in case (3), then the obvious involution $\iota$ of $S_\A$ (induced by the involution of $\A$, considered as a $1$--complex embedded in $S$) is not hyperelliptic, as it swaps $\{a_1, a_1'\}$ with $\{a_2, a_2'\}$ and hence $S_\A / \iota$ has genus $1$. Instead, we consider the following subchains of $\A$:
\[\begin{array}{lll}
\A_1 &:= &\{a_2, a_2', a_3, \ldots, a_{g-1}', a_g\}\\
\A_2 &:= &\{a_1, a_1', a_3, \ldots, a_{g-1}', a_g\}\\
\A_3 &:= &\{a_1, a_1', a_3, a_2', a_2\}
\end{array}\]
See Figure \ref{fig:hypinv_subsurf3}. The corresponding subsurfaces $S_{\A_m}$ then all admit hyperelliptic involutions which interchange their boundary components and fix the subchain $\A_m$, so as above we have that
\begin{equation}\label{eqn:GA_BH3}
\Gamma_{\A_m} = \SMod(S_{\A_m}) \cong B.
\end{equation}
where $\Gamma_{\A_m}$ denotes the subgroup of $\GA$ generated by the twists in the curves of $\A_m$ and $B$ is a braid group on $2g-2$ strands if $m=1, 2$ and on $6$ strands if $m=3$.

We will often use \eqref{eqn:GA_BH12} and \eqref{eqn:GA_BH3} to simplify our investigation of $\GA$ orbits. In particular, if $c$ is a simple closed curve on $S_\A$, then one can understand its $\SMod(S)$ orbit by projecting $c$ down to a (possibly non-simple) closed curve $q(c)$ on $\Sigma =  S_\A / \iota$. The action of the braid group $B$ on the curve $q(c)$ is now much easier to visualize, and by lifting a curve in $B\cdot q(c)$ back up to $S_\A$ we recover a curve in $\GA \cdot \, c$.

The same analysis works for curves which are not entirely contained in $S_\A$. In this case, the intersection of $c$ with $S_\A$ is a collection of pairwise disjoint simple arcs $\{\alpha_1, \ldots, \alpha_n\}$ and therefore they project to a collection of (possibly non-disjoint, non-simple) arcs on $\Sigma$. One may similarly lift the action of an element of the braid group to the action of some $g \in \GA$; then the image of the curve $c$ under the lifted element of $\GA$ may be obtained by replacing each $\alpha_i$ in $c \cap S_\A$ with $g(\alpha_i)$ (here we use the fact that the symmetric mapping class group must fix the boundary pointwise).

This trick allows us to (relatively) painlessly determine explicit elements of $\GA$ which take one specified curve on $S_\A$ to another. For example, with this framework we can easily prove that $\GA$ acts transitively on the set of the $c_{(i,j)}$ whose elements each encircle the same number of holes.

Recall that for $i < j \le g$ we define $c_{(i,j)}$ to be the ``top'' boundary curve of an $\varepsilon$--neighborhood of 
\[a_i \cup a_i' \cup a_{i+1} \cup a_{i+1}' \cup \ldots \cup a_{j-1}' \cup a_j\]
as in Figure \ref{fig:cij}. If we are in Case (3) and $i=1$, then we will alter our definition so that $c_{(1,2)}$ is the top boundary component of $S_{\A_3}$, while for each $j \ge 3$, the curve $c_{(1,j)}$ is the top boundary component of the subsurface filled by the chain
\[a_1 \cup a_1' \cup a_{3} \cup a_3' \cup \ldots \cup a_{j-1}' \cup a_j.\]
Note that in this case, $c_{(1,j)}$ does not meet $b_2$ but does meet $b_{2g-2}$.

In order to treat cases when $i < j \le g$ and $ g \le i < j$ uniformly, we will also define
\[c_{(2g-j, 2g-i)}=c_{(i,j)}\]
for all $3 \le i < j \le g$. In case (1+2), we will set
\[c_{(2g-j, 2g-2)} = c_{(2,j)},\]
while in case (3) we set\[c_{(2g-j, 2g-2)} = c_{(1, j)}.\]
Note that with these naming conventions, $c_{(i,j)}$ meets $b_i, b_{i+1}, \ldots, b_j$ in order when traversed in the counter-clockwise direction.

\begin{lemma}\label{lem:GA_action_cij}
Suppose that the curves of $S$ are labeled as in Figure \ref{fig:curvelabels} and that $i < j \le g$ and $k < \ell \le g$.
\begin{itemize}
\item In case (1+2), if $\ell - k = j - i \le g-1$, then
\[c_{(k, \ell)} \in \GA \cdot \, c_{(i,j)}.\]
\item In case (3), if $i \neq 1 \neq k$ and $\ell - k = j - i \le g-1$, then
\[c_{(k, \ell)} \in \GA \cdot\,  c_{(i,j)}.\]
If $i=1\neq k$ and $\ell-k = j-2$, then
\[c_{(k, \ell)} \in \GA \cdot \,c_{(1,j)}.\]
\end{itemize}
\end{lemma}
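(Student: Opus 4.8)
The plan is to run everything through the Birman--Hilden dictionary developed in \S\ref{sec:braids}. In case (1+2) we have $\GA=\SMod(S_\A)\cong B$, with $B$ the braid group on $2g$ strands of the quotient disk $\Sigma=S_\A/\iota$; in case (3) each subgroup $\Gamma_{\A_m}\le\GA$ is, likewise, a braid group of the quotient of $S_{\A_m}$. First I would record how the auxiliary curves look downstairs. Since $c_{(i,j)}$ (and, in case (3) with $i=1$, the curve $c_{(1,j)}$) is a boundary component of a regular neighborhood of a sub-chain of $\A$, and since $\iota$ interchanges the two boundary components of such a neighborhood, the projection $q(c_{(i,j)})$ is a \emph{single} simple closed curve on $\Sigma$ bounding a sub-disk that contains a consecutive block of branch points. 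A short count gives the size of that block: in case (1+2) it is $2(j-i+1)$; in case (3) it is $2(\ell-k+1)$ when the first index is not $1$, and $2(j-1)$ when it is $1$ (the skip over $a_2,a_2'$ is exactly what produces the different formula, and explains why the $i=1$ hypothesis reads $\ell-k=j-2$ rather than $\ell-k=j-1$). Under the stated hypotheses, then, $q(c_{(i,j)})$ and $q(c_{(k,\ell)})$ bound sub-disks enclosing consecutive blocks of branch points of equal cardinality.

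Next I would produce an explicit braid carrying one block to the other: there is a ``block braid'' in $B$ sliding the consecutive set of branch points under $c_{(i,j)}$ onto the consecutive set under $c_{(k,\ell)}$ (alternatively, one may invoke the change--of--coordinates principle for the punctured disk, noting that cutting along either curve produces a punctured disk together with a punctured annulus with matching numbers of marked points, and that $\Mod(\Sigma)=B$). Pulling this braid back through the branched cover yields an element $\gamma\in\GA$ — in case (3), $\gamma\in\Gamma_{\A_m}\le\GA$ for the $m$ dictated by whether $1\in\{i,k\}$ — with $q(\gamma(c_{(i,j)}))=q(c_{(k,\ell)})$. Here the hypothesis $\ell-k=j-i$ also guarantees that the relevant sub-chains genuinely fit inside $S_\A$ (resp. $S_{\A_m}$), so that the manipulation is supported on the subsurface and extends by the identity over the rest of $S$; this is where I would cite the genus bounds ($S_\A$ has genus $g-1$, the $S_{\A_m}$ genus $g-2$) to justify the inequality in the statement.

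The one delicate point — which I expect to be the main obstacle — is identifying \emph{which} lift of $q(c_{(k,\ell)})$ the element $\gamma$ lands on: since $q(c_{(k,\ell)})$ encircles an even number of branch points, its preimage is a pair of disjoint simple closed curves exchanged by $\iota$, namely the ``top'' and ``bottom'' boundary curves of the corresponding sub-neighborhood, and we need $\gamma(c_{(i,j)})$ to be the one singled out as $c_{(k,\ell)}$ by the conventions fixed just before the lemma. I would resolve this by choosing the lift of the block braid compatibly with the decomposition of $S_\A$ into its two sheets — equivalently, tracking the side of $q(c_{(i,j)})$ on which the braid is supported — so that ``top'' is carried to ``top''; if a correction is needed, post-composing with a lift of a braid supported away from the block changes neither the enclosed set nor the curve's isotopy class but can be used to switch sheets. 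Finally I would dispose of the case (3), $i=1$ bookkeeping separately: one checks that $c_{(1,j)}$ really does lie in $S_{\A_2}$ (it avoids $a_2,a_2'$ and meets $b_{2g-2}$ in place of $b_2$), so the entire argument applies verbatim with $\Gamma_{\A_2}$ replacing $\GA$, with the block size $2(j-1)$ as computed above.
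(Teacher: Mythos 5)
Your proof follows the same Birman--Hilden strategy as the paper: project the $c_{(i,j)}$ curves to the quotient disk, observe that under the stated numerology $q(c_{(i,j)})$ and $q(c_{(k,\ell)})$ enclose equal-sized consecutive blocks of branch points, and lift a braid carrying one block to the other. Your block-size counts are correct (including the $2(j-1)$ count for $i=1$ in case (3), which is exactly what makes the shifted hypothesis $\ell-k = j-2$ the right one). That said, there are two problems.

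First, the ``delicate point'' you raise about which preimage of $q(c_{(k,\ell)})$ the lifted element lands on is a legitimate thing to worry about, but your proposed fix does not work: composing with a braid supported on branch points \emph{disjoint} from the enclosed block lifts to an element of $\SMod(S_\A)$ that actually \emph{fixes} the curve $c_{(i,j)}$ up to isotopy (and certainly does not exchange it with $\iota(c_{(i,j)})$), so it cannot be used to ``switch sheets.'' The correct resolution is not a post-hoc correction but a careful initial choice of braid: one constructs the braid as a concatenation of elementary ``shift by one handle'' moves, for each of which one can see directly (as the paper does in its figure) that the top boundary of one sub-chain neighborhood goes to the top boundary of the next. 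Your argument should be rewritten to produce that explicit braid rather than appeal to a correction step that, as stated, has no effect.

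Second, and more seriously: in case (3) with $i=1$, you assert that ``the entire argument applies verbatim with $\Gamma_{\A_2}$ replacing $\GA$,'' so that $\Gamma_{\A_2}$ carries $c_{(1,j)}$ directly to $c_{(k,\ell)}$. This fails when $k=2$: the curve $c_{(2,\ell)}$ passes through regular neighborhoods of $a_2$ and $a_2'$, which are not contained in $S_{\A_2}$, so no element of $\Gamma_{\A_2}=\SMod(S_{\A_2})$ (which is supported on $S_{\A_2}$) can take $c_{(1,j)}\subset S_{\A_2}$ to $c_{(2,\ell)}\not\subset S_{\A_2}$. You need a two-step argument as in the paper: first use $\Gamma_{\A_2}$ to carry $c_{(1,j)}$ to some curve such as $c_{(3,j+1)}$ that lives in $S_{\A_2}$, and then invoke the already-established first bullet of case (3) (with $i\neq 1\neq k$, via $\Gamma_{\A_1}$) to get to an arbitrary $c_{(k,\ell)}$ with $k\geq 2$.
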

\begin{proof}
By the definition of the $c_{(i,j)}$, it suffices to restrict to the cases when $j, \ell \le g$. By our discussion above, we can reduce the proof of this lemma to proving that $q(c_{(k,\ell)}) \in B \cdot q(c_{(i,j)})$, where $B$ is some appropriate braid group.

In case (1+2), one can observe that the curve $c_{(i,j)}$ is always contained inside of $S_\A$ and $q(c_{(i,j)})$ is a simple closed curve which encircles the $(2i-1)^{\text{st}}$ through $(2j)\ith$ branch points. Therefore, since $\ell-k = j-i$, we see that $q(c_{(i,j)})$ and $q(c_{(k,\ell)})$ encircle the same number of branch points, and so it is easy to construct an element $b \in B$ as in Figure \ref{fig:BH_cij} which takes $q(c_{(i,j)})$ to $q(c_{(k,\ell)})$.
Lifting $b$ via the Birman--Hilden isomorphism \eqref{eqn:GA_BH12} yields an element $g \in \GA$ such that $g \cdot c_{(i,j)} = c_{(k, \ell)}$.

\begin{figure}[ht]
\centering
\includegraphics[scale=.5]{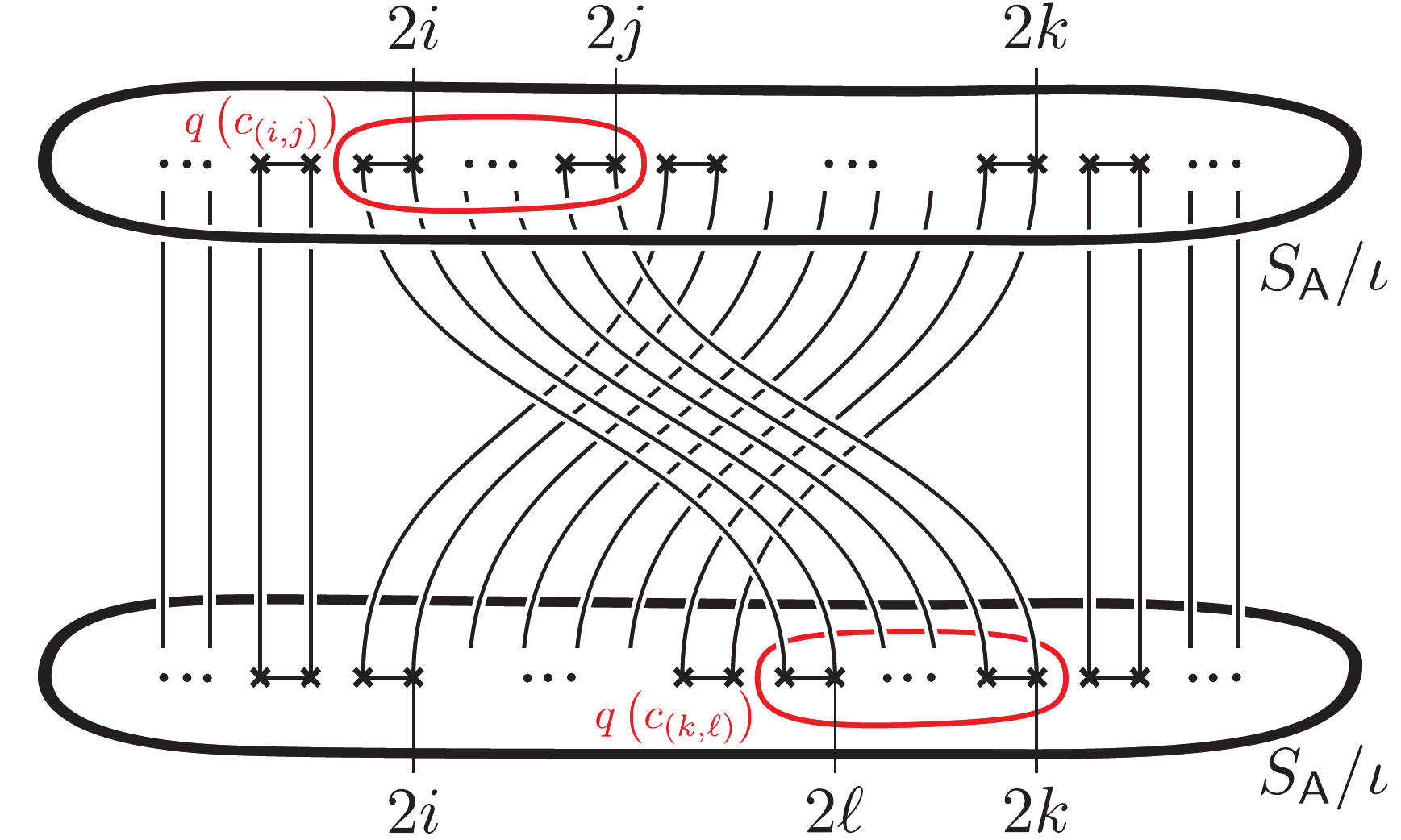}
\caption{Braiding the branch points of $S_\A / \iota$ to take $q(c_{(i,j)})$ to $q(c_{(k,\ell)})$. Such a braid lifts via the Birman--Hilden correspondence to an element of $\GA$ which takes $c_{(i,j)}$ to $c_{(k,\ell)}$.}
\label{fig:BH_cij}
\end{figure}

The proof in case (3) is similar, but now one must keep track of which subsurface(s) contain the curves in question.
\begin{enumerate}[label=(\alph*)]
\item If $i \neq 1 \neq k$, then $c_{(i,j)}$ and $c_{(k,\ell)}$ are both contained in $\A_1$, and so one may apply the same argument as in case (1+2).
\item If $i = 1 \neq k$ and $j \neq 2$, then a similar analysis with $\A_2$ in place of $\A_1$ shows that 
\[c_{(3, j+1)} \in \GA \cdot\,  c_{(1,j)}.\]
The result for general $(k ,\ell)$ with $\ell-k = j-2$ then follows from (a).
\item If $i=1$ and $j=2$, then we must be slightly more clever. To that end, let $\alpha$ denote the arc of intersection of $c_{(1,2)}$ with $S_{\A_2}$; then $q(\alpha)$ separates the first through fourth branch points of $S_{\A_2} / \iota$, and by braiding one can take $q(\alpha)$ to an arc separating off the third through sixth branch points. See Figure \ref{fig:BH_cij_arc}.

Lifting this arc up to $S_{\A_2}$ and replacing $\alpha$ with it results in a curve isotopic to $c_{(2, 4)}$, and lifting the braid via \eqref{eqn:GA_BH3} gives an element of $\GA$ taking $c_{(1,2)}$ to $c_{(2, 4)}$. Applying (a) and (b) then gives the result for general $(k ,\ell)$.
\end{enumerate}
This completes the proof of the Lemma.
\end{proof}

\begin{figure}[ht]
\centering
\includegraphics[scale=.6]{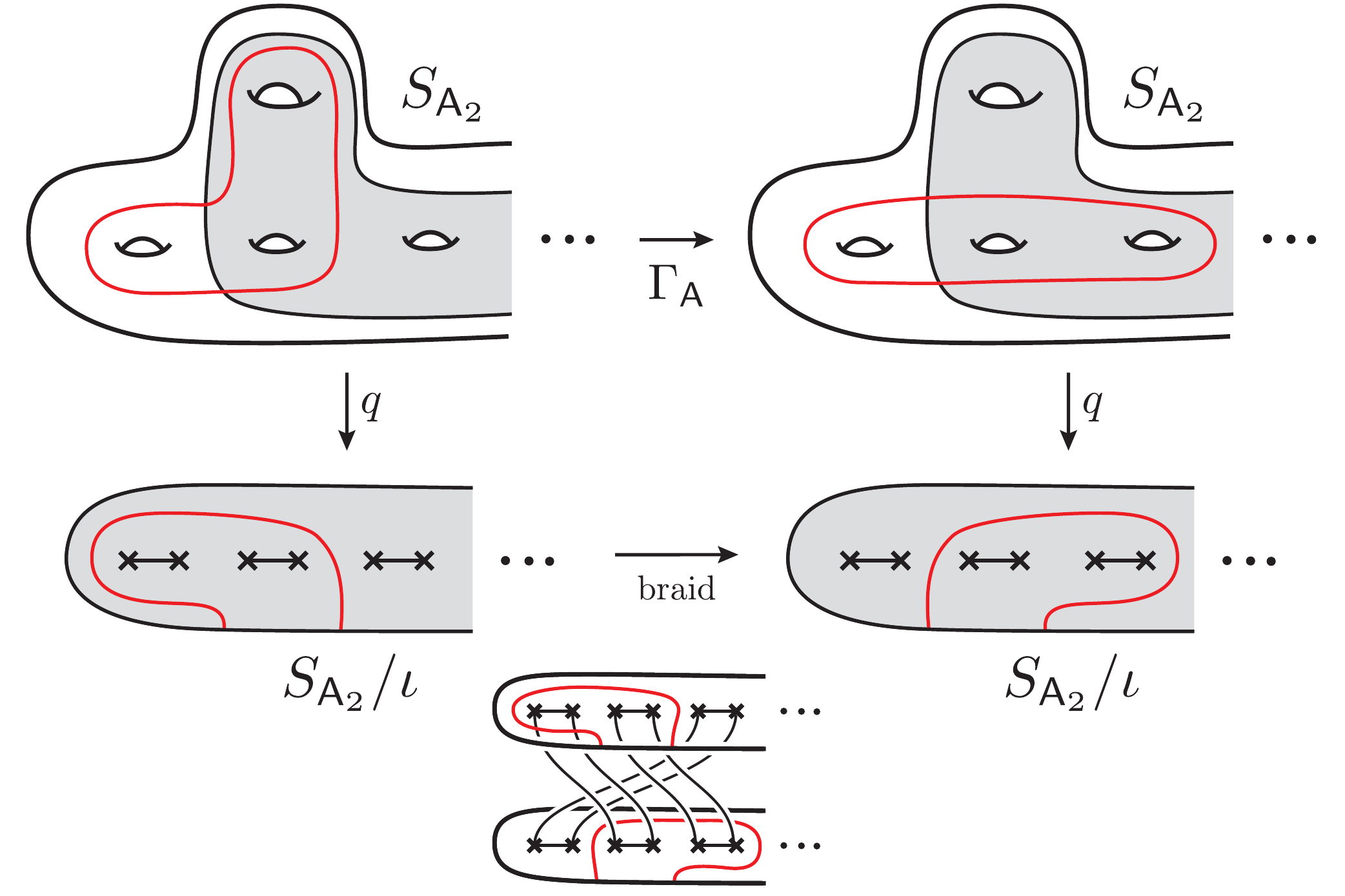}
\caption{Taking $c_{(1,2)}$ to $c_{(2,4)}$ using the Birman--Hilden theory.}
\label{fig:BH_cij_arc}
\end{figure}

\subsection{Justification of the heuristic}\label{sec:heur}

Now that we have established the conceptual basis for our analysis, in this section we state and prove a generalized version of our motivating heuristic.

\begin{prop}\label{prop:heuristic}
Let $1 \le i, j \le 2g-2$ be such that the residue class of $j-i$ mod $2g-2$ is at most $g-2$. Suppose that $2 \le k < \ell \le g$ is such that $\ell - k \equiv j - i \mod 2g-2$. Then
\[b_j \in \langle \GA, T(c_{(k, \ell)}) \rangle \cdot b_i
\,\text{ and } \,
c_{(k, \ell)} \in \langle \GA, T(b_j) \rangle\cdot  b_i.\]
\end{prop}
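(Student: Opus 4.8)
The plan is to reduce everything to a single ``local move'' in a four-holed sphere and then bootstrap to the general statement using the transitivity of $\GA$ on $c_{(k,\ell)}$-curves (Lemma \ref{lem:GA_action_cij}). First I would observe that by the change-of-coordinates principle (Fact \ref{lem:change_of_coords}), it suffices to find, for the pair of curves $b_i$ and $c_{(k,\ell)}$ (where $\ell - k \equiv j-i \bmod 2g-2$), an element of $\langle \GA, T(c_{(k,\ell)}) \rangle$ carrying $b_i$ to $b_j$, and symmetrically an element of $\langle \GA, T(b_j) \rangle$ carrying $b_i$ to $c_{(k,\ell)}$. Because both $b_i$ and $c_{(k,\ell)}$ can be brought into a common standard position by an element of $\GA$ (using Lemma \ref{lem:GA_action_cij} and the transitivity of $\GA$ on the $b_i$'s, which follows from the chain relation / Birman--Hilden picture of \S\ref{sec:braids}), I would first normalize so that $i$ and the pair $(k,\ell)$ sit in a fixed small window, say with $b_i$ adjacent to the ``top'' arc of the subsurface filled by $a_k \cup a_k' \cup \dots \cup a_\ell$.

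The key computation, carried out in the preceding Lemmas \ref{lem:bi_and_cij} through \ref{lem:LHS_case3}, is the following base case: if $c = c_{(k,\ell)}$ and $b$ is one of the $b_i$ meeting $c$ exactly once, then $T(c)$ and $T(b)$ together with $\GA$ suffice to move $b$ past one ``slot,'' i.e. to the curve $b'$ which is the image of $b$ under twisting along $c$ composed with an $\A$-supported correction. Concretely I expect the argument to run inside the thrice- or four-times-punctured subsurface cut out by $b$, $c$, and a nearby $a$-curve: there one has the lantern or chain relation expressing $T(b')$ as a product of twists in $\GA$, $T(b)$, and $T(c)$, exactly mirroring how the Lickorish generators are built from the Humphries generators. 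This gives $b_j \in \langle \GA, T(c_{(k,\ell)})\rangle \cdot b_i$ when $j-i$ is realized by ``one step,'' and iterating (each step staying within the residue-class bound $g-2$, which guarantees the relevant subsurface embeds) handles general $j-i \le g-2$. The second containment, $c_{(k,\ell)} \in \langle \GA, T(b_j)\rangle \cdot b_i$, follows by running the same moves in reverse: once $b_i$ has been pushed to a curve meeting $c_{(k,\ell)}$ once, one more application of the base-case relation (now solving for $T(c)$ instead of $T(b')$) produces $c_{(k,\ell)}$ in the orbit.

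The main obstacle, and the reason this is deferred to the appendix, is the bookkeeping around ``going around the ends of the surface'': when $i+2x$ or the relevant index exceeds $g$, the curves wrap past $a_1, a_1'$ (or $a_2, a_2'$ in case (3)), and the clean chain picture of \S\ref{sec:braids} breaks into the three subchains $\A_1, \A_2, \A_3$. Handling this requires checking that the local relations still hold after transporting curves through the different hyperelliptic subsurfaces $S_{\A_m}$, and that the $D_{2r+3}$-type constraints and the residue-class hypothesis $j - i \le g-2 \pmod{2g-2}$ are exactly what is needed for the intermediate curves to be embedded and disjoint from the obstructing curves $b_2$ (or $b_{2g-2}$). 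I would structure the proof so that the ``interior'' case $2 \le k < \ell \le g$ with everything below the midline is done first by the lantern/chain relation, and then the wrap-around cases are reduced to it by an explicit $\GA$-element built from the braid picture, invoking Lemma \ref{lem:GA_action_cij} in its case-(3) form to identify $c_{(1,j)}$ with the appropriate $c_{(k,\ell)}$.
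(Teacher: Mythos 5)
Your high-level plan — isolate a base-case local move, then bootstrap by $\GA$-transitivity, with the subchains $\A_1, \A_2, \A_3$ absorbing the wrap-around — matches the shape of the paper's argument. But there are two places where the mechanism you describe does not hold up.

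First, you invoke ``the transitivity of $\GA$ on the $b_i$'s'' to normalize $b_i$ into a fixed window, but $\GA$ is {\em not} transitive on the $b_i$. Every generator of $\GA$ is a twist on a curve in $\A$, and the group is identified (via Birman--Hilden) with a braid group of the subsurface $S_\A$, acting trivially on $\partial S_\A$. The curves $b_i$ enter and leave $S_\A$ at distinct points on its boundary, and that boundary behavior is fixed by $\GA$; so $b_i$ and $b_j$ lie in distinct $\GA$-orbits for $i\ne j$. If that transitivity were available the whole Proposition would be immediate, with no role at all for $T(c_{(k,\ell)})$ or $T(b_j)$. What the paper does instead (Lemma \ref{lem:bi_and_cij}) is show that $b_i$ lies in the $\GA$-orbit of $T(b_j)^{-1}(c_{(i,j)})$, a curve depending on both $i$ and $j$; the extra twist by $T(b_j)$ or $T(c_{(i,j)})^{-1}$ then converts this into $c_{(i,j)}$ or $b_j$.

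Second, you posit that the base case is a lantern/chain relation inside a three- or four-holed sphere. The paper uses no such relation. The entire local mechanism is the elementary identity $T(a)^{-1}(b)=T(b)(a)$ for once-intersecting curves, combined with an explicit braid computed in the Birman--Hilden quotient $S_\A/\iota$; the phrase ``reminiscent of Lickorish from Humphries'' in the text refers to the flavor of the manipulation, not to an appeal to the lantern or chain relation. Related to this, the paper does not iterate a one-slot step $g-2$ times: Lemma \ref{lem:bi_and_cij} realizes the full jump $j-i$ by a single braid, and the residue bound $j-i\le g-2$ controls which regime (one-, two-, or three-sided) one is in, not a step count. Finally, your sketch omits the nontrivial bookkeeping for the second containment in the wrap-around regimes: there the paper must pass through the chain of identities (equations \eqref{eqn:twists1}--\eqref{eqn:twists3}, and the analogous \eqref{eqn:heur4}) to untangle which twist is being conjugated by which, and this is not just ``running the moves in reverse.''
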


As noted in the main body of the text, there are multiple different regimes we must consider in our proof, depending on how the $b_i$ curves are positioned our surface. In order to define these in a uniform way, we consider the counterclockwise order on $b_i$ as a cyclic order on $\ZZ_{2g-2}$, so that
\[\ldots \prec 1 \prec 2 \prec \ldots \prec 2g-2 \prec 1 \prec 2\prec \ldots.\]
With this ordering, we observe that for a given $z$ and a given ordered pair $(x,y)$, all distinct, either $z$ separates $x$ and $y$ with respect to the cyclic order (that is, $x \prec z \prec y$) or it does not (so that $z \prec x \prec y \prec z$).

We can now describe the different possible arrangements of $b_i$ and $b_j$ on $S$.

\begin{defn}
Suppose that $i,j \in \ZZ_{2g-2}$ are such that the residue class of $j-i$ mod $2g-2$ is at most $g-2$.

If the surface $S$ is labeled as in case (1+2), then we say that the ordered pair $(i,j)$ is in the
\begin{itemize}
\item {\em one--sided regime} if neither $1$ nor $g$ separates $(i,j)$
\item {\em two--sided regime} if either $1$ or $g$ separates $(i,j)$.
\end{itemize}

If the surface $S$ is labeled as in case (3), then we say that the ordered pair $(i,j)$ is in the
\begin{itemize}
\item {\em one--sided} regime if no element of $\{2, g, 2g-2\}$ separates $(i,j)$
\item {\em two--sided} regime if exactly one element of $\{2, g, 2g-2\}$ separates $(i,j)$
\item {\em three--sided} regime if both $2$ and $2g-2$ separate $(i,j)$.
\end{itemize}
\end{defn}

Observe that in case (1+2), by our restrictions on $j-i$ we know that $g$ and $1$ cannot both separate $(i,j)$. Likewise, in case (3) it follows that $g$ cannot separate $(i,j)$ if either $(2g-2)$ or $2$ does.

The proof of Proposition \ref{prop:heuristic} when $(i,j)$ lies in the one--sided regime is quite straightforward:

\begin{lemma}\label{lem:bi_and_cij}
Suppose that the pair $(i,j)$ is in the one--sided regime. Then
\[ T({b_{j}})^{-1} \left( c_{(i, j)} \right) = T({c_{(i, j)}}) \left(b_j\right) \in \GA \cdot b_i.\]
Similarly, one has
\[T(b_i) \left( c_{(i,j)} \right) = T({c_{(i, j)}})^{-1} \left(b_i\right) \in \GA \cdot b_j.\]
\end{lemma}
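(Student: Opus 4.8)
The statement is a purely topological identity about simple closed curves in the subsurface $S_{\A}$ (together with the $b_i$ curves that hang off of it), so the plan is to prove it by a direct picture-level computation, using the chain structure of $\A$ and the change-of-coordinates principle (Fact \ref{lem:change_of_coords}). First I would record the two elementary facts underlying both displayed equalities. The first is the general ``twist-swap'' identity: if $x,y$ are simple closed curves with $i(x,y)=1$, then $T(y)^{-1}(x) = T(x)(y)$; equivalently $T(x)T(y)(x) = y$ and $T(y)T(x)^{-1}(y)=x$ after reindexing. Since $(i,j)$ being in the one-sided regime means $c_{(i,j)}$ meets each of $b_i$ and $b_j$ exactly once (this is exactly how $c_{(i,j)}$ was drawn in Figure \ref{fig:cij}: it runs alongside the chain $a_i\cup a_i'\cup\cdots\cup a_j$ and crosses $b_i$ and $b_j$ once at the ends), this identity immediately yields the two equalities of mapping classes applied to curves,
\[
T(b_j)^{-1}(c_{(i,j)}) = T(c_{(i,j)})(b_j)
\quad\text{and}\quad
T(b_i)(c_{(i,j)}) = T(c_{(i,j)})^{-1}(b_i),
\]
leaving only the two orbit claims $T(b_j)^{-1}(c_{(i,j)}) \in \GA\cdot b_i$ and $T(b_i)(c_{(i,j)}) \in \GA \cdot b_j$ to verify.

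The core of the argument is therefore: exhibit an explicit element of $\GA$ carrying $b_i$ (resp.\ $b_j$) to $T(b_j)^{-1}(c_{(i,j)})$ (resp.\ $T(b_i)(c_{(i,j)})$). Here I would use the Birman--Hilden description of $\GA \cong \SMod(S_{\A})$ from \eqref{eqn:GA_BH12} (and its case (3) analogue \eqref{eqn:GA_BH3}) exactly as in the proof of Lemma \ref{lem:GA_action_cij}: project the relevant arcs down to the quotient sphere $\Sigma = S_{\A}/\iota$, where the curve $T(b_j)^{-1}(c_{(i,j)})$ becomes (the lift of) a simple arc/curve that differs from the projection of $b_i$ only by sliding past the branch points indexed between $2i-1$ and $2j$, and then realize that slide by an explicit braid which lifts to the desired element $g \in \GA$. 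Concretely, the curve $c_{(i,j)}$ encircles one ``side'' of the chain; twisting it down along $b_j$ and then twisting by $b_j$ (or rather undoing that) pulls it back to a curve that, on $\Sigma$, is isotopic through the braid group to the projection of $b_i$. The one-sided hypothesis is what guarantees that all of this takes place inside a single hyperelliptic subsurface (either $S_\A$ in case (1+2), or one of $S_{\A_1}, S_{\A_2}$ in case (3)), so that the Birman--Hilden correspondence applies cleanly and no ``wrapping around the ends'' is needed --- that wrapping is precisely the content of the later two-sided and three-sided lemmas.

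The plan is thus: (i) reduce to the two orbit statements via the twist-swap identity; (ii) by symmetry of the two claims (they are interchanged by the involution reversing the chain and swapping $i \leftrightarrow j$, which conjugates $\GA$ to itself), treat only one of them in detail; (iii) project to $\Sigma$ and identify $T(b_j)^{-1}(c_{(i,j)})$ with a braid-translate of (the projection of) $b_i$, citing Figures \ref{fig:cij}, \ref{fig:BH_cij}; (iv) lift the braid to $\GA$ and conclude. The main obstacle I anticipate is not conceptual but bookkeeping: carefully checking that, after twisting $c_{(i,j)}$ by $b_j^{-1}$, the resulting curve really is a \emph{simple} closed curve lying in $S_\A$ (not poking out along some $b_k$ with $k\neq i$), and that its $\Sigma$-projection is genuinely in the braid orbit of $q(b_i)$ rather than some other configuration with the same number of encircled branch points. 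This is where one must use the one-sided hypothesis most delicately --- in the two-sided and three-sided regimes this step genuinely fails and must be repaired by the subsequent lemmas --- so I would state precisely which branch points are involved and draw the single picture that makes the braid isotopy manifest, rather than attempt an algebraic verification.
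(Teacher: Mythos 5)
Your proposal follows the paper's proof closely: you isolate the same twist-swap identity $T(y)^{-1}(x)=T(x)(y)$ for once-intersecting curves to dispatch the displayed equality, and you prove the orbit claim $T(b_j)^{-1}(c_{(i,j)})\in\GA\cdot b_i$ by the same route — pass to the quotient $\Sigma = S_\A/\iota$ (or $S_{\A_m}/\iota$ in case (3)) via the Birman--Hilden isomorphism of \eqref{eqn:GA_BH12}/\eqref{eqn:GA_BH3}, realize the required isotopy of arcs as an explicit braid, and lift back to $\GA$. You also correctly identify that the one-sided hypothesis is what confines everything to a single hyperelliptic subsurface, so no wrap-around (the content of the subsequent lemmas) is needed. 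This is the paper's argument.

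One small caution about step (ii): you propose deducing the second orbit claim $T(b_i)(c_{(i,j)})\in\GA\cdot b_j$ from the first ``by symmetry,'' invoking an involution that reverses the chain, swaps $i\leftrightarrow j$, and conjugates $\GA$ to itself. No such involution exists as a diffeomorphism of $S$ for a general one-sided pair: a reflection sending $a_k\mapsto a_{i+j-k}$ preserves the full chain $\A$ only when $i+j$ happens to be compatible with the ends of the labeling (e.g.\ $i+j=g+1$ in case (1+2)), and in general it carries $c_{(i,j)}$ to a \emph{different} $c_{(k,\ell)}$ rather than back to itself. So the shortcut is not literally available. It is, however, fully in the spirit of what you want: the second claim is proved by the ``mirror'' braid (the paper realizes it by braiding the $(2j)$th strand behind the $(2i-1)$st through $(2j-1)$st, rather than the $(2i-1)$st in front of the $(2i)$th through $(2j)$th), so your step (ii) should be read as a mutatis-mutandis remark and then backed by explicitly exhibiting that second braid, not by a conjugation argument.
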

\begin{proof}
Note that the equality in the statement is clear by inspection, and in fact $T(a)^{-1} (b)= T(b) (a)$ for any two simple closed curves $a$ and $b$ intersecting once on $S$.

In order to find an element of $\GA$ taking $b_i$ to $T({b_{j}})^{-1} (c_{(i, j)})$, we use the procedure outlined in \S\ref{sec:braids}. The proof is best understood by scrutinizing Figure \ref{fig:BH_oneside}, but for the convenience of the reader we trace its construction below.

\begin{figure}[ht]
\centering
\includegraphics[scale=.5]{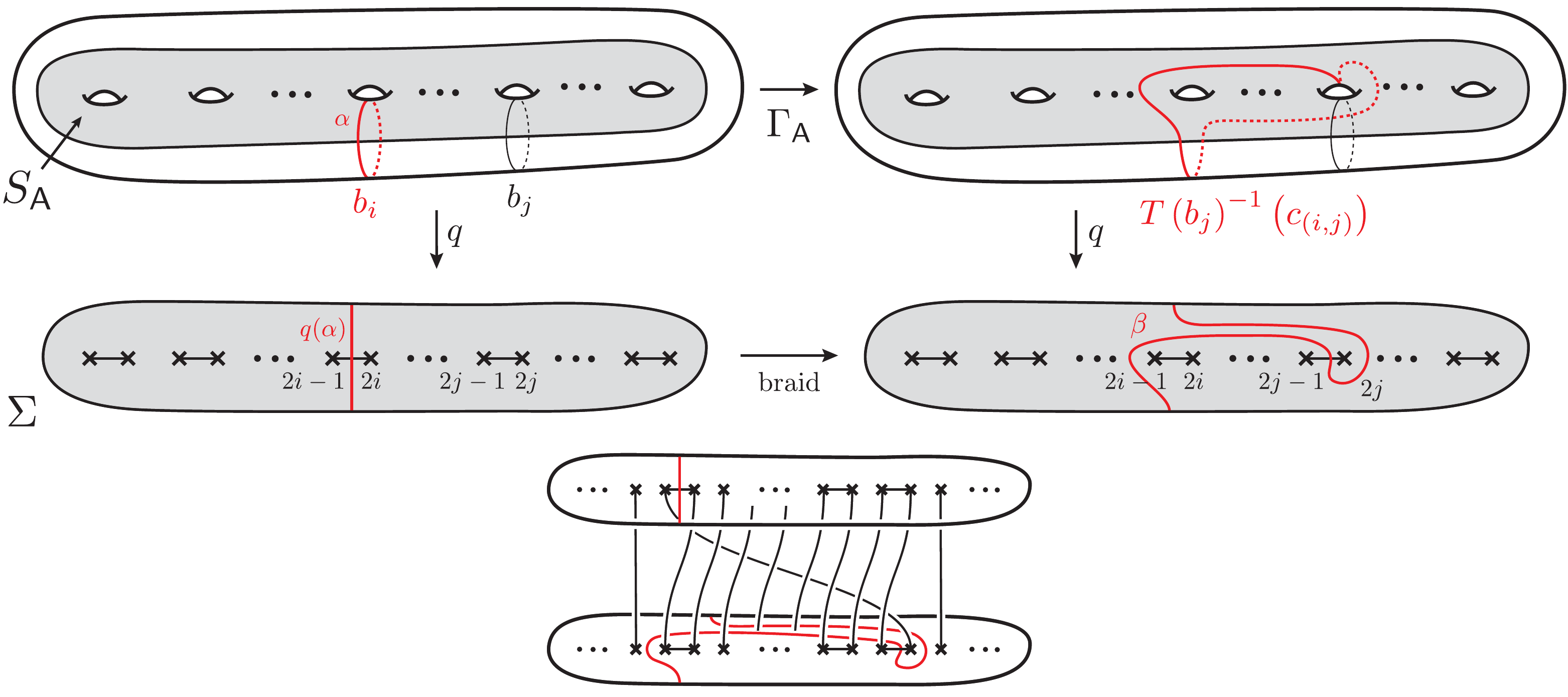}
\caption{A braid which takes $b_i \cap S_\A$ to $T({b_{j}})^{-1} \left( c_{(i, j)} \right) \cap S_\A$.}
\label{fig:BH_oneside}
\end{figure}

First, suppose that we are in case (1+2); then the intersection of $b_i$ with $S_\A$ is an arc $\alpha$ whose image $q(\alpha)$ under the hyperelliptic involution separates the first $(2i-1)$ branch points of $\Sigma$ from the others. By braiding the $(2i-1)^{\text{st}}$ branch point behind each of the $(2i)\ith$ to the $(2j)^{\text{th}}$ strands (and shifting each of $2i$ through $2j$ to the left by one), we can take $q(\alpha)$ to a new arc $\beta$. The lift of the corresponding braid under the Birman--Hilden isomorphism \eqref{eqn:GA_BH12} is an element of $\GA$, and can be seen to take $c$ to 
\[(c \setminus \alpha) \cup q^{-1}(\beta) = T({b_{j}})^{-1} \left( c_{(i, j)} \right).\]

In case (3), one must use one of the subsurfaces $S_{\A_m}$ and groups $\Gamma_{\A_m} = \SMod(S_{\A_m})$ (where the subsurface is determined by how $i$ and $j$ relate to $2$, $g$, and $2g-2$ in the cyclic order), but otherwise the procedure is completely analogous.

The second statement follows by braiding the $(2j)^{\text{th}}$ strand behind the $(2i-1)^{\text{st}}$ to the $(2j-1)^{\text{st}}$.
\end{proof}

In the one--sided regime, this lemma is enough to justify the heuristic, for we immediately note that
\[\begin{array}{cl}
 c_{(i, j)} & \in \langle \GA, T({b_j}) \rangle \cdot b_i \\
 b_j & \in \langle \GA, T({ c_{(i, j)}}) \rangle \cdot b_i \\
\end{array}\]
and hence by Fact \ref{lem:change_of_coords}, the twists on any two of $\{b_i, b_j, c_{(i,j)} \}$ together with $\GA$ are enough to recover the twist on the third.

When $(i,j)$ in the two--sided regime, the curve $c_{(i,j)}$ is no longer defined, and so the initial form of the heuristic makes no sense. However, we may still show that a similar statement still holds: from $\GA$, $b_i$, and an auxiliary curve $c$ one can obtain $b_j$ (and vice--versa). This case should be thought of as allowing us to ``pass around'' a single end of the surface $S$ when applying addition (or subtraction).

\begin{lemma}\label{lem:twosided}
Suppose that the curves of $S$ are labeled as in Figure \ref{fig:curvelabels} and $1 \prec i \prec g \prec j \prec 1$ (in the cyclic order) are so that $j - i \le g-2$. Then 
\[T\left({c_{(g-j+i,g)}}\right) T\left({b_g}\right) \left( c_{(g, j)} \right) \in \GA \cdot \, b_i.\]
In case (1+2), if $g \prec i \prec 1 \prec j \prec g$ and $j-i \le g-2$, then likewise
\[T\left(c_{(1, 2g-i+j-1)}\right) T\left(b_1\right) \left(c_{(1, j)} \right)
\in \GA \cdot \, b_i.\]
In case (3),
if $g \prec i \prec 2g-2$, then likewise
\[T\left( c_{(1,2g-i+1)} \right) T \left(b_{2g-2} \right) \left( c_{(1,3)} \right) \in \GA \cdot\,  b_i\]
and for $2 \prec j \prec g$, one has
\[T\left( c_{(2, j+1)} \right) T\left( b_2 \right) \left( c_{(2,j)} \right) \in \GA \cdot \, b_1.\]
\end{lemma}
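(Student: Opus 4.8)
The plan is to mirror the proof of Lemma~\ref{lem:bi_and_cij}, working throughout with the Birman--Hilden dictionary of \S\ref{sec:braids}: realize the curves involved as arcs on the hyperelliptic subsurface $S_\A$ (respectively one of the $S_{\A_m}$ in case (3)), push them down to the quotient sphere $\Sigma$, and exhibit an explicit braid carrying one projected arc to the other. The one genuinely new feature, compared with the one--sided regime, is that the braid must now route a strand \emph{around} an end of $\Sigma$; the twists $T(b_g)$, $T(b_1)$, $T(b_{2g-2})$, and $T(b_2)$ appearing in the statement are exactly the bookkeeping devices that record, on the level of curves, the effect of crossing the corresponding end. (Here $c_{(g,j)}$, $c_{(g-j+i,g)}$, and the other auxiliary curves are interpreted via the extended conventions set out just before Lemma~\ref{lem:GA_action_cij}.)

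For the first assertion --- which is claimed in both case (1+2) and case (3) --- I would set $c := T(c_{(g-j+i,g)})\,T(b_g)(c_{(g,j)})$ and first check, by inspection of Figure~\ref{fig:curvelabels}, that $c$ is a simple closed curve whose intersection with $S_\A$ is a single arc $\alpha_c$, just as $b_i \cap S_\A$ is a single arc $\alpha_i$. Under $q \colon S_\A \to \Sigma$ the arc $q(\alpha_i)$ separates off the first $2i-1$ branch points; unwinding the two twists defining $c$ (which near $b_g$ act as explicit half--twist words) shows that $q(\alpha_c)$ separates off the corresponding block of branch points transported once around that end. Because $j-i \le g-2$, the counting in Lemma~\ref{lem:ineq} and in the discussion preceding Lemma~\ref{lem:GA_action_cij} leaves enough room on $S$ to braid $q(\alpha_i)$ to $q(\alpha_c)$; lifting that braid via the Birman--Hilden isomorphism~\eqref{eqn:GA_BH12} produces an element of $\GA$ carrying $b_i$ to $c$, as desired. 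The other assertions go the same way: in case (1+2) the configuration $g \prec i \prec 1 \prec j \prec g$ is the mirror situation in which the other end is crossed; in case (3) one repeats the argument with whichever of $S_{\A_1}, S_{\A_2}, S_{\A_3}$ contains the relevant arcs (determined by how $i$ and $j$ sit relative to $2$, $g$, $2g-2$ in the cyclic order), invoking the chain--completion trick used for $c_{(1,2)}$ in the proof of Lemma~\ref{lem:GA_action_cij} when the crossed end is the one near $b_2$.

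The hard part will be bookkeeping rather than any conceptual difficulty: one must verify that the specific composite $T(c_{(g-j+i,g)})\,T(b_g)(c_{(g,j)})$ projects, on the nose, to an arc on $\Sigma$ braid--equivalent to $q(b_i \cap S_\A)$ --- in particular that the two Dehn twists precisely absorb the winding picked up in crossing the end, so that the resulting arc cuts off the right branch points and lands in the correct subsurface. Making the argument uniform across the several regimes (which end is crossed, and whether the distinguished index is $1$, $g$, $2$, or $2g-2$) is where the care is needed; figures in the style of Figure~\ref{fig:BH_oneside} should do most of the work.
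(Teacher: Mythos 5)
Your proposal takes essentially the same route as the paper: intersect $b_i$ with $S_\A$ (or with the appropriate $S_{\A_m}$ in case~(3)), project to the quotient disk, exhibit an explicit braid carrying one projected arc to the other, and lift it via Birman--Hilden; the braid you describe, routing a block of strands around one end of the configuration, is precisely the one depicted in Figure~\ref{fig:BH_twoside}. One small caution: the parenthetical that the twists $T(b_g)$ and $T(c_{(g-j+i,g)})$ ``act as explicit half--twist words'' is not right --- neither $b_g$ nor $c_{(g-j+i,g)}$ lies in $\A$, so these Dehn twists are not Birman--Hilden lifts of half--twists; what you actually need is simply to track how the composite alters the arc $c_{(g,j)}\cap S_\A$ and check its projection agrees with the image of $q(b_i\cap S_\A)$ under the stated braid, which is exactly the bookkeeping the paper leaves to the figure.
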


Before proving the lemma, we note how it implies Proposition \ref{prop:heuristic} (the generalized heuristic) in any of the above scenarios. In the case in which one has the twists on $b_i$ and $c_{(g-j+i, g)}$ it follows from Lemmas \ref{lem:bi_and_cij} and \ref{lem:twosided}, respectively, that
\[\GA \cdot b_j \ni
T({b_g}) \left( c_{(g, j)} \right)
\in \langle \GA, T(c_{(g-j+i,g)}) \rangle \cdot b_i.\]
and therefore $b_j \in \langle \GA, T(c_{(g-j+i,g)}) \rangle \cdot b_i$.

To see that one can get the twist on $c_{(g-j+i, g)}$ from those on $b_i$ and $b_j$, observe that the $T(a)^{-1} (b)= T(b) (a)$ relation together with the commuting property of nonintersecting Dehn twists implies that
\begin{align}\label{eqn:twists1}
\begin{split}
T({c_{(g-j+i,g)}}) T({b_g}) \left( c_{(g, j)} \right)
& = T({c_{(g-j+i,g)}}) T(c_{(g, j)})^{-1} \left( b_g \right)\\
& =T(c_{(g, j)})^{-1} T({c_{(g-j+i,g)}}) \left( b_g \right)\\
&=T(c_{(g, j)})^{-1} T(b_g)^{-1} \left( c_{(g-j+i,g)} \right)\\
&= T(c_{(g, j)})^{-1} T(b_g)^{-1}T(c_{(g, j)}) \left( c_{(g-j+i,g)} \right)
\end{split}
\end{align}
and by Fact \ref{lem:change_of_coords}, one has
\begin{equation}\label{eqn:twists2}
T(c_{(g, j)})^{-1} T(b_g)^{-1}T(c_{(g, j)}) \left( c_{(g-j+i,g)} \right)
= T\Big( T(c_{(g, j)})^{-1} (b_g) \Big)^{-1} \left( c_{(g-j+i,g)} \right).
\end{equation}
Now by Lemma \ref{lem:bi_and_cij}, we have that
$T({c_{(g, j)}})^{-1} \left(b_g\right) \in \GA \cdot b_j$
and therefore
\begin{equation}\label{eqn:twists3}
T\Big( T(c_{(g, j)})^{-1} (b_g) \Big)^{-1} \in \langle \GA, T(b_j) \rangle.
\end{equation}
Putting together \eqref{eqn:twists1}, \eqref{eqn:twists2}, and \eqref{eqn:twists3} with Lemma \ref{lem:twosided}, we have that
\[c_{(g-j+i,g)} \in  \langle \GA, T(b_j) \rangle \cdot b_i .\]
A similar analysis may be performed for each of the other statements.

\begin{figure}[ht]
\centering
\includegraphics[scale=.45]{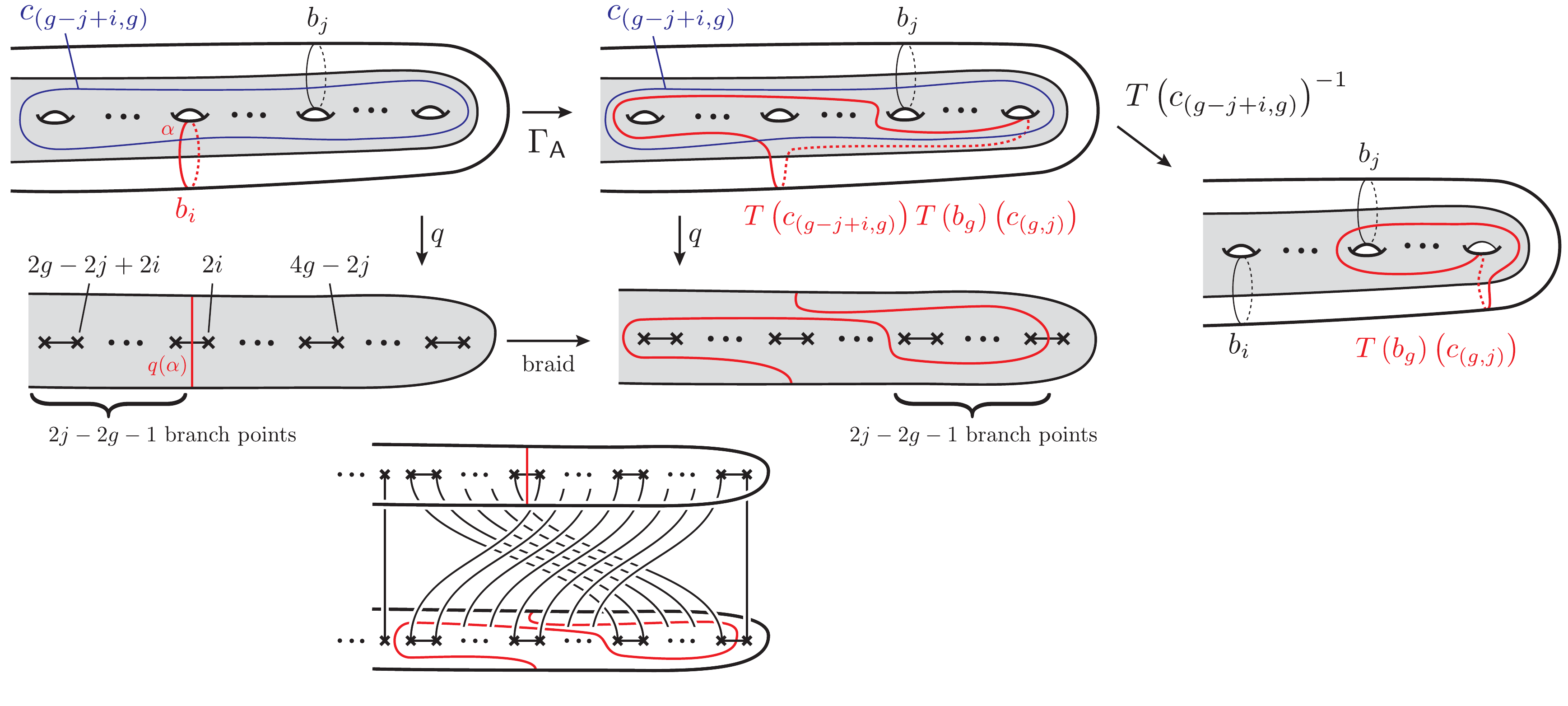}
\caption{The construction of a braid demonstrating Lemma \ref{lem:twosided}.}
\label{fig:BH_twoside}
\end{figure}

\begin{proof}
The proofs of all of the statements are exactly the same up to reindexing (and when in case (3), the use of the appropriate chain $\A_m$), 
so we will assume that we are in the case when $1 \prec  i \prec g \prec j \prec 1$ and $j - i \le g-2$ and leave the remaining cases to the scrupulous reader.

In order to find an element of $\GA$ sending $b_i$ to 
\[T\left({c_{(g-(j-i),g)}}\right) T\left({b_g}\right) \left( c_{(g, j)} \right),\]
we will employ the same strategy as in Lemma \ref{lem:bi_and_cij}. Intersect $b_i$ either with the surface $S_\A$ or $S_{\A_2}$ (when in cases (1+2) and (3), respectively) to get an arc $\alpha$. Upon passing to the quotient $\Sigma = S_\A / \iota$, $\alpha$ becomes an arc which separates the last $2g-2i+1$ branch points from the rest.

The lift of the braid which takes the $(2i)^\text{th}$ through $(2g-1)^{\text{st}}$ points in front of the $(2g-2j+2i-1)^\text{st}$ through $(2i-1)^\text{st}$ points is then our desired element of $\GA$. A schematic of this construction is presented in Figure \ref{fig:BH_twoside}.
\end{proof}

In order to deal with the left--hand side of a surface labeled as in case (3), we require a slightly generalized version of Lemma \ref{lem:twosided}.

\begin{figure}[hb]
\centering
\includegraphics[scale=.5]{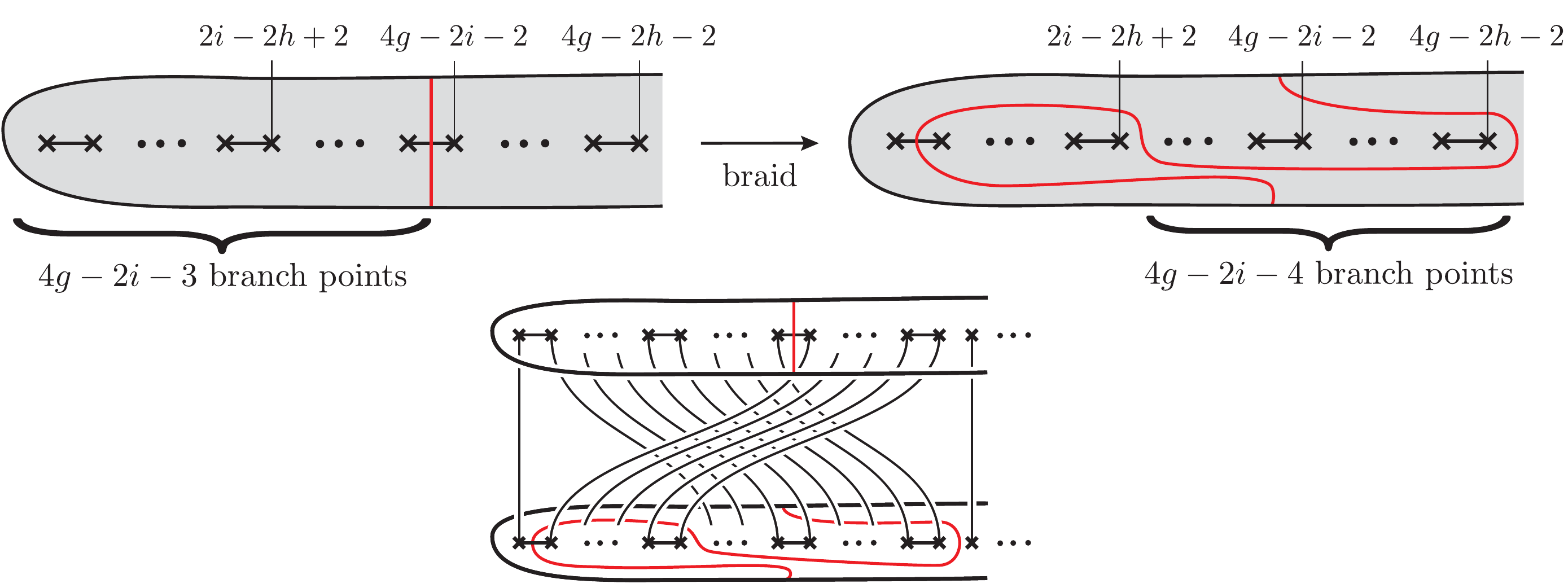}
\caption{A generalization of the braid appearing in Lemma \ref{lem:twosided}.}
\label{fig:BH_twoside_gen}
\end{figure}

\begin{lemma}\label{lem:twosided_generalized}
Suppose that the curves of $S$ are labeled as in case (3) and $g \ge h < i < 2g-2$. Then
\[T\left( c_{(1,2g-h)} \right) T \left(b_{2g-2} \right) \left( c_{(1,i-h+2)} \right) \in \GA \cdot b_i \]
\end{lemma}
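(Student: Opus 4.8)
The plan is to run the same argument as in Lemma \ref{lem:twosided}, now using the Birman--Hilden picture for the chain $\A_2$ and its hyperelliptic subsurface $S_{\A_2}$ from \S\ref{sec:braids}. Concretely, it suffices to produce an element $g \in \Gamma_{\A_2} \le \GA$ with
\[ g(b_i) = T\left( c_{(1,2g-h)} \right) T \left(b_{2g-2} \right) \left( c_{(1,i-h+2)} \right), \]
since this is exactly what the claimed membership $T( c_{(1,2g-h)}) T(b_{2g-2})( c_{(1,i-h+2)}) \in \GA \cdot b_i$ asserts.

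First I would unwind the combinatorial picture on $S_{\A_2}$. Intersecting $b_i$ with $S_{\A_2}$ yields a simple arc $\alpha$ (near the relabeled left end of $S$ one instead gets the appropriate family of disjoint arcs, handled identically), and under the branched double cover $q \colon S_{\A_2} \to \Sigma = S_{\A_2}/\iota$ the image $q(\alpha)$ separates a block of consecutive branch points --- indexed by a range determined by $i$, exactly as in the proofs of Lemmas \ref{lem:bi_and_cij} and \ref{lem:twosided} --- from the remaining branch points. On the other side, using the identity $T(a)^{-1}(b) = T(b)(a)$ for curves meeting once together with the description of $b_{2g-2}$ and the $c_{(1,\cdot)}$ relative to $S_{\A_2}$, I would check that the target curve meets $S_{\A_2}$ in a simple arc $\beta$ whose projection $q(\beta)$ cuts off a block of the same cardinality; the hypotheses $g \ge h$ and $h < i < 2g-2$ are precisely what keep the indices $i-h+2$ and $2g-h$ inside the admissible range $[3,g]$ and force the two blocks of branch points to have equal size.

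With $q(\alpha)$ and $q(\beta)$ both separating off equally many branch points, there is an explicit braid $b \in B$ (a block slide, the generalization of the braid of Figure \ref{fig:BH_twoside} drawn in Figure \ref{fig:BH_twoside_gen}) carrying $q(\alpha)$ to $q(\beta)$. Lifting $b$ through the Birman--Hilden isomorphism \eqref{eqn:GA_BH3} gives an element $g \in \Gamma_{\A_2} \le \GA$; since $g$ fixes $\partial S_{\A_2}$ pointwise and is the identity off $S_{\A_2}$, the curve $g(b_i)$ is obtained from $b_i$ by replacing its arc $\alpha$ with $q^{-1}\big(b \cdot q(\alpha)\big) = \beta$, so $g(b_i)$ is exactly the target curve, as desired.

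I expect the only real work to be the middle step: tracking how the composite homeomorphism $T(c_{(1,2g-h)}) \circ T(b_{2g-2})$ moves the arc $c_{(1,i-h+2)} \cap S_{\A_2}$, checking that the result is still a simple arc, and confirming that its projected block of branch points matches that of $q(\alpha)$ in size. Once this index arithmetic is verified and the block-slide braid of Figure \ref{fig:BH_twoside_gen} is seen to realize the required rearrangement, the statement follows verbatim from the argument of Lemma \ref{lem:twosided}, of which it is the reindexed generalization (the case $h = i-1$ specializing to the $c_{(1,3)}$ statement there).
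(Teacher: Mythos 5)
Your approach matches the paper's: produce an explicit braid on $S_{\A_2}/\iota$, lift it via the Birman--Hilden correspondence \eqref{eqn:GA_BH3} to an element of $\Gamma_{\A_2} \le \GA$, and verify by inspection that it carries $b_i$ to the target curve; the paper simply records the braid explicitly (the $2^{\mathrm{nd}}$ through $(4g-2i-3)^{\mathrm{rd}}$ strands passing behind the $(4g-2i-2)^{\mathrm{nd}}$ through $(4g-2h-2)^{\mathrm{nd}}$) and refers to Figure~\ref{fig:BH_twoside_gen}, while you leave that index arithmetic as ``the only real work.''

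One concrete issue with your write-up: you claim the hypotheses ``$g \ge h$ and $h < i < 2g-2$ are precisely what keep the indices $i-h+2$ and $2g-h$ inside the admissible range $[3,g]$.'' That is not so --- under $g \ge h$ one has $2g - h \ge g$, so $c_{(1,\,2g-h)}$ falls \emph{outside} the range $[3,g]$ except at $h=g$. The statement as printed almost certainly contains a typo; the intended hypothesis is $g \le h < i < 2g-2$ (this is what makes $3 \le 2g-h \le g$ and $3 \le i-h+2 \le g$, and it is exactly what the lemma's use in the three-sided case of Proposition~\ref{prop:heuristic} requires, where $h = i-j \ge g$). Since you explicitly asserted you had checked this range condition, you should have caught the inconsistency; as it stands that sentence of your proof is false as written, and your ``$h=i-1$ specializes to the $c_{(1,3)}$ statement'' remark likewise only recovers the $i = g+1$ instance of Lemma~\ref{lem:twosided}(3) rather than the full range there.
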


The construction of the desired element follows as above, braiding the $2^\text{nd}$ through $(4g-2i-3)^\text{rd}$ strands of $S_{\A_2} / \iota$ behind the $(4g-2i-2)^\text{nd}$ through $(4g-2h-2)^\text{nd}$. We depict the corresponding braid in Figure \ref{fig:BH_twoside_gen} by way of proof.

Finally, we record below the last tool we need to prove Proposition \ref{prop:heuristic} when the surface is labeled as in case (3) and the pair $(i,j)$ is in the three--sided regime. In this scenario, one needs to be able to ``pass around'' both the $a_1$ and $a_2$ handles on the left--hand side of the surface.

\begin{lemma}\label{lem:LHS_case3}
If the curves of $S$ are labeled as in case (3) and $2 \le j \le g$, then 
\[T(b_2) \left(c_{(2, j)}\right) \in \GA \cdot \, T(b_{2g-2} )\left(c_{(1,j+2)}\right).\]
\end{lemma}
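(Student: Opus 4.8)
The plan is to produce an explicit element $g \in \GA$ with $g \cdot T(b_{2g-2})\left(c_{(1,j+2)}\right) = T(b_2)\left(c_{(2,j)}\right)$, by the same Birman--Hilden strategy used for Lemmas \ref{lem:twosided} and \ref{lem:twosided_generalized}. Recall that in case (3) the curve $b_{2g-2}$ is dual to $a_1$ and $b_2$ is dual to $a_2$, so $T(b_{2g-2})\left(c_{(1,j+2)}\right)$ is obtained from $c_{(1,j+2)}$ by wrapping it once around the first handle of $S$, while $T(b_2)\left(c_{(2,j)}\right)$ is obtained from $c_{(2,j)}$ by wrapping it once around the second handle. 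The element $g$ must therefore slide the curve from the first handle to the second while also correcting the span of the underlying $c$-curve; that this is at least consistent on homology follows from a short computation using that $c_{(1,j+2)}$ and $c_{(2,j)}$ are separating (so that $[T(b_{2g-2})(c_{(1,j+2)})] = \pm [b_{2g-2}]$ and $[T(b_2)(c_{(2,j)})] = \pm[b_2]$) together with the fact that $\Gamma_{\A_3} \le \GA$ acts on the first two handles as the full braid group $B_6$, and in particular interchanges them.

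First I would note that outside a regular neighbourhood of the genus-two subsurface $S_{\A_3}$ filled by $\A_3 = \{a_1, a_1', a_3, a_2', a_2\}$, both curves run parallel along the chain $a_3', a_4, a_4', \dots$; using Lemma \ref{lem:GA_action_cij} to slide an auxiliary $c_{(\cdot,\cdot)}$ curve of the appropriate span (and Fact \ref{lem:change_of_coords} to carry the twisting data along), one can arrange that the two curves agree outside $S_{\A_3}$, reducing the problem to finding an element of $\Gamma_{\A_3} = \SMod(S_{\A_3}) \cong B_6$ that carries the restriction of one curve to that of the other. Each restriction is a disjoint union of simple arcs in $S_{\A_3}$, so by the procedure of \S\ref{sec:braids} it suffices to exhibit a braid on the six branch points of $S_{\A_3}/\iota$ taking the projected arc coming from $T(b_{2g-2})(c_{(1,j+2)})$ to the one coming from $T(b_2)(c_{(2,j)})$ --- concretely, a braid that drags the branch points recording the $a_1$ handle past those recording $a_3$ and into the position of the branch points recording the $a_2$ handle, carrying along the small arc that records the $T(b_{2g-2})$-twist. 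Lifting this braid through the Birman--Hilden isomorphism \eqref{eqn:GA_BH3} then yields the desired $g$, and a schematic in the style of Figures \ref{fig:BH_twoside} and \ref{fig:BH_twoside_gen} should make the construction transparent.

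An alternative, perhaps more in keeping with the rest of \S\ref{sec:heur}, would be to factor $g$ as a composition of two moves already established: a case-(3) application of Lemma \ref{lem:twosided_generalized} to pass the curve around the $a_1$ handle, landing in $\GA \cdot b_i$ for an appropriate $i$, followed by the last statement of Lemma \ref{lem:twosided} to pass around the $a_2$ handle, chaining the two resulting equalities and using Fact \ref{lem:change_of_coords} to track the conjugators. In either approach, I expect the main obstacle to be almost entirely bookkeeping: reconciling the cyclically indexed subscripts of the $b_i$ and $c_{(i,j)}$ curves under the conventions $c_{(2g-j, 2g-2)} = c_{(1,j)}$ and $c_{(2g-j,2g-i)} = c_{(i,j)}$ (in particular interpreting $c_{(1,j+2)}$ when $j+2 > g$), and checking that all auxiliary curves actually fit on the left-hand side of $S$ --- which, as in Proposition \ref{prop:gen_easycase}, should reduce to Lemma \ref{lem:ineq} under the standing hypothesis $r = \gcd(\sing) \notin \{g-1, 2g-2\}$. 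The one genuinely new geometric ingredient --- that a single braid on $S_{\A_3}/\iota$ can simultaneously swap the two left-hand handles and carry the twisting arc along --- is exactly what the figure accompanying the lemma will need to show.
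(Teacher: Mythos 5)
The proposal has genuine gaps in both suggested approaches, although the underlying intuition (slide the twist from the $a_1$ handle to the $a_2$ handle via a Birman--Hilden braid) is on the right track.

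In the first approach, you want to arrange the two curves to agree outside $S_{\A_3}$ and then reduce to a single braid on $S_{\A_3}/\iota$. But the two curves do not merely "run parallel" outside $S_{\A_3}$ with the same extent: $c_{(1,j+2)}$ reaches out to $a_{j+2}$ along the chain, whereas $c_{(2,j)}$ reaches only to $a_j$, so their spans differ. You propose to fix this with Lemma \ref{lem:GA_action_cij}, but that lemma is a transitivity statement for $c_{(i,j)}$ curves of a \emph{fixed} difference $j - i$; it cannot change the span. And once the problem is moved inside $S_{\A_3}$, any element of $\Gamma_{\A_3} = \SMod(S_{\A_3})$ fixes $\partial S_{\A_3}$ pointwise and hence fixes the complement of $S_{\A_3}$, so a braid in $B_6$ cannot correct the span discrepancy. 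The paper's proof handles exactly this by composing \emph{three} braids, one in each of $\Gamma_{\A_1}$, $\Gamma_{\A_3}$, and $\Gamma_{\A_2}$: the first and third stages do the span-correction in a subsurface large enough to reach along the chain, and only the middle one (in $\Gamma_{\A_3}$) swaps the two left-hand handles. Reducing to a single $\Gamma_{\A_3}$ braid is not possible. For the second approach, chaining Lemma \ref{lem:twosided_generalized} with the last clause of Lemma \ref{lem:twosided} does not deliver a $\GA$-orbit relation, because each of those statements carries an extra Dehn twist factor, $T\left(c_{(1,2g-h)}\right)$ and $T\left(c_{(2,j+1)}\right)$ respectively, that is not in $\GA$; the chained conclusion lives in a strictly larger group. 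It is also worth noting that the paper applies Lemma \ref{lem:LHS_case3} as an independent input alongside Lemma \ref{lem:twosided_generalized} when proving the three-sided case of Proposition \ref{prop:heuristic}, so a derivation of Lemma \ref{lem:LHS_case3} that leans on that machinery would need to be checked carefully for circularity. Finally, the homology sanity check in your opening paragraph is off: even if $c_{(i,j)}$ were null-homologous, a Dehn twist $T_a$ sends the class $0$ to $0 + \langle 0, [a] \rangle [a] = 0$, not to $\pm[a]$.
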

\begin{proof}
To construct the desired element of $\GA$, we make use of all three hyperelliptic subsurfaces $S_{\A_m}$ and their respective symmetric mapping class groups. An overview of the construction is presented in Figure \ref{fig:BH_threeside}.

\begin{figure}[ht]
\centering
\includegraphics[scale=.5]{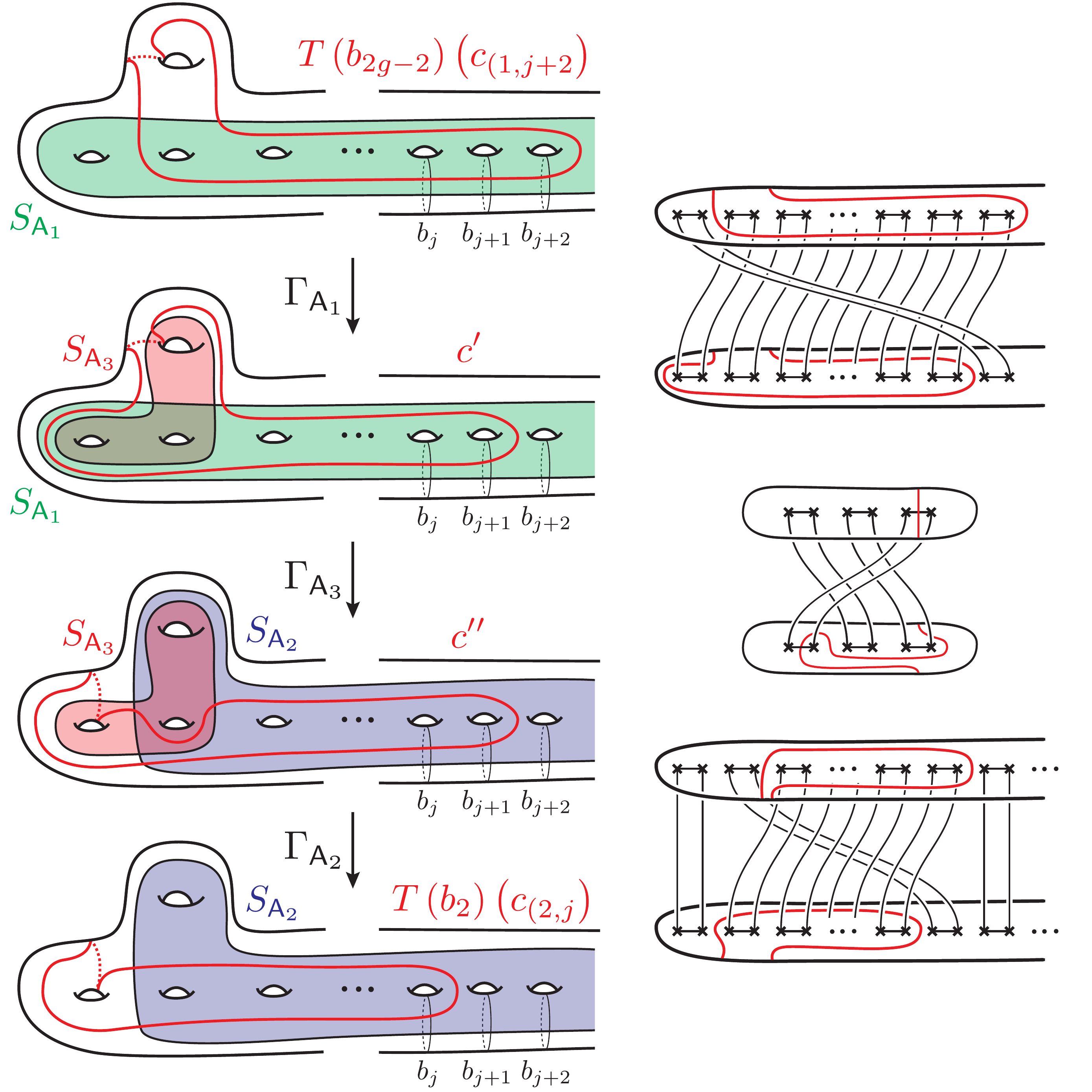}
\caption{A sequence of braids which allows us to ``pass around'' the left hand side of a surface labeled as in case (3).}
\label{fig:BH_threeside}
\end{figure}

For ease of notation, throughout this proof we will write $c$ for $T(b_{2g-2} )\left(c_{(1,j+2)} \right)$. 

We begin by intersecting $c$ with $S_{\A_1}$; call this arc $\alpha$. Its quotient in $S_{\A_1} / \iota$ is an arc which separates off the third through $(4j+4)^\text{th}$ branch points. By braiding the first and second branch points in front of these, we arrive at an arc separating off the first through $(4j+2)^\text{nd}$ branch points, whose lift (with the same endpoints as $\alpha$) we will denote by $\alpha'$. The Birman--Hilden theory then implies that there exists some element of $\Gamma_{\A_1} = \SMod(S_{\A_1})$ which takes $c$ to
\[c':= c \setminus \alpha \cup \alpha'.\]
See Figure \ref{fig:BH_threeside}.

Now intersect $c'$ with $S_{\A_3}$. Upon quotienting by the appropriate hyperelliptic involution, this yields an arc which separates the last branch point from the other five. Braid the fifth and sixth strands of $S_{\A_3} / \iota$ in front of the other four and lift back up to $S_{\A_3}$; as before, Birman--Hilden implies that the resulting curve $c''$ is in the $\Gamma_{\A_3}$ orbit of $c'$.

Finally, consider the intersection of $c''$ with $S_{\A_2}$. The resulting arc on the quotient surface $S_{\A_2} / \iota$ separates the fifth through $(2j)\ith$ branch points from the others, so by braiding the third and fourth branch points behind these and lifting back up, one arrives at a curve $c''' \in \Gamma_{\A_2} \cdot c''$ whose intersection with $S_{\A_2}$ is an arc which encircles the handles corresponding to $a_3$ through $a_j$.

Tracing through this construction, one observes that the resulting curve $c'''$ is isotopic to $T(b_2) \left( c_{(2,j)} \right)$, and
\[c''' \in \Gamma_{\A_2} \cdot c'' \subseteq \Gamma_{\A_2} \Gamma_{\A_3} \cdot c'
\subseteq \Gamma_{\A_2} \Gamma_{\A_3} \Gamma_{\A_1} \cdot c\]
thus concluding the proof of the lemma.
\end{proof}

We may now finish the proof of (the refined form of) our motivating heuristic.

\begin{proof}[Proof of Proposition \ref{prop:heuristic}]
This proof naturally breaks into multiple cases, depending on the labeling scheme of the surface $S$ and the sided-ness of the pair $(i,j)$. The one-- and two--sided cases have already been justified above (see the discussions after Lemmas \ref{lem:bi_and_cij} and \ref{lem:twosided}, respectively, together with Lemma \ref{lem:GA_action_cij}) and so we will not reproduce those arguments here. 

That leaves the three--sided case to consider. To that end, we may suppose that the surface $S$ is labeled as in case (3) and that $2\prec j \prec g \prec i \prec 2g-2$, so 
\[j-i + (2g-2) \le g-2.\]
In particular, this implies that $i - j \ge g$. Therefore, setting $h=i-j$ in Lemma \ref{lem:twosided_generalized} yields
\begin{equation}\label{eqn:heur1}
T\left( c_{(1,2g-i+j)} \right) T \left(b_{2g-2} \right) \left( c_{(1,j+2)} \right) \in \GA \cdot\, b_i .
\end{equation}
Now we note that by Lemma \ref{lem:LHS_case3} we have that
\begin{equation}\label{eqn:heur2}
T \left(b_{2g-2} \right) \left( c_{(1,j+2)} \right) \in \GA \cdot \, T\left( b_2 \right) \left( c_{(2,j)} \right)
\end{equation}
and by Lemma \ref{lem:bi_and_cij},
\begin{equation}\label{eqn:heur3}
T\left( b_2 \right) \left( c_{(2,j)} \right) \in \GA \cdot\, b_j.
\end{equation}
Therefore, by combining \eqref{eqn:heur1}, \eqref{eqn:heur2}, and \eqref{eqn:heur3} it follows that
\[b_j \in \langle \GA, T\left( c_{(1,2g-i+j)} \right) \rangle \cdot b_i.\]
Now by definition we have that $c_{(1,2g-i+j)} = c_{(i-j, 2g-2)}$, and so by Lemma \ref{lem:GA_action_cij} we know that
\[c_{(1,2g-i+j)} \in \GA \cdot c_{(k,\ell)}.\]
Therefore we may conclude that
\[b_j \in \langle \GA, T(c_{(k, \ell)}) \rangle \cdot b_i\]
as desired.

In order to prove the second statement, we apply the same manipulations appearing in \eqref{eqn:twists1} and \eqref{eqn:twists2} to \eqref{eqn:heur1} to deduce that 
\begin{equation}\label{eqn:heur4}
T\left( c_{(1,2g-i+j)} \right) T \left(b_{2g-2} \right) \left( c_{(1,j+2)} \right) 
=
T\Big( T(c_{(1, j+2)})^{-1} (b_{2g-2}) \Big)^{-1} \left( c_{(1,2g-i+j)} \right).
\end{equation}
Applying the $T(a)^{-1} (b)= T(b) (a)$  relation once more, we have that
\[T(c_{(1, j+2)})^{-1} (b_{2g-2}) = T(b_{2g-2}) \left( c_{(1,j+2)} \right)
\in \GA \cdot \,T\left( b_2 \right) \left( c_{(2,j)} \right)
\subseteq \GA \cdot \left( \GA \cdot\, b_j \right)\]
where the second and third inclusions follow from Lemmas \ref{lem:LHS_case3} and \ref{lem:bi_and_cij}, respectively. Therefore
\[T\Big( T(c_{(1, j+2)})^{-1} (b_{2g-2}) \Big) \in \langle \GA, T(b_j) \rangle\]
and so by \eqref{eqn:heur4} and Fact \ref{lem:change_of_coords}, we have that
\[c_{(1, 2g-i+j)} = c_{(i-j, 2g-2)} \in \langle \GA, T(b_j) \rangle \cdot \, b_i.\]
A final application of Lemma \ref{lem:GA_action_cij} finishes the proof.
\end{proof}

With this general form of the heuristic, it is now very simple to prove Proposition \ref{prop:curve_addsub}; indeed, the entire argument appears in the sketch in \S\ref{sec:Euclidean}. For completeness, we reproduce it below.

\begin{proof}[Proof of Proposition \ref{prop:curve_addsub}]
By Fact \ref{lem:change_of_coords}, in order to show that
\[T(b_{i+2x}) \in G:= \langle T\left(b_{i}\right), T\left(b_{i+x}\right), \GA \rangle,\]
one need only find an element $g \in G$ which takes $b_{i+x}$ to $b_{i+2x}$.

First, note that since $x \le g-2$, we know that there is some pair $(k, \ell)$ with $2 \le k < \ell \le g$ (i.e., lying in the one--sided regime) such that $\ell - k  =x$. Therefore we may apply Proposition \ref{prop:heuristic} and deduce that there is some element of $\langle \GA, T(b_{i+x}) \rangle$ taking $b_i$ to $c_{(k, \ell)}$, and therefore $T\left(c_{(k, \ell)} \right) \in G$.

A second application of Proposition \ref{prop:heuristic} yields an element of $\langle \GA, T(c_{(k, \ell)}) \rangle \subseteq G$ which takes $b_{i+x}$ to $b_{i+2x}$, thereby proving the first statement (addition) of the Proposition. The proof of the second statement (subtraction) is completely analogous.
\end{proof}

\end{document}